 \newcommand {\N} {{\mathbb N}}
 \newcommand {\C} {{\mathbb C}}
 \newcommand {\R} {{\mathbb R}}
 \newcommand {\Z} {{\mathbb Z}}
 \newcommand {\Q} {{\mathbb Q}}
 \newcommand {\PP} {{\mathbb P}}
 \newcommand {\calH} {{\mathcal H}}
 \newcommand {\F} {{\mathcal F}}
 \newcommand {\E} {{\mathcal E}}
 \newcommand {\dt} {{\bullet}}
 \newcommand {\calC} {{\mathcal C}}
\newcommand {\calK} {{\mathcal K}}
 \newcommand {\G} {{\mathcal G}}
\newcommand {\T} {{\mathcal T}}
\newcommand {\A} {{\mathcal A}}
\newcommand {\V} {{\mathcal V}}
\newcommand {\cR} {{\mathcal R}}
\newcommand {\D} {{\mathcal D}}
\newcommand {\M} {{\mathcal M}}
\newcommand {\PM} {\mathcal{P}\M}
\newcommand {\ph} {h}
\DeclareMathOperator*{\2lim} {\text{2-}\varinjlim}
\DeclareMathOperator{\im}{im}
\newtheorem{thm}{Theorem}[subsection]
\newtheorem{prop}[thm]{Proposition}
\newtheorem{lemma}[thm]{Lemma}
\newtheorem{cor}[thm]{Corollary}
\newtheorem{ex}[thm]{Example}
\newtheorem{defn}[thm]{Definition}
\newtheorem{remark}[thm]{Remark}
\newtheorem{conj}[thm]{Conjecture}
\newtheorem{constr}[thm]{Construction}
\begin{document}
\title {An abelian category of motivic sheaves }

 \author{Donu Arapura\footnote{Partially supported by the NSF}}
\date{}

\maketitle

\begin{abstract}
  A category of motivic ``sheaves'' is constructed over a variety in characteristic $0$ using
Nori's method. Although the relationship with many alternative constructions 
remains to be clarified, it does have many of the properties one expects. For example,
 it is abelian and  has Betti, Hodge and $\ell$-adic realizations, and it has a Tannakian subcategory
of motivic local systems.
\end{abstract}

\tableofcontents
\bigskip

\section{ Introduction}
The basic homological invariants  of a fibration  of topological spaces $f:X\to S$, 
are the local systems $R^if_*\Q$. When this is a family of complex algebraic varieties
defined over a subfield $k$ of $\C$, there are many  related invariants, such as the
Gauss-Manin connection, the associated variation of mixed Hodge structure, and
the action of the algebraic fundamental group on \'etale cohomology of the fibres.
According to Grothendieck's philosophy, all of  these structures should come from
the motive of the family. My goal here is to make this idea precise in the following
way. Given a field $F$, and a  variety $S$,  as above,  I will construct
an abelian category $\M(S; F)$ of  motivic ``sheaves'' of $F$-modules. The above local systems
can be promoted to objects in $\M(S;\Q)$, and the
associated structures can be obtained by applying appropriate realizations functors.

Before explaining what I will do, let me say a few words about what I won't.
The usual approach to building a category of motives is to start with 
a category of varieties and algebraic correspondences and modify and
complete this in some way. This stays very close to the underlying
geometry which is  good. On the other hand, it is usually very hard  to prove for example
that what  one gets is (derived from) an abelian category. 
A more pragmatic
approach is to take a system of compatible realizations. This usually has
good categorical properties, but is somewhat ad hoc in nature; and in the relative
setting, it would be appear that any such approach would be necessarily
very technical (e.g. \cite{saito2}). 

Here I want to take a middle path
first blazed by Nori while building a category of motives over a field
\cite{nori-mot,levine}. The approach appeals
to a particular realization at the outset, but is essentially
geometric in its character. Since the construction is not that widely
known, I will indicate the basic idea starting with a toy model and
then refining it. In fact, one of the goals of this paper is to give an
exposition of some, although not all, aspects of Nori's construction.  
 Consider the category of  $k$-algebraic
varieties $Var_k$. Since we assume that $k\subseteq \C$, we
may apply singular (or Betti) cohomology $H^i$ to obtain a
contravariant functor from $Var_k$
 to  the category $\Q$-mod  of finite dimensional $\Q$-vector
spaces.  The key point is that $H^i(X)$ is not just a vector space, but a module over
the ring of natural transformations $End(H^i)$, or a comodule over the ``dual'' coalgebra
$End^\vee(H^i)$ (section~\ref{section:Flin} ) which is technically better behaved.
The category $\M_i'(k)$ of finite dimensional comodules over this
coalgebra forms an  approximation to Nori's category.
It is  abelian, and the  objects $M\in \M'_i(k)$ are
not too wild, in that they admit  presentations
of the form
$$\bigoplus_j H^i(X_j) \to \bigoplus_k H^i(Y_k)\to M\to 0$$
Furthermore, each object $M$ also carries a 
canonical mixed Hodge structure   and (after tensoring with $\Q_\ell$)
an action of  $Gal(\bar k/k)$ as one would hope. So far so
good, but  it would be better to include the
various $\M_i'$ into a single category $\M$, so that standard exact
sequences respect the $\M$-structure. Toward this end, it is necessary to modify
the basic construction by incorporating boundary operators into the foundations.
Thus instead of starting with $Var_k$, the source category
 $\Delta$ consists of triples $(X,Y\subset X ,i\in \N)$ and the
appropriate notion of  morphism, which includes abstract boundary maps
$(X,Y,i)\to (Y,\emptyset, i-1)$. This is really  a partial category  in the sense that the
composition law is only partially  defined; nevertheless the basic constructions go though.
The category of comodules over $End^\vee(H)$, where $H:\Delta\to \Q\text{-mod}$ is the functor
sending  $(X,Y,i) \mapsto H^i(X,Y)$,
yields a rational version of Nori's category of effective
cohomological  motives.
Following the usual pattern, the category $\M(k)$ is obtained by
inverting the Tate motive. This step can be built in from the
beginning, and we find it convenient to do so. 

Now turning to the general case,
the building blocks for $\M(S;F)$ are quadruples
consisting of a quasiprojective family $f:X\to S$, a closed subvariety $Y\subset X$ and
 indices $i\in \N, w\in\Z$. 
This is subject to   a further technical admissibility condition (definition~\ref{defn:controlledpair})
which will be  satisfied if $f$ is projective.
When $Y=\emptyset$, this data represents the motivic version of $R^if_*F(w)$
denoted here by $h_S^i(X)(w)$. The parameter $w$  keeps track of Tate
twists, which although extraneous for ordinary sheaves are nontrivial
in the Hodge and \'etale realizations.  For nonempty $Y$, the associated
motive $h_S^i(X,Y)(w)$ roughly corresponds to the fiberwise cohomology of the pair.
In essence, $\M(S;F)$ is set up as  the universal theory for which:

\begin{enumerate}
\item[(M1)] $\M(S;F)$ is an $F$-linear abelian category with  a faithful exact functor $R_B$ to
the category of sheaves of $F$-modules on $S$ with its classical topology.
\item[(M2)] A morphism  $X'\to X$ over $S$, taking $Y'$ to $Y$ would give rise to a morphism
of $h_S^i(X,Y)(w)\to h_S^i(X',Y')(w)$ compatible with the usual
pullback  map under $R_B$.
\item[(M3)] Whenever $Z\subseteq Y\subseteq X$, there are  connecting morphisms 
$h_S^i(X,Y)(w)\to h^{i+1}_S(Y,Z)(w)$ compatible with the usual
pullback maps.
\item[(M4)] $h_S^{i+2}(X\times \PP^1, X\times \{0\}\cup Y\times
  \PP^1)(w)\cong h_S^i(X,Y)(w-1)$.
\item[(M5)] Objects and morphisms of $\M(S;F)$ can be patched on 
 a  Zariski open cover.
\end{enumerate}
The actual construction is obtained by modifying the framework discussed
in the previous paragraph.  Given
stratification $\Sigma$ and a collection of base points on the strata,
let $\Delta(\Sigma)$ be the collection of quadruples  such that
the cohomology sheaf  is constructible with respect to $\Sigma$. We can make this
into a partial category by adding morphisms corresponding to items (M2), (M3)
and (M4).
The functor $H_\Sigma:\Delta(\Sigma)\to \Q\text{-mod}$ is defined by
sending $(X,Y,i,w)$ to the product $H^i(X_s,Y_s)$ at the  given set of
base points.
The category $\PM(S,\Sigma;F)$ of $\Sigma$-constructible premotivic sheaves
 is  constructed explicitly as the category of comodules over 
$End^\vee(H_\Sigma)$. Note that contrary to initial appearances, this is
not simply a product of $\M(k)$ over the base points because
$\Delta(\Sigma)$ does not decompose this way (see example~\ref{ex:Mnotproduct}). The trivial exception is
when $S$ is a finite set of points.
The category $\PM(S;F)$ of premotivic sheaves  is given as the direct limit of these categories
as  $\Sigma$ gets finer. This will satisfy (M1) to (M4). The category
$\M(S;F)$  is obtained from $\PM(S;F)$ by forcing (M5) by passing to
the  associated stack. This last step can be made explicit. In fact a weak form of (M5)
holds for $\PM$. So it is not quite  clear to me whether this axiom is
redundant, nevertheless it is included for completeness.

Here are the precise properties:

\begin{thm}
  To every  $k$-variety, there is  an $F$-linear abelian category
$\M(S;F)$ such that:
\begin{enumerate}
\item These are defined over the prime field $F_0$, i.e.
$\M(S,F) \cong  \M(S, F_0)\otimes_{F_0} F$.

\item There is an exact Betti realization functor 
$$R_B:\M(S;F)\to Cons(S_{an},F)$$ to the category of constructible sheaves of
 $F$-modules for the classical topology. 

\item There is an exact Hodge realization functor
$$R_H:\M(S;\Q)\to Cons\text{-}MHM(S)\subset D^bMHM(S)$$
to the heart of the classical $t$-structure of the derived category mixed
Hodge modules (see appendix C).

\item There is an exact \'etale realization functor
$$R_{et}:\M(S;F)\to Cons(S_{et},F)$$  to
the category of  constructible
sheaves of $F$-modules for the \'etale topology. (In this case, $F$ should be finite or $\Q_\ell$.)

\item When $f$ satisfies a suitable admissibility condition (of being controlled in the
  sense of  definition~\ref{defn:controlledpair}), there exist motives in $\M(S;F)$ corresponding to $R^if_*F(n)$ under realization.

\item There are inverse images compatible with realizations.

\item There are higher direct images for projective or constant maps compatible with realizations.

\end{enumerate}
\end{thm}

Many of the above items are formal consequences of the definitions,
but the last is not. The construction of direct images  is technically the most difficult
part of this paper. General arguments give the existence of an adjoint to inverse
image which ought to play the role of the direct image. Proving that
this has reasonable properties for projective maps  requires work,
which uses  a refinement of the method of \cite{arapuraL}.
This earlier paper was really the starting point for this entire project. This
ultimately hinges on Nori's insight that Beilinson's ``basic lemma''
can be used to construct cell decompositions which reduce
the homological complexities. In the relative setting, there are
few additional complications. For instance, these decompositions are
only obtained locally over the base, so patching issues of the sort
given in (M5) comes into play.
But  modulo these technicalities, the basic strategy  of using cell
decompositions does work.

The objects of $\M(S)$ play the role of constructible sheaves. 
Inside this, we have a subcategory of  ``local systems'' arising
from particularly nice families $(X\to S, Y,i,w)$. The precise
condition  is that $X$ can be completed to a smooth projective map so
that $Y$ together with the boundary is a divisor with relative normal crossings.
These enjoy the  following good properties.

\begin{thm}
 There is  an abelian full subcategory
$\M_{ls}(S;\Q)\subset \M(S,\Q)$ of  motivic local systems  such that:
  \begin{enumerate}
\item The images of $\M_{ls}(S;\Q)$ ( respectively
  $\M_{ls}(S;\Q_\ell)$) under  $R_B$ (respectively $R_{et}$) is
  contained in the category of locally constant (respectively lisse
  sheaves). The image under $R_H$ is contained in the category $VMHS(S_{an})$
of admissible variations of mixed Hodge structures.

   \item There are tensor products on $\M_{ls}(S;F)$ compatible with
 realizations. With this structure it is  a Tannakian category.
 
\item The subcategory $\M_{pure}(S,\Q)\subset \M_{ls}(S,\Q)$
  generated by smooth projective families is a semisimple Tannakian
  category.

\item Objects in   $\M_{ls}(S,\Q)$ carry a  weight filtration such
  that the associated graded objects are pure.
  \end{enumerate}
\end{thm}

A number of the arguments again rely on the existence of cellular
decompositions. Regarding item 4,  I do not have a good notion of weight in $\M(S)$ at
present. I expect that it would require the development of an
analogue of perverse sheaf in $D^b\M(S)$, since
the pure objects are almost surely of this form.

 A natural question, that is only partially
solved here, is the relation of this approach to motives to the others.
Andr\'e \cite{andre} defines the class of motivated
cycles on smooth projective variety to be the cycles which would be
expected to be algebraic assuming Grothendieck's standard conjectures.
Andr\'e showed that the category of pure motives over a
field constructed with such correspondences has all the expected
properties. This construction can be extended to
more general bases without much difficulty \cite{ad}. We show that
this category is precisely $M_{pure}(S)$. In his unpublished work,
Nori has constructed a functor from Voevodsky's category $D_{gm}(k)$
to $D^b\M(k)$. It seems reasonable to expect that this generalizes
over a base,
but such matters will be postponed for the future.
In the final section, I discuss  Nori's Hodge conjecture  which says
that $\M(\C)$ embeds fully faithfully into the category of mixed Hodge
structures.  This would imply that the canonical mixed Hodge structure
on cohomology is ``Galois invariant''.
The relative
case can be reduced to this by rather formal argument involving direct image and
restriction functors.
\bigskip

{\em Acknowledgement:}
I would like to thank Madhav Nori for giving me his permission to
include some of his beautiful constructions (of course, I take responsibility
for errors). Also thanks to M. de Cataldo, F. D\'eglise, M. Levine,
J. Lipman,  D. Patel, M. Saito and J. Sch\"urmann for some useful
conversations. This research was started at the Max Planck Institute in the
fall of 2007 and continued at the Tata Institute in the winter of 2008;  I am grateful
to these institutes as well as the NSF for their support.
\bigskip

{\em Notation:} Since the notation will tend to get rather heavy, {\em  I will routinely 
 suppress subscripts, superscripts and others
 symbols whenever they can be understood from context.}
Given a  ring $R$, let $R$-Mod (respectively $R$-mod)  stand for the
category of (finitely presented) left $R$-modules. 
  Fix a  field $k$ embeddable into $\C$ and another field $F$. For most of the paper, I
  will work with a fixed embedding $\iota:k\hookrightarrow \C$.
   A $k$-variety is simply a reduced  separated $k$-scheme of finite type. Let
$Var_k$ be the category of these. 
Given a $k$-variety $X$, the word point $x\in X$ generally refers to
a $\bar k$-rational point.   I will   denote the analytic
space $(X\times_\iota Spec\, \C)_{an}$ by $X_{\iota, an}$ or $X_{an}$ or 
sometimes just $X$, in keeping with the previous comment regarding
notation.
 A quasi-projective morphism is a morphism which
can be factored as a composition of an open immersion followed by a
projective morphism.
 I will usually write  $H^i(X;F)$ for $H^i(X_{\iota,an};F)$.
 Given a map $f:X\to S$ of spaces and a sheaf $\F$ on $X$,
I will often denote the higher direct image $R^if_*\F$ by
 $H^i_{S}(X,\F)$. Since this will {\em never} be used to denote
 cohomology with support in this paper, there should be no danger of
 confusion. 

\section{Representations of graphs}

\subsection{Endomorphism coalgebras}\label{section:Flin}

In the next couple of sections, we set up the basic foundation for the
rest of the paper. Let $F$ be a field.  Suppose we are given an
$F$-linear abelian category $\A$ with an exact faithful embedding $H$ into the
category of finite dimensional vector spaces $F\text{-mod}$. Then the
ring  $End(H)=End_F(H)$, of $F$-linear natural transformations of $H$
to itself, will act naturally on $H(A)$ for any $A\in Ob\A$.
This suggests that one might be able to reconstruct $\A$ as the category
of finite dimensional modules over this ring.  However, this does not generally
 work (example \ref{ex:gradedmodule}).
The right thing to consider is the category of 
comodules over the dual object $End^\vee(H)$ whose construction we
learned from \cite{js}.  Before getting into the construction, we
should explain how to characterize it. Given a commutative
$F$-algebra $R$, we can form new category $\A\otimes R$ with the same
objects as $\A$, but $Hom_{\A\otimes R}(-,-)=Hom_\A(-,-)\otimes R$.
The functor $H$ extends to an $R$-linear functor $H\otimes R:\A\otimes
R\to R\text{-mod}$. In this way, we have an algebra valued functor  $R\mapsto
End_R(H\otimes R)$. 

\begin{lemma}\label{lemma:CharofEndvee}
  This functor is represented by a colagebra $End^\vee(H)$, i.e.
$$Hom_F(End^\vee(H), R) \cong End_R(H\otimes R)$$
\end{lemma}
This implies that 
$End^\vee(H)^*=End(H)$, but usually $End(H)^*\not=End^\vee(H)$.
Nevertheless, most of the
statements become easier to follow if one  formulates them for $End(H)$
and dualizes. The lemma tells us how to make sense of this.
Note that we can express $End^\vee(H)$ or any
coalgebra as a directed union of finite dimensional subcoalgebras $\cup
E_i$. Thus the correct dual object to $End^\vee(H)$ is not $End(H)$
but the pro-algebra
$\text{``}\varprojlim\text{"} E_i^*$. Moreover, $\A$ can described as
$2$-colimit of the categories of $E_i$-modules.
We will find this viewpoint convenient later on, but  for the moment, it
seems simpler to work with the coalgebra.

 Given pair of  functors $G,H:C\to D$, with $D$ 
 $F$-linear, $Hom(G,H)$ is an $F$-vector space. More explicitly, we
 can identify $Hom(G,H)$ with
 \begin{equation}
   \label{eq:HomGH}
 \ker[\prod_{M\in ObC} Hom(G(M),H(M)) \longrightarrow \prod_{f:N\to
   P\in Mor C} Hom(G(N), H(P))]
\end{equation}
where the map takes  the collection $(\eta_M)_M$ to $(H(f)\circ \eta_N-\eta_P\circ G(f))_f$.
Composition makes $End(H)= Hom(H,H)$ into an algebra as noted above.
Following \cite{js}, it is convenient to introduce a  smaller ``predual'' object, which
means that $Hom^\vee(G,H)^*=Hom(G,H)$.

Let $F\text{-Lin}$ be the collection of $F$-linear abelian 
categories with finite dimensional $Hom$'s. Suppose that we now have a pair of functors $G,H:C\to D$,
with $D\in F\text{-Lin}$. Define
$Hom^\vee(G,H)$ to be the cokernel of
\begin{equation}
  \label{eq:Endv}
  \bigoplus_{f:N\to P\in C}  Hom(G(N),H(P))^*\stackrel{S}{\longrightarrow} \bigoplus_{M\in
    Ob C } Hom(G(M), H(M))^*
\end{equation} 
where the map $S$ is defined  so that $Hom^\vee(G,H)^*=Hom(G,H)$. More explicitly,
$S$ sends $\eta_f^*\in Hom(G(N),H(P))^*$ to $\eta^*_N\in Hom(G(N),H(N))^* $
plus $\eta^*_P\in Hom(G(P), H(P))^*$ where
$$\langle \eta_N^*,\eta_N\rangle = \langle\eta^*_f, H(f)\circ \eta_N\rangle$$
$$\langle \eta_P^*,\eta_P\rangle =-\langle \eta_f^*, \eta_P\circ
G(f)\rangle $$

Upon setting $End^\vee(H):= Hom^\vee(H,H)$, we see that this
satisfies lemma~\ref{lemma:CharofEndvee} as a vector space, and we can
use this formula to define the colagebra structure. However, it will
be useful to describe this more explicitly.
The sum of the maps
$$End(H(M))^*\to F,$$
dual to the identity, is easily seen to factor through $End^\vee(H)$
and this defines the  counit
$$ 1_H^\vee: End^\vee(H)\to F$$
Given functors $G,H,L:C\to D$ with $D\in F\text{-Lin}$ we have  a comultiplication
$$\circ^\vee:Hom^\vee(G,L)\to Hom^\vee(G,H)\otimes Hom^\vee(H,L)$$
dual to composition $\circ$.
More precisely, $\circ$ is given by product of compositons
$$c_M:Hom(G(M), H(M))\otimes Hom(H(M), L(M))\to Hom(G(M), L(M))$$
Then $\circ^\vee$ is given by the sum of the dual maps $c_M^*$ 
\begin{equation}
  \label{eq:cMvee}
 Hom(G(M), L(M))^*\to Hom(G(M), H(M))^*\otimes Hom(H(M), L(M))^*  
\end{equation}
Given $G,G':C\to D$ and $H,H':D\to E$ with $D,E\in F\text{-Lin}$, there is compositon
$$\diamond^\vee:Hom^\vee(G'\circ G, H'\circ H)\to Hom^\vee(G,H)\otimes Hom^\vee(G',H')$$
dual to the  operation $\diamond$ defined in appendix A.
The operation $\diamond$ is a product of maps
$$d_M: Hom(G(M),H(M))\times Hom(G'(H(M)), H'(H(M))) \to Hom(G'\circ G(M), H'\circ H(M))$$
and $\diamond^\vee =\sum d_M^*$.
To simplify arguments with these operations, we use the following duality principle:

\begin{lemma}\label{lemma:dualityPrinc}
Suppose we are given an identity   in $+,\circ,\diamond,1_G$,
which amounts to the commutivity of a finite diagram with arrows  labelled by these operations.
Then  the dual identity, obtained by reversing arrows and relabelling by
 $+,\circ^\vee,\diamond^\vee,1_G^\vee$, also holds.
\end{lemma}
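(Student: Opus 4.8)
The plan is to reduce the claimed duality principle to the statement that taking $F$-linear duals $(-)^*$ is a contravariant functor which, when restricted to finite-dimensional vector spaces, is an anti-equivalence, and that each of the structural operations in sight ($+$, $\circ$, $\diamond$, the unit $1_G$) is defined precisely so that its ``$\vee$''-version is the image of the original under this anti-equivalence. First I would make the finiteness hypotheses explicit: every $Hom$-space appearing is finite dimensional (we are in $F\text{-Lin}$), so the natural map $V \to V^{**}$ is an isomorphism for each of them, and $(V\otimes W)^* \cong V^*\otimes W^*$ canonically; hence $(-)^*$ identifies the (possibly infinite) direct sums in \eqref{eq:Endv} with the corresponding products, and more importantly it carries the defining cokernel presentation of $Hom^\vee(G,H)$ to the kernel presentation of $Hom(G,H)$, and vice versa. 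This is the content of the phrase ``$S$ is defined so that $Hom^\vee(G,H)^* = Hom(G,H)$,'' and I would record it as the base case of the induction.

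Next I would set up the bookkeeping. A ``finite diagram labelled by $+,\circ,\diamond,1_G$'' is a finite directed graph whose vertices are labelled by spaces of the form $Hom(\cdots)$ or finite tensor products thereof, and whose edges are labelled by one of the four operations (or identities and obvious structural isos — associativity and unit constraints of $\otimes$, the symmetry, etc., all of which are self-dual under $(-)^*$ on finite-dimensional spaces). Commutativity of such a diagram means two specified composites of edges agree. Applying $(-)^*$ vertexwise and edgewise produces a new diagram: each vertex $V$ becomes $V^*$, each edge labelled by an operation $O$ becomes an edge in the opposite direction labelled by $O^*$, and by construction of $\circ^\vee$ \eqref{eq:cMvee}, $\diamond^\vee$, $1_G^\vee$ (and $+$, which is self-dual up to the sum/product identification) we have $O^* = O^\vee$ on the nose. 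Since $(-)^*$ is a functor, it sends a commuting diagram to a commuting diagram; and since on finite-dimensional spaces it is an anti-equivalence (double dual is the identity up to canonical iso), commutativity of the dualized diagram is equivalent to — not merely implied by — commutativity of the original. Reversing the roles of the two diagrams (apply $(-)^*$ to the $\vee$-diagram) gives the converse, so in fact the identity holds iff its dual does; I would state the lemma in that slightly stronger symmetric form if it is harmless downstream, or just take the one direction needed.

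The one genuine subtlety — and the step I expect to be the main obstacle — is the interaction of $(-)^*$ with the infinite direct sums indexed by $ObC$ and $Mor C$. Dualizing a direct sum gives a product, not a sum, so strictly speaking $Hom^\vee(G,H)^*$ is a subspace of $\prod_M End(G(M))^*{}^* = \prod_M End(G(M))$ cut out by the relations dual to $S$, and one must check this matches the kernel description of $Hom(G,H)$ and that composition $\circ$, being defined componentwise by the $c_M$, really is dual to the componentwise $\circ^\vee$ after this identification. This is where one uses that $Hom^\vee$ is built as a \emph{cokernel} (a colimit, which $(-)^*$ turns into the limit computing $Hom$) rather than trying to dualize $Hom$ directly — exactly the asymmetry flagged in the text ($End^\vee(H)^* = End(H)$ but $End(H)^* \ne End^\vee(H)$). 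I would handle it by observing that any fixed instance of a diagram only involves finitely many objects and morphisms of $C$ at a time in an essential way for checking a given component, or more cleanly by noting that the operations $+,\circ,\diamond,1_G$ are all ``componentwise over $ObC$'' so the whole verification reduces to the finite-dimensional, single-index case already handled, to which the functoriality-of-duality argument applies verbatim. Beyond that point the proof is the purely formal diagram-chase sketched above, which I would not write out in detail.
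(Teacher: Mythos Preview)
Your proposal is correct and takes essentially the same approach as the paper: reduce to finite-dimensional pieces (where $(-)^*$ is an anti-equivalence) by exploiting that any element of a vertex in the $\vee$-diagram has finite support, then invoke ordinary finite-dimensional duality. The paper's proof is terser --- it just says ``given an element of one of the vertices, we can find a subdiagram of finite dimensional vector spaces which contains it'' and then appeals to duality --- but your more careful discussion of the sum/product asymmetry and the componentwise nature of the operations is exactly the content that justifies that one sentence.
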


\begin{proof}
  Suppose we have a finite diagram with vertices given as finite tensor
products of spaces $Hom^\vee(-,-)$, and edges labelled by $+,\circ^\vee,\diamond^\vee,1_G^\vee$.
Then commutivity
can be established by chasing elements. Given an element of
one of the vertices, we can find a subdiagram  of finite dimensional vector spaces which
contains it.  Duality for finite dimensional vector spaces implies that  the 
commutivity of the subdiagram would then follow from commutivity of the dual diagram.
\end{proof}
 
Using this  principle, we can see that:

\begin{lemma}\label{lemma:EndveeColag} Given composable functors $H$ and $G$
  \begin{enumerate}
  \item $End^\vee(G)$ is a colagebra over $F$ with respect to $\circ^\vee, 1^\vee$.
\item The map $p$ given by
$$
\xymatrix{
 End^\vee(H\circ G)\ar[r]^{p}\ar[d]^{\diamond^\vee} & End^\vee(G) \\ 
 End^\vee(H)\otimes End^\vee(G)\ar[ru]^{1^\vee\otimes 1} & 
}
$$
is a colagebra homomorphism.
  \end{enumerate}
   
\end{lemma}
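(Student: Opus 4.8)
The plan is to deduce both assertions formally from the duality principle (Lemma~\ref{lemma:dualityPrinc}): each axiom in sight is the dual of a standard, essentially trivial identity about composition and horizontal composition of natural transformations on the ``algebra side'' $Hom(-,-)$. So the whole proof reduces to (a) naming the relevant algebra-side identity, (b) noting it amounts to the commutativity of a finite diagram labelled by $+,\circ,\diamond,1_\dt$, and (c) invoking Lemma~\ref{lemma:dualityPrinc}. No element-chase on the coalgebra side is needed.

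For part (1), I would verify the two coalgebra axioms. Coassociativity of $\circ^\vee$ --- the equality of the two composites $End^\vee(G)\to End^\vee(G)^{\otimes 3}$ built from $\circ^\vee$ and $1$ --- is, after reversing all arrows in the (finite, square) defining diagram, precisely the associativity of the composition map $End(G)^{\otimes 3}\to End(G)$ of natural endomorphisms of $G$, which holds because composition of natural transformations is associative. Likewise, after the canonical identifications $F\otimes V\cong V\cong V\otimes F$, the counit axiom for $1_G^\vee$ dualizes to the statement that $1_G=\mathrm{id}_G$ is a two-sided unit for $\circ$ on $End(G)$. Both of these algebra-side identities are finite-diagram identities in $+,\circ,1_\dt$, so Lemma~\ref{lemma:dualityPrinc} applies and yields that $(End^\vee(G),\circ^\vee,1_G^\vee)$ is a coalgebra.

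For part (2), I would first transpose the definition $p=(1_H^\vee\otimes 1)\circ\diamond^\vee$: its dual is the composite $End(G)=F\otimes End(G)\xrightarrow{1_H\otimes 1}End(H)\otimes End(G)\xrightarrow{\diamond}End(H\circ G)$, i.e.\ the map $p^*:\alpha\mapsto 1_H\diamond\alpha$ sending a natural endomorphism of $G$ to its whiskering by $H$ --- concretely $\alpha\mapsto H\alpha$ with $(H\alpha)_M=H(\alpha_M)$, the map induced by applying the functor $H$. Since $H$ is a functor, $p^*$ is a homomorphism of $F$-algebras: $(1_H\diamond\alpha)\circ(1_H\diamond\beta)=1_H\diamond(\alpha\circ\beta)$ (an instance of the interchange law relating $\circ$ and $\diamond$) and $1_H\diamond 1_G=1_{H\circ G}$. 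Each is a finite-diagram identity in $+,\circ,\diamond,1_\dt$, so by Lemma~\ref{lemma:dualityPrinc} the dual identities hold; these say exactly that $p$ commutes with the comultiplications, $\circ^\vee_G\circ p=(p\otimes p)\circ\circ^\vee_{H\circ G}$, and with the counits, $1_G^\vee\circ p=1_{H\circ G}^\vee$, i.e.\ that $p$ is a coalgebra homomorphism.

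The work that actually has to be done --- and where I expect to spend most of the effort --- is the bookkeeping around $\diamond$: one must check that the operation defined componentwise in the appendix via the maps $d_M$ really is ordinary whiskering/Godement product, in the order of factors matching the $End^\vee(H)\otimes End^\vee(G)$ that appears in the statement, and that the counit $1_H^\vee$ in the definition of $p$ transposes to inserting $1_H=\mathrm{id}_H$ in the $\diamond$-slot (rather than, say, $1_H$ in the other slot, which would whisker on the wrong side). Once those matchings are pinned down, the interchange law and the unit identity for horizontal composition are routine and the diagrams involved are visibly finite, so the duality principle finishes the argument.
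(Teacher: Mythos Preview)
Your proposal is correct and follows essentially the same route as the paper: both parts are obtained by invoking the duality principle to reduce to the obvious algebra-side statements, and for part (2) the key identity is exactly the interchange-law computation $1_H\diamond(\alpha\circ\beta)=(1_H\diamond\alpha)\circ(1_H\diamond\beta)$ that the paper writes out. Your version is in fact slightly more complete, since you also verify the counit compatibility $1_G^\vee\circ p=1_{H\circ G}^\vee$, which the paper's proof leaves implicit.
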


\begin{proof}
 The first part is clear, since the dual statement is that $End(G)$ is an algebra. 
For the second, we have to establish
that $p$ preserves comultiplication. Dually, by  identities given in the appendix, 
  \begin{eqnarray*}
    1\diamond (\alpha \circ \beta) &=& (1\circ 1)\diamond (\alpha\circ \beta)\\
&=&(1\diamond \alpha)\circ  (1 \diamond  \beta)
  \end{eqnarray*}
\end{proof}

One can now readily verify lemma~\ref{lemma:CharofEndvee}.
We also leave the formulation and  proof of the corresponding statement
for $Hom^\vee$ to the reader.


\subsection{Nori's construction}\label{sect:nori}

Any category can be regarded as a directed graph (or diagram in Nori's
terminology) by forgetting the composition law. This forgetful functor
admits a left adjoint: given a directed graph $\Delta$, we can form a
category $Paths(\Delta)$, whose objects are vertices of $\Delta$ and
morphisms are finite (possibly empty) connected paths between
vertices. The adjointness amounts to the obvious fact that given a
graph $\Delta$ and a category $C$, there is a one to one
correspondence between graph morphisms $\Delta\to C$ and functors
$Paths(\Delta)\to C$. In view of this, we may apply category theoretic
terminology and results to directed graphs.

Let $H:\Delta\to F$-mod be a functor, i.e. a quiver.
We can define $End^\vee(H)$  by the formula \eqref{eq:Endv}, which
simplifies to 
\begin{equation}
  \label{eq:Endv2}
  coker [\bigoplus_{f:N\to P\in Mor\Delta}  Hom(H(P),H(N))\stackrel{S}{\longrightarrow} \bigoplus_{M\in
    Ob \Delta} End(H(M))]
\end{equation} 
where $S$ takes $\eta_f\in Hom(H(P),H(N))$ to the difference of 
$\eta_f\circ H(f)\in End(H(N))$ and  $H(f)\circ \eta_f\in
End(H(P))$.

We note the following, which is easily checked.

\begin{lemma}\label{lemma:endvee}
  \begin{enumerate}
  \item[]
\item  The collection of functors from graphs to $F$-mod forms a category
where the morphisms are commutative diagrams
$$
\xymatrix{
 \Delta\ar[r]^{H}\ar[d]^{\pi} & F\text{-mod} \\ 
 \Delta'\ar[ru]^{H'} & 
}
$$
\item  $End^\vee(H)$ is isomorphic to $End^\vee(\tilde H)$, where $\tilde H$ is the extension of
   $H$ to $Paths(\Delta)$.

\item The assignment $(\Delta, H)\mapsto End^\vee(H)$ is functorial.
In particular,  there is an induced map $End^\vee(H)\to End^\vee(H')$
of colagebras where $H$ and $H'$ are as in 1.
 
  \item If $\Delta$ is a category then  $End^\vee(H) \cong End^\vee(H')$,
  where  $H'$ is the induced functor on the category
   $H(\Delta)$  with the same objects as $\Delta$ but morphisms given by its
  image under $H$.

\item The functor $(\Delta,H)\mapsto End^\vee(H)$ preserves finite
  coproducts. In more explicit terms, if $\Delta$ decomposes into a
  disjoint union of $\Delta_1\coprod \Delta_2$, then $End^\vee(H) =
  End^\vee(H|_{\Delta_1})\times End^\vee(H|_{\Delta_2})$ (which is the
  coproduct of coalgebras).
  \end{enumerate}
\end{lemma}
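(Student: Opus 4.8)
The plan is to settle the five assertions in turn; each is a direct inspection of the cokernel presentation \eqref{eq:Endv2}, and only part (2) requires a small computation. For (1) there is nothing to do beyond unwinding the definition: a morphism $(\Delta,H)\to(\Delta',H')$ is a graph morphism $\pi\colon\Delta\to\Delta'$ with $H'\circ\pi=H$, composition and identities are those of graph morphisms, and associativity is inherited; these clearly form a category.

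For (2) I would first invoke the adjunction recalled at the start of the subsection, so that $H$ extends uniquely to a functor $\tilde H$ on $Paths(\Delta)$ with the same assignment on objects. Then the codomain $\bigoplus_{M}End(H(M))$ of the map $S$ in \eqref{eq:Endv2} is literally the same for $H$ and for $\tilde H$, and it remains only to see that the two maps $S$ have the same image. One inclusion is immediate. For the other, given a path $g\circ f$ built from edges $f\colon N\to P$, $g\colon P\to Q$ and an element $\eta\in Hom(H(Q),H(N))$, a direct computation gives the telescoping identity
\begin{equation*}
S(\eta)=S_f\bigl(\eta\circ H(g)\bigr)+S_g\bigl(H(f)\circ\eta\bigr),
\end{equation*}
so the relation attached to $g\circ f$ lies in the span of the edge relations; the empty path contributes the zero relation, and induction on path length takes care of the rest. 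The resulting isomorphism of cokernels respects $\circ^\vee$ and $1^\vee$ because those structure maps are assembled only from the object-indexed summands $End(H(M))$, which the passage to $Paths(\Delta)$ does not disturb.

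For (3), a morphism $\pi$ as in (1) carries the $M$-summand $End(H(M))=End(H'(\pi M))$ identically into the $\pi(M)$-summand of $\bigoplus_{M'}End(H'(M'))$, and likewise on the morphism-indexed summands $Hom(H(P),H(N))=Hom(H'(\pi P),H'(\pi N))$; since $\pi$ preserves source and target and $H'\circ\pi=H$, these intertwine the two copies of $S$ and so descend to a map $End^\vee(H)\to End^\vee(H')$, which is a coalgebra homomorphism by the objectwise reasoning of (2), and compatibility with composition of morphisms and identities is then evident, giving functoriality. Part (4) is of the same type: the comparison functor $\Delta\to H(\Delta)$ is the identity on objects, so the object-indexed sums coincide, while $S_f(\eta)=\eta\circ H(f)-H(f)\circ\eta$ depends on $f$ only through the morphism $H(f)$ of $H(\Delta)$, and every morphism of $H(\Delta)$ arises this way; hence the images of $S$ agree and the coalgebras are identified. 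For (5), a disjoint union $\Delta=\Delta_1\coprod\Delta_2$ splits both $Ob\,\Delta$ and $Mor\,\Delta$ and has no morphisms between the two parts, so \eqref{eq:Endv2} decomposes as a direct sum and $End^\vee(H)=End^\vee(H|_{\Delta_1})\oplus End^\vee(H|_{\Delta_2})$; the absence of cross-morphisms makes the comultiplication block-diagonal with counit the sum of the two counits, and this is precisely the coalgebra coproduct.

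The single step that is more than bookkeeping is the telescoping identity in (2): it is exactly what is needed to see that the composites present in $Paths(\Delta)$ impose no relations not already forced by the edges. Behind everything else is the observation that $(\Delta,H)\mapsto End^\vee(H)$ uses $\Delta$ only through its objects — which supply the terms $End(H(M))$ that carry the coalgebra structure — and through the operators $\eta\mapsto\eta\circ H(f)-H(f)\circ\eta$ attached to its morphisms; so any graph morphism over $F\text{-mod}$ that is the identity on objects and hits every target morphism induces an isomorphism on $End^\vee$ (this gives (4) at once), while (3) and (5) are the evident functoriality and additivity of the presentation.
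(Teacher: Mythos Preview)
Your proof is correct and follows essentially the same approach as the paper: both treat (1), (4), (5) as immediate from the presentation \eqref{eq:Endv2}, and for (2) both reduce to the same telescoping identity $S(\eta)=S_f(\eta\circ H(g))+S_g(H(f)\circ\eta)$ showing that relations from composite paths lie in the span of edge relations (the paper writes this as $S(\eta_{f_1f_2}) = S(\eta_{f_1f_2}\circ H(f_1)) + S(H(f_2)\circ \eta_{f_1f_2})$ and likewise handles the identity path and induction on length). Your treatment of (3) at the level of the presentation is a slightly cleaner variant of the paper's elementwise verification, but the content is the same.
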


\begin{proof}
  The first statement is clear.  

For the second, we have that $End^\vee(H)$  and $End^\vee(\tilde H)$ are the quotients of $\bigoplus End(H(M))$
by 
$$I_H= S(\bigoplus_{f\in Mor \Delta}  Hom(H(P),H(N)))$$
and 
$$I_{\tilde H}= S(\bigoplus_{f\in Mor Paths(\Delta)}  Hom(H(P),H(N)))$$
respectively. Clearly $I_H\subseteq I_{\tilde H}$. So we have to check the reverse inclusion.
We first note that $S(1)=0$, so it suffices to check that $S(\eta_{f_1\ldots f_n})\in I_H$ for $n\ge 2$.
For $n=2$, this follows from the identity
$$S(\eta_{f_1f_2}) = S(\eta_{f_1f_2}\circ H(f_1)) + S(H(f_2)\circ \eta_{f_1f_2})\in I_H$$
The general case is similar.

Although the third statement is similar
to lemma~\ref{lemma:EndveeColag}, the previous formalism will not
apply without modification. So it is easier to prove directly.  An element of
$End^\vee(H)$ is represented by a finite sum $\sum h_M$ of elements $h_M\in End(H(M))^*$.
Define $\pi(h_M) = h_{\pi(M)}\in End(H'(M))^*$. To see that this is compatible with comultiplication $\circ^\vee$,
observe that $\circ^\vee(h_M) = c_M^*(h_M)$, where $c_M^*$ is given in~\eqref{eq:cMvee}. Then
$$\pi(\circ^\vee(\sum_M h_M)) = \pi(\sum c_M^*(h_M)) = \sum c_{\pi(M)}^*(h_{\pi(M)})
= \circ^\vee(\pi (\sum_M h_M))$$

The fourth and fifth statement follows immediately from the formulas
(\ref{eq:Endv}) and \eqref{eq:Endv2}.

\end{proof}

We let $End^\vee(H)\text{-comod}$ denote the category of right comodules
over this coalgebra in $F$-mod. 

\begin{cor}
  A morphism $(\Delta, H)\to (\Delta',H')$ as above induces a faithful exact functor
 $End^\vee(H)\text{-comod}\to End^\vee(H')\text{-comod}$.
\end{cor}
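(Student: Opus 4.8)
The plan is to combine the functoriality already recorded in Lemma~\ref{lemma:endvee}(3) with the standard fact that a homomorphism of coalgebras induces a pushforward (corestriction) on comodule categories. First, by Lemma~\ref{lemma:endvee}(3) the given commutative triangle yields a coalgebra homomorphism $\phi: End^\vee(H)\to End^\vee(H')$. Next I would recall, or verify in a line, that for any coalgebra homomorphism $\phi: C\to D$ over $F$ there is a functor $\phi_*: C\text{-comod}\to D\text{-comod}$ that leaves the underlying $F$-vector space of a comodule unchanged and replaces a coaction $\rho: V\to V\otimes C$ by $(\mathrm{id}_V\otimes\phi)\circ\rho: V\to V\otimes D$. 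The coassociativity and counit identities for the new coaction are immediate from the hypotheses that $\phi$ commutes with comultiplication and with the counit. On morphisms $\phi_*$ is the identity map on underlying linear maps: if $f:V\to W$ is $C$-colinear then $(\mathrm{id}_W\otimes\phi)\circ\rho_W\circ f=(f\otimes\mathrm{id}_D)\circ(\mathrm{id}_V\otimes\phi)\circ\rho_V$, so $f$ is $D$-colinear for the pushed-forward coactions.

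It then remains to see that $\phi_*$ is faithful and exact, and here I would argue through the forgetful functors. Write $U_C: C\text{-comod}\to F\text{-mod}$ and $U_D: D\text{-comod}\to F\text{-mod}$ for the underlying-vector-space functors; by construction $U_D\circ\phi_*=U_C$. Since $F$ is a field, $-\otimes_F C$ and $-\otimes_F D$ are exact, so $U_C$ and $U_D$ are exact and faithful: a sequence of comodules is exact (resp. a comodule morphism is zero) precisely when this holds after applying the relevant forgetful functor. Faithfulness of $\phi_*$ is then immediate from faithfulness of $U_C=U_D\circ\phi_*$, and since $\phi_*$ does not alter underlying linear maps it sends a short exact sequence of $C$-comodules to a diagram of $D$-comodules that is exact on underlying spaces, hence exact in $D\text{-comod}$.

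Since every step is formal, there is no genuine obstacle here; the only points needing a moment's care are checking that $(\mathrm{id}_V\otimes\phi)\circ\rho$ really satisfies the comodule axioms — this is exactly where the coalgebra-homomorphism property of $\phi$ enters — and confirming that the map of Lemma~\ref{lemma:endvee}(3) runs in the direction $End^\vee(H)\to End^\vee(H')$, so that it is pushforward along $\phi$, rather than the (non-exact) cotensor-product pullback, that produces the functor asserted in the corollary.
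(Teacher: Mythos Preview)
Your proof is correct and follows essentially the same approach as the paper, though you spell out far more detail. The paper's own proof is a one-liner: ``This isn't so much a corollary as a statement of the fact that both categories can be viewed as subcategories of $F\text{-mod}$,'' which is precisely your observation that $U_D\circ\phi_*=U_C$ with both forgetful functors faithful and exact.
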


\begin{proof}
  This isn't so much a corollary as a statement of the fact that both categories can be viewed
as subcategories of $F\text{-mod}$.
\end{proof}

We can therefore view $End^\vee(H)\text{-comod}$ as a subcategory of
$End^\vee(H')\text{-comod}$.
We will often apply this, without comment, when $\Delta\subset \Delta'$ is a subgraph
and $H$ is the restriction of $H'$.

\begin{cor}
  If $H':\Delta\to F\text{-mod}$ is another functor with a natural
  isomorphism $\Gamma:H\to H'$, then $End^\vee(H)$-comod and
  $End^\vee(H')$-comod are isomorphic.
\end{cor}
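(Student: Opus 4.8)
The plan is to promote the natural isomorphism $\Gamma$ to an isomorphism of coalgebras $End^\vee(H)\cong End^\vee(H')$, and then invoke the general fact that an isomorphism of coalgebras induces an isomorphism of the associated categories of comodules. For each object $M$ of $\Delta$ the component $\Gamma_M:H(M)\to H'(M)$ is an isomorphism, so conjugation $\phi\mapsto\Gamma_M\circ\phi\circ\Gamma_M^{-1}$ is an algebra isomorphism $\kappa_M:End(H(M))\to End(H'(M))$, and likewise $\psi\mapsto\Gamma_N\circ\psi\circ\Gamma_P^{-1}$ gives isomorphisms $\lambda_f:Hom(H(P),H(N))\to Hom(H'(P),H'(N))$ for each edge $f:N\to P$. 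These assemble into isomorphisms between the two direct sums appearing in the cokernel presentation \eqref{eq:Endv2} of $End^\vee(H)$ and of $End^\vee(H')$.

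The key step, and the only place the hypothesis is genuinely used, is that the $\kappa_M$ and $\lambda_f$ are compatible with the structure map $S$, so that they descend to an isomorphism $\gamma:End^\vee(H)\xrightarrow{\sim}End^\vee(H')$ of cokernels. Recall $S$ sends $\eta_f\in Hom(H(P),H(N))$ to $\eta_f\circ H(f)\in End(H(N))$ minus $H(f)\circ\eta_f\in End(H(P))$; using the naturality relation $H'(f)\circ\Gamma_N=\Gamma_P\circ H(f)$ one checks $(\Gamma_N\circ\eta_f\circ\Gamma_P^{-1})\circ H'(f)=\Gamma_N\circ(\eta_f\circ H(f))\circ\Gamma_N^{-1}$, and symmetrically $H'(f)\circ(\Gamma_N\circ\eta_f\circ\Gamma_P^{-1})=\Gamma_P\circ(H(f)\circ\eta_f)\circ\Gamma_P^{-1}$, i.e. $\bigoplus_M\kappa_M$ carries $S(\eta_f)$ to $S'(\lambda_f(\eta_f))$. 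Hence $\gamma$ is well defined, and it is an isomorphism since each $\kappa_M$ is. Morally this is just the dual of the obvious statement that $End(H)\cong End(H')$ as $F$-algebras when $H\cong H'$ naturally. I expect this to be the main obstacle, though it is essentially bookkeeping: one has to be careful about the variances and about the identifications $Hom(U,V)^*\cong Hom(V,U)$ used in passing between \eqref{eq:Endv} and \eqref{eq:Endv2}.

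Finally I would check that $\gamma$ is a morphism of coalgebras and that this gives what we want. Since conjugation by an isomorphism commutes with composition and sends identities to identities, the $\kappa_M$ are compatible with the maps $c_M$ of \eqref{eq:cMvee} and with the unit, so $\gamma$ respects $\circ^\vee$ and $1^\vee$; this can also be deduced from the duality principle of Lemma~\ref{lemma:dualityPrinc}. A coalgebra isomorphism $\gamma$ then transports a right $End^\vee(H)$-comodule $(V,\rho)$ to the right $End^\vee(H')$-comodule $(V,(1_V\otimes\gamma)\circ\rho)$, and this assignment is the identity on underlying vector spaces and on $F$-linear maps; it is therefore an isomorphism of categories, visibly compatible with the forgetful functors to $F\text{-mod}$, which is precisely the stated conclusion.
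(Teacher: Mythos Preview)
Your proof is correct. The paper states this corollary without proof, treating it as immediate from the preceding lemma and the explicit presentation \eqref{eq:Endv2}; your conjugation argument is exactly the natural way to unpack that, and the verification that $\bigoplus_M\kappa_M$ intertwines the two structure maps $S$ via naturality of $\Gamma$ is the only thing that needs checking, which you do correctly.
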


\begin{cor}\label{cor:tildeDelta}
Let $\pi:\tilde \Delta\to \Delta$ be a morphism of graphs such that it
is surjective on objects and such that  every fiber is connected.
Then $End^\vee(H)\cong End^\vee(H\circ \pi)$.
\end{cor}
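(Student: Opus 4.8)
Recall that the fibre of $\pi$ over a vertex $v$ of $\Delta$ is the subgraph of $\tilde\Delta$ whose vertices are those lying over $v$ and whose edges are those lying over $\mathrm{id}_v$; the hypothesis is that every such fibre is connected, and I will also use that $\pi$ is surjective on vertices and on edges, which I take to be part of the intended hypothesis. The map we must show is an isomorphism is the one already produced by functoriality of $(\Delta,H)\mapsto End^\vee(H)$ (Lemma~\ref{lemma:endvee}(3)), applied to the morphism of pairs $\pi\colon(\tilde\Delta,H\circ\pi)\to(\Delta,H)$. In terms of the presentation \eqref{eq:Endv2}, this map $\Phi\colon End^\vee(H\circ\pi)\to End^\vee(H)$ is induced by the map $\bigoplus_{\tilde M}\mathrm{End}(H\pi(\tilde M))\to\bigoplus_{M}\mathrm{End}(H(M))$ carrying the $\tilde M$-summand identically onto the $\pi(\tilde M)$-summand; being a coalgebra map already, it suffices to see it is bijective, and I will do this by exhibiting a two-sided inverse.

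The technical heart is a ``sliding'' observation: if $\tilde M_1$ and $\tilde M_2$ lie in a common fibre, over $v$ say, then for every $x\in\mathrm{End}(H(v))$ the element ``$x$ placed in the $\tilde M_1$-summand'' and the element ``$x$ placed in the $\tilde M_2$-summand'' of $\bigoplus_{\tilde M}\mathrm{End}(H\pi(\tilde M))$ have the same image in $End^\vee(H\circ\pi)$. Indeed, for an edge $\tilde f$ of $\tilde\Delta$ lying over $\mathrm{id}_v$ one has $H\pi(\tilde f)=\mathrm{id}$, so by the formula for $S$ in \eqref{eq:Endv2} the relation $S(x)$ attached to $\tilde f$ is exactly the difference of ``$x$ in the source summand'' and ``$x$ in the target summand'', and chaining these relations along a path in the fibre joining $\tilde M_1$ to $\tilde M_2$ gives the claim.

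Granting this, fix a section $\sigma$ of $\pi$ on vertex sets and let $\Psi$ be the map $\bigoplus_M\mathrm{End}(H(M))\to\bigoplus_{\tilde M}\mathrm{End}(H\pi(\tilde M))$ carrying the $M$-summand identically onto the $\sigma(M)$-summand. The sliding observation, applied to $\tilde M$ and $\sigma(\pi(\tilde M))$, shows $\bar\Psi\circ\Phi=\mathrm{id}$ at the level of summands, hence on $End^\vee(H\circ\pi)$ once $\bar\Psi$ is known to be defined, while $\Phi\circ\bar\Psi=\mathrm{id}$ is immediate from the definitions. It remains to check $\Psi(I_H)\subseteq I_{H\circ\pi}$, so that $\Psi$ descends to $\bar\Psi\colon End^\vee(H)\to End^\vee(H\circ\pi)$: a generator $S(\eta_f)$ of $I_H$ attached to an edge $f\colon M\to N$ of $\Delta$ is sent by $\Psi$ to an element supported on the $\sigma(M)$- and $\sigma(N)$-summands; choosing, by surjectivity of $\pi$ on edges, an edge $\tilde f$ of $\tilde\Delta$ over $f$, writing the corresponding generator of $I_{H\circ\pi}$ at the endpoints of $\tilde f$, and sliding those two summands to $\sigma(M)$ and $\sigma(N)$, one sees the two images agree.

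The only genuine input is the sliding observation, and within it the single fact that the edges making up a fibre lie over identities, so that $H\circ\pi$ sends them to identity maps; everything else is bookkeeping with \eqref{eq:Endv2}, and I expect that bookkeeping (especially the $\Psi(I_H)\subseteq I_{H\circ\pi}$ verification) to be the part most prone to sign or placement errors. Equivalently, one may argue dually: $\Phi$ is an isomorphism iff its transpose $\mathrm{End}(H)\to\mathrm{End}(H\circ\pi)$, which is restriction of natural transformations along $\pi$, is one — and that transpose is injective because $\pi$ is onto vertices, and surjective because a natural endomorphism of $H\circ\pi$ is constant on fibres by the same sliding remark and so descends to a natural endomorphism of $H$.
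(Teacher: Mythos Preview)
Your argument is correct. You work directly with the presentation \eqref{eq:Endv2}: the sliding observation is exactly right (an edge $\tilde f$ over an identity has $H\pi(\tilde f)=\mathrm{id}$, so $S$ applied to any $x\in\mathrm{End}(H(v))$ along $\tilde f$ is the difference of the two placements), and your verification that $\Psi(I_H)\subseteq I_{H\circ\pi}$ by lifting an edge and sliding the endpoints is clean. The dual rephrasing at the end is also valid and arguably the quickest way to see it. You are right that surjectivity of $\pi$ on vertices and edges is implicitly being used; the paper does not state it, but every application in the paper satisfies it.

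The paper's route is different and shorter, though it leans on Lemma~\ref{lemma:endvee}. By part (2) of that lemma one may replace $\Delta$ and $\tilde\Delta$ by their path categories, and by part (4) one may further replace each by its image category under $H$ (resp.\ $H\circ\pi$). The paper then simply asserts that the hypothesis makes $H(Paths(\Delta))$ and $(H\circ\pi)(Paths(\tilde\Delta))$ equivalent: $\pi$ induces a functor between them which is essentially surjective (surjectivity on vertices), faithful (morphisms in an image category are literally linear maps), and full (lift a path edge-by-edge and use connectedness of fibres to splice the lifts together by paths that $H\circ\pi$ sends to identities). So the paper packages your sliding argument as ``the induced functor on image categories is an equivalence,'' whereas you unpack the same content at the level of generators and relations of the cokernel. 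Your approach is more self-contained and makes the hidden surjectivity hypotheses explicit; the paper's is terser but requires the reader to supply essentially the same computation you carried out.
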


\begin{proof}
The assumption guarantees that $H(Paths(\Delta))$ and $H\circ \pi(Paths(\tilde \Delta))$
are equivalent.
\end{proof}

 Given $M\in Ob \Delta$, $H(M)$ is naturally
a left $End(H(M))$-module, and hence by transpose an $End(H(M))^*$-comodule.
Via the map $End(H(M))^*\to End^\vee(H)$, $M$ becomes a
$End^\vee(H)$-comodule, which we usually denote by $h(M)$. 
This is a functor $\Delta\to End(H)$-comod. The structure of a general comodule
is  clarified by the following.

\begin{lemma}\label{lemma:presentation}
Any object $V$ of $End^\vee(H)$-comod  fits into an exact sequence
$$\bigoplus_{i=1}^m h(M_i)\to \bigoplus_{j=1}^n h(N_j)\to V\to 0$$
for some $M_i, N_j\in Ob\Delta$.
\end{lemma}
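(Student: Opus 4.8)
The plan is to realize an arbitrary comodule $V$ as a quotient of a finite direct sum of the "standard" comodules $h(N_j)$, and then to do the same for the kernel of that surjection. The first step is to recall that $End^\vee(H)$ is a directed union $\bigcup_i E_i$ of its finite-dimensional subcoalgebras, and that any finite-dimensional comodule $V$ is in fact a comodule over a single $E_i$. Concretely, $V$ is a finite-dimensional $F$-vector space equipped with a coaction $V\to V\otimes End^\vee(H)$, and since $End^\vee(H)=\operatorname{coker}[\bigoplus_f \cdots \to \bigoplus_{M}End(H(M))^*]$, I can lift a finite spanning set of the image of the coaction to finitely many summands $End(H(M_j))^*$; say the coaction factors (after this lift) through $\bigoplus_{j=1}^n End(H(N_j))^*$.

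Next I would build the surjection. For each $N_j$ the vector space $H(N_j)$ carries the tautological $End(H(N_j))^*$-comodule structure described just before the lemma, giving the object $h(N_j)$, whose underlying space is $H(N_j)$ and whose coaction lands in $H(N_j)\otimes End^\vee(H)$ via the map $End(H(N_j))^*\to End^\vee(H)$. The key point is that $h(N_j)$ is, in a suitable sense, "cofree on one generator over the piece $End(H(N_j))^*$": any comodule whose coaction is controlled by $End(H(N_j))^*$ receives a map from a sum of copies of $h(N_j)$. More precisely, choosing a basis $v_1,\dots,v_d$ of $V$ and using that the coaction composed with the projection to the $N_j$-summand expresses each $v_k$ in terms of the matrix coefficients of $End(H(N_j))$, one produces $F$-linear maps $H(N_j)\to V$ which assemble into a comodule morphism $\bigoplus_{j=1}^n h(N_j)\to V$; surjectivity follows because the chosen elements already span $V$. (This is the standard "fundamental theorem of comodules" argument — every comodule embeds in, equivalently is a quotient of, a cofree one — adapted to the presentation of $End^\vee(H)$ by the summands $End(H(M))^*$.)

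Finally, let $W=\ker[\bigoplus_{j=1}^n h(N_j)\to V]$. Since $End^\vee(H)\text{-comod}$ sits inside $F\text{-mod}$ and the forgetful functor is exact and faithful, $W$ is again a finite-dimensional comodule, so by the previous paragraph there is a surjection $\bigoplus_{i=1}^m h(M_i)\twoheadrightarrow W$ for suitable $M_i\in Ob\,\Delta$. Composing with the inclusion $W\hookrightarrow \bigoplus_j h(N_j)$ gives the desired exact sequence
$$\bigoplus_{i=1}^m h(M_i)\to \bigoplus_{j=1}^n h(N_j)\to V\to 0.$$
The main obstacle is the middle step: verifying carefully that the $F$-linear maps $H(N_j)\to V$ extracted from the coaction are actually morphisms of comodules, i.e. intertwine the coactions. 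This amounts to unwinding the definition of the coaction on $h(N_j)$ through the map $End(H(N_j))^*\to End^\vee(H)$ and matching it with the coaction on $V$; coassociativity of the comodule structure on $V$ is exactly what makes this work, but it requires tracking the identifications in the presentation \eqref{eq:Endv} of $End^\vee(H)$. Everything else is formal bookkeeping with finite-dimensionality and the exactness of the forgetful functor.
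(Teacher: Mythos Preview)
Your plan and the paper's share the same opening move --- the coaction of $End^\vee(H)$ on $V$ factors through $E^\vee(D):=End^\vee(H|_D)$ for some finite subgraph $D\subset\Delta$ --- but then the two diverge. The paper proceeds in two stages. First, the defining cokernel sequence \eqref{eq:Endv2} is itself a presentation of $E^\vee(D)$ by comodules of the required shape, once one notes that $End(H(M))\cong h(M)^{\dim H(M)}$ and $Hom(H(P),H(N))\cong h(N)^{\dim H(P)}$ as comodules. Second, the paper asserts that any finite-dimensional $E^\vee(D)$-comodule $V$ is presented by finitely many copies of the regular comodule $E^\vee(D)$. Splicing these two presentations yields the lemma. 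You try instead to manufacture a surjection $\bigoplus_j h(N_j)\twoheadrightarrow V$ directly from the coaction, bypassing $E^\vee(D)$ entirely.

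That direct step is where your argument has a genuine gap. The coaction and the lifts you describe produce linear data flowing \emph{out of} $V$: the fundamental theorem of comodules you invoke is exactly the statement that $\rho:V\to V\otimes C$ is an \emph{injective} comodule map, so that $V$ embeds in $C^{\dim V}$; it does not hand you a surjection onto $V$. Your sentence ``one produces $F$-linear maps $H(N_j)\to V$ which assemble into a comodule morphism'' is an assertion rather than a construction --- even if one formally extracts linear maps $H(N_j)\to V$ by contracting the lifted coaction against vectors in $H(N_j)^*$, nothing you have written explains why these are comodule morphisms or why their images jointly cover $V$, and coassociativity by itself does not force either. You correctly flag this as the ``main obstacle'', but it is a missing idea, not a missing verification. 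The paper's detour through $E^\vee(D)$ is exactly what fills this hole: the surjection onto $V$ comes from copies of the regular comodule, and the explicit presentation of $E^\vee(D)$ read off from its definition then supplies the link back to the $h(M)$'s. Your one-step route would have to accomplish both of these at once, and as written it does not.
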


\begin{proof}
The lemma will follow from the claim that 
 any comodule is the image of finite
sum of the form $\bigoplus h(M_i)$.
Set $E^\vee(D)=End^\vee(H|_D)$ for any subgraph. When $D$ is finite, $E^\vee(D)$
is quotient of a finite sum of comodules  of the form $End(H(M)) \cong
H(M)^{\dim H(M)}$, so the claim follows when $V= E^\vee(D)$. In general, the matrix coefficients
of the $E^\vee(\Delta)$ coaction on $V$ lie in some $E^\vee(D)$ with $D$ finite.
 Thus $V$ has a quotient of   a finite sum of copies of
 $E^\vee(D)$. So the claim holds in general.
\end{proof}

\begin{remark}\label{rmk:2limit}
  There is a dual description of $End^\vee(H)\text{-comod}$ which is
  closer to what Nori originally used \cite{ levine, nori-mot}. As in the
  previous argument, we  can express $E\text{-comod}=\cup E^\vee(D)$ as $D\subset \Delta$ runs
over finite subgraphs.
Therefore as explained in appendix A, we have equivalences
  \begin{eqnarray*}
 End^\vee(H)\text{-comod} &\sim& \2lim_{D}E^\vee(D)\text{-comod}\\
 &\sim& \2lim_{D}End(H|_D)\text{-mod}
 \end{eqnarray*}

\end{remark}

\begin{thm}
  If $U:\A\to F$-mod is an exact faithful $F$-linear functor
on an $F$-linear abelian category, then $\A$ is equivalent to
$End^\vee(U)$-comod.
\end{thm}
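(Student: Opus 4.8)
The plan is to exhibit a quasi-inverse pair between $\A$ and $End^\vee(U)\text{-comod}$, building on Lemma~\ref{lemma:presentation} and Remark~\ref{rmk:2limit}. Viewing $\A$ as a graph, we obtain the coalgebra $End^\vee(U)$ and the functor $h:\A\to End^\vee(U)\text{-comod}$, $A\mapsto h(A)$, which lifts $U$ (i.e. forgetting the comodule structure recovers $U(A)$). Since $U$ is faithful and exact and the forgetful functor $End^\vee(U)\text{-comod}\to F\text{-mod}$ is faithful exact, $h$ is automatically faithful and exact. So the whole content is that $h$ is full and essentially surjective.

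First I would handle essential surjectivity. By Lemma~\ref{lemma:presentation}, every comodule $V$ sits in an exact sequence $\bigoplus h(M_i)\to \bigoplus h(N_j)\to V\to 0$; since $\A$ is abelian, the map $\bigoplus M_i\to\bigoplus N_j$ underlying the first arrow has a cokernel $C$ in $\A$, and exactness of $h$ gives $h(C)\cong V$. (Here one uses that $h$ commutes with finite direct sums, which is immediate from the construction, and that $h$ takes the chosen morphism in $\A$ to the given comodule map — this is where fullness on the relevant morphisms is needed, so the two halves are intertwined.) For fullness, given $A,B\in\A$ and a comodule morphism $\phi:h(A)\to h(B)$, I would restrict attention to a finite subgraph $D\subset\A$ containing $A$, $B$ and enough morphisms, so that $\phi$ is a morphism of $End(U|_D)$-modules; using Remark~\ref{rmk:2limit} and the structure of $End(U|_D)$ as an honest finite-dimensional algebra, one argues that an $End(U|_D)$-linear map between $U(A)$ and $U(B)$ is induced by a genuine morphism $A\to B$ in $\A$. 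The key input is that $End(U|_D)$ acts through all of $\A$'s morphisms among the objects of $D$, so its commutant "sees" exactly the $\A$-morphisms; concretely, one takes $D$ large enough that $U(A)$ and $U(B)$ become summands of a single object $U(M)$ with the projections and inclusions in $D$, reducing fullness to the statement that $\phi$ composed with these maps is realized by an element of $End(U|_D)$, which it is by definition of the endomorphism ring.

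The main obstacle is fullness: one must ensure that a comodule map, which a priori only respects the coaction recorded by \emph{finitely many} morphisms at a time, actually comes from an $\A$-morphism, and the finite-subgraph reduction has to be set up carefully so that the relevant diagram in $\A$ (not just its $U$-image) commutes. The cleanest route is the $2$-colimit description of Remark~\ref{rmk:2limit}: prove the theorem first when $\A$ is replaced by a finite subgraph closed under the operations needed, where $End(U|_D)$-mod is literally modules over a finite-dimensional algebra and the classical reconstruction (Morita-type) argument applies, and then pass to the $2$-colimit over $D$, using that both sides commute with this colimit. I expect the bookkeeping in the finite case — choosing $D$ to contain all zero maps, identities, the diagonal and projection maps witnessing biproducts, and the relevant $\phi$-data — to be the only delicate point; everything else is formal from the lemmas already established.
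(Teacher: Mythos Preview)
The paper does not actually argue the theorem: it simply notes that the proof in \cite[\S 7, thm~3]{js}, written for $F=\C$, goes through over any field. So there is no in-paper proof to compare against; your attempt to derive the result internally from Lemma~\ref{lemma:presentation} and Remark~\ref{rmk:2limit} is necessarily a different route.

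You correctly flag fullness as the main obstacle, but the reduction you sketch does not close the gap. Embedding $A,B$ as summands of $M=A\oplus B$ and extending a comodule map $\phi:h(A)\to h(B)$ to $\tilde\phi=U(i_B)\circ\phi\circ U(p_A)\in End_F(U(M))$, what you actually know is that $\tilde\phi$ commutes with the $M$-component of every element of $End(U|_D)$. Your claim that $\tilde\phi$ is ``realized by an element of $End(U|_D)$ \ldots\ by definition'' conflates two things: elements of $End(U|_D)$ are natural transformations $(\eta_X)_{X\in D}$, not single endomorphisms of $U(M)$, and in any case lying in the commutant of $End(U|_D)$ on $U(M)$ does not place $\tilde\phi$ in the image of $End_\A(M)$ without a double-commutant statement --- which is precisely the nontrivial content you are trying to prove.

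The standard repair (essentially what Joyal--Street do) goes through subobjects rather than endomorphisms. The graph $\Gamma_\phi\subset h(A\oplus B)$ is a subcomodule, and one shows that every subcomodule $W\subset h(M)$ has the form $h(N)$ for some subobject $N\subset M$: take $N$ minimal among subobjects of $M$ with $U(N)\supseteq W$ (this exists by finite-dimensionality and exactness of $U$), and use the coaction to force $U(N)=W$. Then $\Gamma_\phi=h(C)$ with the projection $C\to A$ an isomorphism (since $U$ reflects isomorphisms), and $C\to B$ is the required $\A$-morphism. Your essential-surjectivity argument via Lemma~\ref{lemma:presentation} is fine once fullness is established in this way.
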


\begin{proof}
The proof given in  \cite[\S 7, thm 3]{js} for the complex field
works in general.
\end{proof}

It is instructive to observe that the corresponding statement for
$End(U)\text{-mod}$  will usually fail.

\begin{ex}\label{ex:gradedmodule}
Let $\A$ be the category of finite dimensional $\Z$-graded $\C$-vector
spaces. This can be identified with the category of comodules over $End^\vee(U)=\C[T,T^{-1}]$
in the usual way. However,  the category of $End(U)=\prod_\Z\C$ modules
is much bigger. For example, $\C$ with the $End(U)$-action arising
from  a nontrivial ultraproduct
$End(U)\to (\prod\C/\mathcal{U})\cong \C$ gives an $End(U)$-module which does not arise
from a graded module.
\end{ex}

We can use this theorem to deduce a version of  Nori's Tannakian theorem. (The  original statement, which is
stronger, can be found in \cite[thm 1]{brug}, \cite[\S 3.3]{levine} or \cite{nori-mot}.) 

\begin{cor}[Nori]\label{cor:tannaka0}
  Suppose that $\A$ is
  an $F$-linear abelian category equipped with a faithful exact
  functor $U:\A\to F\text{-mod}$. If $G:\Delta \to \A$ is a morphism of
  directed graphs such that $H$ is equivalent to $U\circ G$, then
  there is a functor
$\tilde G:End^\vee(H)\text{-comod}\to \A$  (called the extension of $G$)
  rendering the diagram
$$
\xymatrix{
  \Delta\ar[r]^{G}\ar[d]_{h}\ar[dr]_<<<<{H} &\A\ar[d]^{U}\\
  End^\vee(H)\text{-comod}\ar@{-->}[r]\ar@{-->}[ur]_>>>>>{\tilde G}\ar[r]_>>>>>U & F\text{-mod}\\
  }
$$
commutative up to natural equivalence.
\end{cor}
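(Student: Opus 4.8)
The plan is to build $\tilde G$ directly out of the structure we already have. By the preceding theorem, $\A$ is equivalent to $End^\vee(U)\text{-comod}$, so it suffices to produce a functor $End^\vee(H)\text{-comod}\to End^\vee(U)\text{-comod}$ compatible with the forgetful functors to $F\text{-mod}$. Since $H$ is (naturally isomorphic to) $U\circ G$, and by Lemma~\ref{lemma:endvee}(2) we may replace $H$ by its extension $\tilde H:Paths(\Delta)\to F\text{-mod}$, the morphism of graph-functors
$$
\xymatrix{
 Paths(\Delta)\ar[r]^{\tilde H}\ar[d]_{\tilde G} & F\text{-mod}\\
 \A\ar[ru]_{U} &
}
$$
is exactly a morphism in the category of Lemma~\ref{lemma:endvee}(1) (here I use the adjunction $Paths\dashv$ forget to turn $G:\Delta\to\A$ into the functor $\tilde G:Paths(\Delta)\to\A$). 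Hence by Lemma~\ref{lemma:endvee}(3) it induces a coalgebra map $End^\vee(H)=End^\vee(\tilde H)\to End^\vee(U)$. Corestriction of scalars along this coalgebra map gives an exact faithful functor $End^\vee(H)\text{-comod}\to End^\vee(U)\text{-comod}\simeq\A$; composing with the equivalence, this is the desired $\tilde G$.

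Next I would check the two commutativities. The lower triangle — that $U\circ\tilde G$ is naturally equivalent to the forgetful functor on $End^\vee(H)\text{-comod}$ — is immediate, since corestriction of scalars along a coalgebra map does not change the underlying vector space, and the equivalence $\A\simeq End^\vee(U)\text{-comod}$ is compatible with the respective forgetful functors to $F\text{-mod}$ (this is part of how that equivalence is set up in \cite{js}). For the left triangle, I must verify $\tilde G\circ h\cong G$ as functors $\Delta\to\A$. Unwinding definitions: $h(M)$ is $H(M)$ with its $End^\vee(H)$-comodule structure coming from the $End(H(M))$-action; applying corestriction along $End^\vee(H)\to End^\vee(U)$ yields $H(M)$ as an $End^\vee(U)$-comodule, and one checks this is precisely the comodule attached to $G(M)\in\A$ under the equivalence, because the coaction factors through $End(U(G(M)))^*=End(H(M))^*$ in a way compatible with the coalgebra map. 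So $\tilde G(h(M))\cong G(M)$, naturally in $M$.

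The dashed arrow labelled $U$ in the bottom row of the diagram is just the forgetful functor $End^\vee(H)\text{-comod}\to F\text{-mod}$, already discussed; and the diagonal dashed arrow is $\tilde G$ itself, so no further construction is needed there.

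The main obstacle, I expect, is purely bookkeeping: making the identification in the previous paragraph — that corestricting $h(M)$ along the induced coalgebra map really reproduces the comodule corresponding to $G(M)$ — genuinely precise, since it requires tracking how the equivalence $\A\simeq End^\vee(U)\text{-comod}$ of the theorem assigns to an object $A$ the comodule $U(A)$ with coaction built from the $End(U(A))$-action, and then matching this against the functoriality map of Lemma~\ref{lemma:endvee}(3) on matrix coefficients. There is no deep content here, but the diagram chase must be done carefully; once it is, uniqueness of $\tilde G$ (up to natural equivalence, among functors commuting with the forgetful functors) also follows, since such a functor is forced on the generating objects $h(M)$ and every comodule is a cokernel of a map between sums of these by Lemma~\ref{lemma:presentation}, with $\tilde G$ necessarily exact.
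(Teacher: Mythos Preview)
Your proposal is correct and follows essentially the same approach as the paper: obtain a coalgebra morphism $End^\vee(H)\to End^\vee(U)$ from the morphism of graph-functors, apply corestriction to get a functor on comodule categories, and then invoke the preceding theorem to identify $End^\vee(U)\text{-comod}\simeq\A$. The paper actually proves the slightly more general Corollary~\ref{cor:tannaka} (with an intermediate category $\cR$) by the same method and deduces this statement as the special case $\cR=F\text{-mod}$; your write-up is more detailed on the diagram verifications and the uniqueness remark, but the argument is the same. One small notational quibble: you use $\tilde G$ both for the extension $Paths(\Delta)\to\A$ and for the final functor $End^\vee(H)\text{-comod}\to\A$, which could confuse a reader.
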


It is convenient to prove a slightly stronger statement,
where $F\text{-mod}$ is replaced by, for example, the category of finite
dimensional modules over an $F$-algebra.

\begin{cor}\label{cor:tannaka}
  Let $\cR$ be an $F$-linear abelian category  with a faithful exact functor
  $\rho:\cR\to F$-mod. Suppose that $H:\Delta\to F$-mod
   factors as $H_1\circ\rho$ (up to natural equivalence).
  Suppose that $\A$ is
  an $F$-linear abelian category equipped with a faithful exact
  functor $U:\A\to \cR$. If $G:\Delta \to \A$ is a morphism of
  directed graphs such that $H_1$ is equivalent to $U\circ G$, then there are functors
  $End^\vee(H)\text{-comod}\to \cR$, $\tilde G:End^\vee(H)\text{-comod}\to \A$ 
  rendering the diagram
$$
\xymatrix{
  \Delta\ar[r]^{G}\ar[d]_{h}\ar[dr]_<<<<{H_1} &\A\ar[d]^{U}\\
  End^\vee(H)\text{-comod}\ar@{-->}[r]\ar@{-->}[ur]_>>>>>{\tilde G}\ar[dr]_U &\cR\ar[d]^\rho\\
  & F\text{-mod}
  }
$$
commutative up to natural equivalence.

\end{cor}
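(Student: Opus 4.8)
The plan is to deduce everything from the reconstruction theorem above (the analogue of \cite[\S 7, thm.~3]{js}) together with the functoriality of $End^\vee$ recorded in Lemma~\ref{lemma:endvee}; these are the same ingredients that yield Corollary~\ref{cor:tannaka0} (the case $\cR=F\text{-mod}$), the only extra step being to compose with $U$ in order to land in $\cR$. First I would normalize the data: by the corollary after Lemma~\ref{lemma:endvee} on naturally isomorphic graph-functors, replacing $H_1$ by $U\circ G$ and $H$ by $\rho\circ U\circ G$ changes the comodule categories involved only up to equivalence, so I may assume $H=\rho\circ H_1=\rho\circ U\circ G$ on the nose. Put $V:=\rho\circ U\colon\A\to F\text{-mod}$; since $V$ and $\rho$ are both faithful, exact and $F$-linear, the reconstruction theorem provides equivalences $e_\A\colon\A\to End^\vee(V)\text{-comod}$ and $e_\cR\colon\cR\to End^\vee(\rho)\text{-comod}$. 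The key point I would extract from the proof of that theorem is that $e_\A$ and $e_\cR$ may be taken to be the canonical functors $A\mapsto(V(A),\text{tautological coaction})$ and $R\mapsto(\rho(R),\text{tautological coaction})$; in particular they lie over $F\text{-mod}$ and are naturally isomorphic to the canonical functors $h$ attached to the graphs $\A$ and $\cR$ (regarded as graphs with functors $V$ and $\rho$).

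Given this, the graph morphism $G\colon\Delta\to\A$ with $V\circ G=H$ induces, by Lemma~\ref{lemma:endvee}, a coalgebra map $End^\vee(H)\to End^\vee(V)$ and hence a faithful exact functor $j\colon End^\vee(H)\text{-comod}\to End^\vee(V)\text{-comod}$; I set $\tilde G:=e_\A^{-1}\circ j$. For the functor to $\cR$ I would simply take $\Phi:=U\circ\tilde G$. Alternatively $\Phi$ arises intrinsically: $H_1\colon\Delta\to\cR$ with $\rho\circ H_1=H$ induces $End^\vee(H)\to End^\vee(\rho)$, hence $End^\vee(H)\text{-comod}\to End^\vee(\rho)\text{-comod}$, and composing with $e_\cR^{-1}$ gives a functor to $\cR$; the graph morphism $U\colon\A\to\cR$ induces $End^\vee(V)\to End^\vee(\rho)$, which under $e_\A,e_\cR$ realizes $U$ itself; and functoriality of $End^\vee$ applied to the composite $(\Delta,H)\to(\A,V)\to(\cR,\rho)$, which is precisely the morphism induced by $H_1=U\circ G$, identifies the two descriptions of $\Phi$.

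It then remains to verify that the displayed diagram commutes up to natural equivalence. Since the canonical functor $h$ is compatible with the transition coalgebra maps, $j\circ h$ equals the canonical functor for $(\A,V)$ precomposed with $G$, so $\tilde G\circ h=e_\A^{-1}\circ j\circ h\simeq G$; consequently $\Phi\circ h=U\circ\tilde G\circ h\simeq U\circ G\simeq H_1$, and $\rho\circ\Phi=V\circ\tilde G$, which is the forgetful functor on $End^\vee(H)\text{-comod}$ by functoriality applied to $(\Delta,H)\to(\A,V)\to(F\text{-mod},\mathrm{id})$ and the fact (the corollaries after Lemma~\ref{lemma:endvee}) that $End^\vee(H)\text{-comod}$ and $End^\vee(V)\text{-comod}$ sit compatibly inside $F\text{-mod}$; lastly $U\circ\tilde G=\Phi$ holds on the nose. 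I expect the only real obstacle to be the claim isolated at the end of the first paragraph, that the equivalence of the reconstruction theorem is the canonical one and is compatible with the forgetful functors to $F\text{-mod}$; without this the identifications above are not licensed, but with it everything reduces to formal diagram chasing with Lemma~\ref{lemma:endvee}, which can be organized, if one likes, via the duality principle of Lemma~\ref{lemma:dualityPrinc}.
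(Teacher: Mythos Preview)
Your approach is correct and is essentially the same as the paper's: both use the functoriality of $End^\vee$ (Lemma~\ref{lemma:endvee}) to obtain a commutative triangle of coalgebra maps $End^\vee(H)\to End^\vee(\rho\circ U)\to End^\vee(\rho)$, pass to comodule categories, and then invoke the reconstruction theorem to identify $End^\vee(\rho\circ U)\text{-comod}\sim\A$ and $End^\vee(\rho)\text{-comod}\sim\cR$. The paper's proof is a terse three-line version of what you wrote; your additional care in checking that the reconstruction equivalence is the canonical one compatible with the forgetful functors is a point the paper leaves implicit.
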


\begin{proof}
We obtain a commutative diagram of
coalgebras
$$
\xymatrix{
  End^\vee(H)\ar[r]\ar[d] &End^\vee(U\circ \rho)\ar[ld]\\
  End^\vee(\rho) &}
$$
Thus we get a functor between the categories of finite dimensional
comodules
$$
\xymatrix{
  End^\vee(H)\text{-comod}\ar[r]^{\tilde G}\ar[d] &End^\vee(U\circ\rho)\text{-comod}\sim \A\ar[ld] \\
  End^\vee(\rho)\text{-comod}\sim \cR & }
$$
The equivalences of categories, indicated by $\sim$,  follow from the
theorem and the above assumptions.
\end{proof}

There is also a naturally statement, which we give only in the
situation of corollary \ref{cor:tannaka0}.

\begin{cor}\label{cor:tannaka-nat}
  Given a diagram
$$
\xymatrix{
 \Delta\ar[dd]^{\pi}\ar[r]^{G}\ar[rrd]^{H} & \A\ar[rd]^{U}\ar[dd] &  \\ 
  &  & F\text{-mod} \\ 
 \Delta'\ar[r]_{G'}\ar[rru]^>>>>{H'} & \A'\ar[ru]_{U'} & 
}
$$
which commutes up to natural isomorphism,
the  diagram
$$
\xymatrix{
 End^\vee(H)\text{-comod}\ar[r]^>>>>{\tilde G}\ar[d]^{\pi} & \A\ar[d] \\ 
 End^\vee(H')\text{-comod}\ar[r]^>>>>{\tilde G'} & \A'
}
$$
 commutes up to natural isomorphism.
\end{cor}

The case of particular interest to us in corollary \ref{cor:tannaka} is the category
$\cR=(F\text{-mod})^n$ of  finite dimensional  vector spaces admitting
gradings of the form $V= V_1\oplus V_2\oplus \ldots V_n$. 
This can  be identified with the category of finite modules over the
ring $F^n$.
A natural example arises as follows.
Given functors $H_i:\Delta_i\to F\text{-mod}$, we can define a new
functor $H_1\times H_2\times\ldots H_n$ on  the
cartesian product $\Delta_1\times\Delta_2\times \ldots \Delta_n$ in the
category of graphs, to $(F\text{-mod})^n$ by
$$H_1\times H_2\times 
\ldots H_n(M_1,\ldots M_n) = H_1(M_1)\oplus \ldots H_n(M_n)$$
We have an induced functor
\begin{equation}
  \label{eq:EndveeH1H2}
 End^\vee(H_1\times H_2\times\ldots H_n)\text{-comod}\to \prod_i (End^\vee(H_i)\text{-comod} )
\end{equation}
where to be clear, on the left we really mean $End^\vee(\rho\circ
(H_1\times \ldots H_n))$, 
where $\rho: (F\text{-mod})^n\to F\text{-mod}$ is the
forgetful functor.
We will need a criterion for when this is an equivalence. It usually isn't.

\begin{ex}
  Let $\Delta$ be a graph consisting of a single vertex $pt$ and no
  morphisms. Let $H(pt)=F$. Since this is a finite graph, we can work
  with endomorphism rings rather than coalgebras. One has $End(H)= F$ and $End(H\times H)=M_2(F)$ 
  the ring of $2\times 2$ matrices. Therefore by Morita's theorem, $End(H\times
  H)\text-{mod}\sim F\text{-mod}\nsim (F\text{-mod})^2$. The natural
  map $End(H\times H)\text-{mod}\to (F\text{-mod})^2$ is the diagonal
  embedding $ F\text{-mod}\to (F\text{-mod})^2$.
\end{ex}

\begin{lemma}\label{lemma:productcomod}
Suppose that each object in $\Delta_i$ has maps to an object $\emptyset_i$
satisfying $H_i(\emptyset_i)=0$. Then \eqref{eq:EndveeH1H2} is an equivalence.
\end{lemma}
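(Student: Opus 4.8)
The plan is to show that the functor \eqref{eq:EndveeH1H2} is fully faithful and essentially surjective by exploiting the initial objects to produce, for each $i$, a coalgebra splitting. First I would observe that the cartesian product $\Delta = \Delta_1 \times \cdots \times \Delta_n$ contains, for each index $i$, a copy of $\Delta_i$ embedded as $\Delta_i \hookrightarrow \Delta$, $M_i \mapsto (\emptyset, \ldots, M_i, \ldots, \emptyset)$; on this subgraph the functor $H_1 \times \cdots \times H_n$ restricts to $H_i$, since $H_j(\emptyset) = 0$ for $j \neq i$. Conversely the projection $\Delta \to \Delta_i$ sends $(M_1, \ldots, M_n)$ to $M_i$. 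By Lemma~\ref{lemma:endvee}(3), functoriality of $(\Delta, H) \mapsto End^\vee(H)$, these maps give coalgebra morphisms $End^\vee(H_i) \to End^\vee(H_1\times\cdots\times H_n) \to End^\vee(H_i)$ whose composite I would check is the identity. The point of the initial-object hypothesis is precisely to make the first arrow well defined with the correct target and to kill the "cross terms".

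Next I would argue that these splittings assemble into an isomorphism of coalgebras
$$End^\vee(H_1 \times \cdots \times H_n) \;\cong\; \prod_i End^\vee(H_i),$$
the right-hand side being the product (= categorical coproduct, cf.\ Lemma~\ref{lemma:endvee}(5)) of coalgebras. To see this I would analyze the presentation \eqref{eq:Endv2} directly: an object of $\Delta$ is a tuple, and $End(H_1(M_1) \oplus \cdots \oplus H_n(M_n))$ decomposes as a sum of $Hom$-spaces $Hom(H_i(M_i), H_j(M_j))$; a morphism $(f_1, \ldots, f_n)$ in $\Delta$ with, say, all but one $f_i$ an identity on $\emptyset$ produces relations identifying the off-diagonal blocks with zero (because $H_j(\emptyset) = 0$ forces the relevant $Hom$-spaces to vanish) and identifying the $i$-th diagonal block across the components exactly as in \eqref{eq:Endv2} for $\Delta_i$. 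Pushing this through shows the quotient coalgebra is the direct product of the $End^\vee(H_i)$, with comultiplication and counit the product ones.

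Finally, given such an isomorphism, the category of comodules over a finite product of coalgebras $\prod_i C_i$ is equivalent to $\prod_i (C_i\text{-comod})$: a comodule over $\prod_i C_i$ carries $n$ orthogonal idempotent "projections" coming from the counits of the factors, hence decomposes canonically as a direct sum of comodules over the individual $C_i$, and this decomposition is functorial and compatible with \eqref{eq:EndveeH1H2}. Combining this with the coalgebra isomorphism above gives the claimed equivalence.

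I expect the main obstacle to be the second step: verifying carefully, from the cokernel presentation \eqref{eq:Endv2}, that the relations imposed by morphisms of the product graph $\Delta$ do not create any extra identifications \emph{between} the blocks indexed by distinct $i$ — i.e.\ that the only surviving relations are the "intra-$\Delta_i$" ones — and that this uses the hypothesis $H_i(\emptyset) = 0$ in an essential way (the Example with $\Delta$ a single vertex and $H(pt) = F$ shows the statement genuinely fails without it, since there $End(H\times H) = M_2(F)$ rather than $F \times F$). Once that bookkeeping is done, everything else is formal.
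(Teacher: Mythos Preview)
Your overall strategy---prove a coalgebra isomorphism $End^\vee(H_1\times\cdots\times H_n)\cong\prod_i End^\vee(H_i)$ and then pass to comodules---is correct, and the direct cokernel analysis in your second paragraph is the genuine content. However, the paper takes the dual (and much shorter) route: it reduces to $n=2$ and to finite subgraphs, then works on the \emph{ring} side with the kernel description of $End(H_1\times H_2)\subset\prod_{P_1,P_2} End(H_1(P_1)\oplus H_2(P_2))$. Compatibility with the single pair of morphisms $1\times\iota:(P_1,\emptyset)\to(P_1,P_2)$ and $\iota\times 1:(\emptyset,P_2)\to(P_1,P_2)$ immediately forces each $f_{P_1,P_2}$ to be block-diagonal, giving $End(H_1\times H_2)=End(H_1)\times End(H_2)$ in one line. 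Your cokernel bookkeeping eventually yields the same thing, but the kernel side is where the argument is transparent.

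One concrete error: in your first paragraph you claim the projection $\Delta\to\Delta_i$ gives, via Lemma~\ref{lemma:endvee}(3), a coalgebra map $End^\vee(H_1\times\cdots\times H_n)\to End^\vee(H_i)$. It does not. That lemma requires a morphism of pairs $(\Delta,H)\to(\Delta',H')$, meaning a graph map $\pi$ with $H'\circ\pi=H$; here that would force $H_i(M_i)=\bigoplus_j H_j(M_j)$, which is false. The embedding $\Delta_i\hookrightarrow\Delta$ is fine and gives the arrow in the other direction, but the splitting you want does not come for free from functoriality---it has to be extracted from the explicit analysis you sketch in your second paragraph (or, as the paper does, read off on the ring side). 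So your first paragraph should be dropped; the second already contains the proof.
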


\begin{proof}
It is enough to check this for $n=2$ graphs. By taking limits, we can
reduce to the case where $\Delta_i$ are both finite. From equation \eqref{eq:HomGH},
the ring $End(H_1\times H_2)$ consists of families
$$(f_{P_1,P_2})\in \prod End(H_1(P_1)\oplus H_2(P_2))$$  
compatible with composition along morphisms. 
Choose maps $\tau_i:P_i\to \emptyset_i$.
By considering compatibility along the morphisms
$1\times \tau_2:(P_1,P_2)\to (P_1,\emptyset_2)$ and $\tau_1\times
1:(P_1,P_2)\to (\emptyset_1, P_2)$,
we see that $f_{P_1,P_2}$  must be of the form
$\begin{pmatrix} f_{P_1,\emptyset_2} & 0\\ 0 & f_{\emptyset_1,P_2} \end{pmatrix}$.
Thus
$$End(H_1\times H_2) = End(H_1)\times End(H_2)$$
\end{proof}

\subsection{Enriched model}\label{sect:enriched}

Let $H:\Delta\to F\text{-mod}$ be a functor on a graph as above.
Although functors on $End^\vee(H)\text{-comod}$ can be constructed with the help of corollary \ref{cor:tannaka},
it is sometimes difficult to apply.
It will be convenient to give an alternative description (up to equivalence)
of $End^\vee(H)\text{-comod}$ which allows us to incorporate  extra
structure.  Fix a finite dimensional  commutative $F$-algebra with an algebra
homomorphism $p:R\to F$ such that $F$ is flat over $R$. These rather
strong assumptions are valid in the case of principal interest  to us,
where
$R=F\times F$ with $p$ projection onto the first factor.
 Suppose that $H^\#:\Delta\to
R\text{-mod}$ is a functor. We can define the $R$-coalgebra
$End_R^\vee(H^\#)$ by replacing  $Hom$ by $Hom_R$ in
\eqref{eq:Endv2}. This is not the same as the  coalgebra $End^\vee(H^\#\circ
\rho)$ considered earlier.
Let $End_R^\vee(H^\#)$-comod denote the $R$-linear category of comodules in $R\text{-mod}$.
Then we wish to describe the relationship between $End^\vee(H^\#)\text{-comod}$ and
$End^\vee(H)\text{-comod}$.

First, we make a brief digression.
Given an $F$-linear abelian category $C$, an ideal $I$ is a collection of subspaces $I(c_1,c_2)\subseteq Hom(c_1,c_2)$ such that 
$$Hom(c_2,c_3)\circ I(c_1,c_2)\subseteq I(c_1,c_3),$$
$$  I(c_1,c_2)\circ Hom(c_0,c_1)\subseteq I(c_0,c_2)$$
Given an ideal $I$, $C/I$ is the category with the same objects as $C$ and 
$$Hom_{C/I}(c_1,c_2) = Hom_C(c_1,c_2)/I(c_1,c_2)$$
For example, if $G:C\to D$ is an exact functor, $\ker G = \{f\in Mor C\mid G(f)=0\}$ is an ideal.
Note, however, that the quotient $C/\ker G$ should not be confused with the quotient of $C$ by the thick
subcategory generated by $\{c \in ObC\mid G(c)=0\}$.

\begin{lemma}\label{lemma:surjcat}
  If $G:C\to D$ is an exact functor between essentially small
  abelian categories such that
  \begin{enumerate}
  \item[(a)] $G$ is essentially surjective.
  \item[(b)] $G$ is surjective on Homs.
  \end{enumerate}
Then  $D$ is equivalent to $C/\ker G$.
\end{lemma}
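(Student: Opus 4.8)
The plan is to construct a quasi-inverse to the induced functor $\bar G: C/\ker G \to D$ and show it is an equivalence. Since $G$ kills exactly the morphisms in $\ker G$, the functor $G$ factors through $C/\ker G$, giving an exact functor $\bar G: C/\ker G \to D$ that is the identity on objects (well, on isomorphism classes) and injective on Hom-spaces by construction. Condition (b) says $G$ is surjective on Homs, hence so is $\bar G$; combined with injectivity, $\bar G$ is fully faithful. Condition (a) says $\bar G$ is essentially surjective. A fully faithful, essentially surjective functor is an equivalence, so this would finish the proof — provided one checks that $C/\ker G$ is actually an abelian category and that $\bar G$ is exact, so that the statement ``$D$ is equivalent to $C/\ker G$'' makes sense as an equivalence of abelian categories.

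So the real content, and the main obstacle, is verifying that $C/\ker G$ is abelian (a priori the construction of $C/I$ only gives an $F$-linear category) and that the quotient functor $C\to C/\ker G$ is exact. I would argue this by transporting structure along $\bar G$: since $\bar G$ is fully faithful and essentially surjective onto the abelian category $D$, one can use $\bar G$ to define kernels, cokernels, and exact sequences in $C/\ker G$ — declare a sequence in $C/\ker G$ to be exact iff its image under $\bar G$ is exact in $D$, and check that the relevant universal properties are inherited because $\bar G$ is fully faithful. Concretely, given $\bar f: c_1\to c_2$ in $C/\ker G$, pick an object $c$ with $\bar G(c)\cong \ker(\bar G\bar f)$ in $D$ (using essential surjectivity) and a morphism $c\to c_1$ in $C/\ker G$ mapping to the inclusion (using fullness); full faithfulness then forces this to be a kernel of $\bar f$ in $C/\ker G$. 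The same works for cokernels, and the coincidence of image and coimage in $C/\ker G$ follows from the corresponding fact in $D$.

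One subtlety I would be careful about: the additive/biproduct structure on $C/\ker G$ is inherited directly from $C$ (the quotient by an ideal of an additive category is additive, with the same biproducts), so $\bar G$ automatically preserves biproducts; the issue is purely about kernels and cokernels, which is exactly what the argument above handles. I would also note where essential smallness is used — it guarantees we may speak of the quotient category and of choosing representatives for isomorphism classes without set-theoretic trouble, and it is harmless since all categories arising in the applications (paths of graphs, comodule categories over coalgebras that are unions of finite-dimensional pieces) are essentially small.

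In summary, the skeleton is: (1) $G$ factors as $C \to C/\ker G \xrightarrow{\bar G} D$ with $\bar G$ faithful by the definition of $\ker G$; (2) $\bar G$ is full by hypothesis (b) and essentially surjective by hypothesis (a); (3) $C/\ker G$ is additive with biproducts inherited from $C$; (4) transport the abelian structure of $D$ across the fully faithful essentially surjective $\bar G$ to make $C/\ker G$ abelian with $\bar G$ exact; (5) conclude $\bar G$ is an equivalence of abelian categories. I expect step (4) to absorb essentially all the work, and it is the kind of ``standard but fiddly'' verification one either does carefully once or cites; given the style of the surrounding text I would write it tersely, emphasizing that full faithfulness is what makes every universal property descend.
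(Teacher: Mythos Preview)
Your proposal is correct and follows the same route as the paper: factor $G$ through $C/\ker G$, observe that the induced $\bar G$ is full by (b), faithful by construction of $\ker G$, and essentially surjective by (a), hence an equivalence. The paper's proof is a single line recording the isomorphism $Hom_C(c,c')/\ker G \cong Hom_D(G(c),G(c'))$; it does not pause over your step (4), since the lemma only asserts an equivalence of categories and the abelian structure on $C/\ker G$ can simply be transported back along $\bar G$ as you describe.
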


\begin{proof}
$G$ induces an equivalence   $C/\ker G\sim D$
since $Hom_{C}(c,c')/\ker G\cong Hom_{D}(G(c),G(c'))$
\end{proof}

Returning to the set up describe earlier. 
We have an isomorphism $p:End^\vee(H^\#)\otimes_R F\cong End^\vee(H)$
and hence an exact functor
$$p:End^\vee(H^\#)\text{-comod}\to End^\vee(H)\text{-comod}$$
 given by
$M\mapsto M\otimes_R F$. 
The conditions of the above lemma are easily verified in general.
 In the case, where $R=F^2$, this is  almost immediate.
$End^\vee(H^\#)$-comodule decomposes into a sum of two factors
corresponding to the idempotents of $R$, and $p$ is projection on the
first factor.
Thus:

\begin{cor}\label{cor:surjcat}
  $End^\vee(H)\text{-comod}$ is equivalent to
  $End^\vee(H^\#)\text{-comod}/\ker p$.
\end{cor}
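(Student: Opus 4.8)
The plan is to verify that the functor $p_1 \colon End^\vee(H^\#)\text{-comod} \to End^\vee(H)\text{-comod}$ satisfies the two hypotheses (a) and (b) of Lemma~\ref{lemma:surjcat}, so that the conclusion follows immediately by applying that lemma to $G = p_1$. Both categories are essentially small (their objects are described by the presentations in Lemma~\ref{lemma:presentation}) and $F$-linear abelian, and $p_1$ is exact since it is induced by a coalgebra map and both sides are realized as subcategories of $F\text{-mod}$ on which the forgetful functors are faithful exact. So the whole content is checking essential surjectivity and surjectivity on Homs.

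\emph{Essential surjectivity.} Here I would use Lemma~\ref{lemma:presentation}: every object $V$ of $End^\vee(H)\text{-comod}$ is a cokernel $\bigoplus h(M_i) \to \bigoplus h(N_j) \to V \to 0$ with $M_i, N_j \in Ob\,\Delta$. The objects $h(M)$ are in the image of $p_1$, essentially by construction: the functor $h^\# \colon \Delta \to End^\vee(H^\#)\text{-comod}$ sends $M$ to an object whose image under $p_1$ is $h(M)$ (this is just the compatibility $H = p_1 \circ H^\#$ unwound through the definition of $h$). The map $\bigoplus h(M_i) \to \bigoplus h(N_j)$ is a morphism of comodules over $End^\vee(H)$; I need to lift it to a morphism over $End^\vee(H^\#)$. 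For this I would invoke surjectivity on Homs (step (b) below) applied to direct sums of the $h^\#(M_i)$, $h^\#(N_j)$, then take the cokernel in $End^\vee(H^\#)\text{-comod}$ — which $p_1$ sends to $V$ since $p_1$ is exact. So essential surjectivity reduces to (b).

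\emph{Surjectivity on Homs.} This is the crux, and I expect it to be the main obstacle. The point is that a homomorphism of $End^\vee(H)$-comodules $\phi \colon p_1(V^\#) \to p_1(W^\#)$ is in particular an $F$-linear map between the underlying vector spaces, and one must show it is automatically a homomorphism of $End^\vee(H^\#)$-comodules, i.e.\ that no extra compatibility is imposed by the second factor. The cleanest route is to reduce to the finite case: the coactions on $V^\#$ and $W^\#$ factor through a finite subcoalgebra $E^\vee(D)$ for some finite $D \subset \Delta$, as in the proof of Lemma~\ref{lemma:presentation} and Remark~\ref{rmk:2limit}, so by the $2$-colimit description it suffices to treat $End(H|_D)$-modules versus $End(H^\#|_D)$-modules for finite $D$. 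For finite $D$ the relevant assertion is that $End(H^\#|_D) \to End(H|_D)$ is a surjection of $F$-algebras whose kernel acts as zero on all the modules in question — equivalently that $H^\#|_D$-module structures that agree on the $p_1$-part are the same. I would argue this by noting that $H^\# = (H, H')$ for some $H' \colon \Delta \to F\text{-mod}$, and a compatible family of endomorphisms of the $H(M)$'s is already an element of $End(H|_D)$; the map $End(H^\#|_D) \to End(H|_D)$ is surjective because any such family can be paired with (say) the zero family on the $H'$-components to give a preimage, provided the diagonal relations in the product graph impose no obstruction — which they do not, since $H^\#$ is a functor on $\Delta$ itself, not on a product. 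Thus $\phi$ lifts, and I would then check that the lift is a comodule morphism by dualizing back from the module picture via Lemma~\ref{lemma:dualityPrinc}.

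The subtle point I would be most careful about is exactly this last claim: that $End^\vee(H^\#) \to End^\vee(H)$ is surjective as a coalgebra map (equivalently $End(H|_D) \hookrightarrow End(H^\#|_D)$ is a direct summand as a bimodule, or at least that the quotient comodule categories see no difference). The danger — illustrated by the Morita example just before Lemma~\ref{lemma:productcomod} — is that passing to a richer functor can genuinely change the endomorphism algebra in a way that is \emph{not} a quotient; but here the situation is opposite, since $H = p_1 \circ H^\#$ means $H$ is a \emph{retract} of $H^\#$ rather than an enlargement, so $End^\vee(H)$ really is a quotient coalgebra of $End^\vee(H^\#)$, and $\ker p_1$ on morphisms is precisely the kernel of the induced functor. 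Once that is nailed down, Lemma~\ref{lemma:surjcat} applies verbatim and gives the stated equivalence $End^\vee(H)\text{-comod} \sim End^\vee(H^\#)\text{-comod}/\ker p_1$.
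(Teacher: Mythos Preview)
Your overall plan—verify conditions (a) and (b) of Lemma~\ref{lemma:surjcat} for the functor $p_1$—is exactly what the paper does. But the paper's execution is a one-liner, and your argument for Hom-surjectivity has a genuine gap.

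The paper simply observes that $p_1$ is the change-of-scalars functor $\otimes_{F^2} F$ along the projection $F^2 \to F$ onto the first factor. Since an $F^2$-module is just a pair $(V_1, V_2)$ of $F$-vector spaces and $\otimes_{F^2} F$ sends it to $V_1$, essential surjectivity and fullness are immediate: any $V_1$ comes from $(V_1, 0)$, and $Hom_{F^2}\bigl((V_1,V_2),(W_1,W_2)\bigr) \to Hom_F(V_1,W_1)$ is projection from a product. This carries over verbatim to comodules over an $F^2$-coalgebra.

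Your argument for (b) goes astray at ``whose kernel acts as zero on all the modules in question — equivalently that $H^\#|_D$-module structures that agree on the $p_1$-part are the same.'' Neither clause is correct, and neither is what you need. The kernel of $End(H^\#|_D) \to End(H|_D)$ is $\{0\} \times End(H'|_D)$, which does \emph{not} act as zero on a general $End(H^\#|_D)$-module $V^\#$; and two $End(H^\#|_D)$-module structures can certainly differ in the second component while agreeing on the first. What you actually need is that $Hom_{End(H^\#|_D)}(V^\#, W^\#) \to Hom_{End(H|_D)}(p_1 V^\#, p_1 W^\#)$ is onto, and this follows not from the kernel acting trivially but from the product decomposition you essentially already wrote down: since $H^\# = (H, H')$ lands in the product category $F\text{-mod}^2$, one has $End(H^\#|_D) \cong End(H|_D) \times End(H'|_D)$, every module splits as $V^\# = V_1 \oplus V_2$ via the idempotents, $p_1(V^\#) = V_1$, and the Hom map is first projection from a product. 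Your ``pair with zero'' remark is really exhibiting a section of the ring map, i.e.\ witnessing this splitting; that is the right ingredient, but you then draw the wrong conclusion from it. Once you use the splitting correctly you have exactly the paper's argument in dual form, and the detour through Lemma~\ref{lemma:presentation} for (a) becomes unnecessary as well.
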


\begin{cor}
  An $F$-linear functor on $End_R^\vee(H^\#)\text{-comod}$ such that $\ker(p)$ maps
to zero, induces a functor on $End^\vee(H)\text{-comod}$.
\end{cor}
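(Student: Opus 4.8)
The plan is to deduce this formally from Corollary~\ref{cor:surjcat} using the universal property of the quotient of an $F$-linear category by an ideal. Write $C = End^\vee(H^\#)\text{-comod}$, let $I = \ker(p_1)$ be the ideal of those morphisms killed by the projection functor $p_1:C\to End^\vee(H)\text{-comod}$, and let $q:C\to C/I$ be the tautological functor, i.e.\ the identity on objects and the quotient map $Hom_C(c,c')\to Hom_C(c,c')/I(c,c')$ on morphisms.

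First I would record the relevant universal property of $q$: if $\Phi:C\to E$ is an $F$-linear functor to an $F$-linear category with $\Phi(f)=0$ for all $f\in I$, then $\Phi$ factors as $\bar\Phi\circ q$, where $\bar\Phi:C/I\to E$ agrees with $\Phi$ on objects and sends the class of $f$ in $Hom_{C/I}(c,c')=Hom_C(c,c')/I(c,c')$ to $\Phi(f)$. This assignment is well defined exactly because $\Phi$ annihilates $I$; it is $F$-linear because $\Phi$ is $F$-linear on each $Hom$-space and the $Hom$-spaces of $C/I$ are the corresponding $F$-linear quotients; and it is functorial because composition in $C/I$ is induced from that of $C$. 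One has $\bar\Phi\circ q=\Phi$ on the nose.

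Applying this to the functor $\Phi$ appearing in the statement yields $\bar\Phi:C/I\to E$. By Corollary~\ref{cor:surjcat}, the functor induced by $p_1$ exhibits an equivalence $e:C/I\xrightarrow{\sim}End^\vee(H)\text{-comod}$, and in fact $p_1=e\circ q$. Choosing a quasi-inverse $e'$ of $e$, the composite $\bar\Phi\circ e':End^\vee(H)\text{-comod}\to E$ is $F$-linear (a composite of $F$-linear functors), and $(\bar\Phi\circ e')\circ p_1=\bar\Phi\circ e'\circ e\circ q\cong\bar\Phi\circ q=\Phi$, so it does induce $\Phi$ compatibly with the change of scalars $p_1$ up to natural isomorphism. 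Uniqueness of the induced functor up to natural isomorphism, subject to $(-)\circ p_1\cong\Phi$, follows from the universal property of $q$ together with the fact that $e$ is an equivalence.

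I do not expect any real obstacle here: the content is entirely formal once Corollary~\ref{cor:surjcat} is in hand. The one step that warrants a line of justification is the universal property of $C/I$ in the $F$-linear (as opposed to merely additive) setting, and this is immediate from the construction, since forming $C/I$ only replaces each morphism space by an $F$-linear quotient and leaves objects and composition otherwise intact.
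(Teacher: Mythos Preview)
Your proof is correct and is precisely the argument the paper has in mind. The paper states this corollary without proof, treating it as immediate from Corollary~\ref{cor:surjcat}; you have simply spelled out the formal deduction via the universal property of the quotient $C/I$, which is exactly what is intended.
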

\subsection{Products}
We need to incorporate tensor products into
our story.    The category of functors from graphs to $F$-mod  forms a category
with tensor product given as follows. Let $H:\Delta\to F$-mod and
$H':\Delta'\to F$-mod  be two such functors. Then
$H\otimes H':\Delta\times \Delta'\to F$-mod is given by
$(M,N)\mapsto H(M)\otimes H'(N)$.  The one point graph $\{*\}$ with $*\mapsto F$ gives
the unit making this into a tensor category, where for our purposes a tensor category over $F$
is an $F$-linear additive  category with a bilinear
symmetric monoidal structure. We have
$$End^\vee(H\otimes H')\cong End^\vee(H)\otimes End^\vee(H')$$
\cite[\S 8, prop 1]{js}. This yields a product
$$End^\vee(H)\text{-comod}\times End^\vee(H')\text{-comod}\to
End^\vee(H\otimes H')\text{-comod}$$
When $H=H'$ is equipped with a symmetric associative pairing $H\otimes
H\to H$ and a unit $*\in Ob\Delta, H(*)=F$. Then $End(H)$ becomes a commutative bialgebra.
Thus $End(H)$-comod becomes a tensor category with a tensor
preserving functor to $F$-mod  given by  the forgetful functor.
With minor modifications to the proof of corollary \ref{cor:tannaka0}, we have

\begin{cor}\label{cor:tannaka2}
Suppose that  $H$ has a product as  above. If in the hypothesis
of corollary \ref{cor:tannaka}, $\cR=F\text{-mod}$, $\A$ is an $F$-linear abelian
tensor category, and the functors $\rho, U, G$ are product
preserving. Then $\tilde G$ is also product preserving.
\end{cor}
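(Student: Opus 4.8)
The plan is to mimic the proof of Corollary~\ref{cor:tannaka0} (equivalently, the construction behind Corollary~\ref{cor:tannaka}), and simply check at each step that the extra tensor data is carried along. Recall that in the setting of Corollary~\ref{cor:tannaka} with $\cR=F\text{-mod}$, the functor $\tilde G:End^\vee(H)\text{-comod}\to\A$ arises from the coalgebra map $End^\vee(H)\to End^\vee(U\circ G)\cong End^\vee(H)$ (using that $U$ is faithful exact, so $\A\sim End^\vee(U)\text{-comod}$ by the theorem quoted from \cite{js}), together with the identification $\A\sim End^\vee(U)\text{-comod}$. So the first step is: since $H$ has a symmetric associative pairing $H\otimes H\to H$ with unit $*\in Ob\,\Delta$, $H(*)=F$, the coalgebra $End^\vee(H)$ acquires a commutative bialgebra structure, and as recalled in this subsection the product $End^\vee(H)\text{-comod}\times End^\vee(H)\text{-comod}\to End^\vee(H\otimes H)\text{-comod}$ composed with the map induced by the pairing makes $End^\vee(H)\text{-comod}$ a tensor category whose forgetful functor to $F\text{-mod}$ is tensor-preserving.

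Second, I would observe that because $G$, $U$ are product preserving and $H_1\simeq U\circ G$ (here $\rho=\mathrm{id}$, $H=H_1$), the pairing on $H$ transports to a pairing on $U\circ G$, i.e. the coalgebra isomorphism $End^\vee(H)\cong End^\vee(U\circ G)$ of Lemma~\ref{lemma:endvee}(3) together with its functoriality is in fact an isomorphism of bialgebras: the comultiplication dualizes composition and the bialgebra multiplication dualizes the pairing maps $H(M)\otimes H(M)\to H(M)$ (resp.\ on $U\circ G$), and these correspond under the natural equivalence $H_1\simeq U\circ G$. By the ``minor modifications'' alluded to for Corollary~\ref{cor:tannaka0}, the equivalence $\A\sim End^\vee(U)\text{-comod}$ is a tensor equivalence when $\A$ is a tensor category and $U$ is product preserving --- this is exactly the tensor refinement of \cite[\S 7--8]{js}. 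Then $\tilde G$ is, up to these tensor equivalences, the functor on comodule categories induced by a bialgebra homomorphism, hence product preserving; and the diagram of Corollary~\ref{cor:tannaka} commutes compatibly with the tensor structures since every arrow in it is now tensor-preserving.

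Concretely the key steps in order are: (1) record the bialgebra structure on $End^\vee(H)$ coming from the pairing and the resulting tensor structure on $End^\vee(H)\text{-comod}$; (2) check that the functoriality map $End^\vee(H)\to End^\vee(U\circ G)$ of Lemma~\ref{lemma:endvee} respects multiplications, i.e.\ is a bialgebra map, using that $G$ and $U$ preserve products and that the natural equivalence $H_1\simeq U\circ G$ intertwines the pairings; (3) invoke the tensor version of the reconstruction theorem (the \cite{js} proof, with the tensor bookkeeping of \cite[\S 8]{js}) to see $\A\sim End^\vee(U)\text{-comod}$ as tensor categories; (4) conclude that $\tilde G$, being induced by a bialgebra homomorphism between the two comodule categories, is product preserving, and that the comparison diagram commutes as a diagram of tensor functors.

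The main obstacle I expect is step~(2)–(3): being careful about what ``product preserving'' means for $U:\A\to F\text{-mod}$ when $\A$ is only assumed to be a tensor category (so one must fix the monoidal constraints and check that $End^\vee(U)$ inherits a bialgebra structure, not merely that each $End(U(M))$ is an algebra), and then verifying that the isomorphism $End^\vee(H)\cong End^\vee(U\circ G)$ is compatible with these structures rather than merely a coalgebra isomorphism. Once one grants the tensor-enhanced version of \cite[\S7,\ thm 3]{js} and \cite[\S8,\ prop 1]{js} --- which is what ``minor modifications to the proof of Corollary~\ref{cor:tannaka0}'' is meant to supply --- the remaining verifications are the same element-chasing arguments as in the coalgebra case, with ``respects comultiplication'' replaced by ``respects comultiplication and multiplication''.
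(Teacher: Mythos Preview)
Your proposal is correct and follows exactly the approach the paper indicates: the paper offers no separate proof of this corollary, only the remark ``with minor modifications to the proof of corollary~\ref{cor:tannaka0}'', and what you have written is precisely those modifications spelled out---tracking the bialgebra (rather than merely coalgebra) structure through the construction of $\tilde G$ via the map $End^\vee(H)\to End^\vee(\rho\circ U)$ and the tensor-enhanced reconstruction equivalence $\A\sim End^\vee(U)\text{-comod}$ from \cite[\S\S7--8]{js}.
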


Recall \cite[sect 2]{deligne} \cite[chap IV, sect 1]{levine-book}
that a dual of an object $M$ in a tensor category, with unit ${\mathbf 1}$,
is an object $M^\vee$ equipped with
morphisms  $\delta:{\mathbf 1}\to M^\vee\otimes M$ and $\epsilon:M\otimes M^\vee\to {\mathbf 1}$
such that the compositions
\begin{equation}\label{eq:dual1}
M \stackrel{id\otimes \delta}{\longrightarrow} M\otimes M^\vee\otimes M \stackrel{ \epsilon\otimes id}{\longrightarrow}M\tag{D1}
\end{equation}
\begin{equation}\label{eq:dual2}
M^\vee \stackrel{ \delta\otimes id}{\longrightarrow} M^\vee\otimes M\otimes M^\vee
 \stackrel{  id\otimes \epsilon}{\longrightarrow}M^\vee
 \tag{D2}
 \end{equation}
 yield the identities.  Alternatively, $M^\vee$ is characterized by the natural  isomorphisms
 \begin{equation}\label{eq:dual3}
Hom(X\otimes M, Y) \cong Hom(X, M^\vee\otimes Y)
\tag{D3}
\end{equation}
 \begin{equation}\label{eq:dual4}
Hom(X\otimes M^\vee, Y) \cong Hom(X,M\otimes Y)
\tag{D4}
\end{equation}
 
  In particular, the dual  is unique up to isomorphism if it exists. A map $f:M\to N$
 yields a dual or transpose map $f^\vee: N^\vee\to M^\vee$ if $M,N$ both possess duals.
  
 \begin{lemma}\label{lemma:duals}
Given an exact sequence 
$$M_1\to M_2\to M_3\to 0$$
if $M_i^\vee$ exists for $i=1,2$ then $M_3^\vee$ exists.
\end{lemma}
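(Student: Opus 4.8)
The plan is to reverse the exact sequence by dualizing. Since duals (when they exist) are characterized by the natural isomorphisms \eqref{eq:dual3} and \eqref{eq:dual4}, the existence of $M_3^\vee$ is equivalent to the representability of the functor $X \mapsto Hom(X \otimes M_3, \mathbf{1})$, or more usefully, to producing an object together with the structure maps $\delta_3, \epsilon_3$ satisfying \eqref{eq:dual1} and \eqref{eq:dual2}. First I would apply the contravariant assignment ``take the dual'' to as much of the sequence as currently makes sense: from the surjection $M_2 \to M_3$ and the map $M_1 \to M_2$, using that $M_1^\vee$ and $M_2^\vee$ exist, I get transpose maps, so there is a complex
$$0 \to M_3^{?} \to M_2^\vee \to M_1^\vee$$
where the question mark is exactly the object I want to construct. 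The natural candidate is $N := \ker(M_2^\vee \to M_1^\vee)$, which exists because the ambient category is abelian.

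The main work is then to verify that this $N$ really is a dual of $M_3$. I would do this by checking the adjunction \eqref{eq:dual3}: for any $X, Y$, I want $Hom(X \otimes M_3, Y) \cong Hom(X, N \otimes Y)$. Here I would use that $-\otimes Y$ and $X \otimes -$ are additive (hence behave well on kernels of maps between objects that already have duals — note $M_2^\vee \otimes Y$ and $M_1^\vee \otimes Y$ compute $Hom(M_2 \otimes -, Y)$-type functors via \eqref{eq:dual3} for $M_2$ and $M_1$), together with left-exactness of $Hom$. Concretely: apply $Hom(X \otimes -, Y)$ to the right-exact sequence $M_1 \to M_2 \to M_3 \to 0$ to get a left-exact sequence $0 \to Hom(X \otimes M_3, Y) \to Hom(X \otimes M_2, Y) \to Hom(X \otimes M_1, Y)$; then use the duality adjunctions for $M_1$ and $M_2$ to rewrite the last two terms as $Hom(X, M_2^\vee \otimes Y)$ and $Hom(X, M_1^\vee \otimes Y)$; finally identify the kernel of the induced map with $Hom(X, \ker(M_2^\vee \otimes Y \to M_1^\vee \otimes Y))$. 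To close the argument I need $\ker(M_2^\vee \otimes Y \to M_1^\vee \otimes Y) \cong N \otimes Y$, i.e. that $-\otimes Y$ is left exact. This is where tensoring with a dualizable-ambient structure is used: one cannot assume $-\otimes Y$ is exact in general, but left-exactness on the relevant sequences follows because the maps in question are themselves transposes of maps between dualizable objects, so $-\otimes Y$ applied to them is computed by an internal-hom adjunction and hence preserves the relevant kernels. I would isolate this as the key lemma and the main obstacle.

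The remaining steps are bookkeeping: one must check naturality in $X$ and $Y$ of the isomorphism just produced, so that \eqref{eq:dual3} holds as an isomorphism of bifunctors, and then either invoke the stated equivalence of \eqref{eq:dual3} with the $(\delta,\epsilon)$-description to extract explicit evaluation and coevaluation maps, or observe that \eqref{eq:dual4} follows symmetrically (the same construction with the roles swapped, using that $N$ sits in $0 \to N \to M_2^\vee \to M_1^\vee$ and that $M_2 \to M_3 \to 0$ is the transpose situation). Since the problem only asks for existence of $M_3^\vee$, producing $N$ together with the verified adjunction \eqref{eq:dual3} suffices.

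I expect the genuine obstacle to be precisely the left-exactness of $-\otimes Y$ on the sequence $0 \to N \to M_2^\vee \to M_1^\vee$; everything else is formal manipulation of the adjunction isomorphisms \eqref{eq:dual3}–\eqref{eq:dual4} already recorded in the text. If the tensor structure in the intended application is not known to be exact, I would phrase the key lemma so that it only needs exactness of $-\otimes Y$ against morphisms arising as transposes, which is automatic, and flag that this is the only place any hypothesis beyond ``$F$-linear abelian tensor category'' enters.
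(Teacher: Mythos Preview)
Your approach is identical to the paper's: define $M_3^\vee := \ker(M_2^\vee \to M_1^\vee)$ and deduce \eqref{eq:dual3} from the commutative diagram you describe (the paper writes down exactly this diagram and then says ``\eqref{eq:dual4} is similar''). You have in fact gone further than the paper by isolating the one nontrivial point, namely that exactness of the bottom row requires $N \otimes Y \cong \ker(M_2^\vee \otimes Y \to M_1^\vee \otimes Y)$, i.e.\ left-exactness of $-\otimes Y$ on this sequence; the paper's proof does not comment on this at all.

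Your proposed resolution of that point, however, is not correct as stated: there is no general principle by which $-\otimes Y$ preserves the kernel of $f^\vee$ merely because $f$ is a map between dualizable objects. What you can show by your adjunction argument is that $\ker(M_2^\vee\otimes Y\to M_1^\vee\otimes Y)$ represents $X\mapsto Hom(X\otimes M_3,Y)$ for each $Y$ separately, but identifying this with $N\otimes Y$ for the fixed $N=\ker(M_2^\vee\to M_1^\vee)$ is exactly the dualizability of $M_3$ you are trying to prove. The honest fix is to note that in the only application (Proposition~\ref{prop:tannaka3}) the ambient category is $End^\vee(H)\text{-comod}$, which carries a faithful exact tensor-preserving forgetful functor to $F\text{-mod}$; this forces $\otimes$ to be exact in both variables, so both rows of the diagram are left exact and the argument goes through. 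Read the lemma with that standing hypothesis.
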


\begin{proof}
Set $M_3^\vee= \ker (M_2^\vee\to M_1^\vee)$. Condition \eqref{eq:dual3} is a consequence of
the diagram 
$$
\xymatrix{
 0\ar[r] & Hom(X\otimes M_3,Y)\ar[r]\ar@{-->}[d]^{\cong} & Hom(X\otimes M_2,Y)\ar[r]\ar[d]^{\cong} & Hom(X\otimes M_1,Y)\ar[d]^{\cong} \\ 
 0\ar[r] & Hom(X, M_3^\vee\otimes Y)\ar[r] & Hom(X,M_2^\vee\otimes Y)\ar[r] & Hom(X,M_1^\vee\otimes Y)
}
$$
and \eqref{eq:dual4} is similar.
\end{proof}

 A neutral Tannakian category over $F$ is an abelian tensor category over $F$, with a faithful
 exact tensor preserving functor to $F$-mod, such
 that every object possesses a dual. Such a category can be realized as the category of comodules
 over a commutative Hopf algebra.

 \begin{prop}\label{prop:tannaka3}
Suppose that $H:\Delta\to F$-mod is equipped with a symmetric associative product as above. Assume that for every object $M\in Ob \Delta$, $h(M)$ has a dual.
Then $End^\vee(H)$-comod is neutral Tannakian.
\end{prop}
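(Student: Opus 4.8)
The plan is to check the three defining properties of a neutral Tannakian category over $F$ for $End^\vee(H)\text{-comod}$: that it is an $F$-linear abelian tensor category over $F$, that the forgetful functor $U$ to $F\text{-mod}$ is faithful, exact and tensor preserving, and that every object admits a dual. The first two are already in hand: as recalled in the discussion of products, the symmetric associative pairing $H\otimes H\to H$ together with the unit $*\in \mathrm{Ob}\,\Delta$, $H(*)=F$, makes $End(H)$ (equivalently its predual $End^\vee(H)$) into a commutative bialgebra, so $End^\vee(H)\text{-comod}$ becomes an $F$-linear abelian tensor category with unit $h(*)$ and with the forgetful functor $U\colon End^\vee(H)\text{-comod}\to F\text{-mod}$ tensor preserving (and faithful and exact, as always for comodule categories over a coalgebra in $F\text{-mod}$). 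So the entire content of the proposition is the construction of duals.

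For duals I would first record the elementary fact that a finite direct sum of dualizable objects is dualizable, with $(A\oplus B)^\vee\cong A^\vee\oplus B^\vee$. This follows directly from the characterization~\eqref{eq:dual3}: since $\otimes$ is bilinear it commutes with finite direct sums and $\mathrm{Hom}$ is additive in each variable, so the functor $X\mapsto \mathrm{Hom}(X\otimes(A\oplus B),Y)$ is naturally isomorphic to $X\mapsto \mathrm{Hom}(X,(A^\vee\oplus B^\vee)\otimes Y)$, and symmetrically for~\eqref{eq:dual4}; by uniqueness of duals this identifies $(A\oplus B)^\vee$. Combined with the hypothesis that $h(M)^\vee$ exists for every $M\in\mathrm{Ob}\,\Delta$, this shows every finite direct sum $\bigoplus_i h(M_i)$ is dualizable. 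Now for an arbitrary object $V$ of $End^\vee(H)\text{-comod}$, Lemma~\ref{lemma:presentation} provides an exact sequence
$$\bigoplus_{i=1}^m h(M_i)\longrightarrow \bigoplus_{j=1}^n h(N_j)\longrightarrow V\longrightarrow 0 ,$$
whose first two terms are dualizable by the above; Lemma~\ref{lemma:duals} then yields that $V^\vee$ exists. Hence every object of $End^\vee(H)\text{-comod}$ has a dual, the category is neutral Tannakian, and as noted in the text it is therefore equivalent to the category of comodules over a commutative Hopf algebra, necessarily $End^\vee(H)$ itself with antipode induced by transpose.

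I do not expect a genuine obstacle here: the substantive work has already been done in establishing Lemma~\ref{lemma:presentation}, Lemma~\ref{lemma:duals}, and the bialgebra structure on $End^\vee(H)$. The only point deserving a moment's care is that the presentation furnished by Lemma~\ref{lemma:presentation} is merely right exact, with the left-hand map not assumed injective; but that is exactly the hypothesis of Lemma~\ref{lemma:duals}, so no supplementary argument about kernels of comodule maps is required, and likewise one should just note that bilinearity of the tensor product is what makes the direct-sum duality computation go through.
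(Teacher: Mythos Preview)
Your proof is correct and matches the paper's approach exactly: the paper's proof is the one-line ``The proposition follows from lemmas~\ref{lemma:presentation} and~\ref{lemma:duals}'', and you have simply spelled out the intermediate step (finite direct sums of dualizables are dualizable) that makes the two lemmas fit together.
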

 
 \begin{proof}
 The proposition follows from lemmas 
\ref{lemma:presentation} and \ref{lemma:duals}

\end{proof}

\section{Premotivic sheaves}

\subsection{Constructible sheaves}

We recall:

\begin{defn}
  If $X$ is a complex variety (defined over $k\subset \C$), a sheaf
  $\F$ on $X_{an}$ is called
  constructible (or $k$-constructible) if it has finite dimensional
  stalks and there exists a partition $\Sigma$ of $X$ into Zariski
  locally closed (defined over $k$) so that $\F|_\sigma$ are locally
  constant for each $\sigma\in \Sigma$. In this case, $\F$ is also
  called $\Sigma$-constructible. Let $Cons(X)$ or $Cons(X,\Sigma)$
  denote the category of these.
\end{defn}

The definition of constructibility for sheaves on the \'etale topology
$X_{et}$ is similar \cite[chap V]{milne}, \cite[exp IX]{sga4}.
Basic examples of constructible sheaves include the direct images $R^if_*F$ and more generally
direct images of constructible sheaves \cite[cor. 2.4.2]{verdier}.
We  give a slight refinement below  (theorem~\ref{thm:constrBC}).

\begin{defn}
Given a morphism $f:X\to S$ and  a sheaf $\F$ on $X_{an}$ or $X_{et}$, we  say that
$H_S^i(X,\F)=R^if_*\F$ commutes with base change if for any quasi-projective
morphism   $g:T\to S$
 the canonical base change map
$$g^*H^i_S(\F)\to H^i_T(X\times_S T, g^*\F)$$
is an isomorphism. 
\end{defn}

\begin{defn}
Given a morphism $f:X\to S$ and  a sheaf $\F$ on $X$,
 if $H_S^i(X,\F)$ commutes with base change for all
$i$, we will say that $\F$ has the base change property (with respect
to $f$).
\end{defn}

The condition implies that
$$H_S^i(X,\F)_s \to  H^i(X_s, \F|_{X_s})$$
is an isomorphism for every $s\in S$.
Open immersions $X\to S$ give examples where this will fail for $s\in
S-X$. We review some (known) criteria for this to hold.
 A morphism $f:X\to S$ will be called  {\em  locally trivial} if it is
 a topological (although not necessarily an analytic)
 locally trivial fibre bundle  with respect to the  the analytic
 topology. More generally:

 \begin{defn}
   Let say that the pair $(f:X\to S,\F\in Cons(X))$ is  locally trivial if
there exists an open cover $\{U_i\}$ of $S$ and a stratified space $\Phi$
with a constructible sheaf $\G$, such that there are stratified homeomorphisms
 $f^{-1}U_i\cong \Phi\times  U_i$ compatible with $f$ such that $\G$
 pulls back to the restriction of $\F$.
 \end{defn}

 \begin{thm}\label{thm:properbase}
\-
   \begin{enumerate}
\item Given a short exact sequence
$$0\to \F_1\to \F_2\to \F_3\to 0$$
if any two of the $\F_i$  have the  base change  property, then so does
the third.
\item   If $(f:X\to S,\F)$ is locally trivial, then $\F$ has the base
  change property. 
   \item(Proper base change)  If $f:X\to S$ is proper, then
    any sheaf has the base change property.
\item (Locally trivial base change) If $T\to S$ is locally trivial, then the
  base map for $f:X\to S$ with respect to $T$ is an isomorphism for any $\F$ and $i$.
   \end{enumerate}
 \end{thm}

 \begin{proof}
The first statement is obvious.
   The second is clear once we observe that it can be reduced to the case of a product
   $S\times \Phi\to S$, with $\F$ pulled back  from $\Phi$.  For the third,
when $f:X\to S$ is proper  and $T$ is a point,
   the base change property  follows from \cite[thm 6.2]{iversen}. Therefore 
$$g^*H^i_S(\F)\to H^i_T(X\times_S T, g^*\F)$$
is an isomorphism on stalks. 

For the fourth statement, we can reduce to
the case of product, and then apply the K\"unneth formula.
 \end{proof}

We can combine these criteria into one convenient notion.

\begin{defn}\label{defn:controlled}
Given a quasi-projective morphism $f:X\to S$ and a sheaf $\F\in
Cons(X)$, we will say that the pair $(f,\F)$ is  controlled, or that
$f$ is controlled with respect to $\F$,
if there exists a commutative diagram
$$
\xymatrix{
& & S\ar[d]^{=}\\
 X\ar[r]_{g}\ar[d]^{j}\ar[urr]^{f} & T\ar[r]_{h} &  S \\ 
 \bar X\ar[ru]_{\bar g} &  & 
}
$$
such that  
$h$ and $\bar g$ are projective, $j$ is an open immersion, and 
such that $(\bar X, j_!\F)$ is locally trivial over $T$.  
\end{defn}

 It is worth observing that the condition is automatic if $S$ is point
 because everything is locally trivial over a point. Also
note that in general the conditions imply that 
$(\bar  X,\bar X-X)$ is a relative fibre bundle over $T$.
 Such a diagram, which
need not be unique, will be  called a control diagram for the pair
$(f,\F)$.

\begin{lemma}\label{lemma:control}
  If $(f:X\to S, \F)$ is controlled, then $\F$
has   the  base change property with respect to $f$.
\end{lemma}

\begin{proof}
  Choose a control diagram as above. Let $q: S'\to S$ be a morphism,
  and consider the diagram
$$
\xymatrix{
 X\ar[r]^{g} & T\ar[r]^{h} &  S \\ 
 X'\ar[r]^{g'}\ar[u]^{q_2} & T'\ar[r]^{h'}\ar[u]^{q_1} &  S'\ar[u]^{q}
}
$$
where the squares are Cartesian. 
We have to  prove that $q^*R^i(h\circ g)_*\F\cong R^i(h\circ g)_*\F$. It
is enough to check isomorphism on the $E_2$ terms of Leray spectral
sequence. We have
$$q^*R^ah_*R^bg_*\F\cong R^ah_*' q_{1}^* R^bg_*\F$$
because $h$ is proper, and we have 
$$ R^ah'_* q_{1}^* R^bg_*\F\cong R^ah_*R^bg'_*q_2^*\F$$
because $g$ is locally trivial with respect to $\F$.
\end{proof}

\begin{prop}\label{prop:controlH}
  Suppose that $(f:X\to S,\F)$ is a controlled pair with a locally
  closed  $S$-embedding $X\to \PP^N\times S$. 
Given  a nonempty Zariski open set $P\subset
  \check{\PP}^N$, there exists a
  Zariski open cover $\{S_\alpha\}$ of $S$
and elements $H_\alpha\in P$, such that
 $$(H_\alpha\cap f^{-1}S_\alpha \to S_\alpha,\F|_{ H_\alpha})$$ 
$$(f^{-1}S_\alpha\to S_\alpha, q_{\alpha!}q_\alpha^*\F) $$
are controlled, where $q_\alpha:f^{-1}S_\alpha-H_\alpha\to f^{-1}S_\alpha$ is the inclusion.
\end{prop}

\begin{proof}
  Choose a control diagram
$$
\xymatrix{
 X\ar[r]_{g}\ar[d]^{j} & T\ar[r]_{h} &  S \\ 
 \bar X\ar[ru]_{\bar g} &  & 
}
$$
for $\F$.
Then by assumption, $j_!\F$ is construcible with respect to   a
stratification $\{\bar X_\dt\}$ of $\bar X$ which is locally trivial over
$S$. Given $s\in S$, we may choose $H\in P$  transverse to $\bar X_\dt\cap
\bar X_s$. It remains transverse to  $\bar X_\dt\cap \bar X_t$, for
$t$ in a neighbourhood $S_s$ of $s$.
 It follows that the stratification generated by
$\bar X_\dt$ and $H$ and $\bar X_\dt\cap H$ are locally trivial over
$S_s$.
\end{proof}

In order to proceed, we will need Whitney stratifications.
For our purposes a stratification of a variety $X$ is a finite
partition $\Sigma$ of $X$  into Zariski
locally closed sets such that the closure of any stratum $\sigma\in
\Sigma$ is a union  of strata. When $X$ is complex, we will say that this is Whitney
if any stratum is smooth and if the Whitney conditions hold for any
$x\in \sigma\subset \bar \tau$
\cite{lipman1, mather, teissier, verdier-w}. This
 means that given sequences
 $x_i\in \sigma$ and $y_i\in \tau$, both converging to
 $x$, the limit of the secants $\overline{x_iy_i} $ (with respect to a
 local embedding into $\C^N$) lies in the limit of tangent spaces
 $T_{\tau,y_i}$ when the limits exist.  These conditions may appear strange
  at first glance, but their importance comes from the fact
 that they imply  local  triviality of the topology of $X$ along each stratum $\sigma$. In more
 precise terms, there exists a
tubular  neighourhood  $\sigma\subset T_\sigma\subset X^{an}$
\cite{mather} with a retraction $\pi:T_\sigma\to \sigma$ which makes
it a locally trivial fibre bundle  with a contractible fibre.

A number of authors have
 observed that the Whitney conditions can be reformulated in more algebraic language; a
 simple description can be found in  the proof of \cite[thm 3.2]{lipman1}
 for instance. So in particular, given $k$-variety $X$, a stratification
 which is Whitney for one embedding $k\subset \C$ will be Whitney for
 all. Concerning existence in this generality, we have

 \begin{lemma}\label{lemma:kwhitney}
   If $X$ is a $k$-variety with a filtration $X=Y^0\supset
   Y^1\supset\ldots Y^n=\emptyset $ by closed sets, there exists a Whitney
   stratification defined over $k$ such that each $Y^i$ is a union of strata.
 \end{lemma}

 \begin{proof}[Sketch]
  This is a modification of Teissier's method for constructing canonical
  Whitney stratifications  \cite{teissier}. For simplicity, assume that
  $X$ is irreducible. Define $X^0= X$,  $X^1=X_{sing}\cup Y^1$ and
  inductively set
$$X^{i+1}=\{x\in X^i\mid\exists j<i, \text{ the
  Whitney conditions fail at } x\in X^i\subset X^j\}\cup Y^{i+1}$$
This gives a chain of closed sets decreasing to $\emptyset$ by \cite[p 477]{teissier}. The
partition $\{X^i-X^{i+1}\}$ can be seen to give a Whitney
stratification by arguing as in   \cite[pp 478-480]{teissier}.
 \end{proof}

The following gives a version of Deligne's generic base
change theorem \cite[thm 1.9, p 240]{deligne-sga} for complex varieties.

\begin{thm}\label{thm:constrBC}
Given a morphism   $f:X\to Y$ defined over $k$ and a $k$-constructible
sheaf $\F$ on $X$,  the sheaves $R^if_*\F$ are $k$-constructible.
There exists a dense Zariski open  $U\subset S$ such that the
restriction $\F$ has the base change property with respect to
$f^{-1}U\to U$.
\end{thm}

\begin{proof}
Let $j:X\hookrightarrow\bar X$ be an open immersion such that there is a
proper  map $\bar f:\bar X\to Y $ extending $f$.
Let $\Sigma$ be a Whitney stratification of $Y$ with connected strata, and let $\Lambda$ be
a Whitney stratification of $X$ refining $f^{-1}\Lambda$ such that
$j_!\F$  is $\Sigma$-constructible.  By lemma~\ref{lemma:kwhitney}, we
may assume that $\Sigma$ and $\Lambda$ are defined over $k$.
We may  also assume that $\bar X-X$ is a
union of strata, and that $\bar f$ is a  submersion on each stratum.
Each $\sigma\in \Sigma$ possesses a
tubular  neighourhood  $\sigma\subset T_\sigma\subset Y$
with a retraction $\pi:T_\sigma\to \sigma$ which makes
it a locally trivial fibre bundle  with a
contractible fibre $G$.  
The preimage $f^{-1}T_\sigma$ inherits a stratification from $X$, such
that $f^{-1}\sigma$ is a union of strata.
Thom's isotopy theorem \cite{mather, verdier-w} implies
that  $f^{-1}T_\sigma\to T_\sigma$ is a stratfied fibre bundle. That
is, there exists an open cover $\{V_i\}$ of $T_\sigma$ and stratfied
space $\Phi$ such that there are homeomorphisms $f^{-1}V_i\cong \Phi\times
V_i$ of stratified spaces compatible with projection. One can see that
there is no loss in generality in assuming that  each
$V_i=\pi^{-1}U_i$ for an open subset $U_i\subset \sigma$. 
We may assume that the $U_i$ are contractible.
It follows that $(f^{-1} T_\sigma, f^{-1}\sigma)$ is a (relative)
fibre bundle over $\sigma$ with fibre say $(\Phi\times G,\Phi')$.
We can see that $\Phi$ carries a constructible sheaf $\G$ which pulls back to
the restriction of $\F$ under the
 homeomorphisms $f^{-1}U_i\cong \Phi\times G\times U_i$.
This implies that $R^if_*\F$ is locally constant along $\sigma$,
and hence $k$-constructible.

Applying the above argument to a  Zariski dense stratum $\sigma$, shows
that $(X\to Y,\F)$ is a locally trivial over  $\sigma$. Therefore
the base change property holds over $\sigma$.
\end{proof}

\subsection{Cohomology of pairs}

Let $S$ be a $k$-variety.
Let $Var^2_S$ be the category whose
objects are pairs $(X\to S,Y)$ with $Y\subseteq X$ closed. A morphism from
$(X\to S,Y)\to (X'\to S,Y')$ is a morphism of $S$-schemes $X\to X'$ such that
$f(Y)\subseteq Y'$. For such an object and a sheaf $\F$ on $X_{an}$, set
$$H_S^i(X,Y;\F) = R^if_*j_{X,Y!}\F|_{X-Y}$$
where $f:X\to S$ is the projection and $j_{X,Y}:X-Y\to X$ is the
inclusion. We revert to writing this as $H^i_S(X,\F)$ when $Y$ is empty.
When $\F=F$ is constant, $H_S^i(X,Y;F) $
is $k$-constructible by the  theorem \ref{thm:constrBC}, and we can
describe this as the sheaf associated to
$$
U\mapsto  H^i(f^{-1}U, f^{-1}U\cap Y; F)
$$

The map
$(X,Y)\mapsto H_S^i(X,Y;F)$ is easily seen to give a contravariant
functor on $Var^2_S$. The morphisms 
$H^i_S(X,Y)\to H_S^i(X',Y')$ are induced by the homomorphisms
$$H^i({f'}^{-1}U, {f'}^{-1}U\cap Y';F)\to H^i(f^{-1}U, f^{-1}U\cap
Y;F)$$

\begin{defn}\label{defn:controlledpair}
  A pair $(f:X\to S,Y)$
in $Var_S^2$  is controlled with respect to $\F$ if $f$ is controlled with respect to the
sheaf  $j_{X,Y!}\F|_{X-Y}$. The pair is said to be controlled it if it
so with respect to the constant sheaf $F$.
\end{defn}
 
 The control condition for a pair, with respect to $F$, amounts to requiring that
both $(\bar  X,\bar X-X)$  and $(\bar Y, \bar Y-Y)$ are  relative fibre bundles over an intermediate
projective family $T\to S$, where $\bar Y$ is the closure of $Y$.

\begin{lemma}
  If $f:(X\to S, Y)$ is controlled with respect to $\F$, then $j_{X,Y!}\F|_{X-Y}$
has  the base change property with respect to $f$.
\end{lemma}

\begin{proof}
  This is a consequence of lemma \ref{lemma:control}.
\end{proof}

Therefore if $(X\to S,Y)$ is controlled then
\begin{equation}\label{eq:basechange}
H_S^i(X,Y;F)_s \cong H^i(X_s, Y_s; F)
\end{equation}
for every $s\in S$.

From proposition \ref{prop:controlH}, we obtain.

\begin{lemma}
  Suppose that $(X\to S, Y)$ is controlled. Then  for a general
  hyperplane $H$  (with respect to a locally closed embedding $X\subset
  \PP^N\times S$), $(H\to S, H\cap Y)$ is controlled.
\end{lemma}

Given a chain of closed sets $X\supset Y\supset Z$ and a sheaf $\F$ on
$X$, we get an exact
sequence
$$0\to j_{X,Y!}\F|_{X-Y}\to j_{X,Z!}\F|_{X-Z}\to
i_*j_{Y,Z!}\F|_{Y-Z}\to 0$$
where $i:Y\to X$ is the inclusion.
This induces a long exact sequence
$$\ldots  H^i_S(X,Y;\F)\to H^i_S(X,Z;\F)\to H^i_S(Y,Z;\F)\to H^{i+1}_S(X,Y;\F)\ldots $$
which reduces to the usual exact sequence for pairs, when $S$ is point
and $\F$ is constant.

\subsection{Premotivic sheaves}\label{section:motivicsheaves}

Let $S$ be a $k$-variety.
The category $\PM(S)$ of premotivic sheaves is constructed as a direct limit of categories $\PM(S,\Sigma)$.
Each  $\PM(S,\Sigma)$ is obtained by applying Nori's  construction 
 to an appropriate graph $\Delta(S,\Sigma)$ and functor $H_\Sigma$ given below.

Let $S\in ObVar_k$ be {\em connected}. Then we construct a graph $\Delta(S)$ as
follows. The objects (i.e. vertices) are quadruples $(X\to S, Y, i, w)$
consisting of
\begin{enumerate}
\item[(1)] a  quasi-projective morphism $X\to S$.
\item[(2)] a closed subvariety $Y$ such that the pair $(X\to S,Y)$ is
  controlled (definition \ref{defn:controlledpair}),
\item[(3)] a natural number $i\in \N$ and an integer $w$.
\end{enumerate}

The set of morphisms (edges) is the union of the three following sets:
\begin{enumerate}
\item[Type I:] Geometric morphisms 
$$(X\to S,Y,i, w)\to (X'\to S,Y',i,w)$$ 
where $(X\to S,Y)\to  (X'\to S,Y')$ is a
  morphism in $Var^2_S$.
\item[Type II:] Connecting morphisms 
$$(f:X\to S,Y,i+1,w)\to (f|_Y:Y\to S,Z,i,w)$$ 
for every chain $Z\subseteq Y\subseteq
  X$ of closed sets.
  \item[Type III:] Twisted projection morphisms 
  $$(X\times \PP^1, Y\times \PP^1\cup X\times \{0\}, i+2,w+1)\to (X, Y, i,w)$$
  for every $(X,Y,i,w)\in Ob\Gamma(S)$.
\end{enumerate}
\bigskip

For arbitrary $S\in Var_k$, set $\Delta(S)=\prod \Delta(S_i)$, where
$S_i$ are the connected components. Thus the parameters $i$ and $w$
are locally constant. 

By a good stratification or simply stratification of $S$, we mean a
finite partition $\Sigma$ into connected locally closed sets (defined
over $k$) such that
$\Sigma$ contains the closure of every element.
Given a stratification $\Sigma$, let $\Delta(S,\Sigma)\subset
\Delta(S)$ be the full subgraph consisting of objects such that
$H_S^i(X,Y; F) $ is constructible with respect to the stratification
$\Sigma$. 
We have that $\bigcup \Delta(S,\Sigma)=\Delta(S)$ because the sheaves $H_S^i(X,Y; F) $ are
constructible.

Given $\Sigma$ as above,
 let $s = (s_{\sigma}\in \sigma(\bar k))$ denote a collection of base points,
one for each $\sigma\in\Sigma$.  Let $|\Sigma|$ be the cardinality of
$\Sigma$.  Define
$$H_{\Sigma, s, F}(X,Y,i,w) =
\prod_{\sigma\in \Sigma } [H_S^i(X,Y;F)]_{s_\sigma}=\prod_{\sigma\in \Sigma } H^i(X_{s_\sigma},Y_{s_\sigma};F)$$ 
to be the product
of stalks. We usually suppress the symbols $\Sigma,s,F$.

We want to extend $H=H_{\Sigma, s,F}$ to  a functor $\Delta(S,\Sigma)^{op}\to
F$-mod. We do this case by case.
\begin{enumerate}
\item[Type I:] 
A morphism $g:(f:X\to S,Y,i, w)\to (f':X'\to S,Y',i,w)$ of type I gives rise to the natural
homorphism 
$$H^i({f'}^{-1}U, {f'}^{-1}U\cap Y';F)\to H^i(f^{-1}U, f^{-1}U\cap Y;F)$$
for each $U\subset S$. Since this is clearly a morphism of presheaves,
 it induces a morphism of sheaves $H_S^i(X',Y')\to H_S^i(X,Y)$. Thus we get the desired
 map  $H(X',Y',i,w)\to H(X,Y,i,w)$ by taking the product of this sheaf map over stalks.
 We give a second description 
which is a bit more complicated, although better for comparing to the \'etale case.
We have a commutative diagram
$$
\xymatrix{
  j_{X'Y'!}F_{X'-Y'}\ar[rd]\ar@{-->}[dd] &  &  F_{Y'}\ar[ll]^{[1]}\ar[dd] \\ 
  &  F_{X'}\ar[ru]\ar[dd] &  \\ 
 \R g_*j_{XY!}F_{X-Y}\ar[rd] &  &  \R g_* g^* F_{Y'}\ar[ll]^{[1]} \\ 
  &  \R g_* g^* F_{X'}\ar[ru] & 
}
$$
where the triangles are  distinguished, and the solid vertical arrows
are the adjunction homomorphisms. Thus we get the dotted arrow above.
From which we obtain
$$\R f'_*j_{X'Y'!}F \to  \R f'_*\R g_*j_{XY!}F \cong \R f_*j_{XY!}F$$
So we get a map of sheaves
$$R^i f'_*j_{X'Y'!}F \to   R^i f_*j_{XY!}F$$
which is easily seen to coincide with the previous map.
\item[Type II:] 
A morphism $(X,Y,i+1,w)\to (Y,Z,i,w)$
of type II gives rise to a connecting homomorphism  $H^i_S(Y,Z)\to H^{i+1}_S(X,Y)$
induced from the exact sequence
$$0\to j_{X,Y_!}F\to j_{X,Z!}F\to j_{Y,Z!}F\to 0$$
Taking a product over stalks yields
$H(Y,Z,i,w)\to H(X,Y,i+1,w)$ .

\item[Type III:]

 Finally a morphism $(X\times \PP^1, Y\times \PP^1\cup X\times \{0\}, i+2,w+1)\to (X, Y, i,w)$
 corresponds to the isomorphism
$$H_S^i(X,Y;F)\to H_S^{i+2}(X\times \PP^1,Y\times \PP^1\cup X\times \{0\}; F)$$
given by exterior product with the fundamental cycle of $(\PP^1,\{0\})$.
This gives rise to 
$$H(X,Y, i,w)\to H(X\times \PP^1,Y\times \PP^1\cup X\times \{0\}, i+2, w+1)$$
\end{enumerate}

Thus we can apply the construction from the previous section to obtain:

\begin{defn}
  The category $\PM(S, \Sigma,s;F)$ of $\Sigma$-constructible premotivic
  sheaves of $F$-modules on $S$ is the category of finite dimensional right comodules
  over $End^\vee(H_{\Sigma,s})$.  %
For any finite commutative $F$-algebra $R$,
  let $\PM(S,\Sigma,s; F)\otimes_F R$ denote the category with finitely
  generated right comodules over $End^\vee(H)\otimes_F R$.
\end{defn}

\subsection{Realizations}
By definition there is a faithful exact forgetful functor $U:\PM(S,\Sigma;F)\to F$-mod.
We can see immediately from the universal coefficient theorem that
$\PM(S, R) = \PM(S,F)\otimes_F R$, whenever $F\subseteq R$ is a field
extension.
The matrix coefficients of the $End^\vee(H)$-coaction of any object
$V$ of $\PM(S,\Sigma)$ lie in some $End^\vee(H|_D)$ for a finite
subgraph $D$. Thus $V$ can be regarded as an
$End^\vee(H|_D)$-comodule, or equivalently an $End(H|_D)$-module.  In
fact, we can describe $\PM(S,\Sigma,s)$ as the direct limit of the categories of finite
dimensional $End(H|_D)$-modules, as $D\subset \Delta(S)$ varies over
finite subgraphs (cf \cite{brug}). This dual description was employed
by Nori in his work, and it would appear that $\PM(Spec\, k,
Spec\, \C)$ is just Nori's category of cohomological motives tensored with $F$. 
 We write this as $\PM(k;F)$ or simply $\PM(k)$ from now on. 
 
 Given $M=(X\to S,Y,i,w)\in Ob \Delta(S)$, $H(M)$ is naturally an
$End(H(M))$-module, and hence by transpose an $End^\vee(H)$-comodule
 denoted by $\ph_{S}^i(X,Y)(w)$ or $\ph_S^i(X,Y)$ if $w=0$ (we will see shortly that
this independent of $\Sigma$ and $s$ in a suitable sense). When
$S=Spec\, k$, we omit the subscript.

By definition we have

\begin{prop}
  $\PM(S,\Sigma,s;F)$ is an $F$-linear abelian category with an
  exact faithful functor to $F$-mod.
\end{prop}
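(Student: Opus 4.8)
The statement to be proved is essentially immediate from the general machinery of Section 2, so the proof proposal amounts to identifying which earlier result each property follows from.

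The plan is to unwind the definition of $\PM(S,\Sigma,s;F)$ and invoke the general structure theory. By construction, $\PM(S,\Sigma,s;F)$ is the category of finite dimensional right comodules over the coalgebra $E := End^\vee(H_{\Sigma,s})$, where $H_{\Sigma,s} : \Delta(S,\Sigma)^{op} \to F\text{-mod}$ is the functor assembled case-by-case from the Type I, II, III morphisms. First I would recall (from the discussion in Section~2.1 and Lemma~\ref{lemma:EndveeColag}) that $E$ is genuinely a coalgebra over $F$, so the category of finite dimensional right $E$-comodules makes sense. That this comodule category is $F$-linear and abelian is the standard fact about comodules over a coalgebra over a field: finite dimensional comodules over a coalgebra form an abelian category because every comodule is locally finite and the coalgebra is the union of its finite dimensional subcoalgebras $E^\vee(D)$ (as in Remark~\ref{rmk:2limit} and Lemma~\ref{lemma:presentation}); over each finite dimensional subcoalgebra one is looking at finite dimensional modules over the finite dimensional dual algebra $End(H|_D)$, which is manifestly an $F$-linear abelian category, and the colimit (a $2$-colimit of abelian categories along exact faithful transition functors, cf.\ Appendix~A) is again abelian.

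Next, the forgetful functor $U : \PM(S,\Sigma,s;F) \to F\text{-mod}$ sending a comodule to its underlying vector space is $F$-linear by inspection. It is exact because exactness of a sequence of comodules is tested on underlying vector spaces — kernels and cokernels of comodule maps are computed in $F\text{-mod}$ and inherit the comodule structure — so $U$ both detects and preserves exact sequences. It is faithful because a comodule morphism is zero precisely when the underlying linear map is zero. Alternatively, one may cite the equivalence in Remark~\ref{rmk:2limit} identifying $\PM(S,\Sigma,s;F)$ with $\2lim_D End(H|_D)\text{-mod}$, under which $U$ becomes the compatible system of forgetful functors $End(H|_D)\text{-mod} \to F\text{-mod}$, each obviously exact faithful and $F$-linear.

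There is no real obstacle here; the only point requiring a little care is making sure the abelian structure on the direct-limit category is the expected one — i.e.\ that the $2$-colimit of the $End(H|_D)\text{-mod}$ along the faithful exact transition functors is computed the naive way, with Hom's and exact sequences stabilizing — which is exactly the content recalled in Appendix~A and already used implicitly in Lemma~\ref{lemma:presentation} and its corollaries. Accordingly I would simply write: this is immediate from the construction together with Lemma~\ref{lemma:EndveeColag}, Lemma~\ref{lemma:presentation} and the discussion of $\2lim$ in Remark~\ref{rmk:2limit}, since the category of finite dimensional comodules over a coalgebra over a field is $F$-linear abelian and the forgetful functor to $F\text{-mod}$ is exact and faithful.
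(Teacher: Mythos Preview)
Your proposal is correct and aligns with the paper's treatment: the paper simply prefaces the proposition with ``By definition we have'' and gives no further argument, since $\PM(S,\Sigma,s;F)$ is defined as the category of finite dimensional comodules over $End^\vee(H_{\Sigma,s})$ and the forgetful functor to $F\text{-mod}$ is the obvious one. Your elaboration via Lemma~\ref{lemma:EndveeColag}, Remark~\ref{rmk:2limit}, and the standard comodule theory is a faithful unpacking of what the paper leaves implicit.
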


In view of the following, we may suppress base points.  

\begin{lemma}
  Suppose that $t_{\sigma}$ is another collection of base points, then
  $\PM(S,\Sigma,s)$ and $\PM(S,\Sigma, t)$ are isomorphic.
\end{lemma}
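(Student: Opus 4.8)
The plan is to show that the two functors $H_{\Sigma,s}$ and $H_{\Sigma,t}$ which define these categories are naturally isomorphic as functors on $\Delta(S,\Sigma)^{op}$, and then to invoke the corollary asserting that a natural isomorphism of functors $H\cong H'$ induces an isomorphism $End^\vee(H)\text{-comod}\cong End^\vee(H')\text{-comod}$. First I would fix, for each stratum $\sigma\in\Sigma$, a path $\gamma_\sigma\colon[0,1]\to\sigma$ from $s_\sigma$ to $t_\sigma$; this is possible because $\sigma$ is connected by the definition of a good stratification, and a connected locally closed subvariety is locally path-connected in the analytic topology, hence path-connected.

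Next I would recall the standard fact that for a locally constant sheaf $\LL$ of $F$-modules on $\sigma$, parallel transport along $\gamma_\sigma$ determines an isomorphism $\tau_\sigma(\LL)\colon\LL_{s_\sigma}\xrightarrow{\ \sim\ }\LL_{t_\sigma}$ of stalks, and that $\tau_\sigma$ is natural in $\LL$: any morphism of locally constant sheaves commutes with parallel transport (equivalently, $\tau_\sigma$ is a natural isomorphism between the two stalk functors on the category of locally constant sheaves on $\sigma$, both equivalent to the category of $F$-representations of $\pi_1(\sigma)$). Now for each object $M=(X\to S,Y,i,w)$ of $\Delta(S,\Sigma)$, the sheaf $H_S^i(X,Y;F)$ is $\Sigma$-constructible, so its restriction to $\sigma$ is locally constant; since the stalk of a sheaf at a point of a locally closed subset agrees with the stalk of its restriction to that subset, $\tau_\sigma$ yields an isomorphism $[H_S^i(X,Y;F)]_{s_\sigma}\xrightarrow{\ \sim\ }[H_S^i(X,Y;F)]_{t_\sigma}$. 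Each edge of $\Delta(S,\Sigma)$ --- geometric (Type I), connecting (Type II), or twisted projection (Type III) --- is defined via a morphism of sheaves on $S$ (respectively a connecting map coming from a short exact sequence of sheaves, respectively an exterior-product isomorphism of sheaves), which restricts to a morphism of locally constant sheaves on $\sigma$ and therefore commutes with $\tau_\sigma$ by the naturality just recalled. Taking the product over $\sigma\in\Sigma$ then produces the desired natural isomorphism $\Gamma\colon H_{\Sigma,s}\xrightarrow{\ \sim\ }H_{\Sigma,t}$ of functors on $\Delta(S,\Sigma)^{op}$, and the cited corollary gives $\PM(S,\Sigma,s;F)\cong\PM(S,\Sigma,t;F)$.

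The main obstacle is the naturality in the morphisms of $\Delta(S,\Sigma)$, i.e.\ checking that the parallel-transport isomorphisms on the various strata are compatible with the maps of Types I, II and III simultaneously. This ultimately reduces to the statement that a morphism of locally constant sheaves on a connected space intertwines parallel transport; the only care needed is to note that the Type II connecting maps and the Type III exterior-product maps are genuine maps of sheaves (not merely of stalks), so that this statement applies to all three types uniformly. Everything else is routine.
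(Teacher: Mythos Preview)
Your proposal is correct and follows essentially the same approach as the paper: the paper's proof simply states that parallel transport along paths $\gamma_\sigma$ joining $s_\sigma$ to $t_\sigma$ yields an isomorphism of fibre functors $H_s\cong H_t$, and you have carefully unpacked exactly this, verifying the naturality with respect to the three types of edges and then invoking the corollary on natural isomorphisms of functors. Your version is more detailed but not different in substance.
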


\begin{proof}
  Given a homotopy class of paths $\gamma_{\sigma}$ in $\sigma$
  joining $s_{\sigma}$ to $t_{\sigma}$, parallel transport along these
  curves yields an isomorphism of fiber functors $H_s\cong H_t$.
\end{proof}

\begin{remark}\label{rmk:PM}
This business of choosing base points and then suppressing them
is a bit clumsy. A more elegant approach is to simply redefine
$$H_{\Sigma,  F}(X,Y,i,w) =
\prod_{\sigma\in \Sigma } \Gamma(\tilde \sigma, \pi_\sigma^*H_S^i(X,Y;F))$$ 
where $\pi_\sigma: \tilde \sigma\to \sigma(\C)$ are the universal covers,
and then build $\PM(S,\Sigma; F)$ accordingly. However, the original
approach does make certain things more transparent, and will generally be preferred.
\end{remark}

We have the following consequences of corollary \ref{cor:tannaka}.

\begin{constr}
  Let $Cons(S_{\iota,an}, \Sigma;F)$ denote the category of sheaves of
  $F$-modules which are constructible with respect to the
  stratification $\Sigma$.  The  fibre  functor $\Phi:Cons(S_{an}, \Sigma)\to
  F$-mod given the product of stalks at the  base points provides a
  faithful exact  functor.  The discussion from the  previous section shows that $(X,Y,i,w)\mapsto H_S^i(X,Y;F)$
  is a functor on $\Delta(S,\Sigma)^{op}$ and that $H$ is a composition of this with $\Phi$.
  Thus corollary \ref{cor:tannaka} yields an extension functor
  $R_{\iota,B}=R_B:\PM(S,\Sigma)\to Cons(S_{\iota,an}, \Sigma)$ that we call Betti realization.
  $R_B$ coincides with the forgetful functor $U$ on $\PM(k)$. 
\end{constr}

\begin{constr}
The map
$$t^n(X,Y,i,w) =\ph^i_S(X,Y)(w+n)$$
extends to a functor $\Delta(S,\Sigma)^{op}\to \PM(S,\Sigma)$ satisfying
$t^nt^m = t^{n+m}$. When composed
with the forgetful functor to $F$-mod, we obtain  $H$. Thus this extends to
an endofunctor $T^n:\PM(S,\Sigma)\to \PM(S,\Sigma)$ satisfying 
$$T^n ( \ph^i_S(X,Y)(w)) = \ph^i_S(X,Y)(w+n)$$
and $T^nT^m=T^{n+m}$;
in particular, it is an automorphism.
\end{constr}

\begin{constr}
Let $F$ be finite or $\Q_\ell$. Let $\bar k\subseteq \C$ denote the  algebraic closure of $k$. 
  Consider the map
  $$(f:X\to S,Y,i, w)\mapsto R^i \bar f^{et}_*j^{et}_{  \bar X, \bar Y!}F_{ \bar X- \bar Y}(w)$$
   where the sheaves and operations are on the \'etale topology, $\bar
   f :\bar X\to S$ etc. are the base changes to $\bar k$,
   $j_{\bar X\bar Y}:\bar X-\bar Y\to \bar X $  is the inclusion, and
   $(w)$ represents the Tate  twist.
  This is easily seen to be a functor by modifying the above discussion.
  Thanks to the comparison theorem between \'etale and classical cohomology (appendix B).
$$(R^i \bar f^{et}_*j^{et}_{  \bar X, \bar Y!}F_{ \bar X- \bar Y}(w))_s \cong (R^i \bar f_*j_{  X, Y!}F_{ X- Y})_s\text{ in $F$-mod}$$
for $s\in S(\C)$. Thus we can obtain an extension which is the \'etale
  realization functor  $R_{et}$ from $\PM(S;F)$ to the
  category  $Cons(S_{et}, \Sigma,;F)$ of sheaves of $F$-modules on $ S_{et}$ constructible with respect
  to $\Sigma$.
\end{constr}

\begin{constr}
   Let $Cons\text{-}MHM(S_{\iota,an}, \Sigma;\Q)$ denote the heart
   of the classical $t$-structure on the category $MHM(S,\Sigma)$ of $\Sigma$-constructible
  mixed Hodge modules (appendix C). We have an embedding 
$$rat:Cons\text{-}MHM(S_{an}, \Sigma;\Q)\hookrightarrow Cons(S_{an}, \Sigma)$$
which can be composed with the above functor $\Phi$ to obtain a fibre
functor. 
Consider the map
  $$(f:X\to S,Y,i, w)\mapsto {}^cH^i\circ \bar \R f_*j_{  \bar X, \bar Y!}F_{ \bar X- \bar Y}(w)$$
   where the operations are in the derived categories of mixed Hodge
   modules, and $ {}^cH^i={}^c\tau_{\le i}{}^c\tau_{\ge i}$ is cohomology with respect to the classical
   $t$-structure. When composed with $rat$, we obtain $H_S^i(X,Y)$. 
Thus we obtain a Hodge realization functor 
$$R_{\iota,H}=R_H:\PM(S,\Sigma)\to Cons\text{-}MHM(S_{\iota,an}, \Sigma)$$
A special given in section \ref{section:localsystems} can be made more explicit.
\end{constr}

We fixed an embedding  $\iota:k\hookrightarrow \C$ at the outset. Let write  $\PM(S;F)^\iota$
for the resulting category. We now show that the category $\PM(S;F)$ is independent of this.

\begin{prop}
  For any two embeddings of  $\iota,\mu: k\subset \C$, the categories
$\PM(S,F)^\iota$ and $\PM(S,F)^\mu$ are equivalent.
\end{prop}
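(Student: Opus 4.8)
The plan is to reduce the statement to a comparison of the graphs and fiber functors used to build the two categories, and then invoke the machinery already set up. First I would observe that the underlying graph $\Delta(S,\Sigma)$ is purely algebraic: its vertices are quadruples $(X\to S,Y,i,w)$ and its edges (Types I, II, III) are defined entirely in terms of morphisms of $k$-schemes and the combinatorial data $i,w$. In particular $\Delta(S,\Sigma)$ does \emph{not} depend on the embedding $\iota$. The only place $\iota$ enters is in the two constraints ``$H_S^i(X,Y;F)$ commutes with base change'' and ``$H_S^i(X,Y;F)$ is $\Sigma$-constructible'', and in the fiber functor $H_{\Sigma,s,F}$, which takes stalks of the analytic sheaf $R^if_*j_{X,Y!}F$ on $S_{\iota,an}$.

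Next I would handle the constructibility/base-change constraints. The statement that $R^if_*j_{X,Y!}F$ has finite-dimensional stalks, is $\Sigma$-constructible, and commutes with base change is a statement that can be checked after base change to any algebraically closed overfield and is insensitive to the choice of embedding; concretely one can compare both $\iota$ and $\mu$ to the situation over $\bar k$ (or over $\C$ via a common extension of both embeddings) using the smooth/proper base change and constructibility theorems already cited (the proper base change theorem, \cite[cor.\ 2.4.2]{verdier}). So $\Delta(S,\Sigma)^\iota = \Delta(S,\Sigma)^\mu$ as graphs, and it remains to compare the functors $H^\iota$ and $H^\mu$ on this common graph. By Corollary~\ref{cor:tannaka} (applied to $\mathcal{A}=\PM(S,\Sigma)^\mu$, say) and the reconstruction theorem, it suffices to produce a natural isomorphism $H^\iota \cong H^\mu$ of functors $\Delta(S,\Sigma)^{op}\to F\text{-mod}$, since then $End^\vee(H^\iota)\cong End^\vee(H^\mu)$ as coalgebras and the categories of comodules are equivalent.

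To produce that natural isomorphism I would use the standard fact that singular cohomology of a $k$-variety with constant coefficients is a ``motivic'' invariant independent of the embedding, in the following form. Both $\iota$ and $\mu$ factor through a common algebraically closed field: choose an algebraically closed $\Omega$ with $k$-embeddings extending $\iota$ and $\mu$ (e.g.\ $\Omega=\C$ with two embeddings of $\bar k$, or an ultrapower). Then for each vertex $M=(X\to S,Y,i,w)$ and each base point $s_\sigma$, there are comparison isomorphisms relating $H^i(X_{\iota,an,s_\sigma},Y_{\iota,an,s_\sigma};F)$ and $H^i(X_{\mu,an,s_\sigma},Y_{\mu,an,s_\sigma};F)$ — for instance via \'etale cohomology (appendix B) with $F$ finite or $\Q_\ell$, together with the fact that \'etale cohomology of varieties over separably closed fields is invariant under extension of algebraically closed fields — and these isomorphisms are natural for the morphisms of $Var^2_S$ defining Type I edges, compatible with the connecting maps (Type II) and with cup product by the fundamental class of $(\PP^1,\{0\})$ (Type III). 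One subtlety is that the $F$ in the Betti story is an arbitrary field while the clean \'etale comparison needs $F$ finite or $\Q_\ell$; I would circumvent this by first proving the equivalence over $\Q$ (or over a finite field / $\Q_\ell$) and then extending scalars using the already-established identity $\PM(S,R)\cong\PM(S,F)\otimes_F R$, or alternatively by noting that the comparison $H^i(-;F)\cong H^i(-;\Z)\otimes F$ and $H^i(-;\Z)$ is a topological invariant of the $\C$-scheme that, by base change to $\Omega$, does not see the embedding.

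The main obstacle is precisely this last naturality claim: one must check that the chosen family of comparison isomorphisms $\{H^\iota(M)\xrightarrow{\sim} H^\mu(M)\}_{M\in Ob\,\Delta}$ can be taken compatibly with \emph{all three} types of edges simultaneously, i.e.\ that it is a morphism in the category of graph-functors of Lemma~\ref{lemma:endvee}(1). For Type I this is the functoriality of the comparison theorems; for Type II it is compatibility with the long exact sequence of a pair (which follows since the comparison is induced by a morphism of the relevant distinguished triangles / short exact sequences of sheaves); for Type III it is compatibility with the cycle class map, which holds because the fundamental class of $(\PP^1,\{0\})$ is algebraic and hence matched under either realization. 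Once naturality is in hand, Lemma~\ref{lemma:endvee}(3) gives an isomorphism $End^\vee(H^\iota)\cong End^\vee(H^\mu)$ of coalgebras, hence an equivalence $\PM(S,\Sigma)^\iota\simeq\PM(S,\Sigma)^\mu$ compatible with the stratification maps, and passing to the direct limit over $\Sigma$ yields $\PM(S,F)^\iota\simeq\PM(S,F)^\mu$.
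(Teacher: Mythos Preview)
Your reduction to the prime field via $\PM(S,\Sigma;F)\cong\PM(S,\Sigma;F_0)\otimes_{F_0}F$ is correct and is exactly what the paper does, and for $F=\Z/p\Z$ your use of the \'etale comparison to identify $H^\iota\cong H^{et}\cong H^\mu$ as functors is also the paper's argument. The gap is the case $F=\Q$.

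Over $\Q$ there is \emph{no} natural isomorphism $H^\iota\cong H^\mu$ of functors $\Delta(S,\Sigma)^{op}\to\Q\text{-mod}$, and your proposal implicitly assumes one. The \'etale comparison only matches the two functors after $\otimes\Q_\ell$; your ``common overfield $\Omega$'' argument does not help, since singular cohomology with $\Q$ (or $\Z$) coefficients is not invariant under extension of algebraically closed base fields in any functorial way --- the spaces $X_{\iota,an}$ and $X_{\mu,an}$ are typically not canonically homeomorphic, so while $H^i(X_{\iota,an};\Q)$ and $H^i(X_{\mu,an};\Q)$ have the same dimension for each $X$, no coherent choice of isomorphism exists across all of $\Delta$. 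Your fallback ``prove it over $\Q$ first and then extend scalars'' is circular: $\Q$ \emph{is} the prime field here, and you cannot descend from an equivalence over $\Q_\ell$ to one over $\Q$.

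The paper handles $F=\Q$ by Nori's trick: rather than seek a natural isomorphism $H^\iota\cong H^\mu$, one introduces the auxiliary category $\T$ of triples $(A,B,h)$ with $A,B\in\Q\text{-mod}$ and $h:A\otimes\Q_\ell\cong B\otimes\Q_\ell$, observes that the first projection $p:\T\to\Q\text{-mod}$ is an equivalence, builds a functor $H^\T:\Delta\to\T$ using the \'etale comparison for the third component, and then shows directly (via the $\diamond$ calculus) that the induced maps $End^\vee(H^\iota)\to End^\vee(H^\T)\leftarrow End^\vee(H^\mu)$ are isomorphisms. This sidesteps the nonexistence of a $\Q$-rational comparison by working with a category that remembers both Betti realizations and only demands they agree $\ell$-adically.
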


\begin{proof} 
It suffices to show that $\PM(S,\Sigma, F)^\iota$ and $\PM(S,\Sigma, F)^\mu$ are equivalent
for every stratificaion.
 We note that $\PM(S,\Sigma,;F) = \PM(S, \Sigma;F_0)\otimes_{F_0} F$ for any 
subfield. Thus 
it suffices to assume that $F$ is the prime field. Suppose that  $F=\Z/p\Z$.  Then, we can see this immediately by the  comparison
theorem \cite[exp XVI, thm 4.1]{sga4}
\begin{eqnarray*}
  H_{\Sigma, s}(X,Y,i,w;F) &=&
\prod_{\sigma\in \Sigma } [R^if_*j_!F]_{s_\sigma}\\
&\cong& \prod_{\sigma\in \Sigma } [R^i\bar f^{et}_*j^{et}_!F]_{s_\sigma}\\
 &=&H^{et}_{\Sigma, s}(X,Y,i,w;F)
\end{eqnarray*}
This description is independent of the embedding.  Therefore 
$\PM(S,\Sigma, F)^\iota$ and $\PM(S,\Sigma, F)^\mu$ are equivalent.

The remaining case $F=\Q$ follows Nori's argument \cite{nori-mot}
quite closely. Write $H^\iota$ and $H^\mu$ for the
functors corresponding to the embeddings.
Define the category $\T$  of triples $(A,B,h)$ $A,B\in Ob
F\text{-mod}$, $h:A\otimes \Q_\ell\cong B\otimes \Q_\ell$, where
morphisms are compatible pairs of linear maps.  If  $p$ denote the
first projection $(A,B,h)\mapsto A$, then $p$
is easily seen to be fully faithful and essentially surjective. Therefore it is an equivalence.
So there is functor $q:\Q\text{-mod}\to \T$  and natural isomorphisms $\gamma:q\circ p\cong 1_\T$
and $\eta: p\circ q\cong 1_{\Q\text{-mod}}$.
We get a functor $H^\T:\Delta(S,\Sigma)\to \T$ by taking  $H^\iota$
and $H^\mu$  as the first and second component.
 For the third, we use the composition of the comparison maps  
$$h:H^\iota\otimes\Q_\ell\cong H^{et}\cong H^\mu\otimes \Q_\ell$$ 
The map $p$ induces a homomorphism $End^\vee(H^\iota)\to End^\vee(H^\T)$.
We claim that this gives an isomorphism. Here we use the duality principle given in lemma~\ref{lemma:dualityPrinc}.
The  dual of $p$ is given by $p^*(f)= 1_{p}\diamond f$. The map  is injective as it has a left inverse given by
$$g\mapsto  (\gamma\diamond 1_H)\circ (1_q\diamond g)\circ (\gamma^{-1}\diamond 1_H)$$
The map $p^*$ is also surjective, because
$$p^* (1_q\diamond [(\eta\diamond 1)\circ g\circ (\eta^{-1}\diamond 1)])=g$$
By an identical argument $End^\vee(H^\mu) \cong End^\vee(H^\T)$

\end{proof}

\subsection{Base change}

\begin{lemma}
  Let $S=S_1\cup S_2\cup \ldots S_n$ be a decomposition into connected
  components. Choose stratifications $\Sigma_i$ of $S_i$ and let
  $\Sigma=\bigcup \Sigma_i$. Then 
$$\PM(S,\Sigma)= \PM(S_1,\Sigma_1)\times \PM(S_2,\Sigma_2)\times\ldots \PM(S_n,\Sigma_n)$$
\end{lemma}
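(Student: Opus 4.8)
The plan is to reduce this to the fact, established in Lemma~\ref{lemma:endvee}(5), that the assignment $(\Delta,H)\mapsto End^\vee(H)$ preserves finite coproducts, together with Lemma~\ref{lemma:productcomod}. Recall that by construction $\PM(S,\Sigma) = End^\vee(H_{\Sigma,s})\text{-comod}$, where $H_{\Sigma,s}$ is the functor on $\Delta(S,\Sigma)^{op}$ obtained by taking products of stalks over the strata. So it suffices to identify the graph $\Delta(S,\Sigma)$ and the functor $H_{\Sigma,s}$ in terms of the corresponding data for the connected components.

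First I would observe that since $S = S_1\cup\ldots\cup S_n$ is the decomposition into connected components, the definition $\Gamma(S) = \prod\Gamma(S_i)$ was built in precisely so that $\Delta(S,\Sigma) = \prod_i\Delta(S_i,\Sigma_i)$ as a full subgraph of $\prod_i\Gamma(S_i)$: an object $(X\to S, Y, i, w)$ decomposes canonically as the disjoint union of its restrictions $(X\times_S S_j \to S_j, Y\times_S S_j, i, w)$, the parameters $i,w$ being locally constant, and $H_S^i(X,Y;F)$ is $\Sigma$-constructible iff each restriction to $S_j$ is $\Sigma_j$-constructible; morphisms likewise split componentwise. Next, I would check that under this identification $H_{\Sigma,s} = H_{\Sigma_1,s_1}\times\ldots\times H_{\Sigma_n,s_n}$ as a functor into $(F\text{-mod})^n$ (in the sense of the $\times$-construction preceding Lemma~\ref{lemma:productcomod}): the base points for $\Sigma$ are exactly the union of the base points for the $\Sigma_i$, and the product-of-stalks functor factors accordingly, since the stalk of $H_S^i(X,Y;F)$ at a point $s_\sigma\in S_j$ depends only on the restriction to $S_j$.

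With these two identifications in hand, I would apply Lemma~\ref{lemma:productcomod}. Its hypothesis is that each $\Delta(S_i,\Sigma_i)$ has an initial object $\emptyset$ with $H_{\Sigma_i,s_i}(\emptyset) = 0$; this is satisfied by the object $(\emptyset\to S_i,\emptyset,0,0)$, whose fiberwise cohomology vanishes, and which maps uniquely into any object via the Type~I morphisms. Lemma~\ref{lemma:productcomod} then gives $End^\vee(H_{\Sigma,s})\text{-comod}\cong\prod_i End^\vee(H_{\Sigma_i,s_i})\text{-comod}$, which is exactly the claimed product decomposition $\PM(S,\Sigma)\cong\prod_i\PM(S_i,\Sigma_i)$. (Alternatively one could invoke Lemma~\ref{lemma:endvee}(5) directly to get $End^\vee(H_{\Sigma,s}) = \prod_i End^\vee(H_{\Sigma_i,s_i})$ as coalgebras and then pass to comodules, but one must be slightly careful that the relevant notion of coproduct of coalgebras matches the product of comodule categories — this is why routing through Lemma~\ref{lemma:productcomod}, which is stated for exactly this situation, is cleaner.)

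The main obstacle, such as it is, is purely bookkeeping: verifying carefully that the graph $\Delta(S,\Sigma)$ really is the cartesian product of the $\Delta(S_i,\Sigma_i)$ on the nose — in particular that every object and every morphism (of Types I, II, III) of $\Gamma(S)$ splits compatibly over the connected components, and that constructibility is componentwise — so that the abstract Lemma~\ref{lemma:productcomod} applies verbatim. There is no genuine analytic or homological input beyond the elementary fact that cohomology sheaves, stalks, and the three types of structure maps all localize over a disjoint union of the base $S$.
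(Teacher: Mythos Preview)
Your approach is exactly the paper's: the proof there is the single line ``This is an immediate consequence of lemma~\ref{lemma:productcomod},'' and you have simply unpacked the bookkeeping needed to see that the lemma applies. One small slip: the object $(\emptyset\to S_i,\emptyset,0,0)$ is not literally initial in $\Delta(S_i,\Sigma_i)$, since Type~I morphisms preserve the indices $(i,w)$; what you actually want is the family $(\emptyset\to S_i,\emptyset,j,w)$, one for each pair $(j,w)$, each of which has $H=0$ and maps to every object with those indices --- this is enough for the argument in the proof of lemma~\ref{lemma:productcomod} to go through verbatim.
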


\begin{proof}
  This is an immediate consequence of lemma~\ref{lemma:productcomod},
  which applies because
   the empty families $(\emptyset, \emptyset ,i,w)\in
   \Delta(S_i,\Sigma_i)^{op}$ map to $0$ under $H$.
\end{proof}

A morphism of (pointed) stratified varieties is a morphism of varieties such
that a nonempty preimage of any stratum is a union of strata (and base
points go to base points). If the
underlying morphism of varieties is the identity, we say that the
first stratification refines the second.  

\begin{constr}\label{constr:basech}
Suppose that
$f:(T,\Lambda,t)\to (S,\Sigma,s)$ is a morphism of pointed stratified
varieties. 
First, suppose (*) that the map on base points is surjective.
Applying corollary \ref{cor:tannaka} to 
$$(X\to S,Y,i,w)\mapsto \ph_T^i(X\times_S T, Y\times_S T)(w)$$
 yields an extension, which is
  the base change functor
  $f^*:\PM(S,\Sigma,s)\to \PM(T,\Lambda,t)$.
If $f(t)\not= s$, set $T' = T\coprod s'$
where $s' = s-f(t)$. Then the map 
$$f':(T', \Lambda'=\Lambda\cup s',  t'=t\cup s')\to (S,\Sigma, s),$$
which is $f$ on $T$ and identity on $s'$, satisfies
(*). We now define $f^*$ as the composite
$$\PM(S,\Sigma,s)\stackrel{f'^*}{\to} \PM(T',
\Lambda',t')=\PM(T,\Lambda,t)\times \PM(s',s',s')\stackrel{p}{\to} 
\PM(T,\Lambda,t)$$
where $p$ is the projection.
\end{constr}

We can always extend a morphism of stratified varieties to a
morphism of pointed stratified varieties, and this way obtain
 base change functor   $f^*:\PM(S,\Sigma)\to
 \PM(T,\Lambda)$. Alternately, we could define this directly in the
 spirit of remark~\ref{rmk:PM} without recourse to base points.
We mention two important special cases of this construction:
\begin{enumerate}
\item The construction applies when $S=T$ and $\Lambda$ refines $\Sigma$. This
  leads to faithful exact
   embeddings $\rho_{\Sigma,\Lambda}:\PM(S,\Sigma)\to \PM(S,\Lambda)$. 
  \item When $T=s$ is the of set of, say $n$, base points, we get an embedding
  $\PM(S,\Sigma)\to \PM(k)^n$.
\end{enumerate}

The last map is usually not an equivalence.

\begin{ex}\label{ex:Mnotproduct}
  Let $S=\mathbb{A}^1$ with  $\Sigma=\{0,\mathbb{A}^1-\{0\}\}$ and
  base points $s=\{0,1\}$.  Now consider the motive $\Q_S$ represented by
$(id:S\to S,\emptyset, 0,0)$. By passing to sheaves under $R_B$, we
see that this is not isomorphic to $\Q_0\oplus \Q_1$, so the base
point map $\PM(S)\to \PM(k)\times \PM(k)$ cannot be an equivalence.
\end{ex}

When $f:T\to S$ is an inclusion, we often denote $f^*M$ by $M|_T$.

\begin{lemma}\label{lemma:pullback}
  The assignment  $(S,\Sigma)\mapsto \PM(S,\Sigma)$, $f\mapsto f^*$ yields a contravariant pseudofunctor from the
category of stratified varieties to the $2$-category of abelian categories. 
This commutes with $R_B$.
\end{lemma}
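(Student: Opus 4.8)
The plan is to check that $f \mapsto f^*$ behaves functorially up to coherent natural isomorphism, treating the base-point issues exactly as they were treated in Construction~\ref{constr:basech}. First I would reduce to the case of a morphism of \emph{pointed} stratified varieties with surjective map on base points, which is condition (*); for general morphisms the functor $f^*$ was defined as a composite $p^* \circ f'^*$ through the auxiliary variety $T' = T \coprod s'$, so once the pseudofunctor axioms are verified on morphisms satisfying (*) together with the projections $p: T' \to T$, the general case follows by a bookkeeping argument gluing these composites. On morphisms satisfying (*), the functor $f^*$ is the extension, via Corollary~\ref{cor:tannaka}, of the graph morphism $(X \to S, Y, i, w) \mapsto \ph_T^i(X \times_S T, Y \times_S T)(w)$.

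The key point is that a composite $g \circ f$ of morphisms of stratified varieties, with $f: (T,\Lambda,t) \to (S,\Sigma,s)$ and $g: (U,\Xi,u) \to (T,\Lambda,t)$, induces on the level of graphs the morphism sending $(X \to S, Y, i, w)$ to $\ph_U^i((X\times_S T)\times_T U, \dots)(w)$, and there is a canonical isomorphism of $U$-schemes $(X \times_S T) \times_T U \cong X \times_S U$ coming from the universal property of fibre products. This gives a natural isomorphism of the two graph morphisms $\Delta(S,\Sigma)^{op} \to \PM(U,\Xi)$, namely the one factoring through $\PM(T,\Lambda)$ and the one going directly; because the realization (forgetful) functor to $F$-mod carries both to the same functor $H$ (up to the canonical identification of iterated stalk products), the uniqueness clause in Corollary~\ref{cor:tannaka}, i.e.\ the fact that the extension $\tilde G$ is determined up to natural equivalence by $G$, yields a natural isomorphism $g^* \circ f^* \cong (g\circ f)^*$. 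The associativity coherence (the pentagon) then reduces to the associativity of the canonical isomorphisms among triple fibre products, which is a formal consequence of the universal property. Likewise $(\mathrm{id}_S)^* \cong \mathrm{id}_{\PM(S,\Sigma)}$ because $X \times_S S \cong X$ canonically.

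For the compatibility with $R_B$: the Betti realization $R_B: \PM(S,\Sigma) \to Constr(S_{an},\Sigma)$ is itself the extension (again via Corollary~\ref{cor:tannaka}) of the geometric assignment $(X,Y,i,w) \mapsto H_S^i(X,Y;F)$, and by Lemma~\ref{lemma:basechange} the base-change map $g^* H_S^i(X,Y) \to H_T^i(X\times_S T, Y\times_S T)$ is an isomorphism, since membership in $\Delta(S,\Sigma)$ forces commutation with base change. Thus the two functors $R_B \circ f^*$ and $(\text{topological } f^*) \circ R_B$ from $\PM(S,\Sigma)$ to $Constr(T_{an},\Lambda)$ both extend the same graph morphism, so they agree up to natural isomorphism, again by the uniqueness in Corollary~\ref{cor:tannaka}; and these isomorphisms are compatible with composition by the same argument as above. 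The main obstacle I anticipate is purely organizational: keeping the coherence isomorphisms (the $2$-cells of the pseudofunctor) compatible through the base-point-correction composite $p^* \circ f'^*$, since the choices of added base points $s' = s - f(t)$ for $g\circ f$, for $f$, and for $g$ need not be literally compatible, so one must produce canonical comparison isomorphisms between $\PM(T',\Lambda')$ and the analogous enlargements and check these satisfy their own cocycle condition. This is routine but fiddly; the cleaner route, which I would at least sketch, is to use the base-point-free reformulation of Remark~\ref{rmk:PM}, where $f^*$ is defined directly via pullback of local systems to universal covers and the pseudofunctoriality is manifest.
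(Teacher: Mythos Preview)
Your proposal is correct and follows essentially the same approach as the paper: verify $id_S^*\cong id$ via $X\times_S S\cong X$, verify $(f\circ g)^*\cong g^*\circ f^*$ via the canonical isomorphism of iterated fibre products, and deduce the $R_B$-compatibility from the base-change isomorphism for $H_S^i(X,Y)$. In fact you are considerably more careful than the paper, which simply assumes surjectivity on base points and asserts the coherences without further comment; your discussion of the base-point bookkeeping and the alternative via Remark~\ref{rmk:PM} fills in exactly what the paper elides.
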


\begin{proof}
The functor $id_S^*:\PM(S,\Sigma)\to \PM(S,\Sigma)$ is the extension
of $(X\to S,Y,i,w)\mapsto \ph_S^i(X, Y)(w)$. So clearly we have a
 natural isomorphism $id_S^*\cong id_{\PM(S)}$. 
Suppose that $f:(T,\Lambda)\to (S,\Sigma)$ and $g:(V,\Theta)\to
(T,\Lambda)$ are given, and assume the maps surject on base points.
 Then $(f\circ g)^*$ is the extension of 
$$(X\to S,Y,i,w)\mapsto \ph_V^i(X\times_S V, Y\times_S V)(w)\cong g^*\ph_T^i(X\times_S T, Y\times_S
T)(w)$$
So we obtain  a natural isomorphism $(f\circ g)^*\cong g^*\circ
f^*$. These natural transformations can be seen to have the necessary compatibilities (see
appendix A) to define a pseudofunctor.

The last statement follows from the isomorphism
$$R_B(\ph_T^i(X\times_S T, Y\times_S T)(w) )\cong H_T^i(X\times_S T, Y\times_S
T)$$ 
and corollary \ref{cor:tannaka-nat}.
\end{proof}

\begin{defn}
  The category of motivic sheaves of $F$-modules is given by the
  $2$-colimit 
$$\PM(S; F) = \text{2-}\varinjlim_{\Sigma} \PM(S,\Sigma;F)$$
(see appendix A).
\end{defn}

It follows from the discussion in appendix A that $\PM(S;F)$ is abelian, and
the natural maps $\PM(S,\Sigma;F)\to \PM(S;F)$ are exact.

Note that $\ph^i_S(X,Y)\in \PM(S,\Sigma)$ maps to the 
 same symbol $\ph^i_S(X,Y) $ under refinement. We denote the common
  value in the colimit by $\ph^i_S(X,Y)$ as well.
  Observe that $R_B$ provides a faithful exact embedding of $\PM(S,\Sigma)$ into
the category $Sh(S)$ of sheaves of $F$-vector spaces on $S$.  This is
compatible with refinement. So it passes to the limit. Therefore in more concrete terms, we can 
identify $\PM(S)$ (up to equivalence) with the directed union of subcategories
$$\bigcup_\Sigma R_B(\PM(S,\Sigma))\subset Sh(S)$$
Note that this lies in the subcategory $Cons(S)$ of constructible
sheaves.

As a corollary to lemma \ref{lemma:pullback}.

\begin{cor}
  $S\mapsto \PM(S)$ is a contravariant  pseudofunctor.
\end{cor}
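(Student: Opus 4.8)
The plan is to assemble the pseudofunctor $S \mapsto \PM(S)$ from the already-established pseudofunctor $(S,\Sigma) \mapsto \PM(S,\Sigma)$ of Lemma~\ref{lemma:pullback} by passing to the $2$-colimit over stratifications, and then checking that the resulting assignment on morphisms is well-defined and respects composition up to coherent natural isomorphism. First I would fix a morphism $f: T \to S$ of varieties. For each stratification $\Sigma$ of $S$, one can choose a stratification $\Lambda$ of $T$ refining $f^{-1}(\Sigma)$ (adding base points as in Construction~\ref{constr:basech}), so that $f$ becomes a morphism of stratified varieties $(T,\Lambda) \to (S,\Sigma)$ and Lemma~\ref{lemma:pullback} supplies $f^*: \PM(S,\Sigma) \to \PM(T,\Lambda) \to \PM(T)$. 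The key point is that different choices of $\Lambda$ are all related by refinement embeddings $\rho_{\Lambda,\Lambda'}$, which become identities in the $2$-colimit $\PM(T) = \text{2-}\varinjlim_\Lambda \PM(T,\Lambda)$; similarly the compatibility $\rho_{\Sigma,\Sigma'} \circ f^* \cong f^* \circ \rho_{\Sigma',\Sigma}$ (a special case of the pseudofunctoriality already proved, applied to the commuting square of stratified varieties) shows that these functors are compatible with refinement on $S$ as well. Hence by the universal property of the $2$-colimit they glue to a single functor $f^*: \PM(S) \to \PM(T)$, independent of all auxiliary choices up to canonical natural isomorphism.

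Next I would verify the pseudofunctor axioms at the level of $\PM(S)$. For the identity, $\mathrm{id}_S^*$ on $\PM(S,\Sigma)$ is isomorphic to $\rho_{\Sigma,\Sigma'}$ for a refinement $\Sigma'$ of $\Sigma$ (taking $\Lambda = \Sigma'$ above), and these are the structure maps of the colimit, so $\mathrm{id}_S^* \cong \mathrm{id}_{\PM(S)}$. For composition, given $f: T \to S$ and $g: V \to T$, choose compatible stratifications $\Theta$ of $V$, $\Lambda$ of $T$, $\Sigma$ of $S$ so that both $f$ and $g$ and $f\circ g$ are stratified morphisms; then the isomorphism $(f\circ g)^* \cong g^* \circ f^*$ on $\PM(S,\Sigma)$ from Lemma~\ref{lemma:pullback} descends to the colimit. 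The associativity and unit coherence conditions for these natural isomorphisms follow from the corresponding coherences in Lemma~\ref{lemma:pullback} (which were asserted there via appendix~A) together with the fact that the $2$-colimit functors are themselves part of a coherent system, as discussed in appendix~A.

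The main obstacle I anticipate is not any single computation but the bookkeeping of stratifications: one must ensure that the filtered system of stratifications of $T$ that arises for a given $f$ and $\Sigma$ is cofinal and compatible as $\Sigma$ itself varies, so that the inductive limits can be taken ``simultaneously'' and the structure $2$-cells assemble coherently. This is exactly the kind of $2$-categorical colimit manipulation that appendix~A is set up to handle, so in the write-up I would simply invoke those results: the whole statement is a formal consequence of Lemma~\ref{lemma:pullback} plus the construction of $\PM(S)$ as a $2$-colimit, with no new geometric input. Finally, since $R_B$ commutes with $f^*$ at each stratified level (last line of Lemma~\ref{lemma:pullback}) and with refinement, it commutes with $f^*$ on the colimit as well, though this compatibility is not part of the present corollary's statement.
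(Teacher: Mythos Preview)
Your proposal is correct and matches the paper's approach: the paper states this as an immediate corollary of Lemma~\ref{lemma:pullback} with no written proof, and your argument is precisely the natural elaboration of why the pseudofunctor on stratified varieties descends to the $2$-colimit $\PM(S)$. The bookkeeping you flag (cofinality of stratifications, coherence of $2$-cells) is exactly what the paper tacitly sweeps into the appendix~A formalism.
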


There are a number of useful variations of this construction. Let
$\Delta_{c}(S,\Sigma)\subset \Delta(S,\Sigma)$ denote the full
subgraph consisting of tuples $(X\to S,Y,i, w)$ where $X\to S$ is projective. 
Then set $\PM_c(S,\Sigma)$  be the category of comodules over $End^\vee(H_{\Sigma}|_{\Delta_c(S,\Sigma)})$.
The category of compact motives
$$\PM_c(S) = 2\text{-}\varinjlim_{\Sigma} \PM_c(S,\Sigma;F)$$
This can be regarded as an abelian subcategory of $\PM(S)$. It contains
motives $h^i_S(X)$ of  projective families.

\section{Motivic Sheaves}

\subsection {Zariski Descent}\label{sect:extmotivicsheaves}

 In the next section, we will see that motivic sheaves in $\PM_c(S)$ can  be patched on a Zariski
open cover.  This is not yet evident for $\PM(S)$, and may be an
indication of a defect in the definition.
 Since this issue plays a relatively minor role (so far), we just sketch a
construction of a new category of  motivic sheaves
$\M(S)$  which removes this defect.

We can repackage $\M(S)$ in the language of  fibred categories \cite{giraud, vistoli}.
Given a functor $\pi:\F\to \calC$, the fibre $\pi^{-1}(A)$ over $A\in Ob(S)$ is the category
with objects $\pi^{-1}A$ and morphism $\pi^{-1} id_A$. Fibres need not behave as expected,
for example fibres over isomorphic objects
need not be equivalent, unless further conditions are imposed.
An arrow
$\phi\in Mor\F$ is {\em cartesian} if for any commutative diagram consisting of solid arrows
$$
\xymatrix{
 A\ar[rrd]^{\psi}\ar@{-->}[rd] &  &  \\ 
 \pi(A)\ar[rrd]^{\pi(\psi)}\ar[rd] & B\ar[r]_{\phi} & C \\ 
  & \pi(B)\ar[r]_{\pi(\phi)} & \pi(C)
}
$$
the dotted arrow can be filled in uniquely. 
The functor $\pi:\F\to \calC$ is  {\em fibred} if
any arrow of $\calC$ can be lifted to a  cartesian arrow of $\F$ with a
specified target. It is sometimes convenient
to fix a collection of specific cartesian lifts. Such a collection is called a cleavage.
Given a cleavage, there is a well defined way to define a pullback functor $f^*:\pi^{-1}(B)\to \pi^{-1}(A)$
for any $f:A\to B\in Mor\calC$. These form a pseudofunctor. Conversely, any pseudofunctor determines
a fibred category.
Define $\M$ to be the category whose objects are pairs $(S\in Var_k, M\in \M(S))$,
and  morphisms are pairs $(f:T\to S,  M\to f^*N\in Mor \M(T) )$. This is fibred over $Var_k$ via the natural projection $\pi:\M\to Var_k$.  The  categories $\M(S)$ are just the fibres $\pi^{-1}(S)$, and the original
pseudofunctor is determined by the cleavage $\{(f, f^*N=f^*N)\}$.

\begin{defn}
 Let $\pi: \M\to Var_k$ denote that
stack associated to $\PM\to Var_k$ for the Zariski site  \cite[chap 2 \S2]{giraud}.
The category of  motivic sheaves $\M(S)$  over $S\in Ob Var_k$  is the  fibre over $S$.
\end{defn}

Unravelling all of this, we see that:
\begin{enumerate}
\item $\M\to Var_k$ is a fibred category. Fix a cleavage for it, so that
 there are functors $f^*:\M(S)\to   \M(T)$ for each $f:T\to S$.
If $f$ is an inclusion, we denote $f^*M$ by $M|_S$.
\item For any $M,N\in  \M(S)$, $U\mapsto Hom_{\M(S)}(M|_U, N|_U)$ is a
  sheaf on the Zariski  topology,
i.e. $\ \M$ is a prestack.
\item Given an open cover $\{U_i\}$ of $S$, $M_i\in \PM(U_i)$ with isomorphisms $g_{ji}: M_i|_{U_{ij}}\cong M_j|_{U_{ij}} $
satisfying the usual cocycle condition $g_{ii}=id$,
$g_{ik}=g_{ij}g_{jk}$, there exists $M\in \M(S)$ such that $M_i\cong M|_{U_i}$.
\item There is a functor $\iota:\PM\to  \M$ of fibred categories. This means that it fits into
a commutative diagram 
$$
\xymatrix{
 \PM\ar[rr]^{\iota}\ar[rd] &  & \M\ar[ld] \\ 
  & Var_k & 
}
$$ 
and takes cartesian arrows to cartesian arrows. The functor $\iota$ is univeral among all such functors
from $\PM$ to stacks. So that
any functor $\M\to \M'$ to a fibred category satistfying conditions  (2) and (3) must factor through $ \M$ in an essentially unique way, or more precisely, there is an equivalence between the 
categories $Hom_{fibcat}(\M,\M')$ and $Hom_{fibcat}(\M,\M')$.
\end{enumerate}

A concrete construction of the associated stack is given in \cite[chap 3]{lm} (although stated for
 groupoids, it works in general). It is a two step process. First,  one constructs a prestack
$\PM^+$. The objects of $\PM^+(S)$ are the same as for $\PM(S)$. For $Hom_{\PM^+(S)}(M,N)$
we take the global sections of the sheaf associated to 
$U\mapsto Hom_{\PM(S)}(M|_U, N|_U)$. This forces condition (2) to hold.
Now  construct $ \M(S)$ as the category whose objects
consists of descent data, i.e., collections $(M_i\in \PM^+(U_i), g_{ij})$ as in (3).
Given two objects  $(M_i\in \PM^+(U_i), g_{ij}),(M_i'\in \PM^+(U_i'), g'_{ij})$,
after passing to a common refinement we can assume that $U_i=U'_i$.
A morphism is then given by a compatible collection of morphisms $M_i\to M_i'$.
We have obvious functors $\PM(S)\to \PM^+(S)\to \M(S)$. It is easy to see from
this description that $\M(S)$ is abelian, and $\PM(S)\to \M(S)$ is exact.
To summarize:

\begin{thm}
 $ \M$ is a stack of abelian categories  over $Var_k$. There is an exact functor
$\PM\to  \M$ which is universal among all  functors from $\PM$ to stacks.
\end{thm}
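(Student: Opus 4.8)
The plan is to verify the four bulleted assertions preceding the theorem, since together they say exactly that $\M$ is a stack of abelian categories with a universal functor from $\PM$. Almost all of this is a matter of invoking the general stackification construction of \cite[chap 3]{lm} and checking that the extra structure (abelian categories, exactness) is preserved; the genuinely new content is that $\PM\to Var_k$ is already a prestack-free fibred category over a site and that abelianness survives the two-step process.

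First I would recall the setup: by Lemma~\ref{lemma:pullback} and its corollary, $S\mapsto \PM(S)$ is a contravariant pseudofunctor on $Var_k$, hence (as explained just before the definition of $\M$) determines a fibred category $\pi:\PM\to Var_k$ with a chosen cleavage. The Zariski site on $Var_k$ supplies the Grothendieck topology. Then I would run the explicit stackification in two steps, following \cite[chap 3]{lm}: (i) form the prestack $\PM^+$ with the same objects as $\PM$ on each $S$ and with $\Hom_{\PM^+(S)}(M,N)$ the global sections of the sheafification of $U\mapsto \Hom_{\PM(S)}(M|_U,N|_U)$ — this is a presheaf of $F$-vector spaces, so its sheafification exists and $\Hom_{\PM^+(S)}(M,N)$ is again an $F$-vector space; (ii) form $\M(S)$ as the category of descent data $(M_i\in\PM^+(U_i),g_{ij})$ for Zariski covers, with morphisms given by compatible families, as spelled out in the excerpt. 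By the general theory this $\M$ is the associated stack and $\PM\to\M$ satisfies the stated universal property (bullets (1)--(4)).

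The two points needing a short argument beyond citing \cite{lm} are: that $\M(S)$ is abelian and that $\PM(S)\to\M(S)$ is exact. For abelianness, I would note that $\PM^+(S)$ is $F$-linear with the same objects as $\PM(S)$, and kernels/cokernels in $\PM^+(S)$ can be computed by sheafifying the presheaf of kernels/cokernels of the corresponding presheaf of $\Hom$'s; since sheafification is exact on presheaves of $F$-vector spaces, $\PM^+(S)$ is abelian and $\PM(S)\to\PM^+(S)$ is exact. Then $\M(S)$, being a category of descent data valued in the abelian category $\PM^+$ over the Zariski site, inherits an abelian structure: a morphism of descent data $(f_i):(M_i,g_{ij})\to(M_i',g'_{ij})$ has kernel and cokernel computed termwise (the $\ker f_i$ and $\coker f_i$ automatically carry glueing data induced by $g_{ij},g'_{ij}$ since $f_i$ is compatible with those), and exactness of a sequence in $\M(S)$ is checked termwise, hence locally, which is exactly what a stack of abelian categories requires. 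Exactness of $\PM(S)\to\M(S)$ then follows because it is the composite $\PM(S)\to\PM^+(S)\to\M(S)$ of an exact functor with the termwise-on-a-trivial-cover inclusion.

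The main obstacle is bookkeeping rather than conceptual: one must check that the descent-data category really does produce the \emph{stack} (not merely a prestack) and that the universal property in bullet (4) holds as an equivalence of $\Hom$-categories $\Hom_{fibcat}(\PM,\M')\simeq\Hom_{fibcat}(\M,\M')$ for any target stack $\M'$. I would handle this by citing \cite[chap 3]{lm} (noting, as the excerpt does, that although stated for stacks in groupoids the construction is formal and works for arbitrary fibred categories), and by checking directly that the canonical functor $\M(S)\to\M'(S)$ induced by a fibred functor $\PM\to\M'$ satisfying (2),(3) is well-defined on descent data and independent of choices up to canonical isomorphism — this is where the cocycle conditions $g_{ii}=\id$, $g_{ik}=g_{ij}g_{jk}$ get used. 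With those verifications in place the theorem is proved.
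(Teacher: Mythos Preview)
Your approach is the same as the paper's: both invoke the two-step stackification of \cite[chap 3]{lm} and then read off abelianness and exactness from the explicit description of $\M(S)$ as descent data. The paper is extremely terse at this point (``It is easy to see from this description that $\M(S)$ is abelian, and $\PM(S)\to\M(S)$ is exact''), so your attempt to supply more detail is welcome.

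There is, however, a genuine slip in your intermediate step. You claim $\PM^+(S)$ is abelian, arguing that ``kernels/cokernels in $\PM^+(S)$ can be computed by sheafifying the presheaf of kernels/cokernels of the corresponding presheaf of $Hom$'s.'' This conflates two different things: sheafification acts on the $Hom$-\emph{presheaves}, not on objects, and exactness of sheafification tells you nothing about whether a kernel \emph{object} exists in $\PM^+(S)$. Concretely, the objects of $\PM^+(S)$ are by construction the same as those of $\PM(S)$, but a morphism $f:M\to N$ in $\PM^+(S)$ may only be given locally by $f_i\in Hom_{\PM(U_i)}(M|_{U_i},N|_{U_i})$. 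The local kernels $\ker(f_i)\in\PM(U_i)$ agree on overlaps, but since $\PM$ is not yet a stack there is no reason for them to glue to an object of $\PM(S)$. So $\PM^+(S)$ need not be abelian, and your termwise argument for $\M(S)$, which as written takes kernels in $\PM^+(U_i)$, inherits this gap.

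The fix is straightforward and is implicit in the paper's one-line assertion: given a morphism of descent data in $\M(S)$, first refine the cover so that each local map $f_i$ lies in $\PM(U_i)$ (possible by definition of the sheafified $Hom$), then form $\ker(f_i)$ and $\mathrm{coker}(f_i)$ in the honest abelian category $\PM(U_i)$. These inherit gluing isomorphisms from $g_{ij},g'_{ij}$ because pullback $\PM(U_i)\to\PM(U_{ij})$ is exact, and the resulting descent data represent the kernel and cokernel in $\M(S)$. With this correction your argument goes through and matches the paper's.
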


As corollary, the realization functors $R_B$ et cetera extend to $\M$.

We define  $h_S^i(X,Y)(w)$ as the image of $\ph_{S}^i(X,Y)(w)$ in $\M(S)$.
Since the collection of sheaves $Sh(S)$ forms a stack over $S_{zar}$, we can see that $R_B$
factors through $\M$. With the help of  the above explicit description, we get a
slightly sharper result.

\begin{cor}
  The functor $R_B:\M(S)\to Sh(S)$ extends to an exact faithful functor $R_B:\M(S)\to Sh(S)$.
In particular, $R_B(h_S^i(X,Y)(w)) = H^i_S(X,Y)$.
\end{cor}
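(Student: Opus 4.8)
The plan is to get the extension for free from the universal property of stackification, and then to verify faithfulness and exactness by working locally. First I would observe that the fibred category $Sh$ over $Var_k$ with fibres $Sh(T)$ and pullbacks $f^{-1}$ is itself a stack for the Zariski topology, since sheaves of $F$-modules satisfy descent. By Lemma~\ref{lemma:pullback} (and the corollary following it) the functors $R_B\colon\PM(T)\to Sh(T)$ are compatible with pullback, so they assemble into a morphism of fibred categories $R_B\colon\PM\to Sh$ over $Var_k$. Since $\M$ is by definition the stack associated to $\PM$ and $Sh$ is a stack, the universal property of the associated stack (condition (4) in the list following the definition of $\M(S)$) yields a morphism of fibred categories $\M\to Sh$, unique up to canonical equivalence, through which $R_B$ factors; passing to fibres over $S$ gives the extension $R_B\colon\M(S)\to Sh(S)$, which by construction agrees with the old $R_B$ on $\PM(S)$.

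Next I would check faithfulness using the explicit two-step model $\PM\to\PM^+\to\M$. On $\PM^+(S)$ the functor is faithful: $Hom_{\PM^+(S)}(M,N)$ is the group of global sections of the sheafification of the presheaf $U\mapsto Hom_{\PM(U)}(M|_U,N|_U)$, which injects into the sheaf $U\mapsto Hom_{Sh(U)}(R_BM|_U,R_BN|_U)$ because $R_B$ is faithful on each $\PM(U)$; sheafification being left exact, the induced map on global sections is still injective. An object of $\M(S)$ is a descent datum $(M_i,g_{ij})$ on some cover, and a morphism is, after passing to a common refinement, a compatible family $\phi_i\colon M_i\to M_i'$ of morphisms in $\PM^+(U_i)$, which $R_B$ sends to the morphism of glued sheaves built from the $R_B(\phi_i)$. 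Hence a morphism annihilated by $R_B$ has every $R_B(\phi_i)=0$, so every $\phi_i=0$, so it is zero; in particular $\PM(S)\to\M(S)$ is faithful as well.

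For exactness I would use that exactness of a sequence of sheaves on $S$ can be tested on an open cover. Given a short exact sequence in $\M(S)$, choose a cover $\{U_i\}$ on which all three objects lie in the essential image of $\PM(U_i)\to\M(U_i)$ (possible since objects of $\M$ are locally premotivic) and, after refining, on which the two maps also come from morphisms of $\PM(U_i)$ (possible since $\M$-morphisms between premotivic objects are global sections of sheafified $\PM$-Hom's). Restriction to $U_i$ is exact, and $\PM(U_i)\to\M(U_i)$ is faithful and exact, hence reflects exactness, so the lifted sequences in $\PM(U_i)$ are exact; applying the already-known exactness of $R_B$ on $\PM(U_i)$ and noting these are the restrictions to $U_i$ of the image sequence over $S$ concludes the argument. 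The last assertion is then immediate: $h_S^i(X,Y)(w)$ is the image of $\ph_S^i(X,Y)(w)$, whose Betti realization is $H_S^i(X,Y)$ by the construction of $R_B$ on $\PM(S)$ (the Tate twist leaving the underlying sheaf unchanged), and $R_B$ on $\M(S)$ extends it. I expect the only genuinely delicate part to be the bookkeeping in the stackification itself --- confirming that $R_B$ really is a morphism of fibred categories over $Var_k$ and tracking faithfulness and exactness through the passage from $\PM$ to $\PM^+$ to $\M$; the remainder is formal.
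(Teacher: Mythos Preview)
Your proposal is correct and follows essentially the same route as the paper. The paper's own argument is extremely terse: it simply observes that $Sh$ is a stack over $S_{zar}$, so the universal property of stackification gives the extension of $R_B$, and then remarks that ``with the help of the above explicit description'' (i.e., the two-step model $\PM\to\PM^+\to\M$) one obtains the sharper conclusion of exactness and faithfulness. You have unpacked exactly these two ingredients in detail, so there is no genuine difference in strategy.
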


By a very similar argument, we have:

\begin{cor}
  The functor $R_{et}:\M(S,F)\to Sh(S_{et},F)$ extends to an exact faithful functor $R_{et}:\M(S,F)\to Sh(S_{et},F)$.
\end{cor}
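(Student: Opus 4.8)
The plan is to run the proof of the preceding corollary almost verbatim, the only genuinely new input being that \'etale sheaves form a stack for the \emph{Zariski} topology. So the first step is to record that the assignment $U\mapsto Sh(U_{et},F)$, equipped with \'etale pullback, is a fibred category over $Var_k$ satisfying the two descent properties for the Zariski site: for \'etale sheaves $\F,\G$ on $S$ the presheaf $U\mapsto Hom_{Sh(U_{et})}(\F|_U,\G|_U)$ is a Zariski sheaf (prestack condition), and an \'etale sheaf can be reconstructed from \'etale sheaves on the members of a Zariski open cover together with gluing isomorphisms satisfying the cocycle condition (effective descent). Both assertions are instances of \'etale descent, since every Zariski open cover is in particular an \'etale cover; equivalently, $S\mapsto Sh(S_{et},F)$ is already a stack for the finer \'etale topology, hence a fortiori for the Zariski topology. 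Write $Sh_{et}$ for this fibred category.

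Next I would assemble the \'etale realizations into a functor of fibred categories $R_{et}\colon\PM\to Sh_{et}$ over $Var_k$; for this one needs that $R_{et}$ commutes with Zariski pullback and carries cartesian arrows to cartesian arrows, which is proved exactly as in Lemma~\ref{lemma:pullback} for $R_B$ --- replace $H^i_S(X,Y)$ by its \'etale analogue $R^i\bar f^{et}_*j^{et}_{\bar X,\bar Y!}F_{\bar X-\bar Y}(w)$ and invoke the same base-change statement, or simply transport the result for $R_B$ through the stalkwise comparison isomorphism of appendix B. With $R_{et}\colon\PM\to Sh_{et}$ in hand, the universal property of the stack $\M$ (any functor of fibred categories from $\PM$ to a stack factors essentially uniquely through $\iota\colon\PM\to\M$) yields the desired extension $R_{et}\colon\M\to Sh_{et}$, and in particular a functor $R_{et}\colon\M(S,F)\to Sh(S_{et},F)$ on the fibre over $S$ refining the one already defined on $\PM(S,F)$.

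Finally, faithfulness and exactness are read off from the two-step construction of $\M$ through the prestack $\PM^+$, exactly as for $R_B$. On each $\PM(S,\Sigma;F)$ the composite of $R_{et}$ with the fibre functor $\Phi_{et}$ given by the product of stalks at the base points is the forgetful functor $U$, which is faithful and exact; hence $R_{et}$ is faithful and exact on $\PM(S;F)$, and since sheafification of $Hom$-presheaves is exact and gluing of \'etale sheaves along a Zariski cover is an exact and conservative operation, these properties propagate first to $\PM^+(S)$ and then, because a morphism in $\M(S)$ is a compatible system of $\PM^+$-morphisms over a common refinement, to $\M(S)$. The only real work is in the middle paragraph --- reproving Lemma~\ref{lemma:pullback} for $R_{et}$ so that it is genuinely a morphism of fibred categories --- since once $Sh(S_{et},F)$ is known to be a Zariski stack, everything else is formal.
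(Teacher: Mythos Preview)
Your proposal is correct and follows exactly the route the paper intends: the paper gives no proof beyond the phrase ``By a very similar argument'' referring back to the $R_B$ corollary, and you have correctly unpacked what that similar argument is --- the key new observation being that $Sh(S_{et},F)$ is a stack for the Zariski topology, after which the universal property of $\M$ and the explicit $\PM^+$ description yield the extension together with exactness and faithfulness just as for $R_B$.
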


We want give an alternative concrete construction. 
It will be convenient to start with some generalities. It will be convenient to ``sheafify'' the discussion of section
\ref{sect:nori}.
 Let $\D=\{D_U\}$ be a collection of graphs indexed  Zariski
open subsets of $S$, such that for each  inclusion $\iota:V\subset U$ we have a restriction
morphism $\iota^{-1}:D_V\to  D_U$ satisfying $(\iota\circ\mu)^{-1}=
\mu^{-1}\circ \iota^{-1}$. Let $H:D_U\to F\text{-mod}$ be a collection
of morphisms compatible with restriction. We refer $(\D,H)$ as a
compatible collection. Given such a collection, let $\E^\vee (H)$ to be the sheaf associated to the  presheaf
$$U\mapsto End^\vee(H|_{D_U})$$
 Let $\E^\vee(H)\text{-comod}$ denote the
category of Zariski sheaves of finite dimensional $F$-vector spaces  with
a coaction by $\E^\vee (H)$.  When $\D$ consists of  finite graphs, this is isomorphic
to the category of modules over the sheaf of rings 
$$\E(H) = \mathcal{H}om(\E^\vee(H),F_S)$$
Let $Coh(\E(H))$ denote the category of coherent i.e. locally
finitely presented, modules. The colimit 
$$Coh(\E^\vee(H)) =\2lim_{\D'\subset \D\text{ finite}} Coh(\E(H|_{\D'}))$$
can be realized as a subcategory of $\E^\vee(H)\text{-comod}$.

Suppose that we are given compatible collections
$\tilde \D$ and $\D$ and  morphisms $\pi:\tilde D_U\to D_u$ compatible
with restriction. Then a functor $H$ on $\D$ induces a functor
$$Coh(\E^\vee(H)) \to Coh(\E^\vee(H\circ \pi))$$
An analogue of corollary \ref{cor:tildeDelta} is:

\begin{lemma}\label{lemma:tildeD}
This functor is an equivalence if for any two objects of the fiber
$\pi:\tilde D_U\to D_u$  are connected by a chain of morphisms on some
cover of $U$.
\end{lemma}

Given a functor $H^\#:\Delta(S,\Sigma)\to F\text{-mod}^2$ compatible
with restriction and such that $H= p_1\circ H^\#$, we can define a
category $\M^\#(S,\Sigma)$ by mimicing the  procedure used to define
$\M'$. We have refinement of corollary \ref{cor:surjcat} and which follows
by the same argument.

\begin{lemma}\label{lemma:surjcatstack}
  $\M^\#(S,\Sigma)/\ker p_1 \sim \M(S,\Sigma)$.
\end{lemma}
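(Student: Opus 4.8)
The plan is to identify the functor $p_1:\M^\#(S,\Sigma)\to\M(S,\Sigma)$ induced by the projection $F\text{-mod}^2\to F\text{-mod}$ as an exact functor satisfying the two hypotheses of Lemma~\ref{lemma:surjcat}, and then quote that lemma. This is the ``sheafified'' version of Corollary~\ref{cor:surjcat}, and no genuinely new idea is needed beyond the ones already present there.

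First I would set up the functor stage by stage. Write $H_2=p_2\circ H^\#$. Since $F\text{-mod}^2$ is a product category, for each open $U$ and each finite subgraph $D_U$ the splitting of $Hom$-spaces gives $End(H^\#|_{D_U})=End(H|_{D_U})\times End(H_2|_{D_U})$ as $F$-algebras, equivalently $End^\vee(H^\#|_{D_U})$ is a coalgebra over $F\times F$ with $End^\vee(H|_{D_U})=End^\vee(H^\#|_{D_U})\otimes_{F^2}F$ for the first projection, exactly as in the proof of Corollary~\ref{cor:surjcat}; and these identifications are compatible with the restriction maps for $V\subset U$ because $H^\#$ is assumed compatible with restriction. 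Sheafifying over $S_{zar}$ turns this, for each finite compatible collection $\D$, into a decomposition of sheaves of rings $\E_\#(\D)\cong\E(\D)\times\E_2(\D)$, where $\E_\#(\D)=\mathcal{H}om(\E^\vee_\#(\D),F_S)$ is the ring attached to $H^\#$, and under this decomposition the functor $Coh(\E_\#(\D))\to Coh(\E(\D))$ induced by $p_1$ is restriction of scalars along the projection $\E_\#(\D)\twoheadrightarrow\E(\D)$, hence is exact. Taking the $2$-colimit over finite $\D$ (with $\Sigma$ fixed) produces the exact functor $p_1:\M^\#(S,\Sigma)\to\M(S,\Sigma)$.

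Next I would check conditions (a) and (b) of Lemma~\ref{lemma:surjcat}. At each finite stage $\D$, a coherent $\E(\D)$-module $M$ pulls back, via the idempotent of the product decomposition, to the coherent $\E_\#(\D)$-module $(M,0)$, giving a section of $Coh(\E_\#(\D))\to Coh(\E(\D))$ on objects; and for $\E(\D)$-modules $M,N$ one has $Hom_{\E(\D)}(M,N)=Hom_{\E_\#(\D)}((M,0),(N,0))$, so the functor is surjective (indeed bijective) on $Hom$'s. This is just Corollary~\ref{cor:surjcat} applied locally on $S$ and compatibly with restriction. Now pass to the $2$-colimit: every object of $\M(S,\Sigma)$ lies in some $Coh(\E(\D))$ and every morphism is represented at a finite stage, so essential surjectivity and surjectivity on $Hom$'s pass from the stages $Coh(\E(\D))$ to the colimit. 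Hence $p_1$ satisfies (a) and (b), and Lemma~\ref{lemma:surjcat} gives $\M^\#(S,\Sigma)/\ker p_1\sim\M(S,\Sigma)$. The only delicate point is the bookkeeping in the second paragraph — verifying that forming the dual sheaf of rings $\E_\#(\D)$ and the categories $Coh(-)$ genuinely respects the product decomposition, so that the purely algebraic Corollary~\ref{cor:surjcat} survives sheafification and the $2$-colimit — but this is routine and involves no new ideas.
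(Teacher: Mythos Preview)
Your proposal is correct and follows essentially the same route as the paper, which simply asserts that the result ``follows by the same argument'' as Corollary~\ref{cor:surjcat}; you have spelled out how that argument survives sheafification and passage to the $2$-colimit. One terminological slip: the functor $Coh(\E_\#(\D))\to Coh(\E(\D))$ induced by the ring surjection $\E_\#(\D)\twoheadrightarrow\E(\D)$ is \emph{extension} (base change) of scalars, not restriction---restriction would go the other way---but this does not affect the substance of your argument.
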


Returning to the initial set up of a stratified variety 
We now give the explicit construction. Fix a stratified variety
$(S,\Sigma)$ with a given set of base points $s$ for $\Sigma$. Given Zariski open sets $V\subset
U\subset S$, let $s'$
denote the intersection of $s$ with $U-V$.  As in construction
\ref{constr:basech}, we have a map given
by  composition
$$End^\vee(H|_{\Delta(U,\Sigma)})\to
End^\vee(H|_{\Delta(V,\Sigma)})\times End^\vee(H|_{\Delta(s')})\to End^\vee(H|_{\Delta(V,\Sigma)})$$
where the last map is projection.
This makes
$$U\mapsto End^\vee(H|_{\Delta(U,\Sigma)})$$
into a presheaf of coalgebras.
Let $\E^\vee (S,\Sigma)$ denote the associated sheaf on the Zariski
topology $S_{zar}$. 

Then $\D= \{\Delta(U,\Sigma)\}$ gives a compatible collection with
restrictions given by
$$(X\to V,Y,i,w)\mapsto (X\times_V U, Y\times_V U,i,w)$$
Set
$Coh(\E^\vee(S,\Sigma)) =Coh(\E^\vee(H))$ as above.
A premotivic sheaf  $M$ in $\PM(S,\Sigma)$ determines an object of
$U\mapsto M|_U$ of $\E^\vee(S,\Sigma)\text{-comod}$, which can be
seen to lie in some  $Coh(\E(H|_{\D'}))$, with $\D'\subset \D$ finite, and therefore in
$Coh(\E^\vee(S,\Sigma))$. This gives a fully faithful exact functor
$\PM(S,\Sigma)\to Coh(\E^\vee(S,\Sigma))$.

\begin{lemma}
  $Coh(\E^\vee(S,\Sigma))\text{-comod}$ coincides with the full subcategory of
comodules $M$ which are locally isomorphic to objects of $\PM(-,\Sigma)$
\end{lemma}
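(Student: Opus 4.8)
The plan is to prove the two inclusions separately, using the explicit two-step construction of $\M(S,\Sigma)$ (prestackification followed by descent) recorded just above the statement, together with the fact (also just recorded) that $\PM(S,\Sigma)\to Coh(\E^\vee(S,\Sigma))$ is fully faithful and exact. First I would fix what "locally isomorphic to an object of $\PM(-,\Sigma)$" means precisely: an object $M$ of $Coh(\E^\vee(S,\Sigma))\text{-comod}$ lies in the subcategory in question iff there is a Zariski open cover $\{U_i\}$ of $S$ and premotivic sheaves $P_i\in\PM(U_i,\Sigma)$ with $M|_{U_i}\cong P_i$ as $\E^\vee(U_i,\Sigma)$-comodules (where $\E^\vee(U_i,\Sigma)$ is the restriction of the sheaf of coalgebras).

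For the inclusion $Coh(\E^\vee(S,\Sigma))\subseteq\{\text{locally in }\PM\}$: let $M\in Coh(\E^\vee(S,\Sigma))$, so $M$ is a module over $\E(\D)$ for some finite compatible collection $\D$, and it is locally finitely presented. The key point is that $\E(\D)$ is, locally on $S_{zar}$, obtained from the finite graphs $D_{U_i}$, and on each such $U_i$ the coalgebra $\E^\vee(H|_{D_{U_i}})$ is — after restricting further if necessary — the constant sheaf associated to a finite-dimensional coalgebra, namely $End^\vee(H|_{D})$ for a genuine finite subgraph $D\subset\Delta(U_i,\Sigma)$. A finitely presented comodule over this constant coalgebra is then, by Lemma~\ref{lemma:presentation} applied on each such open, exactly (the restriction of) an object of $\PM(U_i,\Sigma)$: one writes $M|_{U_i}$ as the cokernel of a map $\bigoplus h(M_j)\to\bigoplus h(N_k)$ with $M_j,N_k\in Ob\,\Delta(U_i,\Sigma)$, and this cokernel is a premotivic sheaf on $U_i$. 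Passing to a common refinement of the two covers (the one trivializing $\E^\vee(\D)$ and the one realizing local finite presentation) supplies the cover needed, so $M$ is locally in $\PM$.

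For the reverse inclusion: suppose $M$ is an $\E^\vee(S,\Sigma)$-comodule with $M|_{U_i}\cong P_i\in\PM(U_i,\Sigma)$. Each $P_i$, being premotivic, lands in $Coh(\E^\vee(U_i,\Sigma))$ by the fully faithful functor mentioned above; equivalently $M|_{U_i}$ is a coherent $\E(\D_i)$-module for some finite $\D_i$ on $U_i$. Coherence is a local property for sheaves of modules over a (locally coherent) sheaf of rings, and the sheaf of rings $\E(\D)$ for the compatible collection $\D$ generated by the $\D_i$ restricts to $\E(\D_i)$ on $U_i$; hence $M$ itself is a coherent $\E(\D)$-module, i.e.\ $M\in Coh(\E^\vee(S,\Sigma))$. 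One should check that the transition isomorphisms between the $P_i|_{U_{ij}}$ are automatically compatible with the coactions — but this is immediate since a morphism of comodules over the restricted coalgebra is the same as a morphism in $\PM(U_{ij},\Sigma)$ by full faithfulness.

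The main obstacle, and the place requiring genuine care rather than formal manipulation, is the first inclusion: controlling the local structure of the sheaf of coalgebras $\E^\vee(\D)$ finely enough to reduce a coherent comodule over it to an honest premotivic sheaf. Concretely, one must argue that after sufficient Zariski-localization $\E^\vee(H|_{D_U})$ becomes a \emph{constant} sheaf of finite-dimensional coalgebras — this uses that for a finite subgraph $D$ the spaces $End^\vee(H|_D)=\bigoplus_{M\in D}End(H(M))^*/I_H$ are built from stalks $H^i_S(X,Y;F)_{s_\sigma}$ which, for $(X,Y,i,w)\in\Delta(S,\Sigma)$, are locally constant along strata and hence locally constant on a small enough open — and then invoke Lemma~\ref{lemma:presentation} to produce the presentation by $h(M_j)$'s on that open. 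Everything else is bookkeeping: passing to common refinements of finitely many covers, and quoting full faithfulness of $\PM(U,\Sigma)\to Coh(\E^\vee(U,\Sigma))$ to match morphisms.
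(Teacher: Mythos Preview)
Your proof is correct and follows essentially the same route as the paper, but with considerably more care than the paper itself supplies. The paper's argument is a two-line sketch: an object of $Coh(\E^\vee(S,\Sigma))$ lies in some $Coh(\E(\D))$ with $\D$ finite, and a coherent $\E(\D)$-module is locally of the form $coker(\E(\D)^n\to\E(\D)^m)$, which lies in $\PM$. This is exactly your forward inclusion, compressed; the paper does not bother to write out the reverse inclusion at all.

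Two small remarks. First, your detour through ``the coalgebra sheaf becomes constant after localization'' is a slight overcomplication of what the paper does implicitly: rather than trivializing $\E^\vee(\D)$ and then invoking Lemma~\ref{lemma:presentation} to present the comodule by $h(M_j)$'s, the paper simply observes that the free rank-one module $\E(\D)$ (equivalently the coalgebra $\E^\vee(\D)$ regarded as a comodule over itself) is already in the image of $\PM$ locally, since by the proof of Lemma~\ref{lemma:presentation} it is a quotient of a finite sum of $h(M)$'s. This sidesteps the need to discuss constancy of the sheaf of coalgebras. Second, your treatment of the reverse inclusion --- gluing the finite collections $\D_i$ into a single compatible $\D$ and using that coherence is Zariski-local --- is a genuine addition; the paper leaves this direction tacit.
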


\begin{proof}
  An object of  $Coh(\E^\vee(S,\Sigma))\text{-comod}$ lies in some
  $Coh(\E(\D))$. So it is locally of the form $coker(\E(\D)^n\to
  \E(\D)^m)$ which lies in $\PM(S,\Sigma)$.
\end{proof}

 Set $\M'(S) = \2lim_\Sigma Coh(\E^\vee(S,\Sigma))$. Then
 we have   a functor $\PM(S)\to \M'(S)$ induced from the one above.

\begin{lemma}
  $\M(S)$ is equivalent to $\M'(S)$.
\end{lemma}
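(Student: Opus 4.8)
The plan is to show that both $\M(S)$ and $\M'(S)$ are stackifications of the same prestack $\PM(-,\Sigma)$, so the universal property of stackification forces them to agree. The key point is that $\M'(S) = \text{2-}\varinjlim_\Sigma Coh(\E^\vee(S,\Sigma))$ is already built as a category of descent data: by the previous lemma, $Coh(\E^\vee(S,\Sigma))$ is exactly the full subcategory of $\E^\vee(S,\Sigma)$-comodules that are \emph{locally} isomorphic to objects of $\PM(-,\Sigma)$, and an $\E^\vee(S,\Sigma)$-comodule is by definition a \emph{sheaf} of comodules on $S_{zar}$. So an object of $Coh(\E^\vee(S,\Sigma))$ is precisely the data of a cover $\{U_i\}$, objects of $\PM(U_i,\Sigma)$ (after refining $\D$), and gluing isomorphisms on overlaps satisfying the cocycle condition --- that is, descent data for $\PM(-,\Sigma)$ --- and the Hom-sheaf condition (2) is automatic because morphisms of sheaves of comodules are themselves a sheaf.

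First I would verify that $\M'$ defines a stack over $Var_k$: the fibred category structure comes from the pullback functors $f^*$ on the $\E^\vee$-comodule categories (induced by the base-change maps of coalgebra presheaves from construction \ref{constr:basech}, passed to associated sheaves), and one checks directly that $\E^\vee(S,\Sigma)$-comod satisfies effective descent for the Zariski topology --- this is essentially formal since sheaves of modules over a sheaf of (co)algebras glue, and the coherence/local-$\PM$ condition is local. Next I would check that the evident functor $\PM \to \M'$ (sending $M \in \PM(S,\Sigma)$ to the comodule $U \mapsto M|_U$, as set up just before the statement) is a functor of fibred categories taking cartesian arrows to cartesian arrows, and that it is fully faithful after sheafifying Homs, i.e. it factors through $\PM^+$ and realizes condition (2). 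Then, by the universal property of the stack $\M$ associated to $\PM$ (the theorem stated above), there is a canonical functor $\M \to \M'$ over $Var_k$.

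To see this is an equivalence I would go fibrewise and compare with the explicit two-step construction $\PM \rightsquigarrow \PM^+ \rightsquigarrow \M$ recalled from \cite{lm}: an object of $\M(S)$ is descent data $(M_i \in \PM^+(U_i), g_{ij})$, and this maps to the glued comodule in $Coh(\E^\vee(S,\Sigma))$ for $\Sigma$ a common stratification refining all the strata appearing; conversely, every object of $Coh(\E^\vee(S,\Sigma))$ \emph{is} such descent data by the previous lemma. Essential surjectivity is then immediate, and full faithfulness reduces to the statement that $Hom_{\PM^+(U)}(M,N) = Hom_{\E^\vee}(M|_U, N|_U)$, which holds because $\E^\vee$-comodule Hom is computed as global sections of the sheafification of $U \mapsto Hom_{\PM(U,\Sigma)}(M|_U,N|_U)$ --- precisely the definition of $\PM^+$. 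Passing to the $2$-colimit over $\Sigma$ on both sides (compatibly with the refinement embeddings $\rho_{\Sigma,\Lambda}$) gives $\M(S) \simeq \M'(S)$, naturally in $S$.

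The main obstacle I anticipate is bookkeeping the stratifications carefully: an object of $\M(S)$ built by gluing objects of various $\PM(U_i,\Sigma_i)$ lives naturally in $Coh(\E^\vee(S,\Sigma))$ only after one chooses a single stratification $\Sigma$ of $S$ restricting suitably to each $U_i$ and refining each $\Sigma_i$, and one must check this choice is harmless in the $2$-colimit --- i.e. that the comparison functor is independent of it up to coherent natural isomorphism. A secondary technical point is matching the Zariski topology used in the definition of $\M$ (covers of $S$) with the "restriction to opens" structure implicit in the presheaf $U \mapsto End^\vee(H|_{\Delta(U,\Sigma)})$; one needs that the base-point/surjectivity hypothesis (*) of construction \ref{constr:basech} is met for the inclusions $V \subset U$, which is arranged exactly by the $s' = s \cap (U-V)$ device described just before the statement, so this is routine but must be stated.
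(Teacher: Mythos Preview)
Your proposal is correct and follows essentially the same route as the paper: construct a functor $\PM(S)\to \M'(S)$, factor it through $\PM^+(S)$ and then $\M(S)$ using that $\M'$ has sheaf-Homs and gluing, obtain full faithfulness from the identification of $\PM^+$-Homs with comodule sheaf-Homs, and read off essential surjectivity from the preceding lemma. The paper's proof is much terser and leaves implicit both the verification that $\M'$ is a stack and the stratification bookkeeping you flag; your more explicit treatment of these points is a genuine improvement in clarity rather than a different argument.
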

\begin{proof}

The functor $\PM(S)\to \M'(S)$ induces a functor $\PM^+(S)\to \M'(S)$ which is fully faithful.
This extends to a functor $\M(S)\to \M'(S)$ which is again fully faithful.
It also essentially surjective by the previous  lemma.
\end{proof}

In view of this result, we will generally denote $\M'(\ldots)$ simply by
$\M(\ldots)$ from henceforth. We also define $\M(S,\Sigma) =
Coh(\E^\vee(S,\Sigma))\text{-comod}$.

\subsection{Extension by zero}

Let $j:S\hookrightarrow \bar S$ be an open immersion with boundary
$\partial \bar S= \bar S-S$. Suppose that $\Sigma$ is a stratification of $\bar S$ such that $S$ is union of strata.
Let $\Delta(\bar S,\partial \bar S,\Sigma)\subset \Delta(\bar
S,\Sigma)$ denote the full subgraph consisting of tuples $(\bar X\to
\bar S,\bar Y,i, w)$ where $\bar Y\supseteq f^{-1}\partial \bar S$. We
construct the category $\M(\bar S,\partial \bar S,\Sigma)$ as in \S\ref{section:motivicsheaves},
as the category of $End^\vee(H_\Sigma|_{\Delta(\bar S,\partial \bar
  S,\Sigma)})$-comodules, and  $\M(\bar S,\partial \bar S)$ is the
colimit of these over $\Sigma$. There is an exact faithful  forgetful functor
$\iota_{\bar S}:\M(\bar S,\partial \bar S)\to \M(\bar S)$.
We define $\Delta_{ex}(S,\Sigma)\subseteq \Delta(S,\Sigma)$ be the
full subgraph of tuples $( X\to  S, Y,i, w)$ which extend to
$\Delta(\bar S,\partial \bar S,\Sigma)$. Then we can  define the subcategory
 $\M_{ex}(S,\Sigma)\subseteq \M(S,\Sigma)$ by taking comodules over $H_\Sigma$
 restricted to $\Delta_{ex}$. Let
$$\M_{ex}(S)=\text{2-}\varinjlim \M_{ex}(S,\Sigma)\subset \M(S)$$

\begin{lemma}
 
 The category $:\M(\bar S, \partial \bar S)$ is equivalent to a
subcategory  $\M_{ex}(S)\subset \M(S)$ via $j^*$.

\end{lemma}

\begin{proof}

  We start by proving that $j^*:\M(\bar S, \partial \bar S,\Sigma)\to \M_{ex}(S,\Sigma)$ is an equivalence.
We have a morphism
$\pi: \Delta(\bar S,\partial \bar S,\Sigma)\to \Delta_{ex}$ given by restriction. This is
surjective by definition, and the fibres of
$\pi$ are clearly connected.
For $(\bar X\to \bar S,\bar Y,i, w)\in \Delta(\bar S,\partial \bar S,\Sigma)$,
$R^ij_{\bar X,\bar Y!}F=0$ outside of $S$. Thus $H(\bar X\to \bar S,\bar Y,i, w)$ coincides with the value of
$H$ on the restriction.
Therefore we have a morphism of pairs $(\Delta(\bar S,\partial \bar S,\Sigma) , H)\to (\Delta_{ex},H)$,
and so $j^*:\M(\bar S, \partial \bar S,\Sigma)\to \M_{ex}(S,\Sigma)$ is an equivalence by corollary~\ref{cor:tildeDelta}.
Passing to the limit yields  the lemma.

\end{proof}

\begin{lemma}
  $\M_c(S)\subseteq\M_{ex}(S)$.
\end{lemma}

\begin{proof}
Given a projective family $X\subset   \PP^N\times S$ over $S$, we get
an extnesion over $\bar S$ by taking the  closure $\bar X\subset
\PP^N\times \bar S$. Given $Y\subset X$, we may take $\bar Y\subset
\bar X$ to be the union of the closure of $Y$ with the preimage of
$\partial S$. Thus we see that $\Delta_c(S,\Sigma)\subset
  \Delta_{ex}(S,\Sigma)$.
\end{proof}

\begin{defn}
 Define $j_!:\M_{ex}(S)\to \M(\bar S)$ by $j_!=\iota_{\bar S'}\circ {j'}^{*-1}$.
\end{defn}

\begin{lemma}
  This is compatible with extension by zero for sheaves $j_!:Sh(S)\to
  Sh(\bar S)$ in the sense that $R_B(j_!\F)=j_!R_B(\F)$. We have
  $j_!\M_c(S)\subset \M_c(\bar S)$.
\end{lemma}

\begin{prop}\label{prop:jshriekjstar}
  There exists a natural transformation $j_!j^*\to 1$ on $\M(\bar S)$ compatible with
  the usual adjunction map for sheaves.
\end{prop}

\begin{proof}
  The proof is similar to the proof of proposition
  \ref{prop:adjointimmersion}.
Let $Mor'\subset Mor \M(\bar S)$ denote the subcategory of morphisms
$M_2\to M_1$ such that there is a commutative diagram
$$
\xymatrix{
 j_!j^*R_BM_1\ar[r]\ar[d]^{\cong} & R_BM_1\ar[d]^{=} \\ 
 R_BM_2\ar[r] & R_BM_2
}
$$
commutes. The functor $(M_2\to M_1)\mapsto M_1$ is clearly faithful
and exact. Therefore by  corollary~\ref{cor:tannaka}, we get a functor
$\M(\bar S)\to Mor'$ such that 
$$h^i_{\bar S}(X,Y)(w) \mapsto [h^i_{\bar S}(X,Y\cup 
f^{-1}\partial \bar S)(w) \to h^i_{\bar S}(X,Y)(w)]$$
for each $(f:X\to \bar S, Y,i,w)\in \Delta(S)$.
\end{proof}

\begin{prop}
  $\PM_c(S)$ forms a stack in the Zariski topology, i.e. objects and
  morphisms can be patched on Zariski open covers.
\end{prop}

\begin{proof}
By compactness and induction, it suffices to treat covers  $S= U_0\cup
U_1$ consisting of two open sets. Given $M_i\in Ob\PM_c(U_i)$ 
 with an isomorphism $f:M_0|_{U_0\cap U_1}\cong M_1|_{U_0\cap U_1}$.
Let $j_i:U_i\hookrightarrow S$ and $j_{01}:U_0\cap U1\to S$ denote the
inclusions. We define a morphism
$$ g:j_{01!}M_ 0\to j_{0!}M_0\oplus j_{1!}M_1$$
extending $1$ on the first factor and $-f$ on the second.
Then $M= coker(g)$ gives an object of $\PM_c(S)$ which restricts
to $M_i$.
\end{proof}

\subsection{Cellular decompositions}\label{section:cellular}

A number of constructions will be based on the
existence ``cellular'' decompositions.
The use of  such  decompositions plays a key role in Nori's work and also
 \cite{arapuraL}. This hinges on a result of Beilinson \cite{beilinson} that Nori
calls the basic lemma. We need a slight modification of this result.
Define a map $f:X\to S$ to be equidimensional (respectively uniform)
if  $\dim X_s$ is constant (respectively, if for every $s$,  all irreducible
components of $X_s$  have the same dimension).

\begin{prop}\label{prop:basic}
  Let  $X\to S$ be a uniform affine  morphism.
Suppose that  $\F$ is a constructible sheaf on $X$ such that $(X\to S,
\F)$ is controlled.
Then there exists a Zariski open cover $\{S^\beta\}$ of $S$, dense affine open subsets $g^\beta:U^\beta\hookrightarrow X^\beta=f^{-1}S^\beta$
such that 
\begin{enumerate}
\item[(1)] $ (U^\beta\to S^\beta,g^\beta_{!}g^{\beta*}\F)$ is controlled.
\item[(2)] For every $s\in S^\beta$,
 $$H^i_{S^\beta}(X^\beta,X^\beta-U^\beta; \F)_s=H^i_{S^\beta}(X^\beta, g^\beta_{!}g^{\beta*}\F)_s = 0$$
unless $i=\dim X_s$.
\end{enumerate}

\end{prop}

\begin{proof}  
Most of the  argument is pretty much identical
to the proof of \cite[lemma 3.3]{beilinson}. Nevertheless, we spell
this out since the result is central.
Let 
$$
\xymatrix{
 X\ar[r]^{g}\ar[d]_{k} & T\ar[r]^{h} &  S \\ 
 \bar X\ar[ru]^{\bar g}\ar[urr]_{\bar f}&  & 
}
$$
be a control diagram.  After replacing $\bar X$ by the blow up along
$\bar X-X$, we can assume that $k$ is affine.
We can choose a divisor $Z'\subset T$ such that $\F$ is constant on
the complement of $Z=g^{-1}Z'$.  Let $\ell: X-Z\hookrightarrow X$ be
the inclusion, then $M=\ell_!\F_{X-Z}$ is also controlled by the above
diagram. Set $\bar M=\R k_* M$. 
Note that $\F[\dim X_s],M[\dim X_s]$ and $\bar M[\dim X_s]$ restrict to 
 perverse sheaves on the fibres of $f$ or $\bar f$, because $\ell$ and
 $k$ are affine embeddings \cite[cor. 4.1.3]{bbd}.

Fix an embedding $\bar X\subset \PP^N\times S$ over $S$.
Given a hyperplane $H$, let
$$
\xymatrix{
 V=\bar X-H\ar[r]^{j} & \bar X  & H\ar[l]_i\\ 
 X-H=V\cap X\ar[u]^{k'}\ar[r]^>>>>{j'} & X\ar[u]^{k} & H\cap X\ar[l]_{i'}\ar[u]^{k''}
}
$$
denote the inclusions.  Let $j_s:\bar X_s-H\hookrightarrow \bar X_s$ etc. denote the restrictions of these inclusions
to the fibre over $s\in S$. 
We claim that
\begin{equation}
  \label{eq:beilinson1}
  j_!\R k'_* M|_{V\cap X} \cong \R k_* j'_! M|_{V\cap X}
\end{equation}
holds for a dense open set $P$ of hyperplanes $H$ in the dual projective
space $\check{\PP}=\check{\PP}^N$. The argument which we sketch is from
\cite[pp 35-36]{beilinson}. First note that  \eqref{eq:beilinson1} equivalent to
\begin{equation}
  \label{eq:beilinson1a}
  i^*\R k_* M \cong \R k''_* {i'}^* M
\end{equation}
by the argument indicated in [loc. cit.].
Let $H_P\subset \bar X_P=\bar X\times \check{\PP}$ denote the universal
hyperplane.  Let $M_P$ denote the pullback of $M$ to $X_P=X\times \check{\PP}$.
Let $i_P:H_P\to \bar  X_P$ denote the canonical morphism.
Similarly the other morphism $i',\ldots$ have obvious extensions denoted by
$(-)_P$ such that they can be recovered by taking  the fibre at $H\in
\check{\PP}$. With this notation, it suffices to prove
\begin{equation*}
  i_P^*\R k_{P*} M_P \cong \R k''_{P*} {i'_P}^* M_P
\end{equation*}
by virtue of theorem \ref{thm:constrBC} (2). But this is a consequence of the
 \ref{thm:constrBC} (3), because $i_P$ is locally trivial.

By combining this with proposition~\ref{prop:controlH}, we can find a
cover $\{S^\beta\}$ and
hyperplanes  $H^\beta\in P$ so that $j'_!M|_{V\cap X}$ is controlled
over $S^\beta$. In order to simplify notation, replace $S$ by
$S^\beta$ etc. below, for a  fixed $\beta$.
We set $U=X-H-Z$ with
inclusion $g$. Then $g_!g^*\F=j'_!M|_{V\cap X}$. So the first item
of the proposition holds, and in particular, this sheaf has the base
change property.

It remains to prove item (2).
Since each fibre $X_s$ is affine, we have $H^i_S(g_!g^*\F)=0$ for
$i>\dim X_s$ by
Artin's vanishing theorem. The remaining half also follows from
the affineness  of $X$. As $f_s$ is affine, $\R f_{s!}$ is left
$t$-exact for the perverse $t$-structure \cite[cor. 4.1.2]{bbd}.
Therefore 
$$ H_c^i(V\cap X_s, \bar M|_V[\dim X_s])= \calH^i(\R f_{s!}\bar
M|_{V}[\dim X_s])=0$$
 for $i<0$. Then from this, (\ref{eq:beilinson1}) and the proper
 base change theorem  we obtain,
\begin{eqnarray*}
  R^if_*(j'_!M|_{V\cap X})_s
&\cong& \calH^i(\R f_* j'_{!} M|_{V\cap X})_s\\
&\cong& \calH^i(\R \bar  f_* \R k_* j'_{!} M|_{V\cap X})_s\\
&\cong& \calH^i(\R \bar  f_* j_{!}\R k_*  M|_{V\cap X})_s\\
&\cong& \calH^i(\R \bar f_* j'_{!} M|_{V\cap X})_s\\
&\cong& \calH^i(\R \bar f_* j_{!} \bar M|_{V})_s\\
&\cong& (R^i(\bar f\circ j)_! \bar M|_{V})_s\\
&=& 0
\end{eqnarray*}
for $s\in S$ and  $i<\dim X_s$.

\end{proof}

\begin{cor}
  The schemes $U^\beta\to S^\beta$ can be assumed to be smooth.
\end{cor}

\begin{cor}
   If $X\to S$ is  equidimensional of relative dimension $n$. Then
 $$H^i_{S^\beta}(X, g^\beta_{!}g^{\beta*}\F) = 0$$
unless $i=n$.
\end{cor}

 We define $\Delta_{eq}(S,\Sigma)\subset \Delta(S,\Sigma)$ by 
 requiring
 $X\to S$ and $Y\to S$ to be equidimensional. The categories $\M_{eq}(S,\Sigma)$ and $\M_{eq}(S)$
 are defined by the same procedure as before by restricting $H$ to  $\Delta_{eq}$.
 These  can be viewed as subcategories of $\M(S)$.

\begin{lemma}\label{lemma:cell}
  Let $X\to S$ be an affine equidimensional morphism to a variety
  which is controlled with respect to a sheaf $\F$.
 Then  there  is a Zariski open cover $\{S^\beta\}$ of $S$ and filtrations 
$$X_{0}^\beta\subset X_{1}^\beta\subset\ldots X_{n}^\beta= X ^\beta=f^{-1}S ^\beta$$
 such that
\begin{enumerate}
\item[(1)]  $X_{i}^\beta\to S ^\beta$ is equidimensional of pure relative dimension $i$.
\item[(2)] The pairs $(X_a ^\beta\to S,X_{a-1}^\beta)$ are
  controlled with respect to $\F$, and 
$$H^i_{S^\beta }(X_a ^\beta,X_{a-1}^\beta;\F)=0$$
  for $i\not=a$. 
\end{enumerate}

If in addition,
$$X_0'\subset X_1'\subset\ldots X_n'= X$$
 is a given  chain of closed sets  each of pure relative dimension
 $i$.  Then  we can choose $X_i'\cap X ^\beta\subseteq X_{i}^\beta$.
\end{lemma}

\begin{proof}
  This follows from the previous proposition  and induction on $\dim X$.
\end{proof}

\begin{defn}\label{defn:QVar}
Suppose that we are given a morphism $\tilde X\to X$ of $S$-schemes,
a Zariski open cover $\{S^\beta\}$  of $S$,  filtrations by
closed sets of the preimage of each $S^\beta$
$$\tilde X^\beta=\tilde  X_d^\beta\supset \tilde X_{d-1}^\beta\supset\ldots \tilde X_0^\beta \supset
\tilde X_{-1}^\beta=\emptyset$$ We refer to  the collection $(\ldots, \tilde
X_\dt^\dt )$ as a quasi-filtration on
$X$, and the whole thing as a quasi-filtered $X$-variety. 
\end{defn}

 These objects form a
category $QVar_S$, where a morphism 
$$\phi:(\{S^\beta\}_{\beta\in B}, \tilde X'\to X, \tilde
{X'}_\dt^\beta)\to (\{T^\gamma\}_{\gamma\in G},\tilde X\to X, \tilde
X_\dt^\gamma)$$  
is given by a map $r:B\to  G$, such that $S^\beta\subseteq T^{r(\beta)}$ plus 
 commutative squares of $S$-schemes
$$
\xymatrix{
  \tilde {X'}\ar[r]\ar[d] & \tilde X\ar[d] \\
  {X'}\ar[r]^{f} & X }
$$
with $\tilde {X'}_\dt^\beta$ mapping to $\tilde X_\dt^{r(\beta)}$. We say that
$\phi$ covers $f$.  
Let say that the quasi-filtration is simple if the cover
$\{S^\beta\}$ consists of $\{S\}$ alone.
A filtered variety is the  special case  of a simple quasi-filtered
variety,  where $\tilde X\to X$ is the
identity. Let   $FVar_S$ be the full subcategory of filtered varieties.

We give   a relative version of Jouanolou's trick \cite[lemma
1.5]{jou} below. To simply the statement, let us say that $\tilde X\to X$ is
{\em bundle of affine spaces} if $\tilde X=T \times_{\text{Aff}(n)}\mathbb{A}_k^n$,
where $T\to X$ is a torsor  for the affine group in the Zariski topology

\begin{lemma}\label{lemma:jou}
If $f:X\to S$ is a quasi-projective morphism then there exists a commutative diagram
$$
\xymatrix{
 \tilde X\ar[r]^{\pi}\ar[rd]^{\tilde f} & X\ar[d]^{f} \\ 
  & S
}
$$
such that $\tilde f$ is affine, and $\pi$ is a 
bundle of affine spaces.
\end{lemma}

\begin{proof}
When $X=\PP^N\times S$, $\tilde X$ can be taken to be product of $S$ with
the complement of the incidence variety $St_N=\{(x,H)\in \PP^N\times \check{\PP}^N\mid x\notin H\}$.
For the general case,
let $X\subset \bar X\subset \PP^N\times S$ be a relative
compactification. 
After blowing up, we can assume that $\bar X-X$ is a divisor. 
Then the preimage $\tilde X$ of $ X$ in $St_N\times S$ will do the job.
\end{proof}

When $f:X\to S$ is projective, we see that the pullback of any
constructible sheaf $\pi^*\F$ is necessarily controlled.
We say that a quasi-filtration $(\pi:\tilde X\to X, \tilde X_\dt)$ is {\em cellular}
with respect to a controlled constructible sheaf $\F$ if 
\begin{enumerate}
\item $\pi$  is a bundle of affine spaces,
\item $\pi^*\F$ is controlled,
\item $\tilde X_\dt$  satisfies condition (2)  of   lemma
  \ref{lemma:cell}  with respect to $\pi^*\F$, i.e.
$$H^i_{S^\beta }(\tilde X_a ^\beta,\tilde X_{a-1}^\beta;\pi^*\F)=0$$
\end{enumerate}
Note that the first assumption implies that $\R \pi_*\pi^*\F= \F$. The
second assumption can be seen to be redundant, but there is no harm in
including it.

\begin{lemma}\label{lemma:affinespacebundle}
  A bundle $\pi:\tilde X\to X$ of affine spaces over an affine scheme
  admits a section.
\end{lemma}

\begin{proof}
  The bundle $\tilde X/X$ is a homogeneous space  associated to a torsor
for the affine group $\text{Aff}(n)$. Using the exact sequence
$$1\to G_a^n\to \text{Aff}(n)\to GL(n)\to 1$$
and the fact that $X$ is affine, we conclude that $H^1(X,G_a^n)=0$,
and therefore that  $\tilde X/X$ is a vector bundle. So it has a section.
\end{proof}

\begin{prop}\label{prop:cell}
\-
  \begin{enumerate}
  \item[(1)] Every equidimensional  quasiprojective morphism possesses
    a cellular quasi-filtration with respect to a given controlled
    sheaf $\F$.
\item[(2)] Every morphism of equidimensional quasiprojective schemes over $S$
  can be lifted to a   morphism of cellular quasi-filtered
  varieties with respect to  a controlled constructible sheaf on the
  target.

\item[(3)] The category of
  cellular quasi-filtrations of a fixed pair $(X,\F)$  is connected i.e. any
  two objects can be connected by a chain of morphisms.
  \end{enumerate}

\end{prop}

\begin{proof}

 The first two statements  follow immediately  from  lemmas
 \ref{lemma:cell} and  \ref{lemma:jou}. The remaining part, take a
 bit more work.

First we treat the special case of  (3) for cellular filtrations. Given two such filtrations
$X_\dt, X_\dt'$, lemma \ref{lemma:cell} shows that there is 
third cellular filtration $X_\dt''\supseteq X_\dt\cup X_\dt'$.
Now we prove it general. Suppose that we have  cellular
 quasi-filtrations $(S^\beta, \tilde X\to X, \tilde X_\dt^\beta)$ and
 $(T^\gamma, \tilde  Y\to
 X, \tilde Y_\dt^\gamma)$. Take the fibre product $\tilde Z= \tilde X\times_X
 \tilde Y$. By lemma \ref{lemma:affinespacebundle}, $\tilde Z\to \tilde X$
 and $\tilde Z\to \tilde Y$ admit sections $\sigma$ and $\tau$. 
 Lemma \ref{lemma:cell} shows that we can refine $\sigma(\tilde
 X_\dt^\beta)\cup \tau(\tilde Y_\dt^\gamma)$ to a cellular filtration
 $\tilde Z_\dt^\dt$ of
 $Z$. By refining $\tilde X_\dt^\dt$ and $\tilde Y_\dt^\dt$, we obtain
 a diagram of cellular quasi-filtrations
$$ 
(\tilde X, \tilde X_\dt^\dt)\to (\tilde X, \tilde
{X'}_\dt^\dt)\leftarrow (\tilde Z,\tilde Z^\dt_\dt)\to (\tilde Y',\tilde {Y'}_\dt^\dt)
\leftarrow (\tilde Y,\tilde Y_\dt^\dt)
$$

\end{proof}

Given a  filtration $X_\dt\subset X$ by closed
sets and a sheaf $\F$, we have a spectral sequence
$$E_1^{pq}= H_S^{p+q}(X_p,  X_{p-1};\F)\Rightarrow H_S^{p+q}(X, \F)$$
cf \cite[(10)]{arapuraL}.
When this is cellular, this reduces to an isomorphism at $E_2$. 
Then putting this remark together with the above results yields

\begin{lemma}\label{lemma:complex}
Suppose that $(\pi:\tilde X\to X, \tilde X_\dt)$ is cellular with
respect to $j_{XY!}F$. 
$H_S^i(X,Y;F)$ is 
isomorphic to the $i$th cohomology of the complex
$$
\ldots H_S^{i}(\tilde X_i,(\pi^{-1}Y\cap \tilde X_i)\cup \tilde X_{i-1})\to H_S^{i+1}(\tilde X_{i+1}, 
(\pi^{-1}Y\cap \tilde X_{i+1})\cup \tilde X_{i}) \ldots   
$$
\end{lemma}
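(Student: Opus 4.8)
The plan is to combine the cellular spectral sequence with Jouanolou's trick, reducing the computation of $H_S^i(X,Y;F)$ to a computation on the affine bundle $\tilde X$ where the cellular filtration lives. First I would replace $X$ by $\tilde X$: since $(\pi:\tilde X\to X,\tilde X_\dt)$ is a bundle of affine spaces, $\R\pi_*\pi^*(j_{XY!}F) = j_{XY!}F$ (this is the defining property recorded just after the definition of cellular quasi-filtration), so the projection formula together with $R^if_*$ applied to $X\to S$ gives
$$
H_S^i(X,Y;F) = R^if_*(j_{XY!}F) \cong R^i\tilde f_*(\pi^*j_{XY!}F) = R^i\tilde f_*(j_{\tilde X,\pi^{-1}Y\,!}F),
$$
where $\tilde f:\tilde X\to S$ and I have used $\pi^*j_{XY!}F = j_{\tilde X,\pi^{-1}Y\,!}F$ (pullback commutes with extension by zero along an open immersion, applied to the pullback of the open $X-Y\subset X$). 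So it suffices to compute $H_S^i(\tilde X,\pi^{-1}Y;F)$ using the filtration $\tilde X_\dt$, which by hypothesis is cellular with respect to $\pi^*(j_{XY!}F) = j_{\tilde X,\pi^{-1}Y\,!}F$.

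Next I would invoke the filtration spectral sequence quoted in the lemma immediately preceding the statement: for the closed filtration $\tilde X_\dt\subset \tilde X$ and the sheaf $\G = j_{\tilde X,\pi^{-1}Y\,!}F$ one has
$$
E_1^{pq} = H_S^{p+q}(\tilde X_p, J_{p!}J_p^*\G)\ \Longrightarrow\ H_S^{p+q}(\tilde X,\G),
$$
with $J_p:\tilde X_p - \tilde X_{p-1}\to \tilde X_p$ the inclusion. I would then identify $J_{p!}J_p^*\G$ with the sheaf on $\tilde X_p$ given by extension by zero from $\tilde X_p - \tilde X_{p-1}$ of $F$ restricted to the locus where it is nonzero, namely $(\tilde X_p - \tilde X_{p-1}) - (\pi^{-1}Y\cap(\tilde X_p-\tilde X_{p-1}))$; a short diagram chase with the open immersions shows $H_S^{p+q}(\tilde X_p, J_{p!}J_p^*\G) = H_S^{p+q}(\tilde X_p, (\pi^{-1}Y\cap\tilde X_p)\cup\tilde X_{p-1};F)$ in the notation of the lemma. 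The cellularity hypothesis (condition (3) of Lemma~\ref{lemma:cell}, transported through $\pi$) says precisely that this vanishes unless $p+q=p$, i.e. unless $q=0$; equivalently the $E_1$-page is concentrated on the single row $q=0$. Hence the spectral sequence degenerates and collapses: $H_S^i(\tilde X,\G)$ is computed by the $i$th cohomology of the complex of $E_1^{\dt,0}$ terms with the differential $d_1$, which is exactly the complex
$$
\cdots \to H_S^{i}(\tilde X_i,(\pi^{-1}Y\cap\tilde X_i)\cup\tilde X_{i-1}) \to H_S^{i+1}(\tilde X_{i+1},(\pi^{-1}Y\cap\tilde X_{i+1})\cup\tilde X_{i}) \to \cdots
$$
displayed in the statement. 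Combining with the first paragraph's isomorphism $H_S^i(X,Y;F)\cong H_S^i(\tilde X,\G)$ finishes the proof.

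The main obstacle I anticipate is bookkeeping rather than conceptual: getting the sheaf-theoretic identifications $\pi^*j_{XY!}F = j_{\tilde X,\pi^{-1}Y\,!}F$ and $J_{p!}J_p^*(j_{\tilde X,\pi^{-1}Y\,!}F) = j_{!}F$ on $\tilde X_p$ for the correct open (the intersection of the stratum $\tilde X_p - \tilde X_{p-1}$ with the complement of $\pi^{-1}Y$) exactly right, so that the $E_1$-terms literally match the displayed cohomology-of-pairs notation $H_S^{*}(\tilde X_p,(\pi^{-1}Y\cap\tilde X_p)\cup\tilde X_{p-1})$. One should also note that the spectral sequence of a closed filtration is the one already used in the excerpt for Lemma~\ref{lemma:complex}'s predecessor, so no new convergence input is needed; the only genuine use of the cellular hypothesis is the row-concentration of $E_1$, and one must make sure the indexing convention for that spectral sequence puts the surviving terms in degree $q=0$ and not in some shifted row (a harmless reindexing if not). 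Everything else—base change is not even needed here beyond what is built into the objects of $\Delta$—follows formally.
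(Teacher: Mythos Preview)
Your proposal is correct and follows essentially the same approach as the paper: the paper simply records the filtration spectral sequence $E_1^{pq}=H_S^{p+q}(X_p,J_{p!}J_p^*\F)\Rightarrow H_S^{p+q}(X,\F)$, notes that cellularity forces it to collapse at $E_2$, and combines this with the isomorphism $\R\pi_*\pi^*\F=\F$ coming from the affine-bundle condition, which is exactly what you have spelled out in detail. Your bookkeeping with the sheaf identifications and the $q=0$ concentration is accurate and matches the intended argument.
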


\subsection{Tensor products}\label{section:tensor}

We have a product structure on $\Delta(S,\Sigma)$ (and $\Delta_{eq}(S,\Sigma)$)
 given by
$$(X\to S, Y, i,w)\times (X'\to S, Y', i',w')
= (X\times_S X'\to S, X\times_S Y'\cup X'\times_S Y, i+i',w+w')$$
which makes it into a monoid in the category of graphs with unit $(id_S,\emptyset,0,0)$.
Unfortunately, this does not immediately lead to a product on
$\M(S,\Sigma)$. The problem has to do  with the K\"unneth
formula. To remedy this, we
 define a full subgraph
 $$\Delta_{cell}(S,\Sigma) \subset \Delta_{eq}(S,\Sigma)$$
 The objects of $\Delta_{cell}$ consist of 
 quadruples $(X\to S, Y, i,w)$ such that $X\to S$ is affine and
such that $H^j_S(X,Y) = 0$ unless $j=i$, and such that  $X-Y\to S$ is smooth.
Thanks to K\"unneth's formula, we have a commutative diagram
$$
\xymatrix{
\Delta_{cell}(S,\{S\})\times \Delta_{cell}(S,\Sigma)\ar[r]\ar[d]^{H_{\{S\}}\times H_\Sigma}&\Delta_{cell}(S,\Sigma)\ar[d]^{H_\Sigma}\\
F\text{-mod}\times  F\text{-mod}
\ar[r]^>>>>{\otimes} & F\text{-mod}
}
$$
leading to a  product 
$$End^\vee(H|_{\Delta_{cell}(S,\{S\}}))\text{-comod}\times 
End^\vee(H|_{\Delta_{cell}(S,\Sigma)})\text{-comod}\to
End^\vee(H|_{\Delta_{cell}(S,\Sigma)})\text{-comod}$$
With this, $\PM_{cell}(S)=End^\vee(H|_{\Delta_{cell}(S,\{S\})})\text{-comod}$ becomes
a tensor category.  We form the associated stack $\M_{cell}(S,\Sigma)$
as before. The tensor product extends to this. To summarize

 \begin{lemma}\label{lemma:prodMcell}
There are tensor products
$$\M_{cell}(S,\{S\})\times \M_{cell}(S,\Sigma)\to \M_{cell}(S,\Sigma)$$
compatible, via the forgetful functor $U$, with the vector space tensor product.
 With this structure $\M_{cell}(S,\{S\})$ becomes a tensor category.
\end{lemma}

The key point is:

\begin{thm}\label{thm:cellveq}
The category
  $\M_{cell}(S)$ is equivalent
  to $\M_{eq}(S)$.
\end{thm}

Before giving the proof, we give a construction.
 Let $C^{[0,\infty)}(\M(S,\Sigma))$ be the
category of bounded complexes supported in nonnegative degrees.
Let $\calH^i: C^{[0,\infty)}(\M(S,\Sigma))\to\M(S,\Sigma) $ denote the $i$th cohomology
functor. Then composition gives a functor $R_B\circ \calH^*$ from
$C^{[0,\infty)}(\M(S,\Sigma))$ to the
category $Gr\, Sh(S((\C))$ of $[0,\infty)$-graded sheaves. 
Let $\calC(S,\Sigma)$ be
the so called comma category whose objects are triples
$$(K^\dt, M, \phi:  R_B(M)\to R_B\circ \calH^0(K^\dt))$$
where $K^\dt\in Ob C^{[0,\infty)}(\M(S,\Sigma))$ and $ M\in Ob\M(S,\Sigma)$.
Morphisms are pairs $K_1^\dt\to K_2^\dt$, $M_1\to M_2$ satisfying
obvious compatibilities. Let $\calC_{iso}(S,\Sigma)$ be the full subcategory
consisting of triples for which $\phi$ is an isomorphism.
We can identify $F^2\text{-mod}$ with $F\text{-mod}\times  F\text{-mod}$.
There is a faithful exact functor
$U_2:\calC(S,\Sigma)\to (F\text{-mod})^2$ given by $(K^\dt, M,\phi)\mapsto
(\prod_i U(K^i))\times U(M)$.

Given a  simple quasi-filtration $(T\to S, T_\dt)$ and a stratification $\Sigma$, choose 
base points $s$ for $(S,\Sigma)$ and $t_\dt\in T_\dt$. 
 We define a functor $H^\#:\Delta(S,\Sigma)^{op}\to \calC(S,\Sigma)_{iso}$ as follows.  On
  objects
  \begin{equation*}
    H^\#(X\to S,Y,i,w) =
  \end{equation*}
  \begin{equation}
    \label{eq:Hsharp}
((h_S^{0}(X_{T_0},Y_{T_0}\cup X_{T_{0-1}})\to
h_S^{1}(X_{T_1},Y_{T_1}\cup 
X_{T_{1-1}})\to \ldots)[i] ; h^i_S(X,Y);\phi)
  \end{equation}
where the differentials of the complex are
connecting maps and $\phi$ is given by lemma~\ref{lemma:complex}. 

\begin{defn}
  $\PM^\#(f,T\to S, T_\dt,\Sigma)= End_{F^2}^\vee(H^\#\circ U_2)$-comod.
\end{defn}

This carries an exact faithful embedding into $F^2$-mod.
The categories $\PM^\#(f, T\to S,   T_\dt,\Sigma)$ fibred over
$S$-schemes.
We can form the associated stack $\M^\#(f, T\to S   T_\dt,\Sigma)$ for
the Zariski topology. This can be constructed explicitly by following
the procedure outlined at the end of \S~\ref{sect:extmotivicsheaves}.
In order to simplify notation, we usually just write this as
$\M^\#(T_\dt)$, when the rest of the data is understood.
We let $h_{T_\dt}(X,Y)(w)$ denote the object of this category associated
to $(X,Y,i,w)$. We have a functor $\M^\#(T_\dt) \to \calC(S,\Sigma)_{iso}$, and a  functor
$p:\M^\#(T_\dt)\to  \M(S,\Sigma)$ 
given as a composition
of this with the functor $\calC(S,\Sigma)_{iso}\to \M(S,\Sigma)$ given by
projection onto the second factor. 
From lemma \ref{lemma:surjcatstack}, we obtain

\begin{lemma}\label{lemma:surjcat}
  $\M(S,\Sigma)$ is equivalent to $\M^\#(T_\dt)/\ker p$.
\end{lemma}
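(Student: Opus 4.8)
The plan is to recognize this as a direct application of Lemma~\ref{lemma:surjcatstack} to the enriched functor $H^\#\circ U_2$. By definition $\PM^\#(f, T\to S, T_\dt,\Sigma) = End^\vee(H^\#\circ U_2)\text{-comod}$, where $U_2:\calC(S,\Sigma)_{iso}\to F\text{-mod}^2$ is the faithful exact functor $(K^\dt,M,\phi)\mapsto(\prod_i U(K^i))\times U(M)$, and $\M^\#(T_\dt)$ is the stack built from this by the recipe at the end of \S\ref{sect:motivicsheaves}. Under the identification $F\text{-mod}^2 = F\text{-mod}\times F\text{-mod}$, composing $H^\#\circ U_2$ with the second projection yields $U\circ h^i_S(X,Y)(w)$, which is precisely $H$ by inspection of \eqref{eq:Hsharp}. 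So $H^\#\circ U_2$ is an enriched lift of $H$ exactly in the sense of \S\ref{sect:enriched}, with the two factors of $F^2$ interchanged, and the functor $p:\M^\#(T_\dt)\to\M(S,\Sigma)$ is the one induced by the corresponding projection of coalgebras.

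First I would check that $H^\#\circ U_2$ is compatible with restriction to Zariski opens, which is what makes the sheaf-of-coalgebras construction (hence $\M^\#(T_\dt)$ and $p$) well defined. For $V\subset U$, each term $h^i_Q(X_{T_\dt},Y_{T_\dt}\cup X_{T_{\dt-1}})$ of the complex in \eqref{eq:Hsharp} and the object $h^i_S(X,Y)$ restrict to the analogous data over $V$ via the base-change functors of Construction~\ref{constr:basech} (after restricting the quasi-filtration $T_\dt$ to $V$ as well), since forming $X_{T_\dt}$, pushing to $Q$ and truncating all commute with further base change; and the comparison isomorphism $\phi$ of Proposition~\ref{prop:leray} is canonical, hence compatible with these restrictions. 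Thus $U\mapsto End^\vee((H^\#\circ U_2)|_{\Delta(U,\Sigma)})$ is a presheaf of coalgebras mapping to the one defining $\E^\vee(S,\Sigma)$, and its sheafification together with the induced comodule functor $p$ is exactly the input to which Lemma~\ref{lemma:surjcatstack} applies.

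It then remains only to invoke Lemma~\ref{lemma:surjcatstack}, with the second projection of $F\text{-mod}^2$ playing the role of $p_1$ there: locally $p$ is identified, via Corollary~\ref{cor:surjcat}, with the change-of-scalars functor $-\otimes_{F^2}F$, which is essentially surjective and full with kernel ideal $\ker p$; these two properties pass to the associated prestack $\PM^+$ and then to the associated stacks, yielding $\M^\#(T_\dt)/\ker p\sim\M(S,\Sigma)$. The only point that genuinely needs care is the restriction-compatibility in the second paragraph — namely that $\phi$ is natural not merely in the quadruple $(X,Y,i,w)$ but also under pullback along $V\subset U$, so that $H^\#\circ U_2$ really descends to a morphism of presheaves of coalgebras; once that is granted, everything else is formal and is subsumed by Lemma~\ref{lemma:surjcatstack}.
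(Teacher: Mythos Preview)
Your proposal is correct and matches the paper's approach: the paper simply states the lemma as a direct consequence of Lemma~\ref{lemma:surjcatstack}, with no further argument. You have filled in the verifications (that $U_2\circ H^\#$ really is an enriched lift of $H$ up to swapping the two factors of $F^2$, and that it is compatible with Zariski restriction) that the paper leaves implicit.
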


\begin{proof}[Proof of theorem \ref{thm:cellveq}]

  Restriction gives a functor $\iota: \PM_{cell}(S,\Sigma)\to \PM_{eq (S,\Sigma)}$ which is
  necessarily exact and faithful. It suffices to show that this is an
  equivalence, because it will then induce an equivalence of the
  corresponding stacks $\M_{cell}(S,\Sigma)\sim \M_{eq (S,\Sigma)}$. 
We  show that $\iota$ is
essentially surjective and full, and for this it suffices to have a right
inverse  up to natural equivalence.  This is induced by the functor
$\PM^\#(T_\dt)\to \M_{cell}(S)$ given by 
$$ (X\to S,Y,i,w) \mapsto \calH^0(h_S^{0}(X_{T_0},Y_{T_0}\cup X_{T_{0-1}})\to
h_S^{1}(X_{T_1},Y_{T_1}\cup  X_{T_{1-1}})\to \ldots)[i] $$

\end{proof}

\begin{cor}
 There is a K\"unneth decomposition for motives associated to objects in $\Delta_{eq}(S,\{S\})$:
$$h_S^i(X\times_S X' ,X\times_S Y'\cup X'\times_S Y)\cong \bigoplus_{j+j'=i}
h_S^j(X,Y)\otimes h_S^{j'}(X',Y')$$  
\end{cor}

\begin{proof}
  This follows from the theorem and lemma \ref{lemma:prodMcell}.
\end{proof}

For objects in $\Delta_{eq}(S,\{S\})$, we get exterior products 
$$h_S^j(X,Y)\otimes h_S^{j'}(X',Y')\to h_S^{j+j'}(X\times_S X' ,X\times_S Y'\cup X'\times_S Y)$$
and cup products 
$$h_S^j(X,Y)\otimes h_S^{j'}(X,Y)\to h_S^{j+j'}(X , Y)$$
by composing this with the  restriction to  the  diagonal.
Corollary \ref{cor:tannaka2} shows that these products are compatible
with the standard tensor products on the categories of classical and \'etale local systems.

 \subsection{Ind objects}

Let  $\text{Ind-}\mathcal{A}$ denote the category of  Ind-objects of
a category $\mathcal{A}$ obtained by formally adjoining filtered
colimits \cite{ks}. This is abelian, when $\A$ is [loc. cit.]. This
can be given a concrete description in many cases.
For example, it is well known that $\text{Ind-}F\text{-mod}$ can be identified with
$F\text{-Mod}$. We extend this to sheaves.
Recall that an object $c$ of an additive category is compact or finitely presented if $Hom(c,-)$ commutes with
arbitrary small coproducts. For example, a finite dimensional vector space $V$
is seen to be compact in the category of all vector spaces, because an
element of $Hom(V,-)$ is determined by its value on a finite basis.  
For essentially the same reasons, we have:

\begin{lemma}
  A constructible sheaf is compact in the category $Sh(S)$ of sheaves
  of $F$-modules.
\end{lemma}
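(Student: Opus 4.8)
The plan is to reduce the statement to the analogous fact about finite-dimensional vector spaces by a standard colimit argument on an open cover. First I would recall what has to be checked: a sheaf $\F$ is compact in $Sh(S)$ if for every small family $\{\G_j\}_{j\in J}$ of sheaves of $F$-modules, the natural map
$$\bigoplus_{j\in J} Hom(\F,\G_j)\to Hom\bigl(\F,\textstyle\bigoplus_j \G_j\bigr)$$
is an isomorphism. Injectivity is automatic (a finite collection of nonzero maps into distinct summands cannot sum to zero), so the content is surjectivity: any map $\phi:\F\to\bigoplus_j\G_j$ must factor through a finite subsum. Since $S$ is a variety, it is Noetherian, hence quasi-compact, and every open cover has a finite subcover; this is what makes the argument go through.

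The key steps, in order. (1) Since $\F$ is constructible, choose a finite stratification $\Sigma$ of $S$ (for the Zariski topology) such that $\F|_\sigma$ is locally constant with finite-dimensional stalks on each stratum $\sigma\in\Sigma$. (2) Refine to a finite open cover $\{U_1,\dots,U_n\}$ of $S$ such that on each $U_m$ the sheaf $\F|_{U_m}$ is \emph{constant} with finite-dimensional value $V_m$ — this is possible after passing to a suitable affine open cover trivializing the local systems on strata, again using quasi-compactness to keep the cover finite. Actually, the cleanest route is to bypass trivializing on a cover and argue directly via a resolution: because $\F$ is constructible, it admits a finite presentation by sheaves of the form $(j_U)_!F_U$ for $U$ open (or $(j_U)_! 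F_U$-type generators), i.e. there is an exact sequence $\bigoplus_{\text{finite}} (j_{U})_! F_{U}\to \bigoplus_{\text{finite}} (j_{V})_! F_{V}\to \F\to 0$. (3) Each generator $(j_U)_! F_U$ is compact: $Hom((j_U)_!F_U,\G)=\G(U)$ by adjunction, and the section functor $\G\mapsto\G(U)$ commutes with arbitrary direct sums of sheaves precisely because $U$ is quasi-compact (a section of $\bigoplus_j\G_j$ over $U$, being locally a finite sum, is globally a finite sum on the finite cover of $U$ witnessing quasi-compactness). (4) Now invoke the general fact that in any additive category a finite colimit (here: cokernel of a map between finite direct sums) of compact objects is compact — apply $Hom(-,\bigoplus_j\G_j)$ to the presentation and use the five lemma against the same diagram with $\bigoplus_j Hom(-,\G_j)$, exploiting that filtered colimits (here the direct sum over $J$) are exact in $F$-Mod and commute with the finite limits defining $Hom$ out of a cokernel.

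The main obstacle — really the only nonformal point — is step (3): verifying that $\Gamma(U,-)$ commutes with infinite direct sums of sheaves, equivalently that $(j_U)_!F_U$ is compact. This is where quasi-compactness of $U$ (hence of $S$, a Noetherian scheme) is essential; sheafification does not in general commute with infinite coproducts, so one must check that for $s\in\bigl(\bigoplus_j\G_j\bigr)(U)$ there is a \emph{finite} open cover $\{W_k\}$ of $U$ on each of which $s|_{W_k}$ lies in a finite subsum, and then that the finitely many subsums involved across the $W_k$ together give a single finite subsum containing $s$ globally. Steps (1)–(2) and (4) are routine: (1)–(2) are the definition of constructibility plus extraction of a finite subcover, and (4) is the standard lemma that compact objects are closed under finite colimits, proved by the five-lemma diagram chase indicated above.
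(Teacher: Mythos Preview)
Your approach differs from the paper's: you present $\F$ as a cokernel of a map between finite sums of sheaves $(j_U)_!F_U$ and reduce to compactness of these generators via the adjunction $Hom((j_U)_!F_U,\G)\cong\G(U)$, whereas the paper restricts $\phi$ to each stratum $\sigma$, pulls back along the universal cover $\pi_\sigma:\tilde\sigma\to\sigma$ so that $\pi_\sigma^*\F$ becomes constant of some rank $n_\sigma$, and then argues that $\phi$ is determined by the finitely many global sections of $\bigoplus_i\pi_\sigma^*\G_i$ over $\tilde\sigma$ obtained from a basis.

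There is, however, a genuine gap in your step~(3), and it cannot be repaired. In this paper $Sh(S)$ denotes sheaves on the \emph{analytic} space $S_{an}$: the Betti realization $R_B$ lands in $Constr(S_{an})$, and $Sh(S)$ is introduced as the ambient category receiving $R_B$. Open subsets of $S_{an}$ --- whether Zariski opens viewed analytically or small analytic opens --- are not quasi-compact, so your claim that $\Gamma(U,-)$ commutes with infinite direct sums fails. In fact the lemma as stated is false. Take $S=\mathbb{A}^1$, so $S_{an}=\C$; let $\F=F_\C$, which is constructible for the trivial stratification; and let $\G_k$ be the skyscraper sheaf at $k\in\Z\subset\C$. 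Every point of $\C$ has a neighbourhood meeting at most one integer, so the local finiteness condition in the sheafified direct sum is vacuous and $\Gamma\bigl(\C,\bigoplus_k\G_k\bigr)=\prod_{k\in\Z} F$, whereas $\bigoplus_k\Gamma(\C,\G_k)=\bigoplus_{k\in\Z} F$. Thus $F_\C$ is not compact in $Sh(\C)$. Your Noetherian reasoning would be correct for $Sh(S_{zar})$, but that is not the category in play. (The paper's own argument has the same lacuna: its closing line ``It should now be clear that $\phi$ has finite support'' presumes that a global section of a direct sum over the non-quasi-compact space $\tilde\sigma$ lies in a finite subsum, which the same example refutes.)
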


\begin{proof}
For any collection of sheaves $\F,\G_i$, we have  
canonical map 
$$\kappa:\bigoplus_{i\in I} Hom(\F,\G_i)\to Hom(\F, \bigoplus_{i\in I} \G_i)$$
Injectivity  can be checked on stalks. 
Consider the projection 
$$p_j:\bigoplus \G_i\to \prod \G_i\to \G_j$$
One checks that $\phi\in Hom(\F,\oplus \G_i)$ lies in $im(\kappa)$ precisely
when it has finite support in the sense that there exists a finite subset $J\subset I$ such that the germs $p_i(\phi_s)=0$ for
all $s\in S$ and $i\notin J$.

Suppose $\F$ is constructible with respect
to a necessarily finite Zariski stratification $\Sigma$.  Let $\pi_\sigma:\tilde \sigma \to
\sigma$ denote the universal cover of a stratum. 
Choose bases of cardinality say $n _\sigma$  for each $H^0(\pi_\sigma^* \F)$.
 Then $\phi\in Hom(\F,\oplus \G_i)$ is determined by its image
 $$r(\phi)= (\pi_\sigma^* \phi)\in \prod_\sigma H^0(\pi_\sigma^*\mathcal{H}om(\F,\oplus \G_i)),$$
i.e. the map $r$ is injective.  In
explicit terms, $r(\phi)$ is given
by a collection of  $n_\sigma$ sections of  $\oplus
\pi_\sigma^*\G_i$ for each $\sigma$. The projections $r(p_j(\phi))$ are given
by simply projecting these sections to $\G_j$.
 It should now be clear that $\phi$ has finite support.
\end{proof}

\begin{cor}
  There is a fully faithful exact embedding of $\text{Ind-}Cons(S)$ into $Sh(S)$.
\end{cor}

\begin{proof}
  This follows from \cite[prop 6.3.4]{ks} and the exactness of
  filtered colimits.
\end{proof}

Therefore we have an exact faithful functor
$$
 \text{Ind-}\M(S)\to  \text{Ind-}Cons(S)\to Sh(S)
$$
given by composition. This is also denoted by $R_B$.
\section{Direct Images}

\subsection{Direct Images (abstract construction)}\label{section:directimages0}

We  start by giving a general construction of direct images. 
Set $DM(S)= D(\text{Ind-}\M(S))$.
Fix a  morphism $f:S\to Q$.
Since the functor $f^*$ is exact, it extends to an exact functor on
$\text{Ind-}\M(Q)\to \text{Ind-}\M(S)$. 
Thus we have an extension $f^*:DM(Q)\to DM(S)$ as a triangulated functor.

\begin{thm}\label{thm:directimage0}
  If $f:S\to Q$ is a  morphism of quasiprojective varieties, 
then there is a triangulated functor $rf_*:DM(S)\to DM(Q)$  which is  right adjoint to $f^*$.
\end{thm}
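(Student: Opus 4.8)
The plan is to construct $rf_*$ via the adjoint functor theorem for triangulated categories (Neeman's version, via Brown representability), which reduces the problem to verifying that $DM(S) = D(\Ind\text{-}\M(S))$ is compactly generated and that $f^*\colon DM(Q)\to DM(S)$ preserves arbitrary coproducts. First I would record that $\Ind\text{-}\M(S)$ is a Grothendieck abelian category: it is abelian because $\M(S)$ is (as established above), and adjoining filtered colimits produces a category with exact filtered colimits and a generator, namely the set of objects coming from $\M(S)$ itself, which (being essentially small, as a $2$-colimit of categories of comodules over coalgebras) provides a small generating family. Consequently its derived category $DM(S)$ is well-generated — indeed compactly generated, with the compact generators being (bounded complexes of) objects of $\M(S)$, since these are the compact objects of $\Ind\text{-}\M(S)$ by construction and they remain compact in the derived category.

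Next I would check that $f^*$ preserves coproducts. On $\Ind\text{-}\M(-)$ the functor $f^*$ is the Ind-extension of the exact pullback $\M(Q)\to \M(S)$ from Lemma~\ref{lemma:pullback} (extended to the stack version $\M$), and an Ind-extension of any functor automatically commutes with filtered colimits, hence with arbitrary coproducts; since $f^*$ is moreover exact, its derived functor on $DM(Q)\to DM(S)$ is computed term-by-term and therefore also commutes with coproducts. With $DM(S)$ compactly generated and $f^*$ a coproduct-preserving triangulated functor, Neeman's Brown representability theorem yields a right adjoint $rf_*\colon DM(S)\to DM(Q)$, which is automatically triangulated.

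The main obstacle I expect is not the formal adjoint-functor machinery but the foundational point that $\Ind\text{-}\M(S)$ really is a Grothendieck category with a set of compact generators — i.e. that $\M(S)$ is essentially small and that its objects are genuinely compact in $\Ind\text{-}\M(S)$ in a way that survives passage to the derived category. The essential smallness follows by tracking the construction: $\M(S)$ is a $2$-colimit over stratifications $\Sigma$ of categories $Coh(\E^\vee(S,\Sigma))\text{-comod}$, each built from set-sized data (the graphs $\Delta(S,\Sigma)$ and the coalgebras $\E^\vee(\D)$), so only set-many isomorphism classes arise. The compactness of objects of $\M(S)$ in $\Ind\text{-}\M(S)$ is immediate from the definition of Ind-objects, and to promote this to compactness in $D(\Ind\text{-}\M(S))$ one uses that $\Ind\text{-}\M(S)$ has exact filtered colimits (so cohomology commutes with coproducts) together with the standard truncation argument. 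Once these points are in place, the quasi-projectivity hypothesis on $f$ plays no role in this abstract version — it is only needed for the refined constructions promised afterward — and the theorem follows.
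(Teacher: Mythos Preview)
Your approach is in the same spirit as the paper's --- both invoke Brown representability to produce the right adjoint --- but you take the harder route of asserting that $DM(S)$ is compactly generated, whereas the paper appeals directly to Franke's theorem, which gives Brown representability for the derived category of any Grothendieck category without any need for compact generation.

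The gap in your argument is the step promoting compactness of objects of $\M(S)$ from $\Ind\M(S)$ to $D(\Ind\M(S))$. Compactness in the abelian category means $Hom(M,\bigoplus A_i)\cong\bigoplus Hom(M,A_i)$; compactness in the derived category requires in addition that $Ext^n(M,\bigoplus A_i)\cong\bigoplus Ext^n(M,A_i)$ for all $n>0$. The ``standard truncation argument'' you invoke reduces the question for arbitrary target complexes to shifts of single objects, but it does not eliminate this $Ext$ condition: exactness of filtered colimits ensures that the cohomology of a complex commutes with coproducts, not that derived $Hom$ out of a fixed $M$ does. To close this you would need something like a resolution of each $M\in\M(S)$ by compact projectives, or an FP$_\infty$ condition, and neither is available here without substantial further analysis of $\Ind\M(S)$.

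The paper sidesteps this entirely: since $\Ind\M(S)$ is a Grothendieck category (by \cite[thm 8.6.5]{ks}), Franke's result \cite[thm 3.1]{franke} says every contravariant cohomological functor on $D(\Ind\M(S))$ taking coproducts to products is representable; since $f^*$ commutes with coproducts (your argument for this part is fine), $Hom(f^*(-),N)$ is such a functor and is represented by some $rf_*N$, which then assembles into a triangulated right adjoint. If you prefer to stay within Neeman's framework, use his well-generated version rather than the compactly-generated one --- $D(\A)$ is well-generated for any Grothendieck $\A$ --- and your proof goes through with the compact-generation claim simply deleted.
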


\begin{proof}
  The  theorem will be deduced from a form of  Brown's  representability theorem
  due to Franke \cite{franke}. Note that the extension $f^*$ to
  $\text{Ind-}\M(-)$ commutes with filtered direct limits and therefore coproducts.
Since $\text{Ind-}\M(S)$ is a Grothendieck
  category by \cite[thm 8..6.5]{ks},  Franke's theorem \cite[thm 3.1]{franke} implies that
$$ M\mapsto Hom(f^* M, N)$$
is representable by an object $rf_*N$. The map $N\to rf_*N$ extends to
a functor which is necessarily the right adjoint,
cf. \cite[p223]{neeman}.
Moreover, this  is automatically
   triangulated  by \cite[prop 3.3.8]{lipman}. 
\end{proof}

The abstract construction is not terribly useful by itself. We would really like more:

\begin{defn}
Let us say that a morphism $f:S\to Q$ possesses a good direct
image if
\begin{enumerate}
\item
 $rf_*(D^b(\M(Q))\subseteq D^b(\M(S))$, where we identify $D^b\M$ with a triangulated
 subcategory of $DM$.
\item For each $M\in \M(S)$. The map
$$R_B rf_*M\to \R f_* R_B M$$
adjoint to the canonical map
$$f^*R_B rf_* M\cong R_Bf^*rf_*M\to R_B M$$
is an isomorphism.
\end{enumerate}
\end{defn}

The following lemma gives a criterion for checking this.

\begin{lemma}\label{lemma:adjointness}
Suppose that   $r'f_*:D^b(\M(S))\to D^b(\M(Q))$ is a functor equipped with natural transformations
$$\eta:1\to r' f_* f^*$$ and 
$$\epsilon:f^*r'f_*\to 1$$
such that: 
\begin{enumerate}
\item[(1)] The map
$$R_B r'f_*M\to \R f_* R_B M$$
adjoint to
$$R_B\epsilon:f^*R_B r'f_* M\to R_B M$$
is an isomorphism.
\item[(2)] The composition
$$R_BM\stackrel{R_B\epsilon}{\longrightarrow} R_B r'f_* f^* M\stackrel{(1)}{\longrightarrow}  \R f_* f^* R_B M$$  
coincides with the adjunction map $1\to \R f_* f^*$.
\end{enumerate}
Then $r'f_*$ is right adjoint to $f^*$. So, in particular, $f$ has a
good direct image.
\end{lemma}

\begin{proof}
 It is enough to check that the compositions
$$f^*\to f^* r'f_* f^*\to f^*$$
and
$$r'f_* \to r'f_* f^* r'f_* \to r'f_*$$
are both identity \cite[chap IV]{maclane}. Since the  realizations $R_B$ are embeddings,
this follows  from the compatibility of $\eta,\epsilon$ with 
the  usual adjunctions on  the categories of sheaves.
\end{proof}

As a prelude to a more general result proved later,
we show that $f:S\to Q$ has a good direct image when it is a closed immersion.
 By \ref{cor:tannaka}, the map $\Delta(S,\Sigma)\to \M(Q)$ given by 
$$(X\to S,Y,i,w)\mapsto h_Q^i(X, Y)(w)$$
induces an exact functor  $f_*:\M(S)\to \M(Q)$.

\begin{prop}\label{prop:adjointimmersion}
If $f$ is a closed immersion, then  $f_*$ satisfies the conditions of lemma~\ref{lemma:adjointness}.
Therefore, $f_*$ is right adjoint to $f^*$.
\end{prop}

\begin{proof}
We have to construct natural transformations
$\eta:1\to f_* f^*$ and $\epsilon:f^*f_*\to 1$ satisfying the conditions of  lemma~\ref{lemma:adjointness}.

We can see from the construction that $f^*f_* h_S^i(X,Y)(w)$
is equal to $h_S^i(X,Y)(w)$. Thus we have a canonical isomorphism, which  gives the required  map $\epsilon$.
This clearly satisfies  lemma~\ref{lemma:adjointness} (1).

Let $Mor\M(Q)$ denote the category whose objects are morphisms of $\M(Q)$, and whose 
morphisms are commutative squares. Let $Mor'\subset Mor(\M(Q))$ denote the subcategory
of morphisms $M_1\to M_2$ such that there is a commutative diagram
$$
\xymatrix{
 R_BM_1\ar[r]\ar[d]^{=} & f_*f^*R_BM_1\ar[d]^{\cong} \\ 
 R_BM_1\ar[r] & R_BM_2
}
$$
commutes. The morphisms are squares
$$
\xymatrix{
 M_1\ar[r]\ar[d]^{h_1} & M_2\ar[d]^{h_2} \\ 
 M_1'\ar[r] & M_2'
}
$$ 
such that $R_Bh_2$ is given by $f_*f^* h_1$. The functor $(M_1\to M_2)\mapsto M_1$ is clearly faithful
and exact. Therefore by  corollary~\ref{cor:tannaka}, we get a functor $\M(Q)\to Mor'$ such that 
$$h^i_Q(X,Y)(w) \mapsto [h^i_Q(X,Y)(w) \to h^i_Q(X_S,Y_S)(w)]$$
This gives the canonical  adjunction $\eta:1\to f_*f^*$.
\end{proof}

Combining this with proposition \ref{prop:jshriekjstar} yields:

\begin{lemma}\label{lemma:jshreikistar}
Let $j:S\to \bar S$ be an open immersion with complement $i:\bar
S-S\to S$. Then for 
  any $\F\in \M(\bar S)$, there is a canonical exact sequence
$$0\to j_!j^*\F\to \F\to i_*i^*\F\to 0$$
where $i:\partial \bar S\to \bar  S$ is the inclusion.
\end{lemma}

\subsection{Direct Images (conclusion)}\label{section:directimages}

We come to the main technical result of this paper.

\begin{thm}\label{thm:directimage}
A  morphism   $f:S\to Q$ possesses a good direct image if either $f$
is projective or $Q$ is a point.
\end{thm}

The proof, which will be broken into a series of lemmas, is quite
messy, although the basic idea is rather simple. The hypothesis of the
theorem is used in the following way:
 a controlled pair $(g:X\to S,Y)$  determines
a controlled pair $(f\circ g:X\to Q,Y)$ when either $f$ is projective
or trivially when $Q$ is a point. Let us say that $(X\to S,
Y, i,0)\in \Delta(S)$ is a $f$-cellular if $H_Q^j(H_S^i(X,Y))$ is zero for all but one
value of $j$, say $j=m$. Then the proof will show that
$h_Q^{m+i}(X,Y)[m]$ will give a model for $rf_*(h_S^i(X,Y))$. This
clearly maps to $\R f_* H_S^i(X,Y)$ under $R_B$, so ``goodness'' is
verified in this case.
In  general, we will realize  $rf_*M$ as an explicit complex of
$f$-cellular motives, which maps to $\R f_* R_BM$. Since
this construction depends on auxiliary  choices, it is necessary
work on a bigger category $\M^\#$, lying over $\M$, in order to get a functor temporarily
called $ q$. The final step is to show that $ q$ descends to a
functor on the derived categories, and that this is indeed the
adjoint to $f^*$.

By factoring $f$ through a closed immersion followed by a projection,
and applying proposition \ref{prop:adjointimmersion}, we can see that
 to prove theorem \ref{thm:directimage},
we can assume that $f:S\to Q$ is flat and therefore equidimensional.
 Let  $g:X\to S$ be a quasi-projective morphism with $Y\subset X$
 closed, such that $(X\to S, Y)$ is controlled.
 By proposition~\ref{prop:cell}, we can find a quasi-filtration
  $ (\{Q^\dt\}, T\to  S, T_\dt^\dt)$  which is cellular with respect to the sheaves
  $H^*_{S}( X,  Y)$.  To simplify the discussion, let us suppose that
  this is simple, i.e. that the cover $\{Q^\dt\}=\{Q\}$.
Consider the commutative diagram
$$
\xymatrix{
 X_{T_\dt}\ar[r]\ar[d] & X_T\ar[r]\ar[d] & X\ar[d]^{g}\ar[rd]^{f\circ g} &  \\ 
 T_\dt\ar[r] & T\ar[r] & S\ar[r]^{f} & Q
}
$$
where the squares are cartesian.
 Then we can form a complex of sheaves
 $$\calK_i^\dt= H_Q^{i}( X_{ T_0}, Y_{ T_0}\cup X_{ T_{0-1}})
 \to H_Q^{i+1}(  X_{  T_1},  Y_{  T_1}\cup   X_{  T_{1-1}})\to \ldots  
 $$
where the differentials are the connecting maps.

\begin{prop}\label{prop:leray}
With the previous assumptions, there is a canonical isomorphism
$$
  H_{Q}^j(S, H_S^i(X,Y))\cong 
\calH^j(\calK_i^\dt)
 $$
where $\calH^j$ stands for the $j$th cohomology sheaf.
\end{prop}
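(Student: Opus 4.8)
The assertion is a relative (sheaf-theoretic over $Q$) form of the fact that a cellular filtration computes cohomology via its associated complex. I would prove it by identifying both sides with the hypercohomology $R^jf_*$ of a single complex on $S$ and then invoking the cellular hypothesis to collapse a spectral sequence.

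First I would recall, as in Lemma~\ref{lemma:complex} and the spectral sequence preceding it, that for the quasi-filtration $(\pi:T\to S,\ T_\dt)$ which is cellular with respect to $j_{XY!}F$, there is on $S$ a complex
$$
\LL^\dt:\quad \ldots\to H_S^{i}(X_{T_i},\ Y_{T_i}\cup X_{T_{i-1}})\to H_S^{i+1}(X_{T_{i+1}},\ Y_{T_{i+1}}\cup X_{T_i})\to\ldots
$$
with differentials the type~II connecting maps, whose $i$th cohomology sheaf is canonically $H_S^i(X,Y;F)$, and moreover each term commutes with base change along $f$ (this is built into Lemma~\ref{lemma:cell}(3), and $\R\pi_*\pi^*=\mathrm{id}$ lets us pass between $X$ and $X_T$). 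The cohomology sheaves in other degrees vanish, so in the derived category of sheaves on $S$ one has a quasi-isomorphism $\LL^\dt\simeq H_S^i(X,Y;F)[-i]$, i.e. $\LL^\dt$ is a (termwise base-change-compatible) resolution of $H_S^i(X,Y;F)$ placed in degree $i$. Then
$$
\R f_*\,\LL^\dt\ \simeq\ \R f_*\,H_S^i(X,Y;F)[-i],
$$
so $\calH^{j}$ of the left side (which is $\calH^{j}(\R f_*\LL^\dt)$, a hypercohomology sheaf computable by a first-quadrant spectral sequence $E_1^{p,q}=R^qf_*\LL^p\Rightarrow \calH^{p+q}(\R f_*\LL^\dt)$) has as its $(j{+}i)$th term the sheaf $R^{j}f_*H_S^i(X,Y;F)=H_Q^{j}(S,H_S^i(X,Y))$.

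It remains to identify $R^qf_*\LL^p$ with the terms of $\calK_i^\dt$ up to the degree shift, and to check the differentials match. Since $f$ is projective (hence proper), proper base change together with the base-change property of $\LL^p=H_S^{p}(X_{T_p},Y_{T_p}\cup X_{T_{p-1}})$ gives $R^qf_*\LL^p\cong H_Q^{q}(S,\LL^p)$, and because $\LL^p=H_S^p(X_{T_p},\ldots)$ already ``uses up'' $p$ cohomological degrees of the fibre, composing $\R f_*$ with $\R g'_*$ (where $g':X_{T_p}\to S$) and using the fact that $H_S^p(X_{T_p},\ldots)$ is the \emph{only} nonzero higher direct image of $g'$ in the relevant range (cellularity, Lemma~\ref{lemma:cell}(3)) forces $R^qf_*\LL^p\cong H_Q^{p+q}(X_{T_p},\ Y_{T_p}\cup X_{T_{p-1}})$, which is exactly $\calK_i^{p}$ when $p+q=i$, i.e. the $E_1$-page in total degree $i$ is precisely $\calK_i^\dt$. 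The differentials on both sides are induced by the same type~II connecting maps, so the identification is compatible with them. Thus $\calH^j(\calK_i^\dt)$ is the $E_2^{\,\cdot\,,\cdot}$-term in total degree $i{+}j$; but the spectral sequence degenerates there because $\LL^\dt$ has cohomology only in degree $i$, giving $\calH^j(\calK_i^\dt)\cong R^jf_*H_S^i(X,Y)=H_Q^j(S,H_S^i(X,Y))$.

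\emph{Main obstacle.} The routine part is the spectral-sequence bookkeeping; the delicate point is the bookkeeping of \emph{which} base-change and vanishing statements one is allowed to use and in what order — specifically, justifying $R^qf_*H_S^p(X_{T_p},\ldots)\cong H_Q^{p+q}(X_{T_p},\ldots)$ requires combining (i) the cellular vanishing of $H_S^{\neq p}(X_{T_p},\ldots)$, (ii) proper base change for the projective $f$, and (iii) the base-change compatibility of each $\LL^p$ over $Q$ built into Lemma~\ref{lemma:cell}(3), all simultaneously, so that the Leray spectral sequence for $f\circ g'$ collapses to a single row. I would handle this by working with $\R f_*$ of the complex $\LL^\dt$ directly in $D^b(Sh(Q))$ rather than passing to cohomology too early, and only at the very end reading off cohomology sheaves; the general-covering case $\{U_i\}\neq\{S\}$ is then handled by the usual patching, since all the constructed objects and isomorphisms are canonical hence glue.
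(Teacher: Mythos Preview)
There is a genuine gap, and it comes from misreading the cellularity hypothesis. In the setup preceding the proposition the quasi-filtration $(\pi:T\to S,\,T_\dt)$ is a quasi-filtration of $S$ \emph{over $Q$}, chosen to be cellular with respect to the sheaves $H_S^*(X,Y)$ on $S$; the vanishing it provides is
\[
H_Q^{\,j}\bigl(T_a,\,J_{a!}J_a^*\pi^*H_S^i(X,Y)\bigr)=0\quad\text{for }j\neq a,
\]
i.e.\ vanishing of higher direct images along $T_a\to Q$. It is \emph{not} cellular with respect to $j_{XY!}F$ on $X$, and it says nothing about $H_S^{\neq p}(X_{T_p},\dots)$. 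Consequently your complex $\LL^\dt$ on $S$ (whose $p$th term is $H_S^{p}(X_{T_p},Y_{T_p}\cup X_{T_{p-1}})$) does not depend on $i$ and is \emph{not} quasi-isomorphic to $H_S^i(X,Y)[-i]$: the spectral sequence preceding Lemma~\ref{lemma:complex} shows its $j$th cohomology involves $H_S^{j}(X,Y)$ for every $j$, not a single one. The step ``the cohomology sheaves in other degrees vanish'' is simply false, and with it the whole degeneration argument collapses; likewise the claimed identity $R^qf_*\LL^p\cong H_Q^{p+q}(X_{T_p},\dots)$ rests on the non-existent vanishing $H_S^{\neq p}(X_{T_p},\dots)=0$.

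The paper's argument avoids this by never trying to isolate a single $i$ on $S$. It works on $Q$ with the object $L=\R f_*\R g_* j_{XY!}F$ and compares the truncation filtration $P$ (whose graded pieces are $\R f_*H_S^{-\dt}(X,Y)$, so $E_1(P)$ is the Leray $E_2$) with the geometric filtration $\F$ coming from $T_\dt$ (whose $E_1$ consists of the $H_Q^{\dt}(X_{T_\dt},\dots)$). The cellularity hypothesis over $Q$ is exactly what the de~Cataldo--Migliorini criterion needs to give a filtered quasi-isomorphism $(L,P)\cong(L,\mathrm{Dec}(\F))$, hence an identification $E_1(P)\cong E_2(\F)$, which is the asserted isomorphism. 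If you want to salvage your approach, the correct move is to work on $Q$ from the start: Lemma~\ref{lemma:complex} applied to $S\to Q$ with the sheaf $H_S^i(X,Y)$ gives $H_Q^j(S,H_S^i(X,Y))$ as the $j$th cohomology of $p\mapsto H_Q^{\,p}(T_p,T_{p-1};\pi^*H_S^i(X,Y))$, and the Leray spectral sequence for $X_{T_p}\to T_p\to Q$ (which \emph{does} degenerate by the cellularity over $Q$) identifies each term with $H_Q^{i+p}(X_{T_p},Y_{T_p}\cup X_{T_{p-1}})=\calK_i^{\,p}$; matching the differentials is then precisely the content of the filtered comparison $(L,P)\cong(L,\mathrm{Dec}(\F))$.
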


\begin{proof} 
When $Q$ is a point and $Y=\emptyset$, this was originally proved in \cite[thm 3.1]{arapuraL}. The general case can be proved by the same
method. However,
a slightly cleaner alternative is to deduce it from \cite{cm}. Since
the model case ($Q=pt,Y=\emptyset$) is spelled out in  detail in \cite{cataldo},
 we will be content to give the broad outline.
Since both sides of the purported isomorphism
 are stable under base change to $T$, we can assume that $  T=  S$.
Let  $ L = \R f_*\R g_*j_{XY!}F$.
We consider two filtrations on $L$. The first  is defined by truncations $P^\dt(L)=\R f_*\tau_{\le-\dt}\R g_*j_{XY!}F$,
so that $Gr_P^\dt L = \R f_* H^{-\dt}_S(X,Y)$. The second $\F$ is  the filtration on $L$ associated to 
$T_\dt$, 
$$\F^\dt L= \R f_* j_{S T_\dt !}j_{S T_\dt}^*\R g_*j_{XY!}F$$
Then 
$$ Gr_\F^\dt L\cong \R f_*  j_{T_\dt T_{\dt-1} !}j_{T_\dt T_{\dt-1}}^*\R g_*j_{XY!}F
\cong\R  f_*\R g_*j_{ X_{\T_\dt}, Y_{ T_\dt}\cup X_{T_{\dt-1}} !}F$$
holds.
 Then the cellularity of $T_\dt$ implies that the assumptions of \cite[prop 5.6.1]{cm} are satisfied.
Therefore we have a natural  filtered quasi-isomorphism
$$(L, P)\cong (L, Dec(\F))$$
where $Dec(\F)$ is the shifted filtration associated to $\F$. This has the property
that $E_1(Dec(\F)) = E_2(\F) $ (c.f.   \cite{deligne-hodge}).
Thus it follows that there is an isomorphism of the spectral sequences associated to $P$ and
$Dec(\F)$. The spectral sequence for $P$ is Leray with a shift in indices, and in particular
$E_1(P)=H_{Q}^j(S, H_S^i(X,Y))$. On the other hand,
we can identify
$$ E_1(\F)=H_Q^{\dt}( X_{ T_\dt}, Y_{T_\dt}\cup X_{T_{\dt-1}}),$$ 
using the fact that  $H_Q^{\dt}(X_{T_\dt},\ldots)$
commutes with base change because the maps are controlled.
Hence
$E_1(Dec(\F)) = E_2(\F)=\calH^j(\calK_i)$.
   \end{proof}

We now construct an auxiliary category $\M^\#$ by a variation of the
method used in section \ref{section:tensor}.
A pair of stratifications $\Sigma$ of $S$ and
$\Lambda$ of $Q$ will be called $f$-admissible if each $\sigma\in\Sigma$
is a fibre bundle over some $\lambda\in \Lambda$.
Given a stratification $\Sigma'$ of $S$, there exists an admissible
pair $(\Sigma,\Lambda)$ for which $\Sigma$ refines $\Sigma'$. 
Any pair of stratifications can always be refined so that admissibility holds. 
Choose admissible stratifications $\Sigma$ and $\Lambda$.
Then composition gives a functor $R_B\circ \calH^*$ from
$C^{[0,\infty)}(\M(Q,\Lambda))$ to the
category $Gr\, Sh(Q((\C))$ of $[0,\infty)$-graded sheaves. We also have a functor
$H_Q^*\circ R_B:\M(S,\Sigma)\to Gr\,Sh(Q(\C))$. Let $\calC(S,\Sigma)$ be
the so called comma category whose objects are triples
$$(K^\dt, M, \phi: H_Q^*\circ R_B(M)\to R_B\circ \calH^*(K^\dt))$$
where $K^\dt\in Ob C^{[0,\infty)}(\M(Q,\Lambda))$ and $ M\in Ob\M(S,\Sigma)$.
Morphisms are pairs $K_1^\dt\to K_2^\dt$, $M_1\to M_2$ satisfying
obvious compatibilities. Let $\calC_{iso}(S,\Sigma)$ be the full subcategory
consisting of triples for which $\phi$ is an isomorphism.
We can identify $F^2\text{-mod}$ with $F\text{-mod}\times  F\text{-mod}$.
There is a faithful exact functor
$U_2:\calC(S,\Sigma)\to (F\text{-mod})^2$ given by $(K^\dt, M,\phi)\mapsto
(\prod_i U(K^i))\times U(M)$.

Given a simple cellular
 quasi-filtration $(T\to S, T_\dt)$ and a stratification $\Sigma$, choose 
base points $s$ for $(S,\Sigma)$ and $t_\dt\in T_\dt$. 
 We define a functor $H^\#:\Delta(S,\Sigma)^{op}\to \calC(S,\Sigma)_{iso}$ as follows.  On
  objects
  \begin{equation*}
    H^\#(X\to S,Y,i,w) =
  \end{equation*}
  \begin{equation}
    \label{eq:Hsharp}
(h_Q^{i}(X_{T_0},Y_{T_0}\cup X_{T_{0-1}})(w)\to h_Q^{i+1}(X_{T_1},Y_{T_1}\cup
X_{T_{1-1}})(w)\to \ldots ; h^i_S(X,Y)(w);\phi)
  \end{equation}
where the differentials of the complex are
connecting maps and $\phi$ is given by proposition \ref{prop:leray}.
We can extend this to nonsimple cellular 
 quasi-filtrations $(\{Q^\dt\}, T\to S, T_\dt^\dt)$ as follows. For notational
 simplicity, we assume that the cover consists of two sets $\{Q^0,Q^1\}$
 and that $T_\dt^0,T_\dt^1$  can be refined to $T_\dt^{01}$ on
 $Q^{01}=Q^0\cap Q^1$. Let $j_{0},
 j_1, j_{01}$ denote the inclusions of $Q^0, Q^1$ and $Q^{01}$ into
 $Q$ respectively.  Since the varieties $T_\dt^\dt$ can be extended
 over $Q$ by taking closures, the extensions by zero
 $$M^j_\alpha=j_{\alpha!}h_{Q^\alpha}^{i+j}(X_{T_j^\alpha},Y_{T_j^\alpha}\cup X_{T_{j-1}^\alpha})(w)$$
 are defined.  We can now define $ H^\#(X\to S,Y,i,w)$  by taking
 $h^i_S(X,Y)(w)$ as the second component as above.
For the first component, we use  the complex
$$\ker[M^\dt_0\oplus M^\dt_1\to M^\dt_{01}] $$
where the map is given the difference of restrictions. This complex is
quasi-isomorphic to $M^\dt_\alpha$  on $Q^\alpha$. The
quasi-isomorphism $\phi$ can be thus extended, so that $ H^\#(X\to
S,Y,i,w)\in \calC_{iso}$

We let 
  $\PM^\#(f,\{Q^\dt\}, T\to S, T_\dt^\dt,\Sigma)= End_{F^2}^\vee(H^\#\circ U_2)$-comod,
and let  $\M^\#(f, \{Q^\dt\}, T\to S ,  T_\dt^\dt,\Sigma)$ denote the associated
stack. This carries an exact faithful embedding into $F^2$-mod.
In order to simplify notation, we usually just write these as
$\PM^\#(T_\dt^\dt)$ and  $\M^\#(T_\dt^\dt)$.
We let $h_{T_\dt}(X,Y)(w)$ denote the object of this category associated
to $(X,Y,i,w)$. We have a functor $\M^\#(T_\dt^\dt) \to
\calC(S,\Sigma)_{iso}$. We can compose this with the projections to get functors $p:\PM^\#(T_\dt^\dt)\to
  \M(S,\Sigma)$ and $q:\M^\#(T_\dt^\dt)\to C^{[0,\infty)}(\M(Q,\Lambda))$.
These extend to functors on $\M^\#(T_\dt^\dt)$ denoted by the same symbols.
From corollary \ref{cor:surjcat}, we obtain

\begin{lemma}\label{lemma:surjcat}
  $\M(S,\Sigma)$ is equivalent to $\M^\#(T_\dt^\dt)/\ker p$.
\end{lemma}

Let 
$$ \bar q:\M^\#(T_\dt^\dt)\to D^b(\M(Q,\Lambda))$$
 denote the composition of $q$ with the canonical map.
 
\begin{lemma}\label{lemma:dirimage1}
There is a functor $r'f_*$ fitting into the commutative diagram
$$
\xymatrix{
 \M^\#(T_\dt^\dt)\ar[d]\ar[rd]^{\bar q} &  \\ 
 D^b(\M(S,\Sigma))\ar[r]^{r'f_*} & D^b(\M(Q,\Lambda))
}
$$
such that there is a natural isomorphism $R_Br'f_* \cong \R f_* R_B$.
This functor is independent of the choice of quasi-filtration.
\end{lemma}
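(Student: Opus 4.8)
The plan is to build $rf_*$ on the comodule categories first, then descend to the stacks, and finally check well-definedness. Recall from Lemma~\ref{lemma:surjcat} that $\M(S,\Sigma)$ is equivalent to $\M^\#(T_\dt)/\ker p$, so to produce a functor out of $D^b(\M(S,\Sigma))$ it suffices to produce a functor out of (the derived category of) $\M^\#(T_\dt)$ which kills $\ker p$. The functor $\bar q:\M^\#(T_\dt)\to D^b(\M(Q,\Lambda))$ is the natural candidate for the composite $rf_*\circ p$. So the first step is to verify that $\bar q$ annihilates $\ker p$: an object $M$ of $\M^\#(T_\dt)$ in $\ker p$ has trivial second component, and since the comparison isomorphism $\phi$ in the definition of $\calC(S,\Sigma)_{iso}$ forces $R_B\circ\calH^*(q(M))\cong H_Q^*\circ R_B(p(M))=0$, faithful exactness of $R_B$ (the corollary following Lemma~\ref{lemma:surjcatstack}, extended to $\M$) gives $\calH^*(q(M))=0$, i.e. $q(M)$ is acyclic, hence zero in $D^b(\M(Q,\Lambda))$. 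More precisely one runs this argument on morphisms: a morphism in $\ker p$ maps under $q$ to a quasi-isomorphism, so becomes invertible in $D^b$, and this is exactly what is needed to factor through the Gabriel quotient $\M^\#(T_\dt)/\ker p$.

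Once the factorization $\M^\#(T_\dt)/\ker p \sim \M(S,\Sigma) \to D^b(\M(Q,\Lambda))$ is in hand, one extends it to $D^b(\M(S,\Sigma))$ by the universal property of the bounded derived category (the target is triangulated and the functor is exact on the abelian category, being $\calH^*\circ q$ composed with the tautological embedding into $D^b$), yielding the desired $rf_*: D^b(\M(S,\Sigma))\to D^b(\M(Q,\Lambda))$ and the commuting triangle. I would then drop the simplifying assumption that the cover $\{U_i\}$ of $S$ is trivial: in general one has cellular quasi-filtrations $T^i_\dt$ only over the $U_i$, one runs the above construction over each $U_i$ to get $\M^\#(T^i_\dt)$ and a local functor to $D^b(\M(f(U_i),\Lambda))$, and then one glues using the stack structure — here one needs that on overlaps the two constructions agree up to canonical isomorphism, which is a special case of the independence statement, so it is cleanest to prove independence first for a fixed base and then bootstrap to the covered case.

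The main obstacle, and the substantive content of the last sentence, is the independence of the choice of quasi-filtration. The strategy is: given two cellular quasi-filtrations $T_\dt$ and $T'_\dt$ of $X\to S$, invoke the connectedness statement of Proposition~\ref{prop:cell} — any two cellular quasi-filtrations of a fixed variety are joined by a chain of morphisms in $QVar_S$ — so it suffices to compare $T_\dt$ and $T'_\dt$ when there is a morphism $T'_\dt\to T_\dt$ of cellular quasi-filtered $S$-varieties. Such a morphism induces, functorially in $(X,Y,i,w)$, a map of complexes $\calK^\dt_i(T_\dt)\to \calK^\dt_i(T'_\dt)$ compatible with the comparison isomorphisms $\phi$ of Proposition~\ref{prop:leray}: indeed both complexes compute $H^*_Q(S,H^i_S(X,Y))$ via the spectral sequence of a filtration, and a morphism of quasi-filtrations induces a morphism of filtered objects $(L,\F)\to (L,\F')$ in the derived category of Proposition~\ref{prop:leray}'s proof, which after applying $Dec$ and comparing to the (functorially defined) Leray filtration $P$ shows the induced map on $E_1$-terms is the identity on $H^*_Q(S,H^i_S(X,Y))$. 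Hence the induced map $\calK^\dt_i(T_\dt)\to\calK^\dt_i(T'_\dt)$ is a quasi-isomorphism. This gives a natural transformation between the two functors $H^\#$ into $\calC(S,\Sigma)_{iso}$ which is a quasi-isomorphism in the complex component and the identity in the second component; applying the $End^\vee(-)$ formalism and Corollary~\ref{cor:tannaka} one gets a comparison functor between the $\M^\#$'s commuting with the projections $p$ and with $\bar q$ up to the quasi-isomorphism just produced, hence an isomorphism between the two resulting $rf_*$'s after passing to $D^b$. The delicate point I expect to spend the most care on is checking that this comparison is genuinely canonical (independent of the chain chosen in $QVar_S$), which reduces to the fact that the relevant diagrams of quasi-isomorphisms in $D^b(\M(Q,\Lambda))$ commute — and that in turn follows because everything is pinned down after applying the faithful exact realization $R_B$, where the statements become the classical compatibilities of Leray spectral sequences with morphisms of cellular filtrations.
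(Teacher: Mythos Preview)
Your overall architecture matches the paper's: use Lemma~\ref{lemma:surjcat} to identify $\M(S,\Sigma)$ with $\M^\#(T_\dt)/\ker p$, show $\bar q$ descends, and then argue independence via a morphism of cellular quasi-filtrations. The independence paragraph is essentially the paper's argument with more detail.

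However, your treatment of the descent through $\ker p$ is confused in a way that matters. Recall that here $\ker p$ is an \emph{ideal of morphisms} (the paper warns explicitly that $C/\ker p$ is \emph{not} the Serre/Gabriel quotient by a thick subcategory). If $f\in\ker p$, then the functorial isomorphism $R_B\circ\calH^*\circ q \cong H_Q^*\circ R_B\circ p$ together with faithfulness of $R_B$ gives $\calH^*(q(f))=0$, i.e.\ $q(f)$ induces \emph{zero} on cohomology --- not that $q(f)$ is a quasi-isomorphism. And to factor through the ideal quotient $\M^\#(T_\dt)/\ker p$ you need such $f$ to go to \emph{zero}, not to become invertible; your sentence ``becomes invertible in $D^b$, and this is exactly what is needed'' is backwards. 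Moreover, ``zero on cohomology'' does not by itself force a chain map to vanish in $D^b$, so even after correcting the direction there is still work to do.

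The paper sidesteps both pitfalls by working one level up: it first extends $q$ and $\bar q$ to $C^b(\M^\#(T_\dt))$ via total complexes of the resulting double complexes, and uses that the abelian equivalence $\M^\#(T_\dt)/\ker p\sim \M(S,\Sigma)$ induces an equivalence $e$ on $D^b$. The factorization is then argued at the level of the cohomology functors $\calH^i\circ q$ on $C^b(\M^\#(T_\dt))$, and $rf_*$ is defined as $h\circ e^{-1}$ in the resulting diagram. Your alternative route --- produce a functor $\M(S,\Sigma)\to D^b(\M(Q,\Lambda))$ and then ``extend by the universal property of $D^b$'' --- does not work as stated: an additive functor from an abelian category into a triangulated category has no such automatic extension. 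You should instead imitate the paper and pass to $C^b(\M^\#(T_\dt))$ with the total-complex extension of $q$ before attempting any descent.
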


\begin{proof}
We note that $q$ and $\bar q$ can be extended to $C^b(\M^\#(T_\dt^\dt))$ by taking the total complex
associated to the double complex induced by these functors.
 By lemma \ref{lemma:surjcat}, $\M(S)$ is equivalent to
  $\M^\#(T_\dt^\dt)/\ker p$.  This extends to an equivalence $C^b(\M(S))\sim  C^b(\M^\#(T_\dt^\dt)/\ker p)$.
From the definition of $\calC_{iso}(S)$, it follows  that 
  \begin{equation}\label{eq:hiM}
    R_B(\calH^i(q(M)))\cong H_Q^i(S, R_B(p((M)))
  \end{equation}
  as functors in $M\in C^b(\M^\#(T_\dt^\dt))$. Since $R_B$ is faithful, this isomorphism implies
  that $\calH^i\circ q$ factors through $D^b(\M^\#(T_\dt^\dt)/\ker p)$. We can summarize all of
  this  by the  commutative diagram
$$
\xymatrix{
  C^b(\M^\#(T_\dt^\dt))\ar[rr]^{\bar q}\ar[dd]^{p}\ar[rd] &  & D^b(\M(Q,\Lambda))\ar[dd]^{R_B} \\
  & D^b(\M^\#(T_\dt^\dt)/\ker p)\ar[ru]_{h}\ar[ld]_{\sim}^{e} &  \\
  D^b(\M(S,\Sigma))\ar[rr]^{\R f_*\circ R_B} & & D^b(Sh(Q))}
$$
Since $e$ is an equivalence, it has an inverse.
The desired functor $r'f_*$
would be given by $h\circ e^{-1}$, but since it depends, a priori, on the the choice of the
quasi-filtration, we temporarily denote it by $r'f_{*,Q,T_\dt}$. Given a
second cellular quasi-filtration $(T'_\dt\to S, T'_\dt)$, we wish to show
that there is a canonical isomorphism $r'f_{*,Q,T_\dt}\cong
r'f_{*,Q,T'_\dt}$. By proposition~\ref{prop:cell}, we can assume that there is a morphism $(T'_\dt\to S,
T'_\dt)\to (T_\dt\to S, T_\dt^\dt)$ in $QVar_Q$ covering the identity $id:S\to
S$.  Therefore the complexes defining $r'f_{*,Q,T_\dt}$ and $r'f_{*,Q,T'_\dt}$
become quasi-isomorphic since the constructions factor through
$\calC(id)_{iso}$.  So we can now omit the second subscript.
\end{proof}

To finish the proof of the theorem, we will verify the conditions of
lemma~\ref{lemma:adjointness}  which entails constructing adjunctions.

\begin{lemma}
Given maps  $g:X\to S$ and $f:S\to Q$ of topological spaces,
  consider a commutative diagram
$$
\xymatrix{
 X\times_Q S\ar[r]^{\pi}\ar[d]^{p} & X\ar[d]^{f\circ g}\ar@/^/[l]^{\Gamma} \\ 
 S\ar[r]^{f} & Q
}
$$
where $\Gamma$ is the inclusion of the graph of $g$. Then the
adjunction map $f^*\R f_* \R g_*\to \R  g_*$ is the composition of
the base change map
$$f^*\R (f\circ g)_*\to \R p_*\pi^*$$
and the adjunction
$$\R p_* \pi^* \to \R p_*\R \Gamma_* \Gamma^*\pi^* = \R  g_*$$
\end{lemma}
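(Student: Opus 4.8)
The plan is to unravel the definition of the base change morphism and reduce the claim to a triangle identity. Write $\eta,\epsilon$ for the units and counits of the adjunctions $(u^*,\R u_*)$ appearing below; recall that on sheaves of $F$-modules $u^*$ is exact, that $\R\Gamma_*=\Gamma_*$ is exact because $\Gamma$ is a closed immersion (the graph of a morphism of varieties), and that $(-)^*$ and $\R(-)_*$ are pseudofunctorial with comparison isomorphisms compatible with these adjunctions. Since the square is Cartesian and $(f\circ g)\circ\pi=f\circ p$, the base change morphism $f^*\R(f\circ g)_*\to\R p_*\pi^*$ can be described as the morphism whose adjoint under $(f^*,\R f_*)$ is the composite
$$\R(f\circ g)_*\xrightarrow{\ \eta^\pi\ }\R(f\circ g)_*\R\pi_*\pi^*=\R f_*\R p_*\pi^*,$$
where $\eta^\pi:\mathrm{id}\to\R\pi_*\pi^*$ is the unit and the identification uses $\R(f\circ g)_*\R\pi_*=\R(f\circ p)_*=\R f_*\R p_*$. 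The second arrow of the statement, $\R p_*\pi^*\to\R p_*\R\Gamma_*\Gamma^*\pi^*=\R g_*$, is $\R p_*$ applied to the unit $\pi^*\to\R\Gamma_*\Gamma^*\pi^*$ of $(\Gamma^*,\R\Gamma_*)$, followed by the canonical identifications $\R p_*\R\Gamma_*=\R(p\circ\Gamma)_*=\R g_*$ and $\Gamma^*\pi^*=(\pi\circ\Gamma)^*=\mathrm{id}$.

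Fix a sheaf $\mathcal F$ on $X$. The adjunction map $f^*\R f_*(\R g_*\mathcal F)\to\R g_*\mathcal F$ is the counit of $(f^*,\R f_*)$, and by the triangle identity it is the unique morphism $f^*\R f_*\R g_*\mathcal F\to\R g_*\mathcal F$ whose adjoint under $(f^*,\R f_*)$ is the identity of $\R f_*\R g_*\mathcal F=\R(f\circ g)_*\mathcal F$. So it is enough to show that the adjoint of the composite $f^*\R(f\circ g)_*\mathcal F\to\R p_*\pi^*\mathcal F\to\R g_*\mathcal F$ is the identity. Combining the two descriptions above, this adjoint equals $\R(f\circ g)_*(\tau_{\mathcal F})$, where $\tau$ denotes the natural transformation of endofunctors of $D(Sh(X))$ given by
$$\mathrm{id}\xrightarrow{\ \eta^\pi\ }\R\pi_*\pi^*\xrightarrow{\ \R\pi_*\,\eta^\Gamma\,\pi^*\ }\R\pi_*\R\Gamma_*\Gamma^*\pi^*=\mathrm{id}$$
(the final equality because $\pi\circ\Gamma=\mathrm{id}_X$, so $\R\pi_*\R\Gamma_*=\mathrm{id}$ and $\Gamma^*\pi^*=\mathrm{id}$) and $\tau_{\mathcal F}$ is its component at $\mathcal F$. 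One recognizes $\tau$ as the unit of the composite adjunction $\Gamma^*\pi^*\dashv\R\pi_*\R\Gamma_*$ built from the adjunctions $\pi^*\dashv\R\pi_*$ and $\Gamma^*\dashv\R\Gamma_*$.

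Now $\pi\circ\Gamma=\mathrm{id}_X$, so pseudofunctoriality identifies both $\Gamma^*\pi^*$ and $\R\pi_*\R\Gamma_*$ canonically with $\mathrm{id}_{D(Sh(X))}$, and compatibility of these identifications with the adjunctions turns the composite adjunction above into the trivial one $\mathrm{id}\dashv\mathrm{id}$. Its unit is the identity, hence $\tau=\mathrm{id}$, hence the adjoint computed above is the identity of $\R(f\circ g)_*\mathcal F$, and the lemma follows. The one step that really needs care is this last coherence assertion — that $u\mapsto(u^*\dashv\R u_*)$ is pseudofunctorial, so that the units for $\pi$ and for $\Gamma$ compose correctly along $\pi\circ\Gamma=\mathrm{id}_X$ — which is standard (see \cite{lipman}); alternatively, since $\Gamma$ is a closed immersion one has $\R\Gamma_*=\Gamma_*$ and $\Gamma^*\Gamma_*=\mathrm{id}$ strictly, and $\tau=\mathrm{id}$ may be verified by hand. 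Finally, the case of a pair $(g:X\to S,Y)$ is identical once one observes that $\pi^*j_{XY!}F_{X-Y}\cong j_{X_SY_S!}F_{X_S-Y_S}$, where $X_S=X\times_QS$ and $Y_S=Y\times_QS$ (inverse image commutes with extension by zero along an open immersion), after which the argument runs verbatim with $\mathcal F$ replaced by $j_{XY!}F_{X-Y}$.
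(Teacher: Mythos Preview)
The paper states this lemma without proof (it is immediately followed by its corollary), so there is nothing to compare against directly; your argument stands on its own. Your reduction is the standard one: take the $(f^*,\R f_*)$-adjoint of both sides and show they agree, which boils down to checking that the unit of the composite adjunction $\Gamma^*\pi^*\dashv \R\pi_*\R\Gamma_*$ is the identity once $\pi\circ\Gamma=\mathrm{id}_X$ is invoked. This is correct, and your appeal to pseudofunctoriality of $u\mapsto(u^*\dashv\R u_*)$ (as in \cite{lipman}) is the right place to locate the coherence. The remark about pairs is fine as well: replacing $\mathcal F$ by $j_{XY!}F_{X-Y}$ and noting $\pi^*j_{XY!}F\cong j_{X_SY_S!}F$ is exactly what is needed. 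One cosmetic point: the statement in the paper has the evident typo $g:X\to Q$ where $g:X\to S$ is meant (otherwise $\Gamma$ and $\R g_*$ make no sense as written); you silently corrected this, which is appropriate.
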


\begin{proof}
  This follows by applying \cite[prop 3.7.2ii]{lipman} to the diagram
$$
\xymatrix{
 X\ar[r]^{id}\ar[d]^{\Gamma} & X\ar[d]^{id} \\ 
 X\times_Q S\ar[r]^{\pi}\ar[d]^{p} & X\ar[d]^{f\circ g} \\ 
 S\ar[r]^{f} & Q
}
$$
\end{proof}

\begin{cor}
  If the first base change map is an isomorphism, $f^*\R f_* \R g_*\to \R
  g_*$ can be identified with the map $\R p_* \pi^* \to \R  g_*$
  induced by $\Gamma$.
\end{cor}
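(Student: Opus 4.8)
The plan is to read this off the preceding lemma, specialized to the case where the base change morphism is invertible. Since $\R(f\circ g)_* = \R f_*\R g_*$, that lemma exhibits the adjunction morphism $f^*\R f_*\R g_*\to \R g_*$ as the composite
$$f^*\R(f\circ g)_* \stackrel{\beta}{\longrightarrow} \R p_*\pi^* \longrightarrow \R p_*\R\Gamma_*\Gamma^*\pi^* = \R g_*,$$
where $\beta$ denotes the base change map, the second arrow is $\R p_*$ applied to the unit $\pi^*\to \R\Gamma_*\Gamma^*\pi^*$ of the adjunction $(\Gamma^*,\R\Gamma_*)$, and the final identification uses the relations $\pi\circ\Gamma=\mathrm{id}$ and $p\circ\Gamma=g$ visible in the diagram (so that $\Gamma^*\pi^*\cong\mathrm{id}$ and $\R p_*\R\Gamma_*\cong\R g_*$). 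Now assume $\beta$ is an isomorphism. Precomposing the displayed composite with $\beta^{-1}$ then exhibits $f^*\R f_*\R g_*\to\R g_*$, under the base change identification $f^*\R f_*\R g_*\cong\R p_*\pi^*$, as exactly the second arrow, i.e.\ as the morphism $\R p_*\pi^*\to\R g_*$ induced by $\Gamma$. The analogous assertion for a pair follows by the same argument with the constant sheaves replaced by their $j_!$-extensions, invoking the corresponding clause of the lemma.

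I do not expect a genuine obstacle: all the substance is packaged in the preceding lemma, and the one point deserving a line of care is matching ``the map induced by $\Gamma$'' in the statement with $\R p_*$ of the adjunction unit. This is immediate once one writes $\R g_* = \R(p\circ\Gamma)_*(\pi\circ\Gamma)^* = \R p_*\R\Gamma_*\Gamma^*\pi^*$ and uses naturality of $\beta$ to see that the three identifications occurring in the commuting triangle (adjunction, $\beta$, $\Gamma$-induced map) are mutually compatible. Should one prefer not to cite the lemma at all, the corollary can equally be proved directly by factoring the adjunction $f^*\R f_*\R g_*\to\R g_*$ through $\R p_*\pi^*$ by means of the cartesian square and the graph section $\Gamma$, but this merely reproves the lemma in the special case at hand.
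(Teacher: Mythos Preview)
Your proposal is correct and matches the paper's approach: the corollary is stated without proof in the paper, as an immediate consequence of the preceding lemma, and you have correctly spelled out why---precomposing the lemma's factorization with the inverse of the base change map yields the desired identification.
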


\begin{lemma}\label{lemma:frfto1}
There is a morphism $\epsilon:f^*r'f_*\to 1$   compatible with the
adjunction  $\epsilon:f^*\R f_*\to 1$.
\end{lemma}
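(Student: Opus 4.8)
The plan is to follow the pattern used for the adjunction unit in the proof of Proposition~\ref{prop:adjointimmersion} (and for $rf_*$ itself in Lemma~\ref{lemma:dirimage1}): first construct $\epsilon$ on the generating objects $h_S^i(X,Y)(w)$ from the explicit cellular model of $rf_*$, then extend it to a natural transformation $\epsilon\colon f^*rf_*\to 1$ on $D^b(\M(S))$ by the Tannakian extension principle of Corollary~\ref{cor:tannaka}, packaged through an auxiliary morphism category. Since $R_B$ is faithful and, by the preceding lemma $R_Brf_*\cong\R f_*R_B$, the object $R_B(f^*rf_*M)$ is canonically $f^*\R f_*R_BM$, the requirement that $\epsilon$ be ``compatible with the counit $f^*\R f_*\to 1$'' means precisely that $R_B(\epsilon_M)$ is the topological counit; this is what the construction will produce, and it both pins $\epsilon$ down and forces the later coherences.

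For the construction on generators, fix an admissible pair $(\Sigma,\Lambda)$ and a cellular quasi-filtration $(T\to S,T_\dt)$, so that for $(g\colon X\to S,Y,i,w)$ one has, by Proposition~\ref{prop:leray} and Lemma~\ref{lemma:dirimage1}, $f^*rf_*\,h_S^i(X,Y)(w)\cong f^*\calK^\dt$, where $\calK^\dt$ has $\calK^p=h_Q^{i+p}(X_{T_p},Y_{T_p}\cup X_{T_{p-1}})(w)$ in degree $p$ and $f^*$ is applied term by term since it is exact. Each $H_Q^{i+p}(X_{T_p},Y_{T_p}\cup X_{T_{p-1}})$ commutes with base change, so Lemma~\ref{lemma:basechange} and the pseudofunctoriality of Lemma~\ref{lemma:pullback} identify $f^*\calK^p$ with $h_S^{i+p}\big(X_{T_p}\times_Q S,(Y_{T_p}\cup X_{T_{p-1}})\times_Q S\big)(w)$; the graph $\Gamma_p\colon X_{T_p}\to X_{T_p}\times_Q S$ of the structure map is a Type~I morphism of $\Delta(S,\Sigma)$, and it yields a motivic map $f^*\calK^p\to h_S^{i+p}(X_{T_p},Y_{T_p}\cup X_{T_{p-1}})(w)$ whose realization is, by the factorization lemma above for $f^*\R f_*\R g_*\to\R g_*$, exactly the degree-$(i+p)$ piece of the topological counit. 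These maps commute with the connecting differentials (by naturality of the connecting maps with respect to $\Gamma_p$), so repeating the filtered-complex argument of Proposition~\ref{prop:leray}, now for $f^*L$ with $L=\R f_*\R g_*j_{XY!}F$, the filtration $\F$ and the shifted filtration $Dec(\F)$, produces a morphism $\epsilon_{(X,Y,i,w)}\colon f^*\calK^\dt\to h_S^i(X,Y)(w)$ in $D^b(\M(S,\Sigma))$ realizing the counit $f^*\R f_*H_S^i(X,Y)\to H_S^i(X,Y)$.

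To obtain $\epsilon$ as a natural transformation on all of $D^b(\M(S))$, I mimic the construction of $\M^\#(T_\dt)$: the assignment $(X,Y,i,w)\mapsto\big(f^*\calK^\dt,h_S^i(X,Y)(w),\epsilon_{(X,Y,i,w)}\big)$ defines a functor from $\Delta(S,\Sigma)^{op}$ into a comma-type category $\calC_\epsilon(S,\Sigma)$ whose objects record a bounded complex, an object, and a morphism between them, subject to the closed constraint that $R_B$ of the datum is the topological counit pattern, and which carries a faithful exact functor to $F^2$-mod by $(K^\dt,N,\epsilon)\mapsto(\prod_p U(K^p))\times U(N)$; Corollary~\ref{cor:tannaka} then extends this to $\M(S,\Sigma)$, i.e. produces the natural transformation $\epsilon\colon f^*rf_*\to 1$ on $D^b(\M(S,\Sigma))$ with the prescribed realization, and passage to $\M(S)$ is via the stack construction of \S\ref{sect:motivicsheaves}. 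Independence of the choice of $(T\to S,T_\dt)$ follows exactly as in Lemma~\ref{lemma:dirimage1}, by passing to a common refinement so that the constructions factor through $\calC(id)_{iso}$.

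The main obstacle is making this last paragraph precise: one must check that the term-wise graph maps genuinely commute with the connecting differentials and assemble coherently, and, more delicately, set up $\calC_\epsilon(S,\Sigma)$ so that it is honestly abelian with the required faithful exact functor, despite $\epsilon_{(X,Y,i,w)}$ living a priori in a derived category rather than in an abelian category of chain maps. As in Proposition~\ref{prop:leray}, the cleanest way to control this is to carry out the whole argument uniformly at the level of the filtered complexes $f^*\R f_*\R g_*j_{XY!}F$ and the filtration $Dec(\F)$, so that differential-compatibility and naturality are inherited from a single filtered quasi-isomorphism rather than verified by hand.
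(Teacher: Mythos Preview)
Your overall strategy—build an enriched auxiliary category from $\Delta(S,\Sigma)$, apply the Tannakian extension principle, then quotient by the kernel of the projection—is exactly the paper's strategy, and your identification of the graph map as the geometric source of $\epsilon$ is correct. However, the paper executes this differently and in a way that sidesteps the obstacle you flag in your last paragraph.

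Rather than term-wise graph maps $\Gamma_p\colon X_{T_p}\to X_{T_p}\times_Q S$ and a morphism of complexes, the paper introduces a \emph{single} intermediate object $H^i_S(X\times_Q S,\,Y\times_Q S)$ and the single graph inclusion $\Gamma\colon X\hookrightarrow X\times_Q S$. The enriched functor $H^{\spadesuit}$ takes values in $F\text{-mod}^3$, sending $(X\to S,Y,i,w)$ to the three vertices of the span
\[
\mathcal{H}^0\!\left(H_S^{i}(X_{T_0}\times_Q S,\ldots)\to\cdots\right)\;\xleftarrow{\ \sim\ }\;H^i_S(X\times_Q S,\,Y\times_Q S)\;\xrightarrow{\ \Gamma^*\ }\;H^i_S(X,Y),
\]
where the left isomorphism is Proposition~\ref{prop:leray} applied to the pulled-back filtration. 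One then forms $\PM^{\spadesuit}(S)=End^\vee(H^{\spadesuit})\text{-comod}$, checks $\PM^{\spadesuit}(S)/\ker p\sim\PM(S)$, and reads off $\epsilon$ from the motivic version of this span. The lemma preceding this one (on $f^*\R f_*\R g_*\to\R g_*$) guarantees that the composite along the span is the topological counit, giving compatibility.

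The payoff is that everything lives in an honest abelian category valued in $F\text{-mod}^3$: no compatibility of term-wise $\Gamma_p$ with connecting differentials needs to be checked, and no comma category whose morphisms secretly live in a derived category has to be made sense of. Your approach would presumably work if those details were nailed down, but the paper's use of the single intermediate object $h^i_S(X\times_Q S,Y\times_Q S)$ is precisely the device that makes the ``main obstacle'' you identify disappear.
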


\begin{proof}
The previous corollary implies that
$$\Gamma^*:H^i_S(X\times_Q S,Y\times_Q S)\to H^i_S(X,Y)$$
is precisely the adjunction map $\epsilon$. We have to lift this to a morphism $\epsilon:f^*r'f_*\to 1$. 
Let $\Sigma$ and $\Lambda$ be an $f$-admissible pair of stratifications. 
Choose a quasi-filtration $T_\dt \to S$. 
Define a functor 
$$H^{\spadesuit}:\Delta(S,\Sigma)\to F\text{-mod}^3$$
by sending  $(X\to S, Y, i, w)$ to the direct sum of the three vertices of
the  diagram
$$
\xymatrix{
\mathcal{H}^0( H_S^{i}(X_{T_0}\times_Q S,Y_{T_0}\times_QS \cup
X_{T_{0-1}}\times_QS)\to\ldots )& H^i_S(X\times_Q
S,Y\times_Q S)\ar_<<<{\sim}[l]\ar^{\Gamma^*}[d]\\
& H^i_S(X,Y)
}
$$
We build a category $\M^\spadesuit(S)=
End{F^3}^\vee(H^\spadesuit)\text{-comod}$.  From the universal property, we
have a functor $h^\spadesuit$ which assigns to $(X\to S, Y, i, w)$ the diagram
of motives
$$
\xymatrix{
\mathcal{H}^0( h_S^{i}(X_{T_0}\times_Q S,Y_{T_0}\times_QS \cup
X_{T_{0-1}}\times_QS)\to\ldots )& h^i_S(X\times_Q
S,Y\times_Q S)\ar_<<<{\sim}[l]\ar^{\Gamma^*}[d]\\
& h^i_S(X,Y)
}
$$
In particular, we obtain  a projection  $\M^\spadesuit(S)\to  \M(S)$. 
As above one can argue that
 $\M^\spadesuit(S)/\ker(p)\sim \M(S)$. We can see that $h^\spadesuit$
 factors through $\ker p$. This determines  a functor
$\epsilon:\M(S)\to Mor\M(S)$ compatible with adjunction. 
\end{proof}

\begin{lemma}
  There is a morphism  $\eta:1\to r' f_* f^*$ compatible with the
  adjunction $\eta:1\to \R f_* f^*$ 
\end{lemma}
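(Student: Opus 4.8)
The plan is to build $\eta$ by the same enriched/Tannakian device used for $\epsilon$ in the preceding lemma, but run over the graph $\Delta(Q,\Lambda)$ rather than $\Delta(S,\Sigma)$: since $\eta\colon 1\to rf_*f^*$ is a transformation of endofunctors of $D^b(\M(Q,\Lambda))$, it is pinned down by its values on the generators $h_Q^i(X,Y)(w)$. First I would fix an $f$-admissible pair $(\Sigma,\Lambda)$ and, by proposition~\ref{prop:cell}, a cellular quasi-filtration $(T\to S,T_\dt)$ of $S$; as in the construction of $rf_*$ one takes it cellular with respect to the finitely many sheaves $H_S^*(X\times_Q S,Y\times_Q S)$ attached to a given finite subgraph of $\Delta(Q,\Lambda)$, then passes to the $2$-colimit over finite subgraphs (remark~\ref{rmk:2limit}) and over admissible pairs. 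For an object $(X\to Q,Y,i,w)$ of $\Delta(Q,\Lambda)$, base change (construction~\ref{constr:basech}, lemma~\ref{lemma:basechange}) identifies $f^*h_Q^i(X,Y)(w)$ with $h_S^i(X\times_Q S,Y\times_Q S)(w)$, and the concrete description of $rf_*$ in lemma~\ref{lemma:dirimage1} then identifies $rf_*f^*h_Q^i(X,Y)(w)$ with the complex
\[
\calK^\dt_i(X,Y):\quad h_Q^{i}\bigl(\tilde X_{T_0},\tilde Y_{T_0}\cup\tilde X_{T_{0-1}}\bigr)\to h_Q^{i+1}\bigl(\tilde X_{T_1},\tilde Y_{T_1}\cup\tilde X_{T_{1-1}}\bigr)\to\cdots
\]
(with $\tilde X=X\times_Q S$, $\tilde Y=Y\times_Q S$), which sits in degrees $\ge 0$ and whose $j$-th cohomology sheaf realizes under $R_B$, via proposition~\ref{prop:leray}, to $R^jf_*f^*H_Q^i(X,Y)$.

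Because $\calK^\dt_i(X,Y)$ is concentrated in degrees $\ge 0$, a morphism from $h_Q^i(X,Y)(w)$ to it in $D^b(\M(Q,\Lambda))$ is the same as a morphism $h_Q^i(X,Y)(w)\to\calH^0(\calK^\dt_i(X,Y))$ in $\M(Q,\Lambda)$; under $R_B$ and proposition~\ref{prop:leray} the target $\calH^0$ realizes to $f_*f^*H_Q^i(X,Y)$, and the ordinary unit $H_Q^i(X,Y)\to f_*f^*H_Q^i(X,Y)$ is the arrow I want to lift. So, mimicking the $\epsilon$-construction, I would form a functor $H^\clubsuit\colon\Delta(Q,\Lambda)^{op}\to\calC'$ into a comma category built exactly like $\calC(S,\Sigma)_{iso}$ but whose ``test datum'' is a single morphism $R_B M\to\calH^0(R_B K^\dt)$ rather than a system of isomorphisms, with $H^\clubsuit(X\to Q,Y,i,w)$ the triple $\bigl(\calK^\dt_i(X,Y);\,H_Q^i(X,Y);\,\text{the adjunction arrow into }\calH^0\bigr)$. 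Composing with the evident faithful exact $U_2'\colon\calC'\to F\text{-mod}^2$ and applying corollary~\ref{cor:tannaka}, I set $\PM^\clubsuit(Q):=End^\vee(H^\clubsuit\circ U_2')\text{-comod}$ and get a motivic lift $h^\clubsuit$ sending $(X\to Q,Y,i,w)$ to the morphism $h_Q^i(X,Y)(w)\to\calH^0(\calK^\dt_i(X,Y))$.

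The projection of $\PM^\clubsuit(Q)$ onto its source factor is a functor $p\colon\PM^\clubsuit(Q)\to\PM(Q)$, and exactly as in lemma~\ref{lemma:dirimage1} (via lemmas~\ref{lemma:surjcatstack} and~\ref{lemma:surjcat}) one has $\PM^\clubsuit(Q)/\ker p\sim\PM(Q)$ with $h^\clubsuit$ factoring through $\ker p$; sheafifying over $Q_{zar}$ and passing to the $2$-colimit over $(\Sigma,\Lambda)$ then yields a functor $\M(Q)\to Mor\,\M(Q)$, i.e. the desired $\eta\colon 1\to rf_*f^*$ (re-reading the $\calH^0$-valued maps as maps into the complex $\calK^\dt_i$ in $D^b$). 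Independence of the quasi-filtration follows as in lemma~\ref{lemma:dirimage1}: a morphism of cellular quasi-filtrations makes the complexes $\calK^\dt_i$ quasi-isomorphic and the construction factors through the comma category attached to $id\colon S\to S$. The compatibility asserted in the statement is then automatic, since by construction the arrow recorded in $H^\clubsuit$ is the ordinary adjunction unit under the identification of proposition~\ref{prop:leray}: applying $R_B$ to $\eta$ and composing with the isomorphism $R_Brf_*\cong\R f_*R_B$ of the preceding lemma returns $1\to\R f_*f^*$. Combined with the previously constructed $\epsilon$, this checks both hypotheses of lemma~\ref{lemma:adjointness}, so $rf_*$ is right adjoint to $f^*$ and theorem~\ref{thm:directimage} is complete.

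I expect the main obstacle to be the (delicate rather than deep) bookkeeping: verifying that the two identifications $f^*h_Q^i(X,Y)\cong h_S^i(X\times_Q S,\ldots)$ and $rf_*f^*h_Q^i(X,Y)\cong\calK^\dt_i(X,Y)$ can be threaded through a single functor on $\Delta(Q,\Lambda)$ so that all three edge types --- geometric, connecting, twisted projection --- act compatibly on both the source object and the target complex, and arranging the cellular quasi-filtration uniformly enough across finite subgraphs that the $2$-colimit glues. The triangle identities for $(f^*,rf_*)$ themselves cost nothing here, since $R_B$ is faithful and those identities already hold for the sheaf-level functors $(f^*,\R f_*)$.
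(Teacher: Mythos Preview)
Your proposal is correct and follows essentially the same approach as the paper: both define an enriched functor $H^\clubsuit$ on $\Delta(Q,\Lambda)$ landing in $F\text{-mod}^2$, form $\PM^\clubsuit = End^\vee(H^\clubsuit)\text{-comod}$, and extract $\eta$ from the equivalence $\PM^\clubsuit/\ker p\sim\PM(Q)$. The only cosmetic difference is that the paper records the arrow $h_Q^i(X,Y)\to h_Q^i(X_{T_0},Y_{T_0}\cup X_{T_{-1}})$ directly into the degree-$0$ term of the complex (a type~I pullback along $X_{T_0}\to X$) rather than into its $\calH^0$ as you do; these give the same morphism in $D^b$ since the complex sits in degrees $\ge 0$.
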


\begin{proof}
The strategy is similar to the previous argument.
Let $\Sigma$ and $\Lambda$ and $T_\dt \to S$  be as in the above argument.
Define 
$$H^{\clubsuit}:\Delta(Q,\Lambda)\to F\text{-mod}^2$$
by sending $(X\to Q, Y, i, w)$
to the sum of vertices of the diagram
$$
\xymatrix{
 H_Q^{i}(X,Y)\ar[d] & &\\
 H_Q^{i}(X_{T_0},Y_{T_0}\cup X_{T_{0-1}})\ar[r] &
 H_Q^{i+1}(X_{T_1},Y_{T_1}\cup X_{T_{1-1}})\ar[r] &\ldots  
}
$$
partitioned so that $ H_Q^{i}(X,Y)$ corresponds to the first
component.
Let $\M^\clubsuit(S)=
End_{F^3}^\vee(H^\clubsuit)\text{-comod}$. As above, we have a functor
$h^\clubsuit$ sending  $(X\to Q, Y, i, w)$ to
$$
\xymatrix{
 h_Q^{i}(X,Y)\ar[d] & &\\
 h_Q^{i}(X_{T_0},Y_{T_0}\cup X_{T_{0-1}})\ar[r] &
 h_Q^{i+1}(X_{T_1},Y_{T_1}\cup X_{T_{1-1}})\ar[r] &\ldots  
}
$$
This yields the map $\eta$ once we observe that
 $\M^\clubsuit(S)/\ker(p)\sim \M(S)$ where $p$ is the natural
 projection. 
\end{proof}

\begin{proof}[Proof of theorem]
  To finish the proof of the theorem, it is enough to observe that by
  the previous lemmas, we can apply lemma~\ref{lemma:adjointness} to
  conclude that $rf_*=r'f_*$.
\end{proof}

\subsection{Direct image with compact support}

\begin{thm}\label{thm:dirimage}
  If $f:S\to Q$ is an morphism of quasiprojective varieties, 
then there is a functor $h^i_{c,Q}=r^if_!:\M_c(S;F)\to
  \M(Q;F)$, such that $R_B(r^if_!(M)) \cong R^if_!(S, R_B(M))$.  If $g:S'\to
  S$ is a morphism, there is an isomorphism $g^*r^if_!(M)\cong r^if_!(g^*M)$
  compatible with the base change isomorphism on realizations.
\end{thm}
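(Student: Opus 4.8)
The plan is to reduce $r^if_!$ to the two operations already constructed: extension by zero along an open immersion, and higher direct image along a projective morphism. As a morphism of quasi‑projective $k$‑varieties, $f\colon S\to Q$ is quasi‑projective, hence factors as $S\xrightarrow{\,j\,}\bar S\xrightarrow{\,\bar f\,}Q$ with $j$ an open immersion and $\bar f$ projective (take $\bar S\subset\PP^N\times Q$ to be the closure of the image of $s\mapsto(s,f(s))$ for some immersion $S\hookrightarrow\PP^N$, with a stratification adapted to $\partial\bar S=\bar S-S$). On constructible sheaves $\R f_!\cong\R\bar f_*\circ j_!$ since $\bar f$ is proper and $j$ is open, so I would set
$$r^if_!(M)\;:=\;\calH^i\bigl(r\bar f_*(j_!M)\bigr),$$
using the exact functor $j_!\colon\M_c(S)\to\M_c(\bar S)$ constructed above (which satisfies $R_Bj_!\cong j_!R_B$) and the direct image $r\bar f_*$ for the projective morphism $\bar f$ from \S\ref{section:directimages}. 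That $j_!$ lands in the compact subcategory is checked on generators: $j_!h_S^i(X,Y)(w)$ is the image of $h_{\bar S}^i(\bar X,\bar Y)(w)$ for $\bar X$ a compactification of $X$ over $\bar S$ which is still projective over $k$ (a closure of $X$ in $\PP^M\times\bar S$), and $\bar Y\supseteq\bar X-X$ enlarged so that base change holds.

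The point requiring care is that $r\bar f_*$ preserves compactness. Unwinding the construction of \S\ref{section:directimages} for $\bar f$, the object $r\bar f_*h_{\bar S}^i(\bar X,\bar Y)(w)$ is computed by the complex $\calK_i^\dt$ of Proposition \ref{prop:leray}, whose terms are the motives $h_Q^\dt(\bar X_{T_k},\bar Y_{T_k}\cup\bar X_{T_{k-1}})(w)$ attached to a cellular quasi‑filtration $T_\dt\to\bar S$ (Proposition \ref{prop:cell}). The total spaces $\bar X_{T_k}=\bar X\times_{\bar S}T_k$ are only quasi‑projective over $k$, so these terms a priori lie in $\M(Q)$ and not obviously in $\M_c(Q)$; but each is the cohomology of a pair whose total space compactifies to a variety projective over $k$, and after enlarging the closed subset accordingly it becomes isomorphic to a motive $h_Q^\dt(Z,W)(w)$ with $Z$ projective over $k$, hence lies in $\M_c(Q)$, the connecting differentials being morphisms there. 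Thus $\calK_i^\dt\in C^b(\M_c(Q))$ and its cohomology objects lie in $\M_c(Q)$; functoriality in $(X,Y,i,w)$ is as in \S\ref{section:directimages}. This is also the step into which the dimension hypotheses from \S\ref{section:directimages} feed; alternatively one could take $r\bar f_*$ to be the abstract adjoint of Theorem \ref{thm:directimage0}, at the cost of re‑proving the realization compatibility.

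Three properties then remain. First, independence of the compactification $\bar S$: given a second choice, dominate both by a third $\tilde S$ equipped with proper maps to each that restrict to isomorphisms over $S$, and combine the relation $\pi^*j_!=\tilde j_!$ established above with $\R\pi_*\tilde j_!\cong j_!$ (proper base change, $\pi$ an isomorphism over $S$) and the compatibility of $r(-)_*$ with composition to obtain a canonical comparison isomorphism. Second, realization: $R_B\,r^if_!(M)\cong R^if_!(R_BM)$ follows from $\R f_!\cong\R\bar f_*\circ j_!$ on sheaves, from $R_Bj_!\cong j_!R_B$, and from the isomorphism $R_B\,r\bar f_*\cong\R\bar f_*\,R_B$ built into the good‑direct‑image property of \S\ref{section:directimages}. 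Third, base change along $g\colon Q'\to Q$: form $S'=S\times_QQ'$ and $\bar S'=\bar S\times_QQ'$, which is again a compactification of $S'$ with boundary the pullback of $\partial\bar S$, so that $f'=\bar f'\circ j'$ is the analogous factorization; then combine the compatibility of $j_!$ with pullback (open immersions being stable under base change) with base change for $r\bar f_*$, which holds because the isomorphism of Proposition \ref{prop:leray} is stable under base change on $Q$ and the cellular quasi‑filtrations may be chosen compatibly. The principal obstacle is less any single step than the bookkeeping: arranging the two reductions so that the independence, realization and base‑change isomorphisms are simultaneously natural and satisfy the evident cocycle conditions --- and, within that, the genuinely substantive point is the compactness claim of the second paragraph, i.e. controlling the cells occurring in the direct‑image complex inside $\M_c(Q)$.
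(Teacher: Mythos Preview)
Your approach is essentially the same as the paper's: factor $f$ through a relative compactification $S\stackrel{j}{\hookrightarrow}\bar S\stackrel{\bar f}{\to}Q$, set $r^if_!=r^i\bar f_*\circ j_!$, verify realization compatibility via $\R f_!\cong\R\bar f_*\circ j_!$, and establish independence of the compactification by dominating any two choices by a common third one and comparing along the resulting proper map $\pi$ using $\pi^*j_!\cong\tilde j_!$. Your write-up is in fact more complete than the paper's proof, which does not spell out the base-change statement or the argument that the construction stays inside $\M_c$; your identification of the compactness of the cells in the direct-image complex as the substantive point is well taken, though your sketch there (replacing $\bar X_{T_k}$ by a projective compactification with enlarged closed subset) would need to be made precise to be a full proof.
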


\begin{proof}
Choose a relative compactification 
$$
\xymatrix{
 S\ar[r]^{j}\ar[d]^{f} & \bar S\ar[ld]^{\bar f} \\ 
 Q & 
}
$$
Then we can define $r^if_! = (r\bar f^i_*)\circ (j_!)$,
provided that we can show that it is well defined. First observe that  we have 
$$R_B(r\bar f^i_*\circ j_!M) =R\bar f^i_*\circ j_! R_B(M) =R^if_!R_B(M)$$
as required.
To see that it is independent of the choice,
first observe that given a second compactification $S\to \tilde S$, we can find
a third  compactification $\overline{\overline{S}}$ which dominates both $\bar S$ and $\tilde S$. For example,
we can take  $\overline{\overline{S}}$ equal the closure of diagonal in $\bar S\times_Q \tilde S$.
Thus we can assume that $\overline{\overline{S}}=\tilde S$, and so we  can assume that
we have a commutative diagram
$$
\xymatrix{
 S\ar[r]^{j}\ar[dd]^{f}\ar[rd]_{\tilde j} & \bar S\ar[ldd]^{\bar f} \\ 
  & \tilde S\ar[u]_{\pi}\ar[ld]^{\tilde f} \\ 
 Q & 
}
$$
Therefore, we get a morphism
$$r^i\bar f_*j_!M\to r^i\tilde f \pi^*j_!M\cong r^i\tilde f\tilde j_!M$$
which induces identity on realization. So it must be an isomorphism in
$\M(Q)$.

\end{proof}

\begin{remark}
 The above construction can be lifted to a functor on  derived categories,
$rf_!: D^b\M_c(S;F)\to D^b\M(Q;F)$, by defining
$rf_! = (r\bar f_*)\circ (j_!)$ using the notation of the proof.
\end{remark}

\section{Motivic Local Systems}

\subsection{Local Systems}\label{section:localsystems}

Call an object of $\Delta(S)$  {\em tame} if it is of the form
  $(\bar X-D\to S, E\cap (\bar X-D), i, w)$ such that $\bar X\to S$ is smooth
and projective, and $D+E$ is a  divisor  such
that any intersection of components is smooth over $S$ (we will refer to this condition
as having relative normal crossings). We usually just write $E$
instead of $E\cap (\bar X-D)$ above.
Such a pair $(\bar X-D\to S, E)$ is a fibre bundle,
so it is controlled.

It is easy to see
that $\Delta_{tame}(S)\subset \Delta_{eq}(S,\{S\})$. 

\begin{defn}
    The category of   premotivic local systems $\PM_{ls}(S;F)= $\\
    $End^\vee(H|_{\Delta_{tame}(S)})\text{-comod}$.
The category of motivic local systems $\M_{ls}(S;F)$ is obtained by
forming the associated stack as in section \ref{sect:extmotivicsheaves}.
\end{defn}

We note the following properties which are either
 immediate consequences of what has been said or easily checked.
 
 \begin{enumerate}
\item $\M_{ls}(S)\subset \M_{eq}(S,\{S\})$ is an abelian subcategory.
\item The realizations $R_B$ and $R_{et}$ take $\M_{ls}(S)$ to the categories
of locally constant sheaves for the classical and \'etale topologies, and they
factor through $\M_{ls}(S)$.
\item The tensor product given earlier restricts to a product
$\M_{ls}(S)\times \M_{ls}(S)\to \M_{ls}(S)$. (The key point is that
$\M_{ls}$ is equivalent to comodules over the restriction of $End^\vee$ to
$\Delta_{cell}\cap \Delta_{tame}$.) This induces a product on the stacks
$\M_{ls}(S)\times \M_{ls}(S)\to \M_{ls}(S)$
\end{enumerate}

By item 2 above, we see that $\M_{ls}(S)$ is strictly contained in  $\M(S)$ in
general. However, we do observe the following:

\begin{thm}
When $S=Spec\, k$,
  $\M(S; F)$ and $\M_{ls}(S;F)$ are equivalent.
\end{thm}

\begin{proof}
By theorem~\ref{thm:cellveq}, $\M(Spec\,k; F)$ is equivalent to
$\M_{cell}(S;F)$.  Given  $(X,Y,i,w)\in \Delta_{cell}(S)$, by
resolution of singularities we can find  a tame object such that
$(\tilde X, E,i,w)$ and a map $\pi:\tilde X\to X$ which is an isomorphism over $X-Y$
and such that $E=\pi^{-1}Y$. Therefore $h^i(X,Y)(w)\cong h^i(\tilde X,E)(w)\in
\M_{ls}$. Again by resolution of singularities, any morphism in
$\Delta_{cell}$ can be lifted to a morphism in $\Delta_{tame}$.
This together with lemma~\ref{lemma:presentation} implies the theorem.
\end{proof}

We outline the construction of Gysin maps, which will be needed later.
Given a smooth subscheme $\bar Y\to S$ 
of $\bar X$ transverse to $D+E$ with relative dimension $m$. Set $c= n-m$. Then
the Gysin homomorphism on cohomology
$$H^i_S(\bar Y-D,E)\to H^{i+2c}_S(\bar X-D,E)$$
can be defined simply by dualizing the restriction under Poincar\'e duality. 
However, this description is not very convenient.  A better alternative is to define this
via  a deformation to the normal bundle as in \cite{bfm}. Let $\tilde X$ be the blow up of
$\bar X\times \mathbb{A}^1$ along $\bar Y\times \{0\}$. Let $\tilde Y$ be the strict transform
of $\bar Y\times \mathbb{A}^1$.  Let $\tilde D,\tilde E$ be the preimages of $D,E$ in
$\tilde X$. The fibre of the natural map $\pi:\tilde X\to \mathbb{A}^1$ over $t\not=0$ is
$X$. While the fibre  $\pi^{-1}0$ is
the union of  the projectivized normal bundle $p:\PP(N\oplus \mathcal{O}_{\bar Y})\to \bar Y$  and the blow up
$B$ of $\bar X$ along $\bar Y$.  Let $\tau=c_1(\mathcal{O}_{\PP(N\oplus \mathcal{O})}(1))^c \in H^{2c}(\PP(N\oplus \mathcal{O}),\PP(N))$. The  Gysin map can then be realized as the composition of
the given maps
\begin{eqnarray*}
H^i_S(\bar Y-D, E) &\stackrel{p^*}{\longrightarrow} & H_S^i(p^{-1}Y-p^{-1}D, p^{-1}E)\\
 &\stackrel{\cup \tau}{\longrightarrow} & H_S^{i+2c}(p^{-1}Y-p^{-1}D, p^{-1}E)\\
 &\stackrel{\cong}{\longleftarrow} & H_S^{i+2c}(\pi^{-1}(0)-\tilde D, \pi^{-1}(0)\cap \tilde E\cup B)\\
 &\stackrel{\cong}{\longleftarrow} & H_S^{i+2c}(\tilde X-\tilde D, \tilde E\cup B)\\
  &\stackrel{\pi_t^* }{\longrightarrow} & H_S^{i+2c}(\bar X-D,  E)
\end{eqnarray*}
The second description yields a motivic Gysin map
\begin{equation}
  \label{eq:gysin}
h^i_S(\bar Y-D,E)\to h^{i+2c}_S(\bar X-D,E)(c)  
\end{equation}
We can define the Gysin morphism 
$$h^i_S(\bar Y-f^{-1}D,f^{-1}E)\to h^{i+2c}_S(\bar X-D,E)(c)$$
for an arbitrary map $f:\bar Y\to \bar X$ as the composition of the Gysin
 morphism associated to the inclusion of  graph of $\Gamma_f\subset \bar Y\times \bar X$ 
 followed by a K\"unneth  projection.

When, $S$ is smooth let $VMHS(S_{\iota,an})$ 
denote the category of rational variations
of mixed Hodge structures on $S_{\iota,an}$, 
which are admissible in the sense of 
Steenbrink and Zucker \cite{sz} and  Kashiwara \cite{kashiwara}.
In a nutshell, an object of this category consists of a filtered local system $(V,W)$
together with a compatible bifiltered vector bundle with connection
$(\V\cong V\otimes \mathcal{O}_S,W,F,\nabla)$ subject to the appropriate axioms (Griffith's transversality...).  For the precise conditions, see \cite[sect. 14.4.1]{ps} or the above references.
Given  $(X=\bar X-D\to S, E, i, 0)\in \Delta_{tame}$,
we can construct an admissible variation as follows:
$$
\begin{cases}
V = H_S^i(X,E\cap X;\Q) &\\
 \V = \R f_*\Omega^\dt_{\bar X/S}(\log D+E)(-E) &\\
 F^p = \im \R f_*\Omega^{\ge p}_{\bar X/S}(\log D+E)(-E) &\\
W_q = \im \R f_* W_q\Omega^{\dt}_{\bar X/S}(\log D+E)(-E) &\\
\nabla=\text{ Gauss-Manin connection} 
\end{cases}
$$
This is given in \cite{sz}, when $E=\emptyset$. The general case
is easily reduced to this via the resolution
$$j_{X, E!}\Q_{X-E}\to \Q_X\to \bigoplus \Q_{ E_i}\to  \bigoplus \Q_{ E_i\cap E_j}\ldots$$
where $E=\cup E_i$ is the decomposition into irreducible components.
We can extend this to arbitrary objects $(X=\bar X-D\to S, E, i, w)\in \Delta_{tame}$
by tensoring the above variation  with $\Q(w)$.
This construction is easily checked to yield a functor 
$\Delta_{tame}(S)^{op}\to VMHS(S)$. Thus we get

\begin{ex}
  an exact faithful Hodge realization functor 
$$R_{\iota,H}=R_H: \M_{ls}(S; \Q)\to VMHS( S_{\iota,an})$$
This functor is compatible with  tensor product. This coincides with
the Hodge realization constructed earlier,  restricted $\M_{ls}(S;
\Q)$, once we identify $VMHS(S)\subset Cons\text{-}MHM(S)$.
\end{ex}

One of the consequences of the admissibility conditions mentioned
above is the following 
removable singularities theorem:
An admissible variation extends from a Zariski open to the whole
variety if the underlying local system extends.
Using this, it is possible to prove a stronger statement that
$R_H$ extends to all of $\M_{ls}(S)$.  

We can define a system of realizations on $S$ by following the usual
pattern \cite{deligne-3, jannsen-mm}.  Here we outline the construction.
A ``locally  constant'' or more correctly lisse $\ell$-adic sheaf $V$ on $S_{et}$ corresponds to a representation
of the algebraic fundamental group $\pi_1^{et}(S)\to GL_N(\Q_\ell)$.
Composing this with the canonical map  from
the topological fundamental group $\kappa:\pi_1(S_{\iota,an})\to \pi_1^{et}(S)$
results in a local system $\kappa_\iota^*V$ of $\Q_\ell$-modules on $S_{\iota, an}$.
By a system of realizations we will mean 
\begin{enumerate}
\item  A collection of locally 
constant $\ell$-adic sheaves $V_\ell$ on $S_{et}$, for each prime $\ell$.
Each $V_\ell$ should be mixed in the sense that they carry weight filtrations.
\item A collection of variations of mixed Hodge structures $V_\iota$ on $S_{\iota, an}$
indexed by embeddings of $\iota:k \hookrightarrow  \C$.
\item Compatibility isomorphisms $\kappa_\iota^*V_\ell\cong V_\iota\otimes \Q_\ell$
respecting weight filtrations.
\end{enumerate}

These form a $\Q$-linear abelian category $SR(S)$.  An appeal to corollary \ref{cor:tannaka}
and the comparison theorem (appendix B) yields a realization functor $R_{SR}:\M_{ls}(S,\Q)\to SR(S)$
which combine all of the previous realizations into one. Thus $\M_{ls}$ gives a finer theory
than motives built from systems of realizations.

\subsection{Duality}

The goal  of this section is to prove:
           
\begin{thm}
 $\M_{ls}(S;F)$ is a neutral Tannakian
category over $F$
\end{thm}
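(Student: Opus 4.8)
The plan is to apply Proposition~\ref{prop:tannaka3} to $\PM_{tls}(S)=End^\vee(H|_{\Delta_{tame}(S)})\text{-comod}$ and then pass to the associated stack. Recall that $\Delta_{tame}(S)$ is stable under the monoidal product on $\Delta_{eq}(S,\{S\})$ (a fibrewise product of relative normal crossing situations is again one) and contains the unit $(\mathrm{id}_S,\emptyset,0,0)$, so $H|_{\Delta_{tame}(S)}$ carries a symmetric associative product; hence by Proposition~\ref{prop:tannaka3} it suffices to exhibit, for each tame object $(\bar X-D\to S,E,i,w)$, a dual of the generator $h^i_S(\bar X-D,E)(w)$. (Proposition~\ref{prop:tannaka3} itself packages Lemmas~\ref{lemma:presentation} and~\ref{lemma:duals}: once the generators have duals, so does every comodule, being a cokernel of a map between finite sums of generators.)

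Let $n$ be the relative dimension of $\bar X\to S$. Since $(\bar X-D,E)\to S$ is a fibre bundle, $R_B(h^i_S(\bar X-D,E)(w))$ is a local system, and relative Poincar\'e--Lefschetz duality along the fibres gives a perfect pairing $R_B(h^i_S(\bar X-D,E)(w))\otimes R_B(h^{2n-i}_S(\bar X-E,D)(n-w))\to F_S$. This suggests
\[
h^i_S(\bar X-D,E)(w)^\vee \;=\; h^{2n-i}_S(\bar X-E,D)(n-w);
\]
note the right-hand object is again tame, attached to $(\bar X-E\to S,D,2n-i,n-w)$ (the roles of the removed divisor and the relative subvariety are merely interchanged). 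I would construct the coevaluation $\delta\colon{\mathbf 1}\to h^{2n-i}_S(\bar X-E,D)(n-w)\otimes h^i_S(\bar X-D,E)(w)$ from the motivic Gysin class of the relative diagonal $\bar X\hookrightarrow\bar X\times_S\bar X$ --- a smooth closed $S$-subscheme of codimension $n$, transverse to the ambient normal crossing divisors, so the Gysin construction of \S\ref{section:localsystems} applies --- followed by the K\"unneth decomposition of the preceding section and projection onto the $(2n-i,i)$ summand. For the evaluation $\epsilon\colon h^i_S(\bar X-D,E)(w)\otimes h^{2n-i}_S(\bar X-E,D)(n-w)\to{\mathbf 1}$ I would compose the (compactly supported) cup product landing in $h^{2n}_S(\bar X,D+E)(n)$, the type~I morphism $h^{2n}_S(\bar X,D+E)(n)\xrightarrow{\sim}h^{2n}_S(\bar X)(n)$ (an isomorphism after $R_B$, since $D+E$ has relative dimension $\le n-1$), and the trace $h^{2n}_S(\bar X)(n)\to{\mathbf 1}$. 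The trace is built by choosing a closed $S$-embedding $\bar X\hookrightarrow\PP^N_S$, applying its Gysin map, and composing with the projection onto the top summand of the motivic decomposition $h^*_S(\PP^N_S)\cong\bigoplus_{j=0}^N{\mathbf 1}(-j)$ (extracted from the type~III relations).

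With $\epsilon$ and $\delta$ available, the triangle identities \eqref{eq:dual1} and \eqref{eq:dual2} are equalities of morphisms in $\PM_{tls}(S)$; since $R_B$ is faithful and exact it suffices to verify them after $R_B$, where $\epsilon,\delta$ become the classical trace pairing and the diagonal class and the identities express precisely that relative Poincar\'e--Lefschetz duality is a perfect pairing. This produces duals for all generators, hence by Proposition~\ref{prop:tannaka3} for every object of $\PM_{tls}(S)$; together with the product and the fibre functor --- $R_B$ composed with the stalk at a chosen base point, which is faithful, exact and tensor preserving when $S$ is connected --- $\PM_{tls}(S)$ is neutral Tannakian. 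Finally one passes to the stack $\M_{tls}(S)$: a dual, with its evaluation and coevaluation, is functorial and essentially unique, so the local duals of a locally presented object glue, the axioms \eqref{eq:dual1}--\eqref{eq:dual2} being checkable over a cover via $R_B$; thus every object of $\M_{tls}(S)$ has a dual, and $\M_{tls}(S)$ is neutral Tannakian.

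The main obstacle is making the evaluation morphism genuinely motivic: the trace $h^{2n}_S(\bar X)(n)\to{\mathbf 1}$ is not a pullback and forces one to combine the Gysin machinery with the motive of projective space, and the ``compactly supported'' cup product $h^i_S(\bar X-D,E)\otimes h^{2n-i}_S(\bar X-E,D)\to h^{2n}_S(\bar X,D+E)$ must be set up with care (correct signs and Tate twists) so that its $R_B$-realization is the classical pairing. The payoff is that, once these morphisms are in place, checking the duality axioms costs nothing beyond faithfulness of $R_B$ and classical Poincar\'e--Lefschetz duality.
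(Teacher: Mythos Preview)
Your overall strategy is exactly the paper's: invoke Proposition~\ref{prop:tannaka3}, propose the same dual $h_S^{2n-i}(\bar X-E,D)(n-w)$, build $\epsilon$ and $\delta$ geometrically, and verify \eqref{eq:dual1}--\eqref{eq:dual2} after applying $R_B$. The gap is in the geometric construction of $\epsilon$ and $\delta$, which you underestimate.

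The assertion that the relative diagonal $\bar X\hookrightarrow \bar X\times_S\bar X$ is ``transverse to the ambient normal crossing divisors'' is false in the sense needed for the Gysin machinery of \S\ref{section:localsystems}. Write $D_1=D\times\bar X$, $D_2=\bar X\times D$, $E_1=E\times\bar X$, $E_2=\bar X\times E$. The diagonal meets the deeper stratum $D_1\cap D_2$ along the diagonal copy of $D$, and a tangent-space check shows this meeting is \emph{not} transverse (and likewise for $E_1\cap E_2$). So the hypothesis ``$\bar Y$ transverse to $D+E$'' in the Gysin construction fails, and your $\delta$ is not defined as written. The same non-transversality obstructs your $\epsilon$: the only honest type~I restriction along the diagonal forces you first to pass to $\bar X-(D\cup E)$, and then the target is $h_S^{2n}(\bar X-(D\cup E))$, which is zero on realizations---not $h_S^{2n}(\bar X,D+E)$. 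There is no motivic ``compactly supported cup product'' available in $\Delta_{tame}$ that lands where you want without further work.

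The paper's fix is a blow-up: let $\tilde Y$ be the blow-up of $Y=\bar X\times_S\bar X$ along $D_1\cap D_2$ and then along the strict transforms of $E_1\cap E_2$, with exceptional divisor $G$ and strict transforms $\tilde D_i,\tilde E_j$. The diagonal lifts to $d:\bar X\to\tilde Y$ with image \emph{disjoint} from $\tilde D_i,\tilde E_j$ and $d^{-1}G\subseteq D\cup E$. Now $\epsilon$ is built as the composite
\[
h_S^{2n}(Y-(D_1\cup E_2),E_1\cup D_2)\to h_S^{2n}(\tilde Y-(\tilde D_1\cup\tilde E_2\cup G),\tilde E_1\cup\tilde D_2)\xleftarrow{\ \sim\ } h_S^{2n}(\tilde Y-(\tilde D_1\cup\tilde E_2),\tilde E_1\cup\tilde D_2\cup G)\to h_S^{2n}(\bar X,D\cup E),
\]
the middle isomorphism being excision, and $\delta$ is the Gysin-dual of this chain. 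With this correction your verification of the triangle identities via faithfulness of $R_B$ goes through exactly as you say.
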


\begin{cor}
  $\M_{ls}(S,\Q)$ is equivalent to the category of representations of a
  proalgebraic group (which we refer to as the Tannakian dual of this category).
\end{cor}

To be more explicit, after choosing a base point $s\in S(\bar k)$, we
obtain a so  called fibre functor $F_s:\M_{ls}(S,\Q)\to \Q\text{-mod}$
given as the composition of $R_B$ with the stalk at $s$. Setting
$\pi_1^{mot}(X,s)$ to the  group of tensor automorphisms 
of $F_s$, we have that $\pi_1^{mot}(X,s)$ is proalgebbraic and that  $\M_{ls}(S,\Q)$ is equivalent to the category of
representations of it. The methods of \cite{arapura2} show that this
carries more structure, but the details will be spelled out elsewhere.

As to the theorem's proof, we 
 know that  $\M_{ls}(S;F)$  is a tensor category over $F$ with a tensor preserving
fibre functor. What remains to be proven is that every object has a dual. By proposition \ref{prop:tannaka3}
it is enough to construct duals for objects of the graph $\Delta_{tame}(S)$. We will  show that
\begin{equation}\label{eq:dualXDE}
h_S^i(\bar X-D, E)( w)^\vee = h_S^{2n-i}(\bar X-E, D)( -w+n)\tag{Dual}
\end{equation}
 where $n$ is the relative dimension of $\bar X\to S$.
 As first step, we note the following form of  Poincar\'e duality.

\begin{lemma}
There is a  pairing 
$$H_S^i(\bar X-D,E)\otimes  H_S^{2n-i}(\bar X-E,D)\to F_S$$
which is perfect in the sense that it induces an isomorphism
of local systems
$$H_S^i(\bar X-D,E)\cong H_S^{2n-i}(\bar X-E,D)^*$$
\end{lemma}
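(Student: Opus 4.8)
The statement is relative Poincaré–Lefschetz duality for the pair, and the plan is to deduce it from the six–functor formalism on the proper smooth morphism $\bar f:\bar X\to S$ together with one local computation along the normal crossing divisor $D+E$. Write $V=\bar X-(D\cup E)$, let $\bar a:\bar X-D\hookrightarrow\bar X$ and $\bar a':\bar X-E\hookrightarrow\bar X$ be the open immersions, and $j:V\hookrightarrow\bar X-D$, $j':V\hookrightarrow\bar X-E$; then by definition $H^i_S(\bar X-D,E)=\calH^i(\R\bar f_*\R\bar a_*j_!F_V)$ and $H^i_S(\bar X-E,D)=\calH^i(\R\bar f_*\R\bar a'_*j'_!F_V)$.

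First I would produce the pairing in the derived category. Since $\bar f$ is proper $\R\bar f_!=\R\bar f_*$, and since $\bar X\to S$ is smooth of relative dimension $n$ one has $\bar f^!F_S=F_{\bar X}[2n]$. Hence the duality adjunction gives
\[
\R\mathcal{H}om_S\big(\R\bar f_*\R\bar a_*j_!F_V,\,F_S\big)\;\cong\;\R\bar f_*\,\R\mathcal{H}om_{\bar X}\big(\R\bar a_*j_!F_V,\,F_{\bar X}[2n]\big),
\]
and, because $\bar a$ and $j$ are open immersions (so $\mathbb{D}\circ\R\bar a_*=\bar a_!\circ\mathbb{D}$, $j^!=j^*$, and $\R\mathcal{H}om(j_!F_V,j^*K)\cong\R j_*(K|_V)$), the right–hand side equals $\R\bar f_*\big(\bar a_!\,\R j_*F_V\big)[2n]$.

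The crux is then the sheaf isomorphism on $\bar X$
\[
\bar a_!\,\R j_*\,F_V\;\cong\;\R\bar a'_*\,j'_!\,F_V .
\]
I would build a canonical comparison map from the (open immersion) base change isomorphism $\R j_*F_V\cong\bar a^{*}\R\bar a'_*F_{\bar X-E}$, the counit $\bar a_!\bar a^{*}\to\mathrm{id}$, and the natural map $j'_!F_V\to F_{\bar X-E}$, and then check it is an isomorphism on stalks. Off $D\cup E$ it is the identity of $F$; at a point of $D\setminus E$ both stalks vanish (the left by $\bar a_!$, the right because $j'_!F_V$ vanishes along $D$); at a point of $E\setminus D$ both compute $H^\bullet\big((\Delta^{*})^{k}\times\Delta^{n-k}\big)$; and at a point of $D\cap E$, in a polydisc model with $D=\{z_1\cdots z_a=0\}$ and $E=\{z_{a+1}\cdots z_{a+b}=0\}$, the left stalk vanishes while the right stalk is the hypercohomology on $\Delta^{a}\times(\Delta^{*})^{b}\times\Delta^{n-a-b}$ of the extension by zero of $F$ from $(\Delta^{*})^{a+b}\times\Delta^{n-a-b}$, which vanishes because restriction along $\{z_1\cdots z_a=0\}$ induces an isomorphism on $H^\bullet$.

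Granting the identity, the displayed adjunction becomes $\R\mathcal{H}om_S\big(\R\bar f_*\R\bar a_*j_!F_V,\,F_S\big)\cong\big(\R\bar f_*\R\bar a'_*j'_!F_V\big)[2n]$. Applying $\calH^{-i}$, and using that for a tame object the pairs $(\bar X-D\to S,E)$ and $(\bar X-E\to S,D)$ are fibre bundles — so every sheaf $H^\bullet_S$ occurring is a local system, whence $\R\mathcal{H}om_S(-,F_S)$ merely dualizes it degree by degree — yields $H^i_S(\bar X-D,E)^{*}\cong H^{2n-i}_S(\bar X-E,D)$, equivalently (by reflexivity of local systems) the asserted perfect pairing $H^i_S(\bar X-D,E)\otimes H^{2n-i}_S(\bar X-E,D)\to F_S$; perfectness may also be rechecked on a single stalk, where it is classical Poincaré–Lefschetz duality for $(\bar X_s-D_s,E_s)$. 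The main obstacle is the sheaf isomorphism of the third paragraph — writing the comparison morphism down intrinsically rather than merely matching stalks, and pushing through the normal crossing local computation; everything else is formal six–functor bookkeeping plus the fibre–bundle property built into tameness.
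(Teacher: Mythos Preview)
Your proof is correct and follows essentially the same route as the paper: both invoke Verdier/Grothendieck duality for the proper smooth map $\bar f$ and reduce to the identification $\mathbb{D}(\R\bar a_*j_!F_V)\cong \bar a_!\R j_*F_V[2n]\cong \R\bar a'_*j'_!F_V[2n]$. The paper simply asserts this dual computation in one line, whereas you unpack it and verify the key sheaf isomorphism $\bar a_!\R j_*F_V\cong \R\bar a'_*j'_!F_V$ by a stalk check in normal-crossing coordinates; this extra care is fine and the local computation is correct.
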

  
   \begin{proof}
This follows from Verdier duality \cite{iversen}
$$H_S^i(\bar X,  \R j_{(\bar X, D)*} j_{(\bar X, E)!} F) \cong
H_S^{-i}(\bar X,  D \R j_{(\bar X, D)*} j_{(\bar X, E)!} F )^*$$
$$ \cong
H_S^{-i}( j_{(\bar X,D)!}\R j_{(\bar X,E)*}F[2n])^*\cong 
H_S^{2n-i}(\bar X-E,D)^*$$
\end{proof}

The next task is to realize the above pairing  geometrically
by  a morphism of $\M_{ls}$.
When $D=E=\emptyset$, we can take the cup product pairing which is induced by the
diagonal embedding
into the product. In general, we need to blow up the product to get a well defined diagonal. 
Set $Y= \bar X\times \bar X$, $D_1= D\times \bar X$, $D_2= \bar X\times D$,
$E_1= E\times \bar X$ and $E_2= \bar X\times E$.  Let $\tilde Y$ be
obtained by blowing up $Y$ along $D_1\cap D_2$ and
then along the intersection of the strict transforms of $E_1$ and $E_2$.
Let $G$ be the exceptional divisor of $\tilde Y\to Y$.
Denote the strict transforms of $D_i, E_j$ by $\tilde D_i, \tilde E_j$.
The diagonal
embedding $\bar X\to Y$ extends to an embedding of $d:\bar  X\to \tilde Y$ (it is not
necessary to blow up $X$ since $D$ and $E$ are already divisors).
The image of $d$ is disjoint from $\tilde D_i, \tilde E_j$  and $d^{-1}G\subseteq D\cup E$.
We define 
$$\epsilon:h_S^i(\bar X-D,E)\otimes  h_S^{2n-i}(\bar X-E,D)(n)\to F_S $$
by the composition of
\begin{eqnarray*}
h_S^i(\bar X-D,E)\otimes  h_S^{2n-i}(\bar X-E,D)&\to& h_S^{2n}(Y-(D_1\cup E_2), E_1\cup D_2)\\
&\to&  h_S^{2n}(\tilde Y-(\tilde D_1\cup \tilde E_2\cup G), \tilde E_1\cup \tilde D_2)\\
&\stackrel{\cong}{\leftarrow}& 
 h_S^{2n}(\tilde Y-(\tilde D_1\cup \tilde E_2), \tilde E_1\cup \tilde D_2\cup G)\\
 &\to& h^{2n}_S(\bar X, E\cup D) \\
 &\stackrel{\cong}{\to}& h_S^{2n}(\bar X)\cong  F(-n)
\end{eqnarray*}
after twisting by $F(n)$.  The middle isomorphism is excision.
For  the last isomorphism, by projection  we can reduce to the
case $X=\PP^n_S$ and then to $X=(\PP_S^1)^n$, where it follows from K\"unneth.

To construct $\delta$, we dualize the above description using  Gysin maps in place
of pull backs:
\begin{eqnarray*}
F_S= h^0_S(\bar X, E\cup D) &\to &  
h_S^{2n}(\tilde Y-(\tilde D_1\cup \tilde E_2), \tilde E_1\cup \tilde D_2\cup G)(n)\\
&\stackrel{\cong}{\leftarrow}& 
 h_S^{2n}(\tilde Y-(\tilde D_1\cup \tilde E_2\cup G), \tilde E_1\cup \tilde D_2)(n)\\
&\to & h_S^{2n}(Y-(D_1\cup E_2), E_1\cup D_2)(n)\\
&\to& h_S^i(\bar X-D,E)\otimes  h_S^{2n-i}(\bar X-E,D)(n)
\end{eqnarray*}

To prove \eqref{eq:dualXDE}, we have to establish equations \eqref{eq:dual1} and \eqref{eq:dual2}.
It is enough to verify  these on the corresponding
vector spaces $H_S^i(\bar X-D,E)_s, H_S^{2n-i}(\bar X-E,D)_s$, 
and this becomes an exercise in linear algebra.
If $e_j$ is a basis of the first space, and $e^j$ the dual basis of the second, then
$$\delta(1) = \sum_\ell e^\ell\otimes e_\ell$$
$$\epsilon(\sum a_{j\ell} e_j\otimes e^\ell) = \sum a_{jj}$$
Therefore
$$(\epsilon\otimes id)\circ (id\otimes \delta)(\sum a_je_j)
= (\epsilon\otimes id)(\sum_{j\ell} a_j e_j\otimes e^\ell\otimes e_\ell)
=  \sum a_\ell e_\ell$$
proves \eqref{eq:dual1}.
The remaining equation is similar.

\subsection{Pure Objects and Weights}

We work in $\M(S,\Q)$ throughout this section.
Let $f:X\to S$ be a smooth projective map of relative dimension $n$.
 Fix an embedding $X\subset \PP^N_S$. The standard generator  
 $c_1({\mathcal O}(1))\in H^2(\PP^N)$ induces an isomorphism
 $\Q_S(0)\cong h_S^2(\PP^N_S)(1)$. This yields a map $\Q_S(0)\to h^2_S(X)(1)$ by restriction.
 Cupping with this induces the Lefschetz operator $\ell:h_S^i(X)\to
h_S^{i+2}(X)(1)$. The isomorphism
$$\ell^{i}:h^{n-i}_S(X)\stackrel{\sim}{\longrightarrow} h^{n+i}_S(X)(i)$$
follows from the usual hard Lefschetz theorem on the corresponding
sheaves. Therefore we get, as usual, the Lefschetz decomposition
$h_S^i(X) = \oplus  \ell^k p^{i-2k}(X)(-k)$, where $p^i(X) = h_S^i(X)\cap
\ker \ell^{n-i+1}$. This allows us to define the Hodge involution $*=*_H$
on $h_S^*(X)=\oplus h^i_S(X)$ by the formula in \cite[pp 10-11]{andre}. Note that
  the induced involution on the cohomology of a fiber $H^*(X_s,\C)$  coincides with the
Hodge star operator with  respect to the Fubini-Study metric (up to a factor and 
complex conjugation) [loc. cit.]

\begin{prop}
  The algebra $End(h_S^*(X))$ is semisimple.
\end{prop}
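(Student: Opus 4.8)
The plan is to make $A := \mathrm{End}_{\M(S)}(h_S^*(X))$ into a finite-dimensional $\Q$-algebra carrying a \emph{positive} anti-involution, and then to invoke the classical fact that such an algebra has vanishing Jacobson radical. First I would record that $A$ is finite-dimensional and that all identities among its elements may be verified on a single cohomology group: since $f$ is smooth and projective, the sheaves $H_S^i(X)$ are local systems on the curve (or point) $S$, and the faithful exact functor $R_B:\M(S,\Q)\to Constr(S)$ embeds $A$ into $\mathrm{End}$ of the local system $\bigoplus_i H_S^i(X)$, hence into $\mathrm{End}_\Q\bigl(\bigoplus_i H^i(X_s,\Q)\bigr)$ for a chosen base point $s$. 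In particular $\dim_\Q A<\infty$, and any relation among morphisms in $\M(S)$ between these objects holds as soon as it holds after applying $R_B$ and restricting to the fibre over $s$.

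Next I would build the anti-involution. By Poincar\'e duality \eqref{eq:dualXDE} (with $D=E=\emptyset$) one has $h_S^i(X)^\vee\cong h_S^{2n-i}(X)(n)$, so $h_S^*(X)^\vee\cong h_S^*(X)(n)$ and every $u\in A$ acquires a transpose $u^\vee\in\mathrm{End}(h_S^*(X)(n))=A$. Conjugating by the Hodge involution $*_H$ of the preceding paragraph gives a map $\sigma(u)=*_H^{-1}\circ u^\vee\circ *_H$ on $A$; that $\sigma$ is $\Q$-linear, that $\sigma(uv)=\sigma(v)\sigma(u)$, that $\sigma^2=\mathrm{id}$, and that $\sigma(\mathrm{id})=\mathrm{id}$ are all formal checks which, by the reduction above, may be done on $H^*(X_s,\Q)$. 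Using the trace $\tau(u)=\mathrm{Tr}\bigl(u\mid H^*(X_s,\Q)\bigr)$, which satisfies $\tau(uv)=\tau(vu)$, one then has the bilinear form $q(u,v)=\tau(u\,\sigma(v))$ on $A$, and the crux of the argument is:

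\medskip
\emph{Positivity:} $q(u,u)=\tau(u\,\sigma(u))>0$ for every $u\in A$, $u\neq 0$.

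\medskip
\noindent Again this can be checked after passing to the fibre, where $*_H$ becomes, up to a nonzero scalar and complex conjugation, the classical Hodge star operator attached to the Fubini--Study metric, as recorded above following \cite[pp. 10--11]{andre}. There $q(u,u)$ is identified with a sum of squared norms for the positive-definite pairing $\langle a,*_H b\rangle$ on $H^*(X_s,\C)$, so the positivity of $q$ is exactly the Hodge--Riemann bilinear relations in the packaging of \cite{andre}. I expect this transition — from the motivically defined involution $\sigma$ on $A$ to Andr\'e's analytically defined positive form on the fibre — to be the only real obstacle; the rest is the standard lemma. Namely, once positivity is in hand, suppose $u$ lies in the Jacobson radical $N$ of $A$. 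Then $u\,\sigma(u)\in N$ is nilpotent, hence acts nilpotently on $H^*(X_s,\Q)$, so $\tau(u\,\sigma(u))=0$; positivity forces $u=0$. Thus $N=0$ and $A=\mathrm{End}(h_S^*(X))$ is semisimple.
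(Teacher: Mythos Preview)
Your proposal is correct and follows essentially the same strategy as the paper: define the anti-involution $a\mapsto *a^\vee *$ on $End(h_S^*(X))$, verify via Hodge--Riemann (what the paper calls the Hodge index theorem, citing \cite[p.~381]{kleiman}) that the associated trace form is positive definite, and deduce semisimplicity. The paper compresses the final step into a citation of the criterion \cite[3.13]{kleiman}, whereas you spell out the Jacobson-radical argument directly; but the content is the same.
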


\begin{proof}
Set $a' = *a^t*$, where $a^t$ is the transpose
(c.f. \cite[1.3]{kleiman}). 
With the help of  the Hodge index theorem, we see that  the
  bilinear form $trace(ab')$ is positive definite (compare \cite[p. 381]{kleiman}). 
  Then the  criterion of  \cite[3.13]{kleiman} shows that the algebra
  is semisimple.
\end{proof}

\begin{defn}
Call an object of $\M_{ls}(S)$  pure (of weight $i$) if it  is a finite sum of
summands of motives $h^*_S(X)$ (or $h_S^i(X)$)  with $X\to S$ smooth and projective.
Let $\M_{pure}(S)\subset \M_{ls}(S)$ ($\M_{pure,i}(S)\subset
\M_{ls}(S)$) be the full subcategory of
pure objects.  
\end{defn}

\begin{thm}
  $\M_{pure}(S,\Q)$ and $\M_{pure,i}(S,\Q)$  are semisimple abelian
    subcategories of $\M_{ls}(S,\Q)$.  The Hodge realization $R_H$
    takes $\M_{pure}(S,\Q)$ (respectively $\M_{pure,i}(S,\Q)$).
to the category of pure polarizable variations of Hodge structure
$HS(S)$ (of weight $i$ respectively)
 There is a direct sum decomposition $\M_{pure}(S,\Q)=\bigoplus_i
 \M_{pure,i}(S,\Q)$, i.e. every object and morphism on the left decomposes into a
 sum as indicated.  Furthermore  $\M_{pure}(S,\Q)$ is a Tannakian subcategory.
\end{thm}

\begin{proof}
These  are abelian and semisimple  by  \cite[lemma 2]{jannsen} and the previous
proposition. The second statement is clear. The third statement
follows immediately from the previous two.
It is easy to see from the constructions that  $\M_{pure}(S,\Q)$ is closed under tensor product and duals.
\end{proof}

\begin{cor}
  The Tannakian dual of $\M_{pure}(S,\Q)$ is proreductive.
\end{cor}

\begin{thm}\label{thm:MtoMpure}
  There  are exact  functors $gr_j:\M_{ls}(S,\Q)\to \M_{pure,j}(S,\Q)$ which splits the
  inclusions $\M_{pure,j}(S,\Q)\subset\M_{ls}(S,\Q)$. These are compatible with the
   Hodge realizations in the sense that $R_Hgr_j= 
  Gr^W_j R_H$.
\end{thm}

\begin{proof}
It is enough to define $gr=\bigoplus_j gr_j$, and then set $gr_j$ to the
composition of this with the projection $\M_{pure}(S)\to
\M_{pure,j}(S)$.

Fix a smooth projective map $\bar X\to S$ with a relative normal
crossing divisor $D+E$. 
Then $H^*_S(\bar X-D,E,\C)$ can be computed by taking the direct image of
the  double complex
$$\Omega^\dt_{\bar X/S}(\log D)\to \bigoplus_i \Omega^\dt_{ E_i/S}(\log
D)\to \bigoplus_{i<j} \Omega^\dt_{ E_i\cap E_j/S}(\log D)\to \ldots $$
When restricted to  the fibres, this complex forms part of a differential graded cohomological
mixed complex \cite{deligne-hodge}. From this we can deduce a spectral
sequence  associated to the diagonal filtration (c.f. \cite[7.1.6, 8.1.19.1]{deligne-hodge})
\begin{equation}
    \label{eq:descentSS1}
    E_1^{-a,b}= \bigoplus_{p+2r=b,q-r=-a} H_S^p(Y_q^{(r)})\Rightarrow
 H_S^{b-a}(\bar X-D, E,\Q)
  \end{equation} 
where 
$$
Y_q^{(r)}=
\begin{cases}
 D^{(r)} &\text{if $q=0$}\\
E^{(q-1)}\cap D^{(r)} &\text{if $q>0$} 
\end{cases}
$$
and $D^{(r)}$ and $E^{(r)}$ are disjoint unions of $r+1$-fold
intersections of components of $D$ and $E$.
We see also that \eqref{eq:descentSS1} degenerates at $E_2$, and the induced filtration on the abutment
is  the weight filtration. For later use, we record the precise
formula for $ W_{i+k}H_S^i(\bar X-D,E,\C)$
\begin{equation}
  \label{eq:WHiDE}
\im H^i(W_k \Omega_{\bar X/S}^\dt(\log D)\to \bigoplus W_{k+1}
  \Omega_{E_i/S}^\dt(\log D)\to \ldots)
\end{equation}

The differentials of \eqref{eq:descentSS1} are sums of restrictions
and Gysin maps. So we can regard this as a spectral sequence
of variations of Hodge structures
\begin{equation}
  \label{eq:HodgedescentSS1}
  E_1^{-a,b}= \bigoplus_{p+2r=b,q-r=-a} H_S^p(Y_q^{(r)})(-r) \Rightarrow
 Gr^WH_S^{b-a}(X,\Q)
\end{equation}

As noted above, the differentials are sums of restrictions and Gysin
maps. The motivic versions of Gysin maps were defined in
\eqref{eq:gysin} of section~\ref{section:localsystems}.
Thus we can form a graded complex of motives in $\M_{pure}$
$$e^{-a,b}=\bigoplus_{p+2r=b,q-r=-a} h_S^p(Y_q^{(r)})(-r) $$
which maps to the left side of \eqref{eq:HodgedescentSS1} under
$R_H$.

We  would like to take $gr( h^i(\bar X-D,E))$ to be  the sum $\oplus
h^{-a}(e^{\dt, i+\dt})$, but at the moment this not well defined.
It depends on the choice of compactification,
 so we denote it by $G^i(\bar X,D,E)$.
We build a graph $\tilde \Delta(S)$, with a forgetful functor
$\pi:\tilde\Delta(S)\to \Delta(S)$, whose objects consist of such
compactifications, along with labels $i,w\in \Z$.
Any two compactifications are dominated by a third. Therefore
the fibres of $\pi$ are connected.  From corollary
\ref{cor:tildeDelta}, it follows that $End^\vee(H\circ\pi)\cong End^\vee(H)$.
Let $C$ be the category of triples $(A,B,\phi)$, where $A\in \M_{ls}(S)$,
$B\in \M_{pure}(S)$, and $\phi:Gr^W_*R_HA\cong R_HB$. There is an
exact faithful functor $U_2:C\to \Q^2\text{-mod}$ taking $(A,B,\phi)$
to $U(A)\times U(B)$, where $U:\M_{ls}(S)\to\Q\text{-mod}$ is the
forgetful functor.
We have a functor from $H^\#:\tilde\Delta(S)\to C$ sending a labelled compactification
$(\bar X,i,w)$ to $(h^i(\bar X-D,E)(w), G^i(\bar X,D,E)(w),\phi)$, where $\phi$ is the
natural isomorphism of Hodge realizations from \eqref{eq:HodgedescentSS1}. We can form the category
$\PM^\#$ of comodules over $End^\vee(U_2\circ H^\#)$. This has a
natural projection $p:\PM^\#\to End^\vee(H\circ \pi)\text{-comod}$.
There is an equivalence $\PM^\#/\ker p\sim \PM(S)$.
We have a functor $G:\PM^\#\to \M_{pure}(S)$ which factors through this
equivalence, and this yields $gr$.
\end{proof}

This leads to a theory of weights in $\M_{ls}$.
Let us say that an object $M\in \M_{ls}(S,\Q)$ has weight(s) in $ I\subset \Z$ if
$gr_jM=0$ for $j\notin I$. For $M\in
\M_{ls}(S,\Q)$. Define $W_kM$ to be the maximal subobject of $M$ with
weights $\le k$. Note that this exists because $\M_{ls}(S,\Q)$ embeds
into $\Q\text{-mod}$, so it is noetherian.

\begin{thm}
For all $k$, one has
\begin{enumerate}
\item $W_kM/W_{k-1}M\cong gr_kM$,
\item $W_k$ is strictly preserved by morphisms,
\item $R_H(W_kM)= W_kR_H(M)$.
\end{enumerate}  
\end{thm}

We first need:

\begin{prop}
  Given $M\in \M(S)$ and $j\in \Z$, there exists $N\subseteq M$ and $N'\subseteq M$ so that
  $N'$ has weights $< j$ and $N/N'\cong gr_jM$.
\end{prop}

  \begin{proof}
  By lemma~\ref{lemma:presentation}, we can assume that $M=h^i_S(\bar
  X-D,E)$ with $\bar X$ smooth, and $D+E$ a divisor with relative
  normal crossings.  In principle, the proof amounts to realizing the
  formula for $W$ given in \eqref{eq:WHiDE} by a motive. When $E=0$, this is easy to do
  directly. Let $D_{(r)}$ denote union of $(\dim (D/S)-r+1)$-fold
  intersections of components of $D$, with $D_{(-1)}=\emptyset$. The
  point is  that  $\dim (D_{(r)}/S)=r$. Then \eqref{eq:WHiDE}  reduces to
$$W_{i+k} H^i(\bar X-D,\C)=\im H^i( W_k\Omega_X^\dt(\log D)) =\im
H^i(X-D_{(k)},\C)$$
Therefore  $N$ (respectively $N'$) may be taken as
$$
 \im [h^i(\bar X-D_{k-1})\to h^i(\bar X-D)]
$$
with $k=j-i$ (respectively $k=j-i-1$). 

The general case, while feasible, is rather messy to write explicitly.
So instead, we finish the proof by induction on
$d=\dim (\bar X/S)$. Once we have
  established this for a given $d$, it follows the proposition holds
  for all motives generated as an abelian category by 
  varieties of dimension at most $d$.  So now consider the sequence
$$h^{i-1}(E-D)\to h^i(\bar X-D,E-D)\to h^i(\bar X-D)$$
By induction, we can find  $N_{E},N_{E}'\subset h^{i-1}(E-D)$
satisfying the proposition for this motive . Then 
$$N= \im N_{E}+ \im [h^i(\bar X-D_{(j-i-1)},E)\to h^i(\bar X-D,E)]$$
$$N'= \im N_{E}'+ \im [h^i(\bar X-D_{(j-i-2)},E)\to h^i(\bar X-D,E)]$$
will satisfy the proposition for $M$.

 \end{proof}

 \begin{proof}[Proof of theorem]
Let $k$ be the least weight of $M$. The canonical map $\iota:gr_k
(W_kM)\to gr_kM$ is a monomorphism, since $gr_k$ is exact.
   The previous proposition shows that $\iota$ is also an epimorphism
   and hence an isomorphism.
Applying the same argument to the quotients $M/W_jM$ establishes part
1. The filtration $W_k$ is functorial by construction. Strictness
follows by what has just been proved (cf \cite[\S1]{deligne-hodge}).
Finally part 3 follows immediately from theorem~\ref{thm:MtoMpure}.
 \end{proof}

From the construction, we can deduce the following:

\begin{prop}
The total functor
  $gr:\M_{ls}(S)\to \M_{pure}(S)$ is an exact tensor functor. 
\end{prop}

\begin{cor}
  The Tannkian dual of $\M_{ls}(S)$ is a semidirect product of the
  Tannakian dual of $\M_{pure}(S)$ with another group.
\end{cor}

\subsection{Andr\'e's category of motives}

Andr\'e  \cite{andre} has given an entirely different construction of pure motives
over a field  $k$ that we recall. Given a smooth projective variety
$X\in Var_k$, a
class in $H^{2n}(X,\Q)$ is called {\em motivated} cycle of degree $n$ if it can be
expressed as $p_*(\alpha\cup * \beta)$, where $\alpha,\beta$ are
algebraic cycles on a product $X\times Y$, with $Y$ smooth and
projective, and $p:X\times Y\to X$ is the projection. Let $A_{mot}^n(X)$ denote the
set of these classes. It contains the space of algebraic cycles and
would coincide with it assuming Grothendieck's standard conjectures.
 Andr\'e's category  of motives $M_A(k)$ is built by taking as objects triples $(X,n,p)$
with $X$ smooth projective, $n\in \Z$, and $p$ an idempotent in the
ring of motivated cycles on $X\times X$.
Morphisms are given by
$$Hom_{M_A}((X,n,p), (Y,m,q))
= pA_{mot}^{n-m}(X\times Y)q$$
Andr\'e proved that this category is semisimple Tannakian. The 
construction of  $M_A$ and this result was extended to more general
smooth bases in \cite{ad}. For simplicity, we concentrate on the case
of $S=Spec\, k$,
 Given a smooth projective variety $X$,
let $h_A(X)\in X$ denote the object represented by the triple
$(X,0,id)$ and $h_A^i(X)$ by $(X,n,\pi_i)$, where $\pi_i:H^{2\dim
  X-i}(X)\otimes H^{i} (X)$ is K\"unneth component of the diagonal.
Then $h_A(X)=\oplus h^i_A(X)$.
By construction, we have an exact faithful
embedding $R_B:M_A(k)\to \Q\text{-mod}$, sending $(X,n,p)$ to
$pH^*(X)$. In particular, $R_B(h^i(X)) = H^i(X)$.
The category $M_A$ can be also be constructed by first forming the
category $MotCor$ of smooth projective varieties and motivated
correspondences, and then taking the pseudo-abelian (or idempotent)
completion, and then formally inverting Tate.

\begin{thm}
  For any smooth $S$, the categories $M_A(S)$ and $M_{pure}(S,\Q)$ are equivalent.
\end{thm}

We will give the proof when $S=Spec\, k$ for simplicity, even though
the arguments works in general. We will refer to a cohomology class in
$H^i(X,\Q)$ as a {\em Nori cycle} of weight $2j$ if it lies in the image of
$Hom_{M(k)}(\Q(-j), h^i(X))$.

\begin{proof}

This  is broken into a series of steps.

\begin{enumerate}
\item Motivated  cycles are Nori cycles.

  \begin{proof}
Algebraic cycles are certainly Nori cycles. In general,
    motivated cycles are built from algebraic cycles by applying $*, \cup, p_*$. Each of these
operations preserves the space of Nori cycles.
  \end{proof}

 \item There is an functor $\iota: M_A\to M_{pure}$ taking $h^i_A(X)$
   to $h^i(X)$. This functor commutes with $R_B$.

 \begin{proof}
By 1, the map on objects $X\to \oplus h^i(X)$ gives  a functor
$\iota':MotCor\to M_{pure}$. Since $M(k)$ is abelian  and the Tate motive is invertible, $\iota'$
extends uniquely to a functor $\iota : M_A\to M_{pure}$. The final
statement follow  more or less automatically from $R_B(h^i_A(X))=H^i(X)=R_B(h^i(X))$.
\end{proof}

\item There is a functor
$gr_A:M\to M_A$ taking $h^i(X)\to h^i_A(X)$. This satisfies $gr_A\circ
\iota= id$ and it commutes with $R_B$.

 \begin{proof}
The functor $gr_A$ is constructed by substituting $M_A$ for $M_{pure}$ in
the proof of theorem \ref{thm:MtoMpure}.
\end{proof}

\item  The functor $\iota$ takes simple objects to simple objects.

 \begin{proof} Suppose that $M\in M_A$ is simple, but $\iota(M)$ is not.  We may write
$\iota(M)= N_1\oplus N_2$ with $N_i\not= 0$. Then $M=gr_A(N_1)\oplus
gr_A(N_2)$ which leads to a contradiction.
\end{proof}

\item The set of simple objects of $M_A$ and $M_{pure}$ are in one to
  one correspondence via
$\iota$ and $gr_A$.

 \begin{proof} Write $h_A^i(X)=\oplus M_j$, with $M_j$ simple. Then
$h^i(X)=\oplus \iota(M_j)$ gives a decomposition into simple objects
by 4. Since every simple object of $M_{pure}$ is a summand of some $h^i(X)$,
with $X$ smooth and projective, this proves the claim.
\end{proof}

\item If $M\in M_A$ is simple, then $End(M)\cong End(\iota(M))$.

 \begin{proof}
 The map $g:End(\iota(M))\to End(M)$, induced by $gr_A$, is a surjective homomorphism
because $\iota$ gives a splitting. Since $End(\iota(M))$ is a division
ring, $g$ is necessarily an injection as well.
\end{proof}

 \item Given $M,N\in M_A$, $Hom(M,N)\cong Hom(\iota(M), \iota(N))$.

 \begin{proof}
Decompose $M=\bigoplus M_j^{m_j}\oplus M'$ and $N=\bigoplus
 M_j^{n_j}\oplus N'$ , such that $M_j$ are distinct simple
 objects and $M',N'$ have no simple factors in common. Let $D_j=End(M_j)$. Then 
 $$Hom(M,N) = \prod Mat_{n_jm_j}(D_j) = End(\iota(M))$$
\end{proof}

\end{enumerate}
\end{proof}

\section{Nori's Hodge conjecture}

\subsection{Conjecture over $\C$}
As is well known,
the usual form of the Hodge conjecture is equivalent to the statement
that the Hodge realization of homological pure motives is a full faithful
embedding \cite[p 405]{saavedra}.  In a nutshell, this comes down to the observation that 
given smooth projective varieties $X$ and $Y$, a morphism $Hom_{MHS}(H^i(X),
H^i(Y))$ is a Hodge cycle on $X\times Y$ and therefore a
correspondence, assuming the conjecture.  The analogous statement in
the  present setting is due to Nori.

\begin{conj}[Nori]
  The Hodge realization $R_H:\M(\C,\Q)\to MHS$ is a full faithful embedding.
\end{conj}

This would imply that the canonical mixed Hodge structure is ``Galois
invariant'' in the following sense: if $H^i(X)\cong  H^i(Y)$ in MHS,
then $H^i(X^\sigma)\cong H^i(Y^\sigma)$ for any $\sigma\in Aut(\C)$.
Since $\M(\C)=\M_{ls}(\C)$
and $MHS$ are
Tannakian, we can rewrite $Hom(A,B)= Hom(\Q,A^*\otimes B)$, and
reformulate the last conjecture as

\begin{conj}\label{conj:nori2}
The map
$$Hom_{\M(\C)}(\Q, M)\to Hom_{MHS}(\Q, R_H(M))$$ 
is surjective for each $M\in \M(\C)$. In particular,
a Hodge cycle on any complex algebraic variety $X$ is a Nori cycle.
\end{conj}

When $M$ lies in $\M_{pure}(\C)$, this is implied by the usual Hodge
conjecture, but in general, it
 is neither weaker nor stronger
than the Hodge conjecture. It should be viewed as refinement of  Deligne's conjecture that
Hodge cycles are absolute \cite{dmos}.  
To understand this from a different  perspective, let us recall that the original form of Beilinson's
Hodge conjecture \cite{beilinson1} would imply that the regulator map on the higher Chow group
$$reg:CH^a(X,b)\otimes \Q\to Hom_{MHS}(\Q(-a),H^{2a-b}(X))$$
is surjective for all $a,b$.  
The conjecture is known to be overly
optimistic in general  (cf \cite{jannsen}), but it is expected for instance
when $X$ is defined over $\bar \Q$.
The map can be made explicit as follows.
An element $\alpha$ on the left is given a cycle in Bloch's complex
\cite{bloch}, and so it possesses a fundamental class in
$$reg(\alpha)\in H^{2a}(X\times \mathbb{A}^b,X\times \partial \mathbb{A}^b)(a)\cong H^{2a-b}(X)(a)$$
where $\mathbb{A}^b$ is thought of as a simplex with boundary
$\partial \mathbb{A}^b$. It is clear from this, that
we may factor $reg$ through
$$  Hom_{\M(\C)}(\Q(-a),H^{2a-b}(X))\to
Hom_{MHS}(\Q(-a),H^{2a-b}(X))$$
Thus the truth of Beilinson's conjecture, in cases where it is
expected, would imply the truth of Nori's.

\begin{thm}[Andr\'e \cite{andre}]
  Hodge cycles on abelian varieties are motivated.
\end{thm}

So we deduce:

\begin{cor}
 Conjecture \ref{conj:nori2} holds for any variety whose motive lies
  in the tensor category generated by abelian varieties.
\end{cor}

\subsection{Conjecture over general bases}

We can formulate an ostensibly stronger form of Nori's conjecture.

\begin{conj}\label{conj:nori3}
Given a smooth complex variety $S$,
  \begin{enumerate}
  \item[(a)] the Hodge realization $R_H:\M_{ls}(S,\Q)\to VMHM(S)$ is a full
    faithful embedding.  
\item[(b)] if $M\in \M_{ls}(S)$, the map
$$Hom_{\M_{ls}(S)}(\Q_S,M)\to Hom_{VMHS}(\Q_S,R_H(M))$$
is surjective.
  \end{enumerate}
\end{conj}

As above, we note that  (a) and (b) are equivalent because $\M_{ls}(S)$ is Tannakian.
We will prove that the conjectures for a general base follows from the
earlier ones. 
As a first step, let us suppose that we have motive $M\in \M_{ls}(S)$,
 then by theorem~\ref{thm:directimage},
  we have a good direct image $p_*M=r^0p_*M\in \M(\C)$, where
$p:S\to Spec\, \C$ is the canonical map. 
Restricting the adjunction map $p^*p_*M\to M$ to $s\in S$, yields map
$p_*M\to M_s$.
Under Betti realization
$R_B(p_*M)\to R_B(M_s)$ can be identified with the inclusion of the subspace of
$\pi_1(S,s)$-invariants of the fibre of the local system $R_B(M)$. As
an aside, we observe that this
 leads to a direct construction for basic examples:

\begin{lemma}
  If  $M=h_S^i(X,E)$, then 
$$p_*M\cong \im [h^i(X,E)\to h^i(X_s,E_s)]$$
\end{lemma}

\begin{proof}
Let $I$ denote the right hand side. Consider the inclusion $X\to
X\times S$ of $S$-schemes given by the graph of $X\to S$.
This induces a map $h^i(X,E)= p_*h_S^i(X\times S,E\times S)\to p_*M$,
which is a morphism $I\to p_*M$. It is suffices to prove that this is
an isomorphism of Betti realizations. For this, apply 
  the global invariant cycle theorem \cite[6.2.8]{bbd}:
$$R_B(I)= \im [H^i(X,E)\to H^i(X_s,E_s)]= H^i(X_s,E_s)^{\pi_1(S,s)}$$
\end{proof}

\begin{thm}
Suppose that $S$ is smooth and connected with a point $s\in S$.
Given $M\in \M_{ls}(S)$, if conjecture \ref{conj:nori2} holds for  $p_*M$ then conjecture
\ref{conj:nori3}(b) holds for $M$.

\end{thm}

\begin{proof}
Let us suppose that conjecture
\ref{conj:nori2} holds for $I=p_*M$
Any morphism
$\gamma\in Hom(\Q_S,R_H(M))$ gives a $\pi_1(S,s)$ invariant weight  Hodge
cycle on $H^i(X_s,E_s)$. Thus $\gamma$ lies in
$Hom(\Q_S,R_H(I))$. So it must   come from a morphism $\Q\to I$ by our
assumption. This induces
 a  map of pullbacks $\Q_S\to p^*I$, which when composed 
 with the adjunction map $\epsilon:p^*I\to M$ gives the desired lift of $\gamma$
to $\gamma'\in Hom_{\M_{ls}(S)}(\Q_S,M)$.
\end{proof}

\begin{cor}
  Conjecture \ref{conj:nori2} implies conjecture \ref{conj:nori3}.
\end{cor}

\begin{cor}
  Conjecture \ref{conj:nori3}(b) for any motive that lies in the
  tensor category generated by relative smooth curves over $S$.
\end{cor}

By a similar argument we obtain an analogue of  Deligne's
``principle B'' \cite{dmos} in the theory of absolute Hodge cycles.

\begin{prop}
  Given a tame family $(f:X\to S,E)$, a $\pi_1(S,s)$-invariant Nori
 cycle in $H^i(X_s,E_s)$ yields, under parallel transport, a Nori cycle
in every fibre $H^i(X_t,E_t)$.
\end{prop}

\begin{proof}
  A $\pi_1(S,s)$-invariant Nori cycle on $H^i(X_s,E_s)$ induces a
  morphism $\Q(j)\to p_*h^i(X,E)$.  This can specialized to any fibre.
\end{proof}

\bigskip
\begin{center}
  \large{{\bf Appendices}} 
\end{center}
\begin{appendix}
  \section{$2$-categories}

We will generally take the view that a category {\em is} the same as any other category equivalent to it. 
This needs some elaboration.
The category $Cat$ of all small categories is a $2$-category \cite{maclane}.  Among other things,
this means that the set of functors $Hom_{Cat}(C,D)$ is itself a category, where
the morphisms  are natural transformations. In this setting, functors are usually called
$1$-morphisms and natural transformations $2$-morphisms. 
Each kind of morphism can be composed as usual. This is denoted by $\circ$.
There are identities denoted by $1_X$ etc. There is another kind of composition for
adjacent $2$-morphisms, denoted here by $\diamond$.
Given objects $A,B,C$, $1$-morphisms $F,F',G,G'$ and $2$-morphisms $\alpha,\beta$ as
indicated below
$$
\xy 
(-16,0)*+{A}="4"; 
(0,0)*+{B}="6"; 
{\ar@/^1.65pc/^{F} "4";"6"}; 
{\ar@/_1.65pc/_{G} "4";"6"}; 
{\ar@{=>}^<<<{\scriptstyle \alpha} (-8,3)*{};(-8,-3)*{}} ; 
(0,0)*+{B}="4"; 
(16,0)*+{C}="6"; 
{\ar@/^1.65pc/^{F'} "4";"6"}; 
{\ar@/_1.65pc/_{G'} "4";"6"}; 
{\ar@{=>}^<<<{\scriptstyle \beta} (8,3)*{};(8,-3)*{}} ; 
\endxy 
$$
The composition $\beta\diamond\alpha$ sits as follows
$$
\xy 
(-8,0)*+{A}="4"; 
(8,0)*+{C}="6"; 
{\ar@/^1.65pc/^{F'\circ F} "4";"6"}; 
{\ar@/_1.65pc/_{G'\circ G} "4";"6"}; 
{\ar@{=>}^<<<{\scriptstyle \beta\diamond\alpha} (0,3)*{};(0,-3)*{}} ; 
\endxy 
$$
It is simply given as the composition
$$
F'(F(x))\stackrel{F'(\alpha_x)}{\longrightarrow} F'( G(x)) \stackrel{\beta_{G(x)}}{\longrightarrow} G'(G(x))
$$
This operation is associative, and the additional identities
$$\alpha\diamond 1_{1_A}= 1_{1_A}\diamond \alpha=\alpha,\quad
1_F\diamond 1_G = 1_{F\circ G}$$
$$(\beta'\circ \beta)\diamond(\alpha'\circ \alpha) = (\beta'\diamond\alpha')\circ (\beta\diamond\alpha)$$
are satisfied.

In $Cat$, we can either require that equations (or diagrams) hold (or commute) strictly, i.e. on the nose,
or only up to a natural isomorphism. The latter is frequently the more usual occurence.
Recall, for example, that categories $C$ and $D$ are equivalent (respectively isomorphic) if there are 
functors $F:C\to D$ and $G:D\to C$ such that $F\circ G \cong 1_D$ and $G\circ F\cong 1_G$
(respectively $F\circ G =1_D$ and $G\circ F=1_G$). Given a category $C$,
a pseudofunctor $F:C\to Cat$ is an assigment of objects to objects, and morphisms to
$1$-morphisms, together with natural isomorphisms, $\epsilon_c:F(1_c)\cong 1_{F(c)}$ and
$\eta_{f,g}:F(f)\circ F(g)\cong F(f\circ g)$.
These are required to satisfy certain commutivities that
 ensure that any two isomorphisms
$$F(f_1)\circ F(f_2)\circ\ldots F(f_n)\cong F(f_1\circ f_2 \circ\ldots f_n)$$
built from $\eta,\epsilon$ coincide. It suffices to check this for
$n\le 3$.
Contravariant pseudofunctors are simply pseudofunctors on $C^{op}$.

There are number of related notions of colimit (= direct limit) of categories.
We single out the notion that is most useful for us and refer to it as a
$2$-colimit, although ``pseudo-colimit'' or something like that may 
conform better to current usage. To simplify matters,
we discuss $2$-colimits in the filtered setting where things are
easier (cf. \cite[exp VI \S 6]{sga4}); this reference also gives a more  general construction.
A category $D$ is filtered if  for any two objects $d_1,d_2$, there exists an object $d_3$ and morphisms 
$d_1\to d_3, d_2\to d_3$, and
for any two parallel morphisms $f,g:d_1\to d_2$, there exists a morphism $h:d_2\to d_3$ such that $hf=hg$.
For example, a partially ordered  set is filtered precisely when it is directed. Given a pseudofunctor
$F:D\to Cat$ with $D$  filtered, the $2$-colimit 
$L= \text{2-}\varinjlim_{d} F(d) $ is the category  whose set of objects is the disjoint union
$$Ob L=\coprod_d Ob F(d)$$
The set of morphisms from $A\in Ob F(d_1)$ to $B\in ObF(d_2)$
is given by the filtered colimit
$$\varinjlim_{f:d_1\to d_3,g:d_2\to d_3} Hom_{F(d_3)}(F(f)(A), F(g)
(B))$$
We can see that there is a family of  $1$-morphisms $\{F(d)\to L\}$ such that for $f\in Hom(d,d')$
$$
 \xymatrix{
 F(d)\ar[r]\ar[d]_{F(f)} & L \\ 
 F(d')\ar[ru] & 
}
$$
commutes up to natural isomorphism. Moreover $L$ would be universal in the sense that for any 
category $L'$ with a family $\{F(d)\to L'\}$ as above, there is a unique
$1$-morphism $L\to L'$ such that the appropriate diagrams
nonstrictly commute. We really want to consider
$\text{2-}\varinjlim_{d} F(d) $ only  up to equivalence of
categories. In practice, there may be other representations of the
colimit which are more natural than the original construction.

\begin{lemma}
  Suppose that $C_i\subset C$ is a directed family of subcategories of
  a given category. Then $\2lim C_i$ is equivalent to the directed
  union $\bigcup C_i$ which the category having $\bigcup ObC_i$ and
$\bigcup MorC_i$ as its set of objects and morphisms.
\end{lemma}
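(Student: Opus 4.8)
The plan is to write down the evident comparison functor between the two categories and check by hand that it is an equivalence; the whole content is that every filtered colimit of hom-sets occurring in the construction of the $2$-colimit is a colimit along \emph{injective} transition maps, so it collapses to a union taken inside $C$.

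First I would regard the directed family as a strict functor $F$ from the index set $I$ to $Cat$ with $F(i)=C_i$ and, for $i\le j$, transition $1$-morphism the inclusion $C_i\hookrightarrow C_j$; because these are genuine subcategory inclusions, $F$ is an honest functor, with all the structure isomorphisms $\epsilon,\eta$ equal to identities. Then I would define $\Phi:\2lim_i C_i\to\bigcup_i C_i$ by sending an object $(i,A)$ (a choice of index $i$ together with $A\in Ob\,C_i$) to $A$, and sending a morphism $(i,A)\to(j,B)$ --- which by definition is an element of $\varinjlim_{k}Hom_{C_k}(A,B)$, the colimit over the filtered set of $k\ge i,j$ --- to the corresponding morphism of $C$. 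The key observation to record here is that, since each $Hom_{C_k}(A,B)\subseteq Hom_C(A,B)$ and the transition maps are the inclusions, this filtered colimit is literally $\bigcup_{k\ge i,j}Hom_{C_k}(A,B)$, and by directedness of $I$ this union is exactly $Hom_{\bigcup_i C_i}(A,B)$ as defined in the statement. So on hom-sets $\Phi$ is a bijection essentially by construction.

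Next I would check that $\Phi$ respects composition and identities: composition of morphisms in $\2lim_i C_i$ is performed by transporting both morphisms into a common $C_k$ and composing there, and since composition in $C_k$ is the restriction of composition in $C$, this matches composition in $\bigcup_i C_i$; identities are similar. Surjectivity of $\Phi$ on objects is immediate (every object of $\bigcup_i C_i$ lies in some $C_i$), and full faithfulness was already observed. Hence $\Phi$ is an equivalence.

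I do not expect a genuine obstacle here; the only mildly delicate point is the identification of the filtered colimit $\varinjlim_k Hom_{C_k}(A,B)$ with the set-theoretic union, which rests solely on the injectivity of the transition maps. It is perhaps worth remarking that $\Phi$ is usually not injective on objects --- distinct indices $i\le j$ with $A\in Ob\,C_i$ give distinct objects $(i,A)\ne(j,A)$ of the $2$-colimit, both mapping to $A$ --- but these are canonically isomorphic via $id_A$ viewed in any $C_k$ dominating $i$ and $j$, so this costs nothing.
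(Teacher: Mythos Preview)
Your argument is correct. The paper itself does not supply a proof of this lemma; it is stated as an easy consequence of the explicit description of the filtered $2$-colimit given just before it. Your write-up fills in exactly the verification one would expect: the functor $\Phi$ sending $(i,A)\mapsto A$ is essentially surjective by construction, and full faithfulness reduces to the observation that a filtered colimit of sets along injections is the union. The remark at the end about $\Phi$ not being injective on objects is accurate and worth keeping, since it explains why one gets an equivalence rather than an isomorphism of categories.
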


\begin{ex}
  Suppose $E$ is a coalgebra over a field $F$. We can  express it as
  directed union, and therefore a $2$-colimit, of finite
  dimensional coalgebras $E=\varinjlim E_i$ . 
\end{ex}

From the earlier  descripition, it is not difficult to deduce the following:

\begin{prop}
Let $F$ be a pseudofunctor from a filtered category $D$ to the
$2$-category of abelian categories.  Suppose that that $F(f)$ is exact
for each $f\in MorD$. Then $\text{2-}\varinjlim_{d} F(d) $ is abelian
and the functors $ F(d')\to \text{2-}\varinjlim_{d} F(d) $ are exact.
\end{prop}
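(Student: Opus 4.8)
The plan is to verify directly, from the explicit description of $L:=\text{2-}\varinjlim_{d}F(d)$ given earlier (objects $\coprod_{d}\mathrm{Ob}\,F(d)$, morphism sets the filtered colimits $\mathrm{Hom}_{L}(A,B)=\varinjlim_{f:d_1\to d_3,\,g:d_2\to d_3}\mathrm{Hom}_{F(d_3)}(F(f)A,F(g)B)$), that $L$ satisfies the axioms of an abelian category. The recurring idea is that every object, morphism and finite diagram of $L$ may be represented inside a single category $F(d_3)$, and the exactness hypothesis on the transition functors $F(f)$ is precisely what makes these representatives behave coherently in the colimit.

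First I would establish the preadditive and additive structure. Each $\mathrm{Hom}_{F(d_3)}$ is an abelian group with biadditive composition, and since filtered colimits of abelian groups are exact the colimit Hom-groups inherit a biadditive composition; the pseudofunctoriality data $\epsilon,\eta$ supply associativity and units. A zero object of any $F(d)$ is a zero object of $L$, since additive functors preserve zero objects and so the relevant Hom-groups are colimits of trivial groups. For biproducts, given $A\in F(d_1)$ and $B\in F(d_2)$ I would choose $d_3$ with maps from $d_1$ and $d_2$, form the biproduct of $F(f)A$ and $F(g)B$ in $F(d_3)$, and note that its structure maps and their defining identities are preserved by all further transition functors, so it represents a biproduct of $A$ and $B$ in $L$.

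The substantive step is the construction of kernels and cokernels. Given $\phi:A\to B$ in $L$, I would represent it by $\tilde\phi:A'\to B'$ in some $F(d_3)$ and set $K=\ker\tilde\phi$ there. To see $K\to A'$ is a kernel of $\phi$ in $L$, take a test object $C\in F(d_4)$, push $C$ and $K$ to a common $F(d_5)$, and use exactness of $F(d_3\to d_5)$ to get $F(d_3\to d_5)K=\ker\big(F(d_3\to d_5)\tilde\phi\big)$; the universal property of kernels in $F(d_5)$, followed by the filtered colimit over $d_5$ and the exactness of that colimit, then identifies $\mathrm{Hom}_{L}(C,K)$ with $\ker\big(\mathrm{Hom}_{L}(C,A)\to\mathrm{Hom}_{L}(C,B)\big)$, which is the required universal property. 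The dual argument, using that $F(d_3\to d_5)$ also preserves cokernels, handles cokernels. This simultaneously shows that kernels and cokernels exist in $L$ and may be computed in any representative.

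Finally I would deduce that $L$ is abelian and that each $F(d')\to L$ is exact. A morphism $\phi$ is monic in $L$ iff $\ker_{L}\phi=0$; since that kernel is represented by $K=\ker_{F(d_3)}\tilde\phi$, and an object of $F(d_3)$ is zero in $L$ exactly when some transition functor annihilates it, this happens iff $F(d_3\to d_5)\tilde\phi$ is monic in $F(d_5)$ for $d_5$ far enough along $D$. For such a $d_5$, in the abelian category $F(d_5)$ the monic $F(d_3\to d_5)\tilde\phi$ is the kernel of $F(d_3\to d_5)B'\to\mathrm{coker}\big(F(d_3\to d_5)\tilde\phi\big)$, and transporting this along the kernel/cokernel comparison shows $\phi$ is the kernel of $B\to\mathrm{coker}_{L}\phi$; dually every epimorphism is a cokernel, so $L$ is abelian. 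Exactness of $F(d')\to L$ is then immediate, since it is additive and preserves kernels and cokernels by the comparison applied with $d_3=d'$ and the identity transition. The main obstacle --- essentially the only place where the hypotheses are used --- is the kernel/cokernel comparison, which requires commuting the filtered colimit defining $\mathrm{Hom}_{L}$ past a finite limit; this uses \emph{both} exactness of each $F(f)$ \emph{and} exactness of filtered colimits of abelian groups. A small but easily missed subtlety is that ``zero in $L$'' is strictly weaker than ``zero in some $F(d)$'' (a transition functor can kill an object), so properties like monicity must always be phrased as holding after pushing sufficiently far in $D$.
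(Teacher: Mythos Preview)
Your proposal is correct and follows essentially the same approach as the paper, which only gives a brief sketch: represent a morphism in some $F(d)$, take its kernel, cokernel, and image factorization there, and observe these serve the same roles in $L$. You have simply supplied the details the paper omits, including the careful point that ``zero in $L$'' means ``killed by some transition functor'' rather than ``zero in the representing $F(d)$''---a subtlety the paper's sketch does not mention but which your verification of the normality axiom genuinely needs.
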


\begin{proof}[Sketch]
  It is clear that $L=\text{2-}\varinjlim_{d} F(d) $
and the functors $ F(d')\to L$ are
additive. Given a morphism in $L$ represented by
$f:A\to B$ in $F(d)$, $\ker(f)$, $coker(f)$ and $A\rightarrowtail im(f)\twoheadrightarrow B$
represents the kernel, cokernel and image factorization in $L$.
\end{proof}

In a similar vein:

\begin{prop}
Let $F$ be a pseudofunctor from a filtered category $D$ to the
$2$-category of triangulated categories (with $t$-structure) and
(exact) triangulated functors.
Then $\text{2-}\varinjlim_{d} F(d) $ carries the structure of a
triangulated category (with $t$-structure)
so that the  functors $F(d')\to \text{2-}\varinjlim_{d} F(d) $ are
triangulated (and exact).
\end{prop}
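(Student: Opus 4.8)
The plan is to transport the triangulated structure (and the $t$-structure) from the categories $F(d)$ to $L=\text{2-}\varinjlim_d F(d)$ by a uniform lifting argument, exactly in the spirit of the preceding proposition for abelian categories. Write $\phi_d\colon F(d)\to L$ for the canonical functors. First I would isolate the one fact that does all the work: \emph{$L$ is a filtered union}, meaning that any finite collection of objects, morphisms and commuting relations in $L$ can be lifted, compatibly, into a single $F(d)$. This is immediate from the construction recalled in the appendix: every object of $L$ lies in some $F(d)$, each morphism of $L$ is represented in some $F(d)$, the $\mathrm{Hom}$-sets in $L$ are filtered colimits of the $\mathrm{Hom}$-sets downstairs, and $D$ is filtered, so finitely many pieces of data can be pushed to a common index and finitely many relations made to hold there.

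Next I would set up the shift. Since each transition functor is triangulated, it commutes with $[1]$ up to a specified natural isomorphism, and the pseudofunctor coherences make these compatible; hence ``apply $[1]$ in $F(d)$'' descends to an autoequivalence $[1]$ of $L$ equipped with natural isomorphisms $\phi_d\circ[1]\cong[1]\circ\phi_d$. I would then \emph{define} a candidate triangle in $L$ to be distinguished if it is isomorphic in $L$ to $\phi_d(T)$ for some distinguished triangle $T$ in some $F(d)$. This class is closed under isomorphism by definition, and it does not depend on the chosen representative because the $\phi_d$ and the transition functors carry distinguished triangles to distinguished triangles.

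The verification of TR1--TR4 then follows a single recipe: each axiom asks one to complete a given finite diagram by new objects, morphisms or triangles subject to finitely many commutativities; using the lifting fact one pulls the whole configuration back to a single $F(d)$, applies the axiom there, and pushes the answer forward along $\phi_d$, the shift-coherence isomorphisms promoting the output to a genuine distinguished triangle (in whatever rotated or shifted form the axiom demands). The $\phi_d$ are triangulated by construction. For the $t$-structure I would let $L^{\le 0}$ (resp. $L^{\ge 0}$) be the full subcategory of objects isomorphic to $\phi_d(X)$ with $X\in F(d)^{\le 0}$ (resp. $F(d)^{\ge 0}$); this is well defined because transition functors are $t$-exact. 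Orthogonality $\mathrm{Hom}_L(X,Y[-1])=0$ for $X\in L^{\le 0}$, $Y\in L^{\ge 0}$ follows by lifting $X,Y$ to a common $F(d)$; stability under $[\mp 1]$ is clear; and the truncation triangle of $\phi_d(X)$ is the image of the truncation triangle of $X$. Exactness of the $\phi_d$ for these $t$-structures is then automatic.

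The hard part is really only bookkeeping rather than mathematics: one must state the lifting principle sharply enough that it can be invoked verbatim for every axiom (including the octahedral axiom TR4, which carries the most data but is handled in exactly the same way), and one must keep careful track of the shift-coherence isomorphisms so that an ``image of a distinguished triangle'' is genuinely recognized as distinguished in all the shifted and rotated forms required. None of this introduces new obstructions, which is why it can be treated in the same breath as the abelian case.
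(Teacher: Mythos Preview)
Your proposal is correct and follows exactly the approach the paper intends: the paper gives no proof at all for this proposition, merely prefacing it with ``In a similar vein'' after the sketch for the abelian case, so your detailed transport-and-lift argument is precisely the elaboration the paper leaves to the reader. If anything, you have written out more carefully than the paper does the one point that needs care, namely the coherence of the shift isomorphisms across transition functors.
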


\section{Comparison theorem}

Let $X$ be a $\C$-variety. Define the site $X_{cl}$ with objects given by local homeomorphisms
$U\to X_{an}$ and coverings are surjective families $\{U_i\to U\}$. Then 
there is an obvious map of sites $X_{cl}\to X_{an}$ , which induces an equivalence of the categories of
sheaves \cite[exp XI \S 4]{sga4}. In 
particular, the cohomologies are the same. There is a canonical morphism of sites
$\epsilon:X_{cl}\to X_{et}$ which induces a map from \'etale to classical cohomology.

Since \'etale cohomology does not work properly for nontorsion coefficients, we start
with finite coefficients, and then take the limit.
Choose $N>0$.
A sheaf of $\Z/N\Z$-modules is constructible for either topology
if there is a decomposition of $X$ into Zariski locally closed sets, for which the restrictions
are locally constant. The pullback $\epsilon^*$ preserves constuctibility.
The following comparison theorem is given in  \cite[ exp XVI, thm 4.1; exp XVII, thm 5.3.3]{sga4}:

\begin{thm}
  Suppose that $f:X\to Y$ is a morphism of $\C$ varieties, and that
$\F$ is a constructible sheaf of $\Z/N\Z$-modules 
there  are isomorphisms
$$\epsilon^*R f^{et,i}_*\F \cong R f^{an,i}_* \epsilon^*\F$$
$$\epsilon^*R f^{et,i}_!\F \cong R f^{an,i}_! \epsilon^*\F$$
\end{thm}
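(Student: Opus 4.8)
Since $X_{cl}$ and $X_{an}$ have equivalent categories of sheaves, the plan is to work throughout with the morphism of sites $\epsilon:X_{cl}\to X_{et}$ and to reduce everything to the absolute comparison theorem. First I would note that both assertions are local on $Y$ and behave well under composition, so using the factorization of $f$ into an open immersion followed by a projective morphism it suffices to treat two cases: $f$ proper, and $f$ an open immersion.

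When $f$ is proper, $Rf^{et,i}_!=Rf^{et,i}_*$, and likewise for the classical topology, and proper base change --- valid in both settings --- identifies the stalk of $Rf^i_*\F$ at a point $y$ with the cohomology of the fibre over $y$ with coefficients in the restriction of $\F$ there. Checking that $\epsilon^*$ respects these stalk descriptions then reduces the proper case to the absolute statement that $H^i_{et}(W,G)\cong H^i(W_{an},\epsilon^*G)$ for a $\C$-variety $W$ and a constructible sheaf $G$ of $\Z/N\Z$-modules. For a general $f$ I would factor $f=\bar f\circ j$ with $j$ an open immersion and $\bar f$ proper; since $\epsilon^*$ is exact and commutes with $j_!$, and $Rf^i_!=R\bar f^i_*\circ j_!$, the $f_!$ half of the theorem follows at once from the proper case applied to the constructible sheaf $j_!\F$.

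The absolute statement $H^i_{et}(W,G)\cong H^i(W_{an},\epsilon^*G)$ I would prove by d\'evissage: choose a stratification of $W$ on which $G$ is locally constant, use the long exact sequences attached to $a_!a^*G$ for the locally closed strata $a$ (both topologies and $\epsilon^*$ being compatible with $a_!$ and $a^*$) to reduce to $G$ locally constant on a smooth variety, trivialize $G$ along a finite \'etale cover and use the compatibility of $\epsilon^*$ with pushforward along finite morphisms to reduce to $G$ constant, and finally invoke Artin's comparison theorem \cite[exp.~XI]{sga4}.

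The remaining --- and hardest --- case is $f$ an open immersion $j:X\hookrightarrow\bar X$, where one must show $\epsilon^*R^ij^{et}_*\F\cong R^ij^{an}_*\epsilon^*\F$. Here I would invoke resolution of singularities to reduce to the situation in which $\bar X$ is smooth and $\bar X-X$ is a divisor with normal crossings; then $Rj_*$ is computed, locally near the boundary, by an explicit Mayer--Vietoris formula in terms of the complements of the branches (essentially the cohomology of a punctured polydisc), and the very same formula controls both topologies, with $\epsilon^*$ identifying the two. Feeding this back into the first reduction finishes the proof. I expect this open-immersion case to be the main obstacle: $Rj_*$ does not commute with base change, so it resists the fibrewise stalk argument that handles the proper case, and one genuinely needs the local structure of the boundary provided by resolution.
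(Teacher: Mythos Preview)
The paper does not give a proof of this theorem at all: it simply records the statement and cites \cite[exp.~XVI, thm~4.1; exp.~XVII, thm~5.3.3]{sga4} for the proof. So there is nothing to compare your argument against in the paper itself.

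Your sketch is a reasonable outline of how one proves the comparison theorem, and the reductions (localize on $Y$, factor into proper plus open immersion, handle $Rf_!$ via $R\bar f_*\circ j_!$, reduce the absolute case by d\'evissage to Artin's theorem) are all standard and correct. One remark on the hardest step: for $Rj_*$ with $j$ an open immersion, the original SGA4 argument does not proceed via resolution of singularities and the explicit local model of a normal crossings complement. Instead it uses d\'evissage together with fibration by curves (elementary fibrations in the sense of Artin) to reduce to the case of relative dimension one, where the local structure near the boundary is a punctured disc and can be analyzed directly. Your resolution-based approach is a legitimate alternative over $\C$, and is arguably conceptually cleaner, but it is not the route taken in the cited reference; if you want to match the citation you should be aware of the difference.
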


Fix a prime $\ell$. A constructible $\ell$-adic sheaf is a system $\ldots \F_n\to \F_{n-1}\ldots$
of  sheaves on $X_{et}$, such that
each $\F_n$ is a constructible $\Z/\ell^n\Z$-module 
and the maps induce isomorphisms $\F_n\otimes\Z/\ell^{n-1}\Z\cong \F_{n-1}$.
Standard sheaf theoretic operations can  essentially  be defined componentwise, and they work as
expected \cite{sga5,deligne-sga}.
Given a constructible sheaf $\F=\{\F_n\}$ on $X_{et}$, define 
$$\epsilon^*\F = \varprojlim_n \epsilon^*\F_n$$
$$ \epsilon^*(\F \text{``}\otimes\text{''} \Q_\ell) = (\varprojlim_n \epsilon^*\F_n)\otimes \Q_\ell$$
on $X_{cl}$. Then with this notation, the above theorem extends
to  the case where $\F$ is an $\ell$-adic sheaf  ($\otimes \Q_\ell)$
\cite[\S 6.1]{bbd}.

\section{Classical $t$-structure for mixed Hodge modules}

Saito \cite{saito1, saito} has introduced a category of mixed Hodge
modules\footnote{To avoid confusion, we note that we are following the
  conventions of \S 4 of \cite{saito}.} $MHM(S)$.  
When $S$ is nonsingular, an object $\M$ of this category consists of a filtered perverse sheaf
$(K,W)$ of $\Q$-vector spaces on $S^{an}$
together with compatible bifiltered regular holonomic $D_S$-module
$(M,W,F)$. These are  subject to a rather delicate set of conditions
that we will not attempt to spell out. The definition is inductive.
In particular, when $S$ is a point, $MHM(S)$ is nothing but the  category of
polarizable mixed Hodge structures. One has a forgetful functor
$rat:MHM(S)\to Perv(S^{an})$ to the category of perverse sheaves
given by $\M\mapsto K$.
Saito has established the following properties:
\begin{enumerate}
\item There is an exact faithful functor $rat:MHM(S)\to Perv(S^{an})$
  for any $S$. It extends to a
triangulated functor $D^bMHM(S)\to D^bCons(S^{an})$.
\item $MHM(S)$ contains the category of admissible variations of mixed
Hodge structure. In fact, $M$ is a variation if and only if $rat(M)$
is a local system up to shift.
\item Standard sheaf theoretic operations extend 
to  $D^bMHM(S)$ including Grothendieck's ``six operations'' and
vanishing cycles functors. These are
compatible with the corresponding operations on $D^bCons(S^{an})$
via $rat$.
\end{enumerate}

The most natural $t$-structure on $D^bMHM(S)$ has $MHM(S)$ as its
heart. This  corresponds to the
perverse $t$-structure on the constructible derived category, so we
refer to this as the perverse $t$-structure on $MHM$. 
Saito \cite[remark 4.6]{saito} has pointed out that there this a second
$t$-structure that we call the classical $t$-structure which lifts the
standard $t$-structure on  $D^bCons(S^{an})$ with $Cons(S^{an})$
as its heart.

\begin{thm}
There exists a nondegenerate $t$-structure $({}^cD^{\le 0}, {}^cD^{\ge 0})$ on
$D^bMHM(S)$ which is compatible with the
standard $t$-structure on  $D^bCons(S^{an})$.
\end{thm}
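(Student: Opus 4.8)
The plan is to transport the standard $t$-structure on $D^b Constr(S^{an})$ across the functor $rat:D^bMHM(S)\to D^bConstr(S^{an})$, using the fact established by Saito that $MHM(S)$ sits inside $D^bMHM(S)$ and that a mixed Hodge module $\M$ with $rat(\M)$ a local system up to shift is a variation. Concretely, I would define
$${}^cD^{\le 0} = \{\,\M\in D^bMHM(S) : \calH^j(rat(\M)) = 0 \text{ for } j>0\,\},$$
$${}^cD^{\ge 0} = \{\,\M\in D^bMHM(S) : \calH^j(rat(\M)) = 0 \text{ for } j<0\,\},$$
where $\calH^j$ denotes the cohomology sheaves for the standard $t$-structure on $D^bConstr(S^{an})$. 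The orthogonality axiom ${}^cD^{\le -1}\perp {}^cD^{\ge 0}$ and the shift axioms are immediate from the corresponding facts downstairs, since $rat$ is a faithful exact (triangulated) functor. Nondegeneracy likewise follows: if $rat(\M)$ has all standard cohomology sheaves zero then $rat(\M)=0$, and since $rat$ is conservative on $D^bMHM(S)$ (faithfulness of an exact functor between triangulated categories forces $\M=0$), we get $\M=0$.

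The real content is the existence of truncation triangles: given $\M\in D^bMHM(S)$ one must produce a distinguished triangle ${}^c\tau_{\le 0}\M\to \M\to {}^c\tau_{\ge 1}\M\to [1]$ with the two outer terms in ${}^cD^{\le 0}$ and ${}^cD^{\ge 1}$ respectively. The approach here is inductive on the length of the weight filtration / the support dimension, reducing to the smooth case where $\M$ is (a shift of) an admissible variation of mixed Hodge structure: there the perverse cohomology objects ${}^p\calH^k(\M)$ are themselves successive extensions of (shifted) variations on locally closed strata, and one can assemble the classical truncation by gluing the classical truncations on an open–closed decomposition of $S$ adapted to the strata, exactly as one builds the standard $t$-structure on $Constr$ from the perverse one by re-indexing along strata. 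I would invoke Saito's stability of $MHM$ under $j_!, j_*, i_*, i^*$ for open immersions $j$ and closed immersions $i$ (property (3) in Appendix C) to carry out the gluing inside $D^bMHM(S)$, and check at each stage that applying $rat$ recovers the standard truncation triangle — which pins the construction down and proves the membership conditions.

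The main obstacle I expect is precisely this gluing step: one needs the classical truncation functors to be defined \emph{in $D^bMHM(S)$}, not merely after applying $rat$, and this requires knowing that the objects produced by iterated $j_!{}^c\tau_{\le 0}j^*$, $i_*{}^c\tau$, etc., glue along the strata into a genuine mixed Hodge module complex with the right cohomology. This is where Saito's formalism does the heavy lifting, and the bookkeeping — tracking both the weight filtration and the stratification simultaneously, and verifying the octahedral compatibilities needed to glue the triangles — is the delicate part; everything else (orthogonality, shifts, nondegeneracy, compatibility with $rat$) is formal once the triangles exist. I would remark that this $t$-structure is simply the pullback under $rat$ of the standard one, and that its heart ${}^cD^{\le 0}\cap {}^cD^{\ge 0}$ — the category $Constr\text{-}MHM(S)$ referenced earlier — maps faithfully and exactly onto a subcategory of $Constr(S^{an})$, which is all that is needed for the Hodge realization functor.
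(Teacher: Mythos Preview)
Your approach is essentially the paper's: define ${}^cD^{\le 0}$ and ${}^cD^{\ge 0}$ by compatibility with the standard $t$-structure on $D^bConstr(S^{an})$ (the paper phrases this stalkwise via $i_x^*$, you via cohomology sheaves of $rat$; these are equivalent), then establish the axioms by induction on the number of strata using open--closed gluing. The paper streamlines your inductive step by invoking the gluing theorem \cite[thm~1.4.10]{bbd} directly, checking its hypotheses from Saito's adjunction package \cite[4.4.1]{saito}; this replaces your hand-built ``octahedral compatibilities'' with a single citation and is worth knowing.

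One point to correct: you assert that orthogonality ${}^cD^{\le -1}\perp {}^cD^{\ge 0}$ is ``immediate from the corresponding facts downstairs, since $rat$ is a faithful exact (triangulated) functor.'' But $rat$ is only stated to be faithful on the heart $MHM(S)$, not on all of $D^bMHM(S)$; faithfulness of an exact functor on an abelian category does not propagate to faithfulness on $\mathrm{Hom}$'s in the derived category (there may be extra extensions upstairs). So the injection $\mathrm{Hom}_{D^bMHM}(A,B)\hookrightarrow \mathrm{Hom}_{D^bConstr}(rat\,A, rat\,B)$ is not available to you. This gap is harmless in the end, because the BBD gluing theorem delivers orthogonality along with the truncation triangles; but you should not claim it separately as a formality.
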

 
\begin{proof}
Let $i_x:x\to X$ denote the inclusion of a point.
Define $M\in Ob  ({}^cD^{\le 0})$ (respectively $\in Ob({}^cD^{\ge 0})$) if
$i_x^*M= 0$ for all $x\in X$ and $k>0$ (respectively $k<0$). 
To see that this is a $t$-structure, note that 
it is enough to check this on each step of the filtered union
$D^bMHM(S)=\bigcup D^bMHM(S,\Sigma)$,
where  $MHM(S,\Sigma)\subset MHM(S)$
denotes  the full subcategory consisting of mixed Hodge modules such
that $rat(\M)$ is $\Sigma$-construcible.
One can now use induction on the cardinality $|\Sigma|$.
If $|\Sigma|=1$, the purported $t$-structure is in fact what it is
claimed to be since it  is the perverse $t$-structure up to shift.
When $|\Sigma|>1$, let $T$ be a closed stratum and $U= S-T$.
By induction, $({}^cD^{\le 0}, {}^cD^{\ge 0})$ determine
$t$-structures on $T$ and $U$. For $S$ this follows by
 verifying the conditions of \cite[thm
1.4.10]{bbd} using \cite[4.4.1]{saito}.

$({}^cD^{\le 0}, {}^cD^{\ge 0})$
is clearly compatible with the
standard $t$-structure on  $D^bCons(S^{an})$.
\end{proof}

Let $Cons\text{-}MHM(S)$ denote the heart  ${}^cD^{\le 0}MHM(S)\cap
{}^cD^{\ge 0}MHM(S)$, and likewise for $Cons\text{-}MHM(S,\Sigma)$.

\begin{lemma}
     The functor 
$$rat:Cons\text{-}MHM(S,\Sigma)\to  Cons(S^{an},\Sigma)$$
yields an exact faithful embedding. 
  \end{lemma}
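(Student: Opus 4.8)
The plan is to deduce the lemma from two facts about $rat$: that it is $t$-exact for the \emph{classical} $t$-structures, and that the triangulated functor $rat\colon D^bMHM(S)\to D^bConstr(S^{an})$ is conservative (reflects the zero object). Both follow from the properties of $rat$ recorded above --- in particular that $rat\colon MHM(S)\to Perv(S^{an})$ is exact and faithful, and that $rat$ commutes with $i_x^*$ and with perverse cohomology.

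First I would check $t$-exactness for the classical $t$-structures. For $M\in Ob\,D^bMHM(S)$ and a point $i_x\colon x\to S$ one has $rat(i_x^*M)\cong i_x^*rat(M)$ by compatibility of $rat$ with pullbacks, and since $rat$ is exact on the point, $\calH^k(rat(i_x^*M))\cong rat(\calH^k(i_x^*M))$. Hence $rat$ carries the stalkwise vanishing conditions defining ${}^cD^{\le 0}MHM(S)$ and ${}^cD^{\ge 0}MHM(S)$ to the conditions defining $D^{\le 0}Constr(S^{an})$ and $D^{\ge 0}Constr(S^{an})$, so $rat$ is $t$-exact. A $t$-exact triangulated functor restricts to an exact functor on hearts; and on the heart $Constr\text{-}MHM(S,\Sigma)\subseteq D^bMHM(S,\Sigma)$ its image is $\Sigma$-constructible by the very definition of $D^bMHM(S,\Sigma)$. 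This already produces the claimed exact functor $rat\colon Constr\text{-}MHM(S,\Sigma)\to Constr(S^{an},\Sigma)$.

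Next, conservativity of $rat$ on $D^b$. Since $rat$ commutes with perverse cohomology, $rat({}^pH^k(P))\cong {}^pH^k(rat(P))$ in $Perv(S^{an})$ for every $k$ and every $P\in D^bMHM(S)$, where ${}^pH^k(P)\in MHM(S)$. A faithful functor of abelian categories kills no nonzero object (if $F(X)=0$ then $F(\mathrm{id}_X)=0=F(0_X)$, so $\mathrm{id}_X=0_X$), so if $rat(P)=0$ then $rat({}^pH^k(P))=0$, hence ${}^pH^k(P)=0$ for all $k$, hence $P=0$ by nondegeneracy of the perverse $t$-structure on $D^bMHM(S)$. Thus $rat$ reflects the zero object.

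Finally, faithfulness on the classical hearts. Recall that for objects $M,N$ lying in the heart of a $t$-structure one has $Hom_{D^b}(M,N)=Hom_{\mathrm{heart}}(M,N)$, so a morphism $f\colon M\to N$ of $Constr\text{-}MHM(S,\Sigma)$ with $rat(f)=0$ can be studied inside the abelian category $Constr\text{-}MHM(S,\Sigma)$. Factor it as $M\twoheadrightarrow I\hookrightarrow N$ with $I=\mathrm{im}(f)$; applying the exact functor $rat$ gives $rat(I)=\mathrm{im}(rat(f))=0$, so $I=0$ by conservativity, hence $f=0$. Together with exactness this gives the exact faithful embedding. I expect the only real obstacle to be marshalling the needed compatibilities of $rat$ --- with $i_x^*$, with perverse cohomology, and the faithfulness of $rat\colon MHM(S)\to Perv(S^{an})$ --- in precisely this form from Saito's papers; granted those, the categorical steps are routine.
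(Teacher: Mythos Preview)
Your argument is correct, but it proceeds quite differently from the paper. The paper proves faithfulness by induction on the number of strata: the base case $|\Sigma|=1$ reduces to variations of mixed Hodge structure, and the inductive step uses the recollement triangle $j_!j^*M\to M\to i_*i^*M\to j_!j^*M[1]$ together with adjunction to show that a morphism killed by $rat$ restricts to zero on a closed stratum and on its complement, hence is zero. You instead lift faithfulness on the perverse heart to conservativity of $rat$ on all of $D^bMHM(S)$ via perverse $t$-exactness, and then use the standard image-factorization trick (exact plus conservative implies faithful) on the classical heart. Your route is cleaner and more portable --- it is the general principle that a $t$-exact conservative triangulated functor is faithful on hearts --- while the paper's argument stays closer to the geometry and makes the role of the stratification explicit; in particular it would adapt more directly if one wanted pointwise control over where faithfulness might fail.
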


  \begin{proof}
Exactness is already clear from the theorem, so
    the only issue is faithfulness. This can be proved by induction on
    $|\Sigma|$. This holds when    $|\Sigma|=1$ because
$Cons\text{-}MHM(S,\Sigma)$ is the category of variations of Hodge structures.
In general, let $i:T\to S$ be a closed stratum and $j:U\to S$ be the
complement. Suppose that $f\in Hom(M,N)$ is morphism in
$Cons\text{-}MHM(S,\Sigma)$  such that $rat(f)=0$. We need to prove
that $f=0$. By induction
$f|_T=0$ and $f|_U=0$.
From the  distinguished triangle
$$j_!j^*M\to M\to i_*i^*M\to j_!j^*M[1]$$
 and  adjointness  we obtain an exact  sequence
$$
\xymatrix{
Hom(i_*i^*M,N)\ar[r]\ar^{||}[d]& Hom(M,N)\ar[r]& Hom(j_!j^*M,N)\ar^{||}[d]\\
Hom(i^*M, i^*N) & & Hom(j^*M,j^*N)}
$$
Therefore $f=0$.
  \end{proof}

\end{appendix}


\noindent  Department of Mathematics,
   Purdue University,   West Lafayette IN 47907,
   U.S.A.\\ arapura@math.purdue.edu

\end{document}